\documentclass[reqno,10pt]{amsart}
\usepackage{amscd,amssymb,verbatim,array}
\usepackage{hyperref}
\usepackage{amsmath}

\setlength{\textwidth}{6.3in}
 \addtolength{\oddsidemargin}{-1.7cm}
\addtolength{\evensidemargin}{-1.7cm}

\numberwithin{equation}{section} \theoremstyle{plain}

\newtheorem{theorem}{Theorem}[section]
\newtheorem{lemma}[theorem]{Lemma}

\newtheorem{corollary}[theorem]{Corollary}

\theoremstyle{definition}

\theoremstyle{remark}

\numberwithin{equation}{section}

\newcommand{\an}{\operatorname{an}}

\newcommand{\Det}{\operatorname{Det}}
\newcommand{\B}{\mathcal{B}}
\newcommand{\E}{\mathcal{E}}

\newcommand{\even}{\operatorname{even}}
\newcommand{\trivial}{\operatorname{trivial}}
\newcommand{\cyl}{\operatorname{cyl}}
\newcommand{\odd}{\operatorname{odd}}
\newcommand{\gr}{\operatorname{gr}}

\newcommand{\Tan}{\operatorname{tan}}
\newcommand{\Nor}{\operatorname{nor}}
\newcommand{\Dim}{\operatorname{dim}}
\newcommand{\Mod}{\operatorname{mod}}
\newcommand{\Min}{\operatorname{min}}
\newcommand{\Max}{\operatorname{max}}
\newcommand{\rk}{\operatorname{rk}}
\newcommand{\rank}{\operatorname{rank}}
\newcommand{\pr}{\operatorname{pr}}

\newcommand{\Ker}{\operatorname{ker}}

\newcommand{\Spec}{\operatorname{Spec}}
\newcommand{\Sign}{\operatorname{sign}}
\newcommand{\Tr}{\operatorname{Tr}}

\newcommand{\Res}{\operatorname{Res}}

\newcommand{\Imm}{\operatorname{Im}}
\newcommand{\I}{\operatorname{I}}
\newcommand{\II}{\operatorname{II}}
\newcommand{\III}{\operatorname{III}}
\newcommand{\J}{\operatorname{J}}
\newcommand{\K}{\operatorname{K}}
\newcommand{\LL}{\operatorname{L}}
\newcommand{\Dom}{\operatorname{Dom}}

\newcommand{\rel}{\operatorname{rel}}
\newcommand{\Abs}{\operatorname{abs}}

\newcommand{\ddet}{\operatorname{det}}
\newcommand{\Id}{\operatorname{Id}}
\newcommand{\SF}{\operatorname{SF}}
\newcommand{\Mas}{\operatorname{Mas}}
\newcommand{\doub}{\operatorname{doub}}
\newcommand{\double}{\operatorname{double}}
\newcommand{\m}{\operatorname{m}}

\newcommand{\R}{\operatorname{r}}
\newcommand{\A}{\operatorname{a}}

\newcommand{\supp}{\operatorname{supp}}

\begin{document}

\title[Comparison of two constructions of the refined analytic torsion]
{The comparison of two constructions of the refined analytic torsion on compact manifolds with boundary}
\author{Rung-Tzung Huang}

\address{Department of Mathematics, National Central University, Chung-Li 320,Taiwan}

\email{rthuang@math.ncu.edu.tw}

\author{Yoonweon Lee}

\address{Department of Mathematics, Inha University, Incheon, 402-751, Korea}

\email{yoonweon@inha.ac.kr}

\subjclass[2000]{Primary: 58J52; Secondary: 58J28, 58J50}
\keywords{refined analytic torsion, zeta-determinant, eta-invariant, odd signature operator, well-posed boundary condition}

\begin{abstract}
The refined analytic torsion on compact Riemannian manifolds with boundary has been discussed by B. Vertman ([22], [23]) and the authors ([11], [12])
but these two constructions are completely different.
Vertman used a double of de Rham complex consisting of
the minimal and maximal closed extensions of a flat connection and the authors used well-posed boundary conditions
${\mathcal P}_{-, {\mathcal L}_{0}}$, ${\mathcal P}_{+, {\mathcal L}_{1}}$ for the odd signature operator.
In this paper we compare these two constructions by using the BFK-gluing formula for zeta-determinants, the adiabatic method
for stretching cylinder part near boundary and the deformation method
used in [6] when the odd signature operator comes from a Hermitian flat connection and all de Rham cohomologies vanish.
\end{abstract}
\maketitle

\section{Introduction}

The refined analytic torsion was introduced by M. Braverman and T. Kappeler ([4], [5]) on an odd dimensional closed Riemannian manifold
with a flat bundle as an analytic analogue of the refined combinatorial torsion introduced by M. Farber and V. Turaev ([20], [21], [8], [9]).
Even though these two objects do not coincide exactly, they are closely related.
The refined analytic torsion is defined by using the graded zeta-determinant of the odd signature operator and is described as
an element of the determinant line of the cohomologies.
Specially, when the odd signature operator comes from an acyclic Hermitian connection on a closed manifold, the refined analytic torsion is a complex number,
whose modulus part is the Ray-Singer analytic torsion and the phase part is the $\rho$-invariant determined by the given odd signature operator
and the odd signature operator defined by the trivial connection acting on the trivial line bundle.

The refined analytic torsion on compact Riemannian manifolds with boundary has been discussed by B. Vertman ([22], [23]) and the authors ([11], [12]) but these two constructions are completely different.
Vertman used a double of de Rham complex consisting of
the minimal and maximal closed extensions of a flat connection.
On the other hand, the authors introduced well-posed boundary conditions
${\mathcal P}_{-, {\mathcal L}_{0}}$, ${\mathcal P}_{+, {\mathcal L}_{1}}$ for the odd signature operator to define the refined analytic torsion on compact Riemannian manifolds with boundary.
In this paper we are going to compare these two constructions when the odd signature operator comes from a Hermitian connection and all de Rham cohomologies vanish.
For comparison of the Ray-Singer analytic torsion part
we are going to use the BFK-gluing formula for zeta-determinants proven in [7] and the adiabatic method for stretching cylinder part near boundary.
For comparison of the eta invariant part we are going to use the deformation method used in [6]. These methods were used in [12], where the authors discussed the gluing formula
of the refined analytic torsion with respect to the well-posed boundary conditions
${\mathcal P}_{-, {\mathcal L}_{0}}$, ${\mathcal P}_{+, {\mathcal L}_{1}}$.
Hence this work is a continuation of [12].

We now begin with the description of the odd signature operator near boundary.

\vspace{0.3 cm}

\section{The refined analytic torsion on manifolds with boundary}
In this section we first describe the odd signature operator $\B$ near boundary and introduce the well-posed boundary conditions
${\mathcal P}_{-, {\mathcal L}_{0}}$, ${\mathcal P}_{+ ,{\mathcal L}_{1}}$ for the odd signature operator.
We then review the construction of the refined analytic torsions with respect to ${\mathcal P}_{- ,{\mathcal L}_{0}}$, ${\mathcal P}_{+ ,{\mathcal L}_{1}}$
discussed in [11].

Let $(M, g^{M})$ be a compact oriented odd dimensional Riemannian
manifold with boundary $Y$, where $g^{M}$ is assumed to be a product metric near the boundary $Y$.
We denote the dimension of $M$ by $m = 2r - 1$. Suppose that $\rho : \pi_{1}(M) \rightarrow
GL(n, {\Bbb C})$ is a representation of the fundamental group and $E = {\widetilde M}
\times_{\rho} {\Bbb C}^{n}$ is the associated flat bundle, where ${\widetilde M}$
is a universal covering space of $M$. We choose a flat connection
$\nabla$ and extend it to a covariant differential

$$
\nabla : \Omega^{\bullet}(M, E) \rightarrow \Omega^{\bullet + 1}(M, E).
$$

\noindent
Using the Hodge star operator $\ast_{M}$, we define an involution
$\Gamma = \Gamma(g^{M}) : \Omega^{\bullet}(M, E) \rightarrow \Omega^{m - \bullet}(M, E)$ by

\begin{equation}\label{E:1.1}
\Gamma \omega := i^{r} (-1)^{\frac{q(q+1)}{2}} \ast_{M} \omega, \qquad \omega \in \Omega^{q}(M, E),
\end{equation}

\noindent
where $r$ is given as above by $r = \frac{m+1}{2}$.
It is straightforward to see that $\Gamma^{2} = \Id$.
We define the odd signature operator $\B$ by

\begin{equation}\label{E:1.2}
    \B\ = \  \B(\nabla,g^M) \ := \ \Gamma\,\nabla \ + \ \nabla\,\Gamma:\,\Omega^\bullet(M,E)\ \longrightarrow \  \Omega^\bullet(M,E).
\end{equation}

\noindent
Then $\B$ is an elliptic differential operator of order $1$.
Let $N$ be a collar neighborhood of $Y$ which is isometric to $[0, 1) \times Y$. Then we have a natural isomorphism

\begin{eqnarray}\label{E:1.3}
\Psi : \Omega^{p}(N, E|_{N}) & \rightarrow & C^{\infty}([0, 1), \Omega^{p}(Y, E|_{Y}) \oplus \Omega^{p-1}(Y, E|_{Y}))  \\
\omega_{1} + dx \wedge \omega_{2} & \mapsto & \left( \begin{array}{clcr} \omega_{1} \\ \omega_{2} \end{array} \right)  \nonumber
\end{eqnarray}

\noindent
Using the product structure we can
induce a flat connection $\nabla^{Y} : \Omega^{\bullet}(Y, E|_{Y})
\rightarrow \Omega^{\bullet}(Y, E|_{Y})$ from $\nabla$ and the Hodge star operator $\ast_{Y} : \Omega^{\bullet}(Y, E|_{Y}) \rightarrow
\Omega^{m-1-\bullet}(Y, E|_{Y})$ from $\ast_{M}$.
We define two maps $\beta$, $\Gamma^{Y}$ by

\vspace{0.2 cm}

\begin{equation}\label{E:1.4}
 \begin{aligned}
  \beta  & :  \Omega^{p}(Y, E|_{Y}) \rightarrow  \Omega^{p}(Y, E|_{Y}), \quad \beta(\omega) = (-1)^{p} \omega \\
  \Gamma^{Y}  & :  \Omega^{p}(Y, E|_{Y}) \rightarrow  \Omega^{m-1-p}(Y,
E|_{Y}),\quad \Gamma^{Y}(\omega) = i^{r-1} (-1)^{\frac{p(p+1)}{2}}
\ast_{Y} \omega.
\end{aligned}
\end{equation}

\vspace{0.2 cm}

\noindent
It is straightforward that

\begin{equation}\label{E:1.5}
\beta^{2} = \Id,  \qquad  \Gamma^{Y} \Gamma^{Y} = \Id.
\end{equation}

\noindent
Then simple computation shows that

\begin{equation}\label{E:1.6}
 \Gamma = i \beta \Gamma^{Y} \left( \begin{array}{clcr} 0 & -1 \\ 1 & 0 \end{array} \right), \qquad
\nabla = \left( \begin{array}{clcr} 0 & 0 \\ 1 & 0 \end{array} \right) \nabla_{\partial_{x}} +
\left( \begin{array}{clcr} 1 & 0 \\ 0 & -1 \end{array} \right) \nabla^{Y},
\end{equation}

\noindent
where $\partial_{x}$ is the inward normal derivative to the boundary $Y$ on $N$.
Hence the odd signature operator $\B$ is expressed, under the isomorphism (\ref{E:1.3}), by

\begin{equation}\label{E:1.8}
 \B = - i \beta \Gamma^{Y} \left\{ \left( \begin{array}{clcr} 1 & 0 \\
0 & 1 \end{array} \right) \nabla_{\partial_{x}} + \left( \begin{array}{clcr}  0 & -1
\\ -1 & 0 \end{array} \right) \left( \nabla^{Y} + \Gamma^{Y} \nabla^{Y} \Gamma^{Y} \right) \right\}.
\end{equation}

\noindent
We denote

\begin{equation}  \label{E:1.9}
{\mathcal \gamma} := - i \beta \Gamma^{Y} \left( \begin{array}{clcr} 1 & 0 \\ 0 & 1 \end{array} \right), \qquad
{\mathcal A} := \left( \begin{array}{clcr}  0 & -1 \\ -1 & 0 \end{array} \right) \left( \nabla^{Y} + \Gamma^{Y} \nabla^{Y} \Gamma^{Y} \right)
\end{equation}

\noindent
so that $\B$ has the form of

\noindent
\begin{equation}  \label{E:1.10}
 \B =  {\mathcal \gamma} \left( \partial_{x} + {\mathcal A} \right) \qquad \text{with} \quad {\mathcal \gamma}^{2} = - \Id, \quad
{\mathcal \gamma} {\mathcal A} = - {\mathcal A} {\mathcal \gamma}.
\end{equation}

\noindent
Since $\nabla_{\partial_{x}} \nabla^{Y} = \nabla^{Y} \nabla_{\partial_{x}}$, we have

\begin{equation}\label{E:1.11}
\B^{2} = - \left( \begin{array}{clcr}  1 & 0 \\ 0 & 1 \end{array} \right) \nabla_{\partial_{x}}^{2} +
\left( \begin{array}{clcr}  1 & 0 \\ 0 & 1 \end{array} \right) \left( \nabla^{Y} + \Gamma^{Y} \nabla^{Y} \Gamma^{Y} \right)^{2} =
\left( - \nabla_{\partial_{x}}^{2} + \B_{Y}^{2} \right) \left( \begin{array}{clcr}  1 & 0 \\ 0 & 1 \end{array} \right),
\end{equation}

\noindent
where
$$
\B_{Y} = \Gamma^{Y} \nabla^{Y} + \nabla^{Y} \Gamma^{Y}.
$$

We next choose a Hermitian inner product $h^{E}$.
All through this paper we assume that $\nabla$ is a Hermitian connection with respect to $h^{E}$,
which means that $\nabla$ is compatible with $h^{E}$, {\it i.e.}
for any $\phi$, $\psi \in C^{\infty}(E)$,

$$
d h^{E}(\phi, \psi) = h^{E}(\nabla \phi, \psi) + h^{E}(\phi, \nabla \psi).
$$

\noindent
The Green formula for $\B$ is given as follows (cf. [11]).

\vspace{0.2 cm}

\begin{lemma} \label{Lemma:1.1}
(1) For $\phi \in \Omega^{q}(M, E)$, $\psi \in \Omega^{m-q}(M,E)$,
$\hspace{0.2 cm}  \langle \Gamma \phi, \hspace{0.1 cm} \psi \rangle_{M} \hspace{0.1 cm}
= \hspace{0.1 cm} \langle \phi, \hspace{0.1 cm} \Gamma \psi \rangle_{M}$.   \newline
(2) For $\phi \in \Omega^{q}(M, E)$, $\psi \in \Omega^{q+1}(M,E)$,
$$
\langle \nabla \phi, \hspace{0.1 cm} \psi \rangle_{M} \hspace{0.1 cm} = \hspace{0.1 cm}
\langle \phi, \hspace{0.1 cm} \Gamma \nabla \Gamma \psi \rangle_{M} \hspace{0.1 cm} - \hspace{0.1 cm}
\langle \phi_{\Tan}|_{Y}, \hspace{0.1 cm} \psi_{\Nor}|_{Y} \rangle_{Y}.
$$
(3) For $\phi$, $\psi \in \Omega^{\even}(M, E)$ or $\Omega^{\odd}(M,E)$,
$$
\langle \B \phi, \hspace{0.1 cm} \psi \rangle_{M} \hspace{0.1 cm} - \hspace{0.1 cm}
\langle \phi, \hspace{0.1 cm} \B \psi \rangle_{M} \hspace{0.1 cm} = \hspace{0.1 cm}
\langle \phi_{\Tan}|_{Y}, \hspace{0.1 cm} i \beta \Gamma^{Y} (\psi_{\Tan}|_{Y}) \rangle_{Y} \hspace{0.1 cm} - \hspace{0.1 cm}
\langle \phi_{\Nor}|_{Y}, \hspace{0.1 cm} i \beta \Gamma^{Y} (\psi_{\Nor}|_{Y}) \rangle_{Y}
\hspace{0.1 cm} = \hspace{0.1 cm} \langle \phi|_{Y}, \hspace{0.1 cm} {\mathcal \gamma} (\psi|_{Y}) \rangle_{Y}.
$$
\end{lemma}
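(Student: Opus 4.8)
The plan is to establish the three identities in order, since (3) will follow from (1) and (2). For part (1), I would start from the definition \eqref{E:1.1} of $\Gamma$ and the basic symmetry properties of the Hodge star operator on an oriented Riemannian manifold, namely that for $q$-forms $\alpha$ one has $\langle \ast_M \alpha, \beta\rangle_M = \langle \alpha, \ast_M^{-1}\beta\rangle_M$ together with $\ast_M \ast_M = (-1)^{q(m-q)}$ on $q$-forms. The sign bookkeeping amounts to checking that the scalar factor $i^r (-1)^{q(q+1)/2}$ paired against the corresponding factor in degree $m-q$ conspires with the Hodge star sign to give a symmetric pairing; since $m$ is odd this is a short parity computation. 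This is where one must be careful with conventions, but it is routine.

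For part (2), the natural route is Stokes' theorem applied to $d\langle \phi, \Gamma\psi\rangle$ on $M$ (using that the connection is flat so $\nabla$ behaves like $d$ on the pairing and that $h^E$-compatibility lets us integrate by parts), which produces a bulk term $\langle \nabla\phi, \Gamma\psi\rangle_M \pm \langle \phi, \nabla\Gamma\psi\rangle_M$ and a boundary term $\int_Y \langle \phi, \Gamma\psi\rangle$. Rewriting $\langle \phi, \nabla\Gamma\psi\rangle_M$ as $\langle \phi, \Gamma(\Gamma\nabla\Gamma)\psi\rangle_M$ and applying part (1) converts the bulk term into $\langle \nabla\phi,\psi\rangle_M - \langle\phi, \Gamma\nabla\Gamma\psi\rangle_M$ up to sign. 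For the boundary term I would decompose forms near $Y$ into tangential and normal parts via the isomorphism \eqref{E:1.3}, use the product structure of the metric, and check that only the pairing of $\phi_{\Tan}|_Y$ with $\psi_{\Nor}|_Y$ survives (the $\Gamma$ on $\psi$ interchanges the tangential and normal components, and the wedge with $dx$ selects exactly this cross term). I expect the main obstacle here to be precisely this last point: tracking how $\Gamma$ acts on the $\omega_1 + dx\wedge\omega_2$ decomposition and verifying that the surviving boundary integrand is $\langle \phi_{\Tan}|_Y, \psi_{\Nor}|_Y\rangle_Y$ with the stated sign, which requires the explicit formula \eqref{E:1.6} for $\Gamma$ near the boundary.

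For part (3), I would write $\B = \Gamma\nabla + \nabla\Gamma$ and expand $\langle \B\phi,\psi\rangle_M - \langle\phi,\B\psi\rangle_M$ into four pairs of terms. Using part (1) to move $\Gamma$ across the pairing and part (2) to move $\nabla$ across (each application contributing a boundary term), the bulk contributions cancel in pairs because $\Gamma\nabla\Gamma\cdot\Gamma + \Gamma\cdot\Gamma\nabla\Gamma$ reassembles into $\B$ symmetrically; what remains is a sum of boundary terms. Collecting these and again using the near-boundary expression \eqref{E:1.8}–\eqref{E:1.10} for $\B$, together with $\gamma = -i\beta\Gamma^Y\,\mathrm{Id}$ and the fact that $\gamma$ exchanges the roles of the tangential and normal components while $\beta\Gamma^Y$ acts within each, one obtains the two-term expression $\langle \phi_{\Tan}|_Y, i\beta\Gamma^Y(\psi_{\Tan}|_Y)\rangle_Y - \langle \phi_{\Nor}|_Y, i\beta\Gamma^Y(\psi_{\Nor}|_Y)\rangle_Y$. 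Finally, recognizing that this is exactly $\langle \phi|_Y, \gamma(\psi|_Y)\rangle_Y$ is an immediate consequence of the block form of $\gamma$ in \eqref{E:1.9}: $\gamma$ is block-diagonal with a sign difference encoded in how $\beta$ acts, so the minus sign on the normal part is built into $\gamma$. The restriction to $\phi,\psi$ both even or both odd guarantees that $\langle\B\phi,\psi\rangle_M$ is defined with $\B\phi$ and $\psi$ in complementary-consistent degrees so that the telescoping of bulk terms is exact.
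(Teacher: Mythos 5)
The paper itself offers no proof of this lemma --- it is quoted from [11] (with a sign change on the boundary terms because $\partial_{x}$ here is the inward normal) --- and your outline is precisely the standard argument behind that reference: a parity/Hodge-star computation for the self-adjointness of $\Gamma$ in (1) (using that $m=2r-1$ is odd so $\ast_{M}\ast_{M}=\Id$), Stokes' theorem together with the compatibility of $\nabla$ with $h^{E}$ for (2) (flatness is not actually needed there, only the Hermitian condition, which you do invoke), and the assembly of (1), (2) and the collar formulas (\ref{E:1.6})--(\ref{E:1.9}) for (3); so the proposal is correct and takes essentially the intended route. One wording slip worth fixing: you first say that $\gamma$ ``exchanges the roles of the tangential and normal components'' --- it does not ($\Gamma$ does, via the off-diagonal matrix in (\ref{E:1.6})); $\gamma=-i\beta\Gamma^{Y}\Id$ is block-diagonal, as you then correctly state, and the relative minus sign between the two boundary terms in (3) comes from $\beta$ acting with opposite signs on the even- and odd-degree components of $\phi|_{Y}$, $\psi|_{Y}$ (together with the conjugation of $i$ in the Hermitian pairing), not from any off-diagonal structure of $\gamma$.
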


\vspace{0.2 cm}

\noindent
{\it Remark} : In the assertions (2) and (3) the signs on the inner products on $Y$ are different from those in [11] because in [11]
$\partial_{x}$ is an outward normal derivative.

\vspace{0.3 cm}

We note that $\B_{Y}$ is a self-adjoint elliptic operator on $Y$.
Putting ${\mathcal H}^{\bullet}(Y, E|_{Y}) := \Ker \B^{2}_{Y}$, ${\mathcal H}^{\bullet}(Y, E|_{Y})$ is a finite dimensional vector space and we have
$$
\Omega^{\bullet}(Y, E|_{Y}) = \Imm \nabla^{Y} \oplus \Imm \Gamma^{Y} \nabla^{Y} \Gamma^{Y} \oplus {\mathcal H}^{\bullet}(Y, E|_{Y}).
$$

\noindent
If $\nabla \phi = \Gamma \nabla \Gamma \phi = 0$ for $\phi \in \Omega^{\bullet}(M, E)$, simple computation shows that $\phi$ is expressed,
near the boundary $Y$, by

\begin{equation} \label{E:1.12}
\phi = \nabla^{Y} \phi_{\Tan} + \phi_{\Tan, h} + dx \wedge ( \Gamma^{Y} \nabla^{Y} \Gamma^{Y} \phi_{\Nor} + \phi_{\Nor, h}), \qquad
\phi_{\Tan, h}, \hspace{0.1 cm} \phi_{\Nor, h} \in {\mathcal H}^{\bullet}(Y, E|_{Y}).
\end{equation}

\noindent
We define ${\mathcal K}$ by

\begin{equation} \label{E:1.13}
{\mathcal K} := \{ \phi_{\Tan, h} \in {\mathcal H}^{\bullet}(Y, E|_{Y}) \mid \nabla \phi = \Gamma \nabla \Gamma \phi = 0 \},
\end{equation}

\noindent
where $\phi$ has the form (\ref{E:1.12}).
If $\phi$ satisfies $\nabla \phi = \Gamma \nabla \Gamma \phi = 0$, so is $\Gamma \phi$ and hence

\begin{equation} \label{E:2.47}
\Gamma^{Y} {\mathcal K} = \{ \phi_{\Nor, h} \in {\mathcal H}^{\bullet}(Y, E|_{Y}) \mid \nabla \phi = \Gamma \nabla \Gamma \phi = 0 \},
\end{equation}

\noindent
where $\phi$ has the form (\ref{E:1.12}).
The second assertion in Lemma \ref{Lemma:1.1} shows that
${\mathcal K}$ is perpendicular to $\Gamma^{Y} {\mathcal K}$.
We then have the following decomposition (cf. Corollary 8.4 in [14], Lemma 2.4 in [11]).

\begin{equation}  \label{E:1.144}
{\mathcal K} \oplus \Gamma^{Y} {\mathcal K} = {\mathcal H}^{\bullet}(Y, E|_{Y}),
\end{equation}

\noindent
which shows that
$( {\mathcal H}^{\bullet}(Y, E|_{Y}), \hspace{0.1 cm} \langle \hspace{0.1 cm}, \hspace{0.1 cm} \rangle_{Y}, \hspace{0.1 cm} - i \beta \Gamma^{Y} )$
is a symplectic vector space with Lagrangian subspaces ${\mathcal K}$ and $\Gamma^{Y} {\mathcal K}$.
We denote by

\begin{equation}   \label{E:1.1555}
{\mathcal L}_{0} = \left( \begin{array}{clcr} {\mathcal K} \\ {\mathcal K} \end{array} \right), \qquad
{\mathcal L}_{1} = \left( \begin{array}{clcr} \Gamma^{Y} {\mathcal K} \\ \Gamma^{Y} {\mathcal K} \end{array} \right).
\end{equation}

\vspace{0.2 cm}

\noindent
{\it Remark} : Lemma 2.4 in [11] shows that ${\mathcal K}$ and $\Gamma^{Y} {\mathcal K}$ are the sets of all tangential and normal parts of the limiting values of extended $L^{2}$-solutions to $\B_{\infty}$ on $M_{\infty}$, respectively,
(See (\ref{E:2.77}) below for definitions of $\B_{\infty}$ and $M_{\infty}$).

\vspace{0.2 cm}

We next define the orthogonal projections
${\mathcal P}_{-, {\mathcal L}_{0}}$, ${\mathcal P}_{+, {\mathcal L}_{1}} :
\Omega^{\bullet}(Y, E|_{Y}) \oplus \Omega^{\bullet}(Y, E|_{Y}) \rightarrow \Omega^{\bullet}(Y, E|_{Y}) \oplus \Omega^{\bullet}(Y, E|_{Y})$ by
\begin{equation} \label{E:1.112}
\Imm {\mathcal P}_{-, {\mathcal L}_{0}} \hspace{0.1 cm} = \hspace{0.1 cm}
\left( \begin{array}{clcr} \Imm \nabla^{Y} \oplus {\mathcal K} \\ \Imm \nabla^{Y} \oplus {\mathcal K} \end{array} \right), \qquad
\Imm {\mathcal P}_{+, {\mathcal L}_{1}} \hspace{0.1 cm} = \hspace{0.1 cm}
\left( \begin{array}{clcr} \Imm \Gamma^{Y} \nabla^{Y} \Gamma^{Y} \oplus \Gamma^{Y} {\mathcal K} \\
\Imm \Gamma^{Y} \nabla^{Y} \Gamma^{Y} \oplus \Gamma^{Y} {\mathcal K} \end{array} \right) .
\end{equation}

\noindent
Then ${\mathcal P}_{-, {\mathcal L}_{0}}$, ${\mathcal P}_{+, {\mathcal L}_{1}}$ are pseudodifferential operators and give well-posed
boundary conditions for $\B$ and the refined analytic torsion.
We denote by $\B_{{\mathcal P}_{-, {\mathcal L}_{0}}}$ and $\B^{2}_{q, {\mathcal P}_{-, {\mathcal L}_{0}}}$ the realizations of $\B$ and $\B^{2}_{q}$
with respect to ${\mathcal P}_{-, {\mathcal L}_{0}}$, {\it i.e.}

\begin{eqnarray}\label{E:2.17}
\Dom \left( \B_{{\mathcal P}_{-, {\mathcal L}_{0}}} \right) & = &
\left\{ \psi \in \Omega^{\bullet}(M, E) \mid {\mathcal P}_{-, {\mathcal L}_{0}} \left( \psi|_{Y} \right) = 0 \right\},  \nonumber\\
\Dom \left( \B^{2}_{q, {\mathcal P}_{-, {\mathcal L}_{0}}} \right) & = &
\left\{ \psi \in \Omega^{q}(M, E) \mid {\mathcal P}_{-, {\mathcal L}_{0}} \left( \psi|_{Y} \right) = 0, \hspace{0.1 cm}
{\mathcal P}_{-, {\mathcal L}_{0}} \left( (\B \psi)|_{Y} \right) = 0 \right\}.
\end{eqnarray}

\noindent
We define $\B_{{\mathcal P}_{+, {\mathcal L}_{1}}}$, $\B^{2}_{q, {\mathcal P}_{+, {\mathcal L}_{1}}}$, $\B^{2}_{q, \Abs}$, $\B^{2}_{q, \rel}$
and $\B_{\Pi_{>}}$, $\B_{\Pi_{<}}$ (see Section 3) in the similar way.
The following result is straightforward (Lemma 2.11 in [11]).

\vspace{0.2 cm}

\begin{lemma}  \label{Lemma:1.2}
$$
\Ker \B^{2}_{q, {\mathcal P}_{-, {\mathcal L}_{0}}} = \ker \B^{2}_{q, \rel} = H^{q}(M, Y ; E), \qquad
\Ker \B^{2}_{q, {\mathcal P}_{+, {\mathcal L}_{1}}} = \ker \B^{2}_{q, \Abs} = H^{q}(M ; E).
$$
\end{lemma}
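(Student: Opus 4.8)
The plan is to identify each realization with a classical Hodge Laplacian and then read off its kernel. Since $\Gamma^{2}=\Id$ and $\nabla^{2}=0$, on writing $\delta:=\Gamma\nabla\Gamma$ one gets $\B^{2}=\nabla\delta+\delta\nabla$, and Lemma~\ref{Lemma:1.1}(1),(2) shows that $\delta$ is the formal adjoint of $\nabla$ modulo a boundary term; thus $\B^{2}$ acts degreewise and is the Hodge--de Rham Laplacian. Granting this, the equalities $\ker\B^{2}_{q,\rel}=H^{q}(M,Y;E)$ and $\ker\B^{2}_{q,\Abs}=H^{q}(M;E)$ are the classical Hodge theory on a manifold with boundary, which I would simply cite; so the substance is to prove $\Ker\B^{2}_{q,{\mathcal P}_{-,{\mathcal L}_{0}}}=\ker\B^{2}_{q,\rel}$ and $\Ker\B^{2}_{q,{\mathcal P}_{+,{\mathcal L}_{1}}}=\ker\B^{2}_{q,\Abs}$. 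These two are interchanged by the symmetry $\nabla^{Y}\leftrightarrow\Gamma^{Y}\nabla^{Y}\Gamma^{Y}$, ${\mathcal K}\leftrightarrow\Gamma^{Y}{\mathcal K}$, tangential $\leftrightarrow$ normal, so I only discuss the first.

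The first step is to show that $\B^{2}\psi=0$ together with the domain conditions \eqref{E:2.17} forces $\B\psi=0$ and in fact $\nabla\psi=\delta\psi=0$. Since $m$ is odd, $\B$ preserves the even/odd decomposition, so $\B\psi$ and $\psi$ lie in the same parity class and Lemma~\ref{Lemma:1.1}(3) applies to the pair $(\B\psi,\psi)$, giving $-\langle\B\psi,\B\psi\rangle_{M}=\langle(\B\psi)|_{Y},{\mathcal \gamma}(\psi|_{Y})\rangle_{Y}$. By \eqref{E:2.17} both $\psi|_{Y}$ and $(\B\psi)|_{Y}$ lie in $\Imm(\Id-{\mathcal P}_{-,{\mathcal L}_{0}})$, which is isotropic for the pairing $(a,b)\mapsto\langle a,{\mathcal \gamma}b\rangle_{Y}$ --- this is the well-posedness of ${\mathcal P}_{-,{\mathcal L}_{0}}$ recorded after \eqref{E:1.112} and proved in [11] --- so the right-hand side vanishes and $\B\psi=0$. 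Because $\psi$ is of pure degree $q$, the summands $\Gamma\nabla\psi\in\Omega^{m-q-1}(M,E)$ and $\nabla\Gamma\psi\in\Omega^{m-q+1}(M,E)$ of $\B\psi$ sit in distinct degrees, so each vanishes; applying $\Gamma$ gives $\nabla\psi=0$ and $\delta\psi=0$, i.e. $\psi$ is a harmonic field.

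The second step is to match the $({\mathcal P}_{-,{\mathcal L}_{0}})$-condition with the relative one on harmonic fields. For a harmonic field $\psi$ the normal form \eqref{E:1.12} near $Y$, together with \eqref{E:1.13}, \eqref{E:2.47} and \eqref{E:1.144}, puts the tangential part of $\psi|_{Y}$ in $\Imm\nabla^{Y}\oplus{\mathcal K}$ and the normal part in $\Imm\Gamma^{Y}\nabla^{Y}\Gamma^{Y}\oplus\Gamma^{Y}{\mathcal K}$. Since $\Imm{\mathcal P}_{-,{\mathcal L}_{0}}$ in \eqref{E:1.112} is a product subspace, ${\mathcal P}_{-,{\mathcal L}_{0}}$ is block-diagonal with respect to \eqref{E:1.3} and acts as the identity on the tangential component and as $0$ on the normal component of such a $\psi|_{Y}$; hence ${\mathcal P}_{-,{\mathcal L}_{0}}(\psi|_{Y})=0$ is equivalent to $\psi_{\Tan}|_{Y}=0$, the relative boundary condition, while ${\mathcal P}_{-,{\mathcal L}_{0}}((\B\psi)|_{Y})=0$ holds automatically because $\B\psi=0$. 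Reading this backward, every relative harmonic field lies in $\Dom(\B^{2}_{q,{\mathcal P}_{-,{\mathcal L}_{0}}})$, hence in $\Ker\B^{2}_{q,{\mathcal P}_{-,{\mathcal L}_{0}}}$. Combined with the first step and the classical fact that the relative harmonic fields make up $\ker\B^{2}_{q,\rel}\cong H^{q}(M,Y;E)$, this yields $\Ker\B^{2}_{q,{\mathcal P}_{-,{\mathcal L}_{0}}}=\ker\B^{2}_{q,\rel}=H^{q}(M,Y;E)$, and the second identity follows by the mirror argument.

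The step I expect to be the main obstacle is the boundary-term analysis in the first step --- confirming that $\Imm(\Id-{\mathcal P}_{-,{\mathcal L}_{0}})$ is genuinely isotropic for the ${\mathcal \gamma}$-pairing, so that Green's formula really forces $\B\psi=0$ --- together with keeping the decomposition of the normal form \eqref{E:1.12} into its $\nabla^{Y}$-exact, $\Gamma^{Y}\nabla^{Y}\Gamma^{Y}$-exact and harmonic parts consistent with the definitions \eqref{E:1.13}, \eqref{E:2.47} of ${\mathcal K}$ and $\Gamma^{Y}{\mathcal K}$. The remaining ingredients --- the degree count for $\B\psi=0\Rightarrow\nabla\psi=\delta\psi=0$, and the classical Hodge identification of $\ker\B^{2}_{q,\rel}$ and $\ker\B^{2}_{q,\Abs}$ --- are routine or citable.
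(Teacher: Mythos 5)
Your argument is sound, but there is nothing in the paper to compare it against: the paper offers no proof of Lemma \ref{Lemma:1.2}, quoting it as ``straightforward (Lemma 2.11 in [11])''. Taken as a self-contained reconstruction within this paper's standing hypotheses, your route works: Green's formula (Lemma \ref{Lemma:1.1}(3)) applied to the pair $(\B\psi,\psi)$, the isotropy of $\Ker{\mathcal P}_{-,{\mathcal L}_{0}}$ for the pairing $\langle\,\cdot\,,{\mathcal \gamma}\,\cdot\,\rangle_{Y}$, the degree count forcing $\nabla\psi=\Gamma\nabla\Gamma\psi=0$, and then the normal form \eqref{E:1.12} together with \eqref{E:1.13}, \eqref{E:2.47}, \eqref{E:1.144} to show that on harmonic fields the ${\mathcal P}_{-,{\mathcal L}_{0}}$-condition is exactly the relative condition (the companion condition on $(\B\psi)|_{Y}$, resp.\ on $(\Gamma\nabla\Gamma\psi)|_{Y}$, being automatic). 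Two remarks. First, the isotropy you single out as the main obstacle needs no appeal to [11]: since ${\mathcal P}_{-,{\mathcal L}_{0}}$ is an orthogonal projection, $\Ker{\mathcal P}_{-,{\mathcal L}_{0}}$ has both components in $\Imm\Gamma^{Y}\nabla^{Y}\Gamma^{Y}\oplus\Gamma^{Y}{\mathcal K}$, and $\Gamma^{Y}$ (hence ${\mathcal \gamma}=-i\beta\Gamma^{Y}\,\Id$) carries this space into $\Imm\nabla^{Y}\oplus{\mathcal K}$, which is orthogonal to it by the Hodge decomposition on $Y$ and \eqref{E:1.144}; so the boundary term in Lemma \ref{Lemma:1.1}(3) vanishes termwise and $\B\psi=0$ follows, as does the block-diagonal description of ${\mathcal P}_{-,{\mathcal L}_{0}}$ you use in the second step. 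Second, be explicit that your Hodge-theoretic shortcut uses the paper's standing assumption that $\nabla$ is Hermitian: it is what makes $\Gamma\nabla\Gamma$ the formal adjoint of $\nabla$ (so $\B^{2}$ is the Hodge Laplacian and relative/absolute harmonic fields represent $H^{q}(M,Y;E)$ and $H^{q}(M;E)$) and what legitimizes the positivity argument $\|\B\psi\|^{2}=0\Rightarrow\B\psi=0$; within this paper that hypothesis is in force throughout, so the proof is complete as an argument for the lemma as stated here.
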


\vspace{0.2 cm}

We choose an Agmon angle $\theta$ by $- \frac{\pi}{2} < \theta < 0$.
For ${\frak D} = {\mathcal P}_{-, {\mathcal L}_{0}}$ or ${\mathcal P}_{+, {\mathcal L}_{1}}$, we define the zeta function $\zeta_{\B^{2}_{q, {\frak D}}}(s)$
and eta function $\eta_{\B_{\even, {\frak D}}}(s)$ by

\begin{eqnarray*}
\zeta_{\B^{2}_{q, {\frak D}}}(s) & = & \frac{1}{\Gamma(s)} \int_{0}^{\infty} t^{s-1}
\left( \Tr e^{- t \B^{2}_{q, {\frak D}}} - \Dim \Ker \B^{2}_{q, {\frak D}} \right) dt \hspace{0.1 cm}
= \sum_{0 \neq \lambda_{j} \in \Spec(\B^{2}_{q, {\frak D}})} \lambda_{j}^{-s} \\
\eta_{\B_{\even, {\frak D}}}(s) & = & \frac{1}{\Gamma(\frac{s+1}{2})} \int_{0}^{\infty} t^{\frac{s-1}{2}}
\Tr \left( \B e^{- t \B^{2}_{\even, {\frak D}}} \right) dt \hspace{0.1 cm} =
\sum_{0 \neq \lambda_{j} \in \Spec(\B_{\even, {\frak D}})} \Sign(\lambda_{j}) |\lambda_{j}|^{-s} .
\end{eqnarray*}

\noindent
It was shown in [11] that $\zeta_{\B^{2}_{q, {\frak D}}}(s)$ and $\eta_{\B_{\even, {\frak D}}}(s)$ have regular values at $s=0$.
We define the zeta-determinant and eta-invariant by

\begin{eqnarray}
\log \Det_{2\theta} \B^{2}_{q, {\frak D}} & := & - \zeta_{\B^{2}_{q, {\frak D}}}^{\prime}(0),  \label{E:1.15} \\
\eta(\B_{\even, {\frak D}}) & := & \frac{1}{2} \left( \eta_{\B_{\even, {\frak D}}}(0) + \Dim \Ker \B_{\even, {\frak D}} \right).   \label{E:1.16}
\end{eqnarray}

\vspace{0.2 cm}

\noindent
We denote

\begin{eqnarray}  \label{E:1.188}
& & \Omega^{q}_{-}(M, E) = \Imm \nabla \cap \Omega^{q}(M, E), \qquad
\Omega^{q}_{+}(M, E) = \Imm \Gamma \nabla \Gamma \cap \Omega^{q}(M, E),   \nonumber  \\
& & \Omega^{\even}_{\pm}(M, E) = \sum_{q = \even} \Omega^{q}_{\pm}(M, E),
\end{eqnarray}

\noindent
and denote by $\B_{\even}^{\pm}$ the restriction of $\hspace{0.1 cm} \B_{\even} \hspace{0.1 cm}$ to $\hspace{0.1 cm} \Omega^{\even}_{\pm}(M, E)$.
The graded zeta-determinant $\Det_{\gr, \theta} (\B_{\even, {\frak D}})$ of $\B_{\even}$ with respect to the boundary condition ${\frak D}$
is defined by

$$
\Det_{\gr, \theta} (\B_{\even, {\frak D}}) =  \frac{\Det_{\theta} \B^{+}_{\even, {\frak D}}}
{\Det_{\theta} \left( - \B^{-}_{\even, {\frak D}} \right)} .
$$

\vspace{0.2 cm}

We next define the projections ${\widetilde {\mathcal P}}_{0}$,
${\widetilde {\mathcal P}}_{1} : \Omega^{\bullet}(Y, E|_{Y}) \oplus \Omega^{\bullet}(Y, E|_{Y}) \rightarrow
\Omega^{\bullet}(Y, E|_{Y}) \oplus \Omega^{\bullet}(Y, E|_{Y})$ as follows.
For $\phi \in \Omega^{q}(M, E)$

\vspace{0.2 cm}

$$
{\widetilde {\mathcal P}}_{0} (\phi|_{Y}) = \begin{cases} {\mathcal P}_{-, {\mathcal L}_{0}} (\phi|_{Y}) \quad
\text{if} \quad q \quad \text{is} \quad \text{even} \\
{\mathcal P}_{+, {\mathcal L}_{1}} (\phi|_{Y}) \quad \text{if} \quad q \quad \text{is} \quad \text{odd} ,
\end{cases}
\qquad
{\widetilde {\mathcal P}}_{1} (\phi|_{Y}) = \begin{cases} {\mathcal P}_{+, {\mathcal L}_{1}} (\phi|_{Y}) \quad
\text{if} \quad q \quad \text{is} \quad \text{even} \\
{\mathcal P}_{-, {\mathcal L}_{0}} (\phi|_{Y}) \quad \text{if} \quad q \quad \text{is} \quad \text{odd} .
\end{cases}
$$

\vspace{0.2 cm}

\noindent
We denote by

\begin{equation}  \label{E:1.199}
l_{q} := \Dim \Ker \B_{Y, q}^{2}, \qquad l_{q}^{+} := \Dim {\mathcal K} \cap \Ker \B_{Y, q}^{2}, \qquad \text{and} \qquad
l_{q}^{-} := \Dim \Gamma^{Y} {\mathcal K} \cap  \Ker \B_{Y, q}^{2},
\end{equation}

\vspace{0.2 cm}

\noindent
so that $l_{q} = l_{q}^{+} + l_{q}^{-}$ and $l_{q}^{-} = l_{m-1-q}^{+}$.
Simple computation shows that $\log \Det_{\gr, \theta} (\B_{\even, {\mathcal P}_{-, {\mathcal L}_{0}}})$ and
$\log \Det_{\gr, \theta} (\B_{\even, {\mathcal P}_{+, {\mathcal L}_{1}}})$ are described as follows ([11]).

\begin{eqnarray}
(1) \label{E:1.17}
\hspace{0.2 cm} \log \Det_{\gr, \theta} (\B_{\even, {\mathcal P}_{-, {\mathcal L}_{0}}}) & = &
\frac{1}{2} \sum_{q=0}^{m} (-1)^{q+1} \cdot q \cdot \log \Det_{2\theta} \B^{2}_{q, {\widetilde {\mathcal P}_{0}}}
-  i \pi \hspace{0.1 cm} \eta(\B_{\even, {\mathcal P}_{-, {\mathcal L}_{0}}}) \nonumber \\
& + & \frac{\pi i }{2} \left( \frac{1}{4} \sum_{q=0}^{m-1} \zeta_{\B_{Y, q}^{2}}(0) + \sum_{q=0}^{r-2}(r-1-q) (l_{q}^{+} - l_{q}^{-})  \right). \\
(2) \label{E:1.18}
\hspace{0.2 cm}  \log \Det_{\gr, \theta} (\B_{\even, {\mathcal P}_{+, {\mathcal L}_{1}}})  & = &
\frac{1}{2} \sum_{q=0}^{m} (-1)^{q+1} \cdot q \cdot \log \Det_{2\theta} \B^{2}_{q, {\widetilde {\mathcal P}_{1}}}
- i \pi \hspace{0.1 cm} \eta(\B_{\even, {\mathcal P}_{+, {\mathcal L}_{1}}}) \nonumber \\
& - & \frac{\pi i }{2} \left( \frac{1}{4} \sum_{q=0}^{m-1} \zeta_{\B_{Y, q}^{2}}(0)  +  \sum_{q=0}^{r-2}(r-1-q) (l_{q}^{+} - l_{q}^{-}) \right).
\end{eqnarray}

\vspace{0.2 cm}

To define the refined analytic torsion we introduce the trivial connection $\nabla^{\trivial}$ acting on the trivial line bundle $M \times {\Bbb C}$
and define the corresponding odd signature operator $\B^{\trivial}_{\even} : \Omega^{\even}(M, {\Bbb C}) \rightarrow \Omega^{\even}(M, {\Bbb C})$
in the same way as
(\ref{E:1.2}). The eta invariant $\eta ( \B^{\trivial}_{\even, {\mathcal P}_{-, {\mathcal L}_{0}}/{\mathcal P}_{+, {\mathcal L}_{1}} } )$
associated to $\B^{\trivial}_{\even}$
and subject to the boundary condition ${\mathcal P}_{-, {\mathcal L}_{0}}/{\mathcal P}_{+, {\mathcal L}_{1}}$  is defined in the same way
as in (\ref{E:1.16}) by simply replacing $\B_{\even, {\mathcal P}_{-, {\mathcal L}_{0}}/{\mathcal P}_{+, {\mathcal L}_{1}} }$
by $\B^{\trivial}_{\even, {\mathcal P}_{-, {\mathcal L}_{0}}/{\mathcal P}_{+, {\mathcal L}_{1}} }$.
When $\nabla$ is acyclic in the de Rham complex,
the refined analytic torsion subject to the boundary condition ${\mathcal P}_{-, {\mathcal L}_{0}}/{\mathcal P}_{+, {\mathcal L}_{1}}$ is defined
by

\begin{eqnarray}
\log \rho_{\an, {\mathcal P}_{-, {\mathcal L}_{0}}} (g^{M}, \nabla) & = & \log \Det_{\gr, \theta} (\B_{\even, {\mathcal P}_{-, {\mathcal L}_{0}}}) +
\frac{\pi i}{2} (\rank E) \eta_{ \B^{\trivial}_{\even, {\mathcal P}_{-, {\mathcal L}_{0}}} } (0)  \\
\log \rho_{\an, {\mathcal P}_{+, {\mathcal L}_{1}}} (g^{M}, \nabla) & = & \log \Det_{\gr, \theta} (\B_{\even, {\mathcal P}_{+, {\mathcal L}_{1}}}) +
\frac{\pi i}{2} (\rank E) \eta_{ \B^{\trivial}_{\even, {\mathcal P}_{+, {\mathcal L}_{1}}} } (0)
\end{eqnarray}

\vspace{0.2 cm}

\noindent
The refined analytic torsion on a closed manifold is defined similarly.

On the other hand, B. Vertman also discussed the refined analytic torsion on a compact manifold with boundary
in a different way. He used the minimal and maximal extensions
of a flat connection, which will be explained briefly in Section 4. In this paper we are going to compare
$\rho_{\an, {\mathcal P}_{-, {\mathcal L}_{0}}} (g^{M}, \nabla)$ and $\rho_{\an, {\mathcal P}_{+, {\mathcal L}_{1}}} (g^{M}, \nabla)$ with
the refined analytic torsion constructed by Vertman when the odd signature operator comes from an acyclic Hermitian connection.
For this purpose in the next two sections we are going to compare the Ray-Singer analytic torsion and eta invariant subject to the boundary
condition ${\mathcal P}_{-, {\mathcal L}_{0}}$ and ${\mathcal P}_{+, {\mathcal L}_{1}}$ with those subject to the relative and absolute boundary conditions,
respectively.

\vspace{0.3 cm}

\section{Comparison of the Ray-Singer analytic torsions}

\vspace{0.2 cm}

In this section we are going to compare the Ray-Singer analytic torsion subject to the boundary condition ${\mathcal P}_{-, {\mathcal L}_{0}}$
 and ${\mathcal P}_{+, {\mathcal L}_{1}}$
with the Ray-Singer analytic torsion subject to the relative and absolute boundary conditions. For this purpose we are going to use the BFK-gluing formula and
the method of the adiabatic limit for stretching the cylinder part.
In this section we do not assume the vanishing of de Rham cohomologies. We only assume that the metric is a product one near boundary.
We recall that $(M, g^{M})$ is a compact oriented Riemannian manifold with boundary $Y$
with a collar neighbrhood $N = [0, 1) \times Y$ and $g^{M}$ is assumed to be a product metric on $N$.
We denote by $M_{1, 1} = [0, 1] \times Y$ and $M_{2} = M - N$.
To use the adiabatic limit we stretch the cylinder part $M_{1, 1}$ to the cylinder of length $r$.
We denote $M_{1, r} = [0, r] \times Y$ with the product metric and

$$
M_{r} = M_{1, r} \cup_{Y_{r}} M_{2} \quad \text{with} \hspace{0.2 cm} Y_{r} = \{ r \} \times Y.
$$

\noindent
Then we can extend the bundle $E$ and
the odd signature operator $\B$ on $M$ to $M_{r}$ in the natural way and we denote these extensions
by $E_{r}$ and $\B(r)$ ($\B = \B(1)$). We denote the restriction of $\B(r)$ to $M_{1, r}$, $M_{2}$ by $\B_{M_{1, r}}$, $\B_{M_{2}}$.
It is well known (cf. [2], [13]) that the Dirichlet boundary value problem for $\B_{q}^{2}$ on $M_{2}$ has a unique
solution, {\it i.e.} for $f + dx \wedge g \in \Omega^{q}(M_{2}, E|_{M_{2}})|_{Y_{r}}$,
there exists a unique $\psi \in \Omega^{q}(M_{2}, E|_{M_{2}})$
such that

$$
\B_{q}^{2}\psi = 0, \qquad \psi|_{Y_{r}} = f + dx \wedge g.
$$

\noindent
Let ${\frak D}$ be one of the following boundary conditions :
${\mathcal P}_{-, {\mathcal L}_{0}}$, ${\mathcal P}_{+, {\mathcal L}_{1}}$,
the absolute boundary condition, the relative boundary condition or the Dirichlet boundary condition.
We define the Neumann jump operators $Q_{q, 1, {\frak D}}(r), \hspace{0.1 cm} Q_{q, 2} $ and the Dirichlet-to-Neumann operator
$R_{q, {\frak D}}(r)$

$$
Q_{q, 1, {\frak D}}(r), \hspace{0.1 cm} Q_{q, 2}, \hspace{0.1 cm} R_{q, {\frak D}}(r)
: \Omega^{q}(Y_{r}, E|_{Y_{r}}) \oplus \Omega^{q-1}(Y_{r}, E|_{Y_{r}}) \rightarrow
\Omega^{q}(Y_{r}, E|_{Y_{r}}) \oplus \Omega^{q-1}(Y_{r}, E|_{Y_{r}})
$$

\vspace{0.2 cm}

\noindent
as follows.
For $\left( \begin{array}{clcr} f \\ g \end{array} \right) \in \Omega^{q}(Y_{r}, E|_{Y_{r}}) \oplus \Omega^{q-1}(Y_{r}, E|_{Y_{r}})$, we choose
$\phi \in \Omega^{q}(M_{1, r}, E|_{M_{1, r}})$ and $\psi \in \Omega^{q}(M_{2}, E|_{M_{2}})$ such that

\begin{equation}  \label{E:2.1}
\B_{q, M_{1, r}}^{2}\phi = 0, \qquad \B_{q, M_{2}}^{2} \psi = 0,  \qquad \phi|_{Y_{r}} = \psi|_{Y_{r}} = f + dx \wedge g,
\qquad {\frak D}(\phi|_{Y_{0}}) = 0.
\end{equation}

\noindent
Then we define
\begin{eqnarray} \label{E:2.2}
Q_{q, 1, {\frak D}}(r) (f) \hspace{0.1 cm} = \hspace{0.1 cm} (\nabla_{\partial_{x}} \phi)|_{Y_{r}}, \qquad
Q_{q, 2}(f) \hspace{0.1 cm} = \hspace{0.1 cm} - \hspace{0.1 cm} (\nabla_{\partial_{x}} \psi)|_{Y_{r}}, \qquad
R_{q, {\frak D}} (r) \hspace{0.1 cm} = \hspace{0.1 cm} Q_{q, 1, {\frak D}}(r) + Q_{q, 2},
\end{eqnarray}

\noindent
where $\partial_{x}$ is the inward unit normal vector field on $N \subset M$.

\vspace{0.2 cm}

We denote by $\B^{2}_{q, M_{1, r}, {\frak D}, D}$ ($\B^{2}_{q, M_{2}, D}$)
the restriction of $\B^{2}_{q}(r)$ to $M_{1, r}$
($M_{2}$) subject to the boundary condition ${\frak D}$ on $Y_{0}$ and the Dirichlet boundary condition on $Y_{r}$
(the Dirichlet boundary condition on $Y_{r}$).
We denote by $\B^{2}_{q, {\frak D}}(r)$ the operator $\B_{q}^{2}(r)$ on $M_{r}$ subject to the boundary condition ${\frak D}$
on $Y_{0}$.
The following lemma is well known (cf. [15]).

\begin{lemma} \label{Lemma:2.1}
(1) $R_{q, \frak D} (r)$ is a non-negative elliptic pseudodifferential operator of order $1$ and has the form of

\begin{equation} \label{E:2.3}
R_{q, \frak D}(r) \hspace{0.1 cm} = \hspace{0.1 cm} 2 \hspace{0.1 cm} | {\mathcal A} |  \hspace{0.1 cm} +
\hspace{0.1 cm} \text{ a smoothing operator}, \qquad
|{\mathcal A}| \hspace{0.1 cm} = \hspace{0.1 cm} \left( \begin{array}{clcr} \sqrt{\B_{Y, q}^{2}} & 0 \\ 0 & \sqrt{\B_{Y, q-1}^{2}} \end{array} \right).
\end{equation}

\noindent
(2) $\ker R_{q, \frak D}(r) = \{ \phi|_{Y_{r}} \mid \phi \in \Ker \B_{q, {\frak D}}^{2}(r) \}$.

\end{lemma}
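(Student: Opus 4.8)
The plan is to establish the two assertions of Lemma \ref{Lemma:2.1} by reducing the analysis to the model situation on the cylinder $M_{1,r}$, where everything can be computed explicitly, and then absorbing all contributions from $M_2$ and from the boundary conditions into smoothing operators. For part (1), I would first recall from \eqref{E:1.10}--\eqref{E:1.11} that on the collar $N$ (and hence on $M_{1,r}$) we have $\B_q^2 = -\partial_x^2 + \B_{Y}^2$ acting diagonally, so a solution $\phi$ of $\B_{q,M_{1,r}}^2\phi = 0$ with $\phi|_{Y_r} = f + dx\wedge g$ is, up to the finite-dimensional kernel of $\B_Y^2$, of the form $e^{-(r-x)|{\mathcal A}|}(f + dx\wedge g)$ plus a term involving $e^{-x|{\mathcal A}|}$ fixed by the boundary condition ${\frak D}$ on $Y_0$. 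Differentiating in $x$ and restricting to $Y_r$ gives $Q_{q,1,{\frak D}}(r) = |{\mathcal A}| + (\text{smoothing})$: the correction term coming from the ${\frak D}$-boundary data at $Y_0$ carries a factor $e^{-r|{\mathcal A}|}$, which is smoothing (and in fact exponentially small in $r$), and on the finite-dimensional kernel ${\mathcal H}^\bullet(Y,E|_Y)$ the solution is linear in $x$, contributing only a finite-rank hence smoothing operator. The same computation on $M_2$ — using the well-posedness of the Dirichlet problem for $\B_{q,M_2}^2$ quoted just before the lemma, together with the product structure near $Y_r$ — shows $Q_{q,2} = |{\mathcal A}| + (\text{smoothing})$, and adding the two yields \eqref{E:2.3}. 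Non-negativity of $R_{q,{\frak D}}(r)$ follows from the Green formula (Lemma \ref{Lemma:1.1}(2)): for $\phi$ as in \eqref{E:2.1},
\begin{equation*}
\langle R_{q,{\frak D}}(r)(f+dx\wedge g),\, f+dx\wedge g\rangle_{Y_r}
 = \|\nabla\phi\|^2_{M_{1,r}} + \|\nabla\phi\|^2_{M_2} + \|\Gamma\nabla\Gamma\phi\|^2_{\cdots} \ge 0,
\end{equation*}
where the boundary term on $Y_0$ drops out because ${\frak D}(\phi|_{Y_0}) = 0$ and ${\frak D}$ is one of the self-adjoint well-posed projections listed; ellipticity and order $1$ are then immediate from \eqref{E:2.3}.

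For part (2), the inclusion $\supseteq$ is easy: if $\phi \in \Ker\B_{q,{\frak D}}^2(r)$ on all of $M_r$, then its restrictions to $M_{1,r}$ and $M_2$ are both harmonic with matching Cauchy data along $Y_r$ and with ${\frak D}(\phi|_{Y_0}) = 0$, so in the definition \eqref{E:2.2} the interior normal derivatives from the two sides cancel and $R_{q,{\frak D}}(r)(\phi|_{Y_r}) = 0$. For the reverse inclusion, given $f + dx\wedge g \in \ker R_{q,{\frak D}}(r)$, choose $\phi$ on $M_{1,r}$ and $\psi$ on $M_2$ as in \eqref{E:2.1}; the condition $Q_{q,1,{\frak D}}(r) + Q_{q,2} = 0$ means exactly that $(\nabla_{\partial_x}\phi)|_{Y_r} = (\nabla_{\partial_x}\psi)|_{Y_r}$, so the function obtained by gluing $\phi$ and $\psi$ along $Y_r$ has matching zeroth- and first-order normal data there. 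By unique continuation / elliptic regularity for the second-order operator $\B_q^2$ this glued section is smooth across $Y_r$, solves $\B_q^2(r)\cdot = 0$ on $M_r$, and satisfies ${\frak D}$ on $Y_0$ — that is, it lies in $\Ker\B_{q,{\frak D}}^2(r)$ and restricts to the given $f + dx\wedge g$ on $Y_r$.

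The main obstacle is the bookkeeping around the finite-dimensional space ${\mathcal H}^\bullet(Y,E|_Y) = \Ker\B_Y^2$, on which $|{\mathcal A}|$ is zero and the "$e^{-x|{\mathcal A}|}$" ansatz degenerates into affine-linear solutions in $x$: one must check that the resulting contributions to $Q_{q,1,{\frak D}}(r)$ and $Q_{q,2}$ really do combine into a finite-rank (hence smoothing) operator rather than spoiling the leading symbol $2|{\mathcal A}|$, and that they are compatible with the decompositions \eqref{E:1.144}, \eqref{E:1.112} defining ${\frak D}$. A second, more technical point is justifying the smoothness of the glued section in part (2): matching of $C^0$ and the normal derivative across $Y_r$ is not a priori enough for a general transmission problem, but here the product structure of the metric and connection near $Y_r$ together with $\B_q^2 = -\partial_x^2 + \B_{Y,q}^2$ reduce it to a one-variable ODE argument, after which interior elliptic regularity on $M_r$ upgrades the section to $C^\infty$. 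The exponential decay in $r$ of the smoothing remainders, while not needed for the statement as phrased, is what makes this lemma the workhorse for the adiabatic argument later, so I would record it along the way.
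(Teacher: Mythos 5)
The paper itself contains no proof of this lemma: it is quoted as ``well known (cf.\ [15])'', so your argument can only be measured against that standard route, which is indeed the one you follow. Your cylinder computation (explicit $e^{\pm x|\mathcal A|}$ solutions, exponentially small corrections from the ${\frak D}$-condition at $Y_0$, affine-linear solutions on $\ker\B_Y^2$ giving finite-rank terms) is exactly what resurfaces as the explicit ${\mathcal K}_{q,{\frak D}}(r)$ in Lemma \ref{Lemma:2.8}, and your part (2) is correct: matching Dirichlet and Neumann data across $Y_r$ already makes the glued section a distributional solution of $\B_q^2u=0$ (no single- or double-layer terms), and interior elliptic regularity gives smoothness, so your worry about a ``general transmission problem'' is unfounded. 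Note, however, that your fallback ODE argument there would not be available anyway, since the metric is product only on the collar side of $Y_r$, not inside $M_2$.

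The one genuine gap is the treatment of $Q_{q,2}$. You assert that ``the same computation'' on $M_2$ gives $Q_{q,2}=|\mathcal A|+\text{smoothing}$, but on $M_2$ there is no separation-of-variables solution, so there is no computation to repeat; this identity is the analytic heart of the lemma, and without it (\ref{E:2.3}) is only established modulo pseudodifferential errors of order $0$, not modulo smoothing operators. What is needed is the Calder\'on-projector (Seeley) symbol calculus: the full symbol of the Dirichlet-to-Neumann operator of $M_2$ at $Y_r$ is determined by the jets of the coefficients of $\B^2$ at $Y_r$, and since the metric is product on the collar and smooth on $M$, those jets agree to infinite order with those of $-\nabla_{\partial_x}^2+\B_Y^2$; hence $Q_{q,2}$ has the same full symbol as the model half-cylinder Dirichlet-to-Neumann map $|\mathcal A|$ and differs from it by a smoothing operator (finite-rank corrections on $\ker\B_Y^2$ included). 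Either supply this argument or cite [7]/[15], where it is proved. Two smaller points: your non-negativity display is garbled -- the clean statement is $\langle R_{q,{\frak D}}(r)F,F\rangle_{Y_r}=\|\B\phi\|^2_{M_{1,r}}+\|\B\psi\|^2_{M_2}$, the tangential $\langle{\mathcal A}F,F\rangle_{Y_r}$ contributions of the two sides cancelling and the $Y_0$ term vanishing because ${\frak D}$ (together with ${\frak D}((\B\phi)|_{Y_0})=0$, implicit in (\ref{E:2.1})) is a self-adjoint well-posed condition; and ``unique continuation'' is not what is used in your part (2) -- it is what underlies the solvability of the Dirichlet problems quoted before the lemma.
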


\vspace{0.2 cm}

\noindent
Lemma \ref{Lemma:1.2} shows that $\Dim \Ker \B^{2}_{q, {\frak D}}(r)$ is a topological invariant.
Let $\Dim \Ker \B^{2}_{q, {\frak D}}(r) = k$ and
$\{\varphi_{1}, \cdots, \varphi_{k} \}$ be an orthonormal basis of $\Ker \B^{2}_{q, {\frak D}}(r)$.
We define a positive definite $k \times k$ Hermitian matrix $A_{q, {\frak D}}(r)$ by
$$
A_{q, {\frak D}}(r) = (a_{ij}), \qquad a_{ij} = \langle \varphi_{i}|_{Y_{0}}, \varphi_{j}|_{Y_{0}} \rangle_{Y_{0}}.
$$
Then the BFK-gluing formula ([7], [15], [16]) is described as follows.
Setting $l_{q} = \Dim \Ker \B_{Y, q}^{2}$, we have

\begin{eqnarray}  \label{E:2.333}
\log \Det_{2 \theta} \B^{2}_{q, {\frak D}}(r)
& = & \log \Det_{2 \theta} \B^{2}_{q, M_{1, r}, {\frak D}, D} + \log \Det_{2 \theta} \B^{2}_{q, M_{2}, D}
  + \log \Det_{2 \theta} R_{q, {\frak D}}(r) \nonumber  \\
&  & - \log 2 \cdot (\zeta_{\B^{2}_{Y, q}}(0) + \zeta_{\B^{2}_{Y, q-1}}(0) + l_{q} + l_{q-1})  - \log \ddet A_{q, {\frak D}}(r).
\end{eqnarray}

\vspace{0.2 cm}

\noindent
{\it Remark} : (1) Lemma \ref{Lemma:1.2} shows that $A_{q, {\mathcal P}_{-, {\mathcal L}_{0}}}(r) = A_{q, \rel}(r)$ and
$A_{q, {\mathcal P}_{+, {\mathcal L}_{1}}}(r) = A_{q, \Abs}(r)$.   \newline
(2) The BFK-gluing formula was proved originally on a closed manifold in [7]. But it can be extended
to a compact manifold with boundary with only minor modification when a cutting hypersurface does not intersect the boundary.

\vspace{0.2 cm}

\noindent
Lemma 2.3 in [12] shows that

\begin{eqnarray*}
\Spec \left( \B^{2}_{q, M_{1, r}, {\mathcal P}_{-, {\mathcal L}_{0}}, D} \right) \cup \Spec \left( \B^{2}_{q, M_{1, r}, {\mathcal P}_{+, {\mathcal L}_{1}}, D} \right)
\hspace{0.1 cm} = \hspace{0.1 cm}
\Spec \left( \B^{2}_{q, M_{1, r}, \rel, D} \right) \cup \Spec \left( \B^{2}_{q, M_{1, r}, \Abs, D} \right),
\end{eqnarray*}

\vspace{0.2 cm}

\noindent
which together with (\ref{E:2.333}) yields the following result.

\begin{corollary}  \label{Corollary:2.8}
\begin{eqnarray*}
& & \sum_{q=0}^{m} (-1)^{q+1} q \left( \log \Det \B^{2}_{q, {\widetilde {\mathcal P}}_{0}}(r) + \log \Det \B^{2}_{q, {\widetilde {\mathcal P}}_{1}}(r)
- \log \Det \B^{2}_{q, \rel}(r) - \log \Det \B^{2}_{q, \Abs}(r)  \right) \\
& = & \sum_{q=0}^{m} (-1)^{q+1} q \left( \log \Det R_{q, {\mathcal P}_{-, {\mathcal L}_{0}}}(r) + \log \Det R_{q, {\mathcal P}_{+, {\mathcal L}_{1}}}(r) -
 \log \Det R_{q, {\mathcal P}_{\rel}}(r) -  \log \Det R_{q, {\mathcal P}_{\Abs}}(r) \right).
\end{eqnarray*}
\end{corollary}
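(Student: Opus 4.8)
The plan is to reduce Corollary~\ref{Corollary:2.8} to a direct application of the BFK-gluing formula~(\ref{E:2.333}), combined with the spectral identity quoted just above the statement (Lemma~2.3 in [12]). I start from~(\ref{E:2.333}) written out for each of the four boundary conditions appearing in the corollary, namely ${\widetilde{\mathcal P}}_0$, ${\widetilde{\mathcal P}}_1$ on the one hand and the relative and absolute conditions on the other. Recall that ${\widetilde{\mathcal P}}_0$ equals ${\mathcal P}_{-,{\mathcal L}_0}$ in even degrees and ${\mathcal P}_{+,{\mathcal L}_1}$ in odd degrees, and symmetrically for ${\widetilde{\mathcal P}}_1$; since the alternating sum $\sum_{q=0}^{m}(-1)^{q+1}q(\cdots)$ is taken over all $q$, the four quantities $\log\Det R_{q,{\widetilde{\mathcal P}}_0}(r)$, $\log\Det R_{q,{\widetilde{\mathcal P}}_1}(r)$ and $\log\Det R_{q,{\mathcal P}_{-,{\mathcal L}_0}}(r)$, $\log\Det R_{q,{\mathcal P}_{+,{\mathcal L}_1}}(r)$ contribute, after summation, the same total as $\log\Det R_{q,{\mathcal P}_{-,{\mathcal L}_0}}(r)+\log\Det R_{q,{\mathcal P}_{+,{\mathcal L}_1}}(r)$ summed over $q$; likewise for the interior-cylinder determinants $\log\Det \B^2_{q,M_{1,r},{\frak D},D}$. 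I will state this re-indexing carefully so that the passage from the ${\widetilde{\mathcal P}}_i$-notation on the left-hand side to the ${\mathcal P}_{\pm,{\mathcal L}_j}$-notation on the right-hand side is transparent.

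Next I subtract the two instances of~(\ref{E:2.333}) for ${\widetilde{\mathcal P}}_0,{\widetilde{\mathcal P}}_1$ from the two instances for $\rel,\Abs$, take the alternating weighted sum $\sum(-1)^{q+1}q$, and track which terms cancel. The term $\log\Det \B^2_{q,M_2,D}$ does not depend on the boundary condition on $Y_0$ and cancels immediately. The term $-\log 2\cdot(\zeta_{\B^2_{Y,q}}(0)+\zeta_{\B^2_{Y,q-1}}(0)+l_q+l_{q-1})$ is also independent of ${\frak D}$ and cancels. By the first Remark following~(\ref{E:2.333}), $A_{q,{\mathcal P}_{-,{\mathcal L}_0}}(r)=A_{q,\rel}(r)$ and $A_{q,{\mathcal P}_{+,{\mathcal L}_1}}(r)=A_{q,\Abs}(r)$; combined with the degree-parity definition of ${\widetilde{\mathcal P}}_i$ this forces the $\log\ddet A_{q,{\frak D}}(r)$ contributions to cancel in the full alternating sum as well. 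The spectral identity of Lemma~2.3 in [12] gives
\begin{equation*}
\Spec\!\big(\B^2_{q,M_{1,r},{\mathcal P}_{-,{\mathcal L}_0},D}\big)\cup\Spec\!\big(\B^2_{q,M_{1,r},{\mathcal P}_{+,{\mathcal L}_1},D}\big)=\Spec\!\big(\B^2_{q,M_{1,r},\rel,D}\big)\cup\Spec\!\big(\B^2_{q,M_{1,r},\Abs,D}\big),
\end{equation*}
hence the interior-cylinder zeta-determinants satisfy $\log\Det \B^2_{q,M_{1,r},{\mathcal P}_{-,{\mathcal L}_0},D}+\log\Det \B^2_{q,M_{1,r},{\mathcal P}_{+,{\mathcal L}_1},D}=\log\Det \B^2_{q,M_{1,r},\rel,D}+\log\Det \B^2_{q,M_{1,r},\Abs,D}$ for every $q$ (the Agmon angle and any kernel-dimension normalization agree on both sides since the multisets of eigenvalues coincide, using Lemma~\ref{Lemma:1.2} for the kernel counts). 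Feeding the degree-parity bookkeeping through, these interior terms cancel in the alternating sum too. What survives after all these cancellations is precisely the asserted identity between the alternating weighted sums of $\log\Det \B^2_{q,{\frak D}}(r)$ and of $\log\Det R_{q,{\frak D}}(r)$.

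The main technical point I expect to require care is the bookkeeping of the degree-parity swap implicit in the definitions of ${\widetilde{\mathcal P}}_0$ and ${\widetilde{\mathcal P}}_1$: one must verify that, term by term under $\sum_{q=0}^{m}(-1)^{q+1}q$, pairing degree $q$ of ${\widetilde{\mathcal P}}_0$ with the matching parity of ${\mathcal P}_{\pm,{\mathcal L}_j}$ produces exactly the combination on the right-hand side of the corollary, with no stray terms from the endpoints $q=0$ or $q=m$. A secondary subtlety is ensuring that the equality of eigenvalue multisets from Lemma~2.3 in [12] indeed upgrades to equality of the (zeta-regularized) determinants with the chosen Agmon angle $\theta$; this is immediate because the zeta function, and hence its derivative at $s=0$, depends only on the nonzero spectrum with multiplicity, while the zero-eigenvalue counts on both sides agree by Lemma~\ref{Lemma:1.2}. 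Everything else is a mechanical subtraction of four copies of~(\ref{E:2.333}).
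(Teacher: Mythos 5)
Your proposal is correct and takes essentially the same route as the paper: four applications of the BFK formula (\ref{E:2.333}), cancellation of the ${\frak D}$-independent terms ($\log\Det \B^{2}_{q, M_{2}, D}$ and the $\log 2$ term), cancellation of the $\log\ddet A_{q,{\frak D}}(r)$ terms via the Remark, and cancellation of the cylinder determinants via the spectral identity of Lemma 2.3 in [12], with the parity bookkeeping for ${\widetilde {\mathcal P}}_{0}$, ${\widetilde {\mathcal P}}_{1}$ handled as you describe. The only cosmetic slip is invoking Lemma \ref{Lemma:1.2} for the kernel counts of the cylinder operators (that lemma concerns operators on $M$, not on $M_{1,r}$ with mixed boundary conditions), but this is immaterial since equality of the spectral multisets already forces equality of the zeta-determinants.
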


\vspace{0.2 cm}

We next discuss the Dirichlet-to-Neumann operator $R_{q, {\frak D}}(r)$ defined by
$R_{q, {\frak D}}(r) = Q_{q, 1, {\frak D}}(r) +  Q_{q, 2}$, where ${\frak D}$ is one of
${\mathcal P}_{-, {\mathcal L}_{0}}$, ${\mathcal P}_{+, {\mathcal L}_{1}}$, the absolute or the relative boundary condition.
The following lemma is straightforward (Lemma 2.8 in [12]).

\vspace{0.2 cm}

\begin{lemma}  \label{Lemma:2.8}
$$
R_{q, {\frak D}}(r) \hspace{0.1 cm} = \hspace{0.1 cm} Q_{q, 2} + | {\mathcal A} | + {\mathcal K}_{q, {\frak D}}(r),
$$
where

\begin{eqnarray*}
{\mathcal K}_{q, {\mathcal P}_{-, {\mathcal L}_{0}}}(r), \hspace{0.2 cm}  {\mathcal K}_{q, {\mathcal P}_{+, {\mathcal L}_{1}}}(r)  &  = &
 \begin{cases}
\frac{ 2 \sqrt{\B_{Y}^{2}} }{e^{2 \sqrt{\B_{Y}^{2}} r} \hspace{0.1 cm} - \hspace{0.1 cm} 1}, \hspace{0.2 cm}
 - \frac{ 2 \sqrt{\B_{Y}^{2}} }{e^{2 \sqrt{\B_{Y}^{2}} r} \hspace{0.1 cm} + \hspace{0.1 cm} 1}
\hspace{0.4 cm} \text{on} \hspace{0.2 cm}
\Omega^{q}_{-}(Y, E|_{Y}) \oplus \Omega^{q-1}_{-}(Y, E|_{Y}) \\
\hspace{1.0 cm}  \frac{1}{r} \hspace{0.5 cm}, \hspace{1.0 cm} 0 \hspace{1.4 cm} \text{on} \hspace{0.2 cm}
{\mathcal K} \cap \left( \Omega^{q}(Y, E|_{Y}) \oplus \Omega^{q-1}(Y, E|_{Y}) \right)\\
 -  \frac{ 2 \sqrt{\B_{Y}^{2}} }{e^{2 \sqrt{\B_{Y}^{2}} r} \hspace{0.1 cm} + \hspace{0.1 cm} 1}, \hspace{0.2 cm}
 \frac{ 2 \sqrt{\B_{Y}^{2}} }{e^{2 \sqrt{\B_{Y}^{2}} r} \hspace{0.1 cm} - \hspace{0.1 cm} 1}
  \hspace{0.4 cm} \text{on} \hspace{0.2 cm}
 \Omega^{q}_{+}(Y, E|_{Y}) \oplus \Omega^{q-1}_{+}(Y, E|_{Y})\\
\hspace{1.0 cm}  0 \hspace{0.5 cm}  , \hspace{1.0 cm}  \frac{1}{r}  \hspace{1.4 cm} \text{on} \hspace{0.2 cm}
\Gamma^{Y} {\mathcal K} \cap \left( \Omega^{q}(Y, E|_{Y}) \oplus \Omega^{q-1}(Y, E|_{Y}) \right) ,
\end{cases}  \\
{\mathcal K}_{q, \rel}(r), \hspace{0.2 cm} {\mathcal K}_{q, \Abs}(r) &  = &
 \begin{cases}
 \frac{ 2 \sqrt{\B_{Y}^{2}} }{e^{2 \sqrt{\B_{Y}^{2}} r} \hspace{0.1 cm} - \hspace{0.1 cm} 1}, \hspace{0.2 cm}
  - \frac{ 2 \sqrt{\B_{Y}^{2}}}{e^{2 \sqrt{\B_{Y}^{2}} r} \hspace{0.1 cm} + \hspace{0.1 cm} 1}
\hspace{0.4 cm} \text{on} \hspace{0.2 cm} \Omega^{q}_{-}(Y, E|_{Y}) \oplus \Omega^{q}_{+}(Y, E|_{Y})  \\
\hspace{1.0 cm}  \frac{1}{r} \hspace{0.5 cm}  , \hspace{1.0 cm}  0  \hspace{1.4 cm} \text{on} \hspace{0.2 cm} \Ker \B_{Y}^{2} \cap \Omega^{q}(Y, E|_{Y})\\
- \frac{ 2 \sqrt{\B_{Y}^{2}} }{e^{2 \sqrt{\B_{Y}^{2}} r} \hspace{0.1 cm} + \hspace{0.1 cm} 1}, \hspace{0.2 cm}
\frac{ 2 \sqrt{\B_{Y}^{2}}}{e^{2 \sqrt{\B_{Y}^{2}} r} \hspace{0.1 cm} - \hspace{0.1 cm} 1}
  \hspace{0.4 cm} \text{on} \hspace{0.2 cm}  \Omega^{q-1}_{-}(Y, E|_{Y}) \oplus \Omega^{q-1}_{+}(Y, E|_{Y})  \\
\hspace{1.0 cm}  0 \hspace{0.5 cm}  , \hspace{1.0 cm} \frac{1}{r}  \hspace{1.4 cm} \text{on} \hspace{0.2 cm}
\Ker \B_{Y}^{2} \cap \Omega^{q-1}(Y, E|_{Y}) .
\end{cases}
\end{eqnarray*}
\end{lemma}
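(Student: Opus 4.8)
The plan is to compute $R_{q,\frak D}(r)$ explicitly by solving the relevant boundary value problems on the stretched cylinder $M_{1,r}=[0,r]\times Y$. First I would decompose $\Omega^q(Y,E|_Y)\oplus\Omega^{q-1}(Y,E|_Y)$ into the $|{\mathcal A}|$-eigenspace pieces: the positive spectral subspaces of $\B_{Y,q}^2$ and $\B_{Y,q-1}^2$, and the kernel $\Ker\B_Y^2$, which by \eqref{E:1.144} splits into ${\mathcal K}$ and $\Gamma^Y{\mathcal K}$. Since ${\mathcal A}$ anticommutes with $\gamma$ and the bundle and operator are translation-invariant on the cylinder, on each positive eigenspace (eigenvalue $\mu^2>0$ of $\B_Y^2$) the equation $\B_{q,M_{1,r}}^2\phi=0$ reduces to the scalar ODE $-\phi''+\mu^2\phi=0$ on $[0,r]$, whose solution space is spanned by $e^{\pm\mu x}$. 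On the harmonic part one instead solves $-\phi''=0$, with solutions spanned by $1$ and $x$.

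Next I would impose the boundary conditions. By definition \eqref{E:2.2}, $Q_{q,1,\frak D}(r)(f)=(\nabla_{\partial_x}\phi)|_{Y_r}$ where $\phi$ solves $\B_{q,M_{1,r}}^2\phi=0$ with $\phi|_{Y_r}=f$ and $\frak D(\phi|_{Y_0})=0$. The four boundary conditions ${\mathcal P}_{-,{\mathcal L}_0}$, ${\mathcal P}_{+,{\mathcal L}_1}$, relative, absolute each pick out, via \eqref{E:1.112} and the description of $\Omega^q_\pm$ in \eqref{E:1.188}, which of the two degrees of freedom in the $e^{\pm\mu x}$ (resp. $1,x$) solution is killed at $x=0$: either a Dirichlet-type condition on the relevant component or a Neumann-type ($\partial_x$) condition. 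Matching $\phi(r)=f$ and differentiating at $x=r$ gives, on a positive eigenspace, either $\mu\coth(\mu r)$ or $\mu\tanh(\mu r)$ times $f$ (the two cases correspond to which solution $e^{-\mu x}$ or the hyperbolic sine/cosine survives); subtracting the free-cylinder value $|{\mathcal A}|=\mu$ produces exactly $\pm\frac{2\mu}{e^{2\mu r}\mp1}$. On the harmonic part, the condition $\phi(0)=0$ forces $\phi(x)=cx$ so $\partial_x\phi=c=f/r$, giving the $\frac1r$ contribution, while the Neumann-type condition forces $\phi$ constant so $\partial_x\phi=0$. The bookkeeping of which component ($\Omega^q$ versus $\Omega^{q-1}$, equivalently the first versus second slot in \eqref{E:1.3}) carries which sign is dictated by the matrix form of $\gamma$ and $\nabla$ in \eqref{E:1.6}, \eqref{E:1.8}, together with the precise images in \eqref{E:1.112}; for the relative and absolute conditions one uses instead the standard description of $\B_{q,\rel}^2$, $\B_{q,\Abs}^2$ on the cylinder, where the splitting of $\Ker R_{q,\frak D}$ follows the degree-$q$ versus degree-$(q-1)$ harmonic forms rather than the ${\mathcal K}$/$\Gamma^Y{\mathcal K}$ splitting — which is exactly the difference between the two halves of the displayed formula.

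Finally I would assemble these computations: on each invariant subspace $R_{q,\frak D}(r)=Q_{q,2}+|{\mathcal A}|+{\mathcal K}_{q,\frak D}(r)$ with ${\mathcal K}_{q,\frak D}(r)$ as listed, absorbing all the $Y$-side dependence into $Q_{q,2}$ (which is boundary-condition-independent since it is computed on $M_2$) and the universal symbol $|{\mathcal A}|$. I expect the main obstacle to be the sign and indexing bookkeeping: keeping straight, for each of the four boundary conditions and each of the two slots of \eqref{E:1.3}, whether the surviving cylinder solution is the decaying exponential or the hyperbolic cosine/sine, and correctly identifying how ${\mathcal P}_{-,{\mathcal L}_0}$ restricted to the harmonic sector acts on ${\mathcal K}$ versus $\Gamma^Y{\mathcal K}$. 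Once the reduction to the scalar ODEs is in place the analytic content is elementary; the care lies entirely in matching conventions with \eqref{E:1.6}--\eqref{E:1.12} and \eqref{E:1.112}. This is essentially the same computation as Lemma 2.8 of [12], so I would cite that for the detailed verification and present here only the reduction and the resulting table.
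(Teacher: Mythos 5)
Your proposal is correct: separating variables over the spectral decomposition of $\B_Y^2$ on the cylinder $[0,r]\times Y$, imposing Dirichlet- or Neumann-type conditions at $x=0$ according to which components ${\frak D}$ kills (using (\ref{E:1.112}), (\ref{E:1.144}) and the condition on $(\B\phi)|_{Y_0}$), and subtracting $|{\mathcal A}|$ from $\mu\coth(\mu r)$, $\mu\tanh(\mu r)$, $1/r$, $0$ gives exactly the stated ${\mathcal K}_{q,{\frak D}}(r)$. This is the same route the paper takes, which simply declares the lemma straightforward and refers to Lemma 2.8 of [12] for the detailed cylinder computation, just as you do.
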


\vspace{0.2 cm}

\noindent
The above lemma and (\ref{E:1.12}) lead to the following result.

\begin{corollary}  \label{Corollary:2.98}
\begin{eqnarray*}
{\mathcal K}_{q}(r) & := & R_{q, {\mathcal P}_{-, {\mathcal L}_{0}}}(r) - R_{q, \rel}(r) \hspace{0.1 cm} = \hspace{0.1 cm}
 {\mathcal K}_{q, {\mathcal P}_{-, {\mathcal L}_{0}}}(r) - {\mathcal K}_{q, \rel}(r) \hspace{0.1 cm} = \hspace{0.1 cm}
- \left(  R_{q, {\mathcal P}_{+, {\mathcal L}_{1}}}(r) - R_{q, \Abs}(r)  \right)  \\
& = &
\begin{cases}
 \frac{ - 4 \sqrt{\B_{Y}^{2}} }{e^{2 \sqrt{\B_{Y}^{2}} r} \hspace{0.1 cm} - \hspace{0.1 cm} e^{- 2 \sqrt{\B_{Y}^{2}} r}}
\hspace{0.5 cm} \text{on} \hspace{0.3 cm}  \Omega^{q}_{+}(Y, E|_{Y})  \\
\frac{ 4 \sqrt{\B_{Y}^{2}} }{e^{2 \sqrt{\B_{Y}^{2}} r} \hspace{0.1 cm} - \hspace{0.1 cm} e^{- 2 \sqrt{\B_{Y}^{2}} r}}
  \hspace{0.5 cm} \text{on} \hspace{0.3 cm}  \Omega^{q-1}_{-}(Y, E|_{Y})  \\
- \frac{1}{r} \hspace{2.8 cm} \text{on} \hspace{0.3 cm} \Gamma^{Y} {\mathcal K}  \cap \Omega^{q}(Y, E|_{Y})\\
\frac{1}{r} \hspace{3.1 cm} \text{on} \hspace{0.3 cm} {\mathcal K}  \cap \Omega^{q-1}(Y, E|_{Y})\\
0 \hspace{3.2 cm} \text{otherwise} .
\end{cases}
\end{eqnarray*}
In particular, if $\hspace{0.1 cm} \nabla \phi = \Gamma \nabla \Gamma \phi = 0 \hspace{0.1 cm}$ for
$\hspace{0.1 cm} \phi \in \Omega^{q}(M, E)$, then $\hspace{0.1 cm} {\mathcal K}_{q}(r)(\phi|_{Y}) = 0$.
\end{corollary}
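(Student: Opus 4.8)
The plan is to read the explicit form of $\mathcal{K}_q(r)$ directly off Lemma \ref{Lemma:2.8} by subtracting the stated normal forms piece by piece, and then to deduce the vanishing assertion from the near-boundary description \eqref{E:1.12}. By Lemma \ref{Lemma:2.8} each of $R_{q,\mathcal{P}_{-,\mathcal{L}_0}}(r)$, $R_{q,\rel}(r)$, $R_{q,\mathcal{P}_{+,\mathcal{L}_1}}(r)$, $R_{q,\Abs}(r)$ equals $Q_{q,2}+|\mathcal{A}|$ plus the corresponding operator $\mathcal{K}_{q,{\frak D}}(r)$; hence $R_{q,\mathcal{P}_{-,\mathcal{L}_0}}(r)-R_{q,\rel}(r)=\mathcal{K}_{q,\mathcal{P}_{-,\mathcal{L}_0}}(r)-\mathcal{K}_{q,\rel}(r)$ and $R_{q,\mathcal{P}_{+,\mathcal{L}_1}}(r)-R_{q,\Abs}(r)=\mathcal{K}_{q,\mathcal{P}_{+,\mathcal{L}_1}}(r)-\mathcal{K}_{q,\Abs}(r)$, so the outer terms of the asserted chain already reduce to the two inner ones, and it remains to compute $\mathcal{K}_{q,\mathcal{P}_{-,\mathcal{L}_0}}(r)-\mathcal{K}_{q,\rel}(r)$ and to check that $\mathcal{K}_{q,\mathcal{P}_{+,\mathcal{L}_1}}(r)-\mathcal{K}_{q,\Abs}(r)$ is its negative.

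Using $\Omega^\bullet(Y,E|_Y)=\Imm\nabla^Y\oplus\Imm\Gamma^Y\nabla^Y\Gamma^Y\oplus\mathcal{H}^\bullet(Y,E|_Y)$ together with \eqref{E:1.144}, I would decompose $\Omega^q(Y,E|_Y)\oplus\Omega^{q-1}(Y,E|_Y)$ into the eight pieces $\Omega^q_{-}$, $\Omega^q_{+}$, $\mathcal{K}\cap\Omega^q$, $\Gamma^Y\mathcal{K}\cap\Omega^q$ in the first ($\Omega^q$) summand and $\Omega^{q-1}_{-}$, $\Omega^{q-1}_{+}$, $\mathcal{K}\cap\Omega^{q-1}$, $\Gamma^Y\mathcal{K}\cap\Omega^{q-1}$ in the second ($\Omega^{q-1}$) summand. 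This is the common refinement of the $\pm$-type splitting by which Lemma \ref{Lemma:2.8} organizes $\mathcal{K}_{q,\mathcal{P}_{\pm,\mathcal{L}}}(r)$ and the degree splitting by which it organizes $\mathcal{K}_{q,\rel/\Abs}(r)$, so on each of the eight pieces both operators are explicit functions of $\B_Y^2$ (or $1/r$, or $0$) and may simply be subtracted. The difference vanishes on $\Omega^q_{-}$, $\mathcal{K}\cap\Omega^q$, $\Omega^{q-1}_{+}$, $\Gamma^Y\mathcal{K}\cap\Omega^{q-1}$; it equals $-1/r$ on $\Gamma^Y\mathcal{K}\cap\Omega^q$ and $1/r$ on $\mathcal{K}\cap\Omega^{q-1}$; and on $\Omega^q_{+}$ and $\Omega^{q-1}_{-}$ it equals respectively $-$ and $+$ the operator
$$
\frac{2\sqrt{\B_{Y}^{2}}}{e^{2\sqrt{\B_{Y}^{2}}r}-1}+\frac{2\sqrt{\B_{Y}^{2}}}{e^{2\sqrt{\B_{Y}^{2}}r}+1}=\frac{4\sqrt{\B_{Y}^{2}}}{e^{2\sqrt{\B_{Y}^{2}}r}-e^{-2\sqrt{\B_{Y}^{2}}r}},
$$
which is the one algebraic identity needed. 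Carrying out the identical bookkeeping for $\mathcal{K}_{q,\mathcal{P}_{+,\mathcal{L}_1}}(r)-\mathcal{K}_{q,\Abs}(r)$ produces the negative of this operator on every piece, which closes the chain of equalities and yields the displayed formula for $\mathcal{K}_q(r)$.

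For the last assertion, assume $\nabla\phi=\Gamma\nabla\Gamma\phi=0$ with $\phi\in\Omega^q(M,E)$. By \eqref{E:1.12}, \eqref{E:1.13} and \eqref{E:2.47}, under the identification \eqref{E:1.3} the restriction $\phi|_Y$ has tangential part in $\Imm\nabla^Y\oplus\mathcal{K}$ and normal part in $\Imm\Gamma^Y\nabla^Y\Gamma^Y\oplus\Gamma^Y\mathcal{K}$; thus $\phi|_Y$ lies in the direct sum of the four pieces $\Omega^q_{-}$, $\mathcal{K}\cap\Omega^q$, $\Omega^{q-1}_{+}$, $\Gamma^Y\mathcal{K}\cap\Omega^{q-1}$, which is precisely where $\mathcal{K}_q(r)$ was just shown to vanish, so $\mathcal{K}_q(r)(\phi|_Y)=0$. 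I do not anticipate a genuine obstacle in any of this: the only step that requires attention is that Lemma \ref{Lemma:2.8} presents $\mathcal{K}_{q,\mathcal{P}_{\pm,\mathcal{L}}}(r)$ organized by the $\pm$-type of a form but $\mathcal{K}_{q,\rel/\Abs}(r)$ organized by its degree, so one must pass to the common eight-piece refinement before subtracting; after that it is the single hyperbolic-function identity above together with elementary bookkeeping.
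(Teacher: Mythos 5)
Your proposal is correct and follows the paper's own route: the paper states that Corollary \ref{Corollary:2.98} follows directly from Lemma \ref{Lemma:2.8} and (\ref{E:1.12}), which is exactly your argument of subtracting the explicit operators ${\mathcal K}_{q, {\frak D}}(r)$ on the common refinement of the two decompositions (using the identity $\frac{2a}{e^{2ar}-1}+\frac{2a}{e^{2ar}+1}=\frac{4a}{e^{2ar}-e^{-2ar}}$) and then reading off the vanishing on $\Omega^{q}_{-}\oplus({\mathcal K}\cap\Omega^{q})\oplus\Omega^{q-1}_{+}\oplus(\Gamma^{Y}{\mathcal K}\cap\Omega^{q-1})$ from the near-boundary form (\ref{E:1.12}) of $\phi$ together with (\ref{E:1.13}) and (\ref{E:2.47}). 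The bookkeeping on all eight pieces checks out, including the sign reversal for ${\mathcal K}_{q, {\mathcal P}_{+, {\mathcal L}_{1}}}(r)-{\mathcal K}_{q, \Abs}(r)$.
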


\vspace{0.2 cm}

We next discuss the limit of
$\hspace{0.1 cm}\left(\log \Det R_{q, {\mathcal P}_{-, {\mathcal L}_{0}}/{\mathcal P}_{+, {\mathcal L}_{1}}}(r) - \log \Det R_{q, \rel/\Abs}(r) \right)
\hspace{0.1 cm}$ for $r \rightarrow \infty$.
We note that

\begin{eqnarray*}
\lim_{r \rightarrow \infty} R_{q, {\mathcal P}_{-, {\mathcal L}_{0}}/{\mathcal P}_{+, {\mathcal L}_{1}}}(r) \hspace{0.1 cm} = \hspace{0.1 cm}
\lim_{r \rightarrow \infty} R_{q, \rel/\Abs}(r) \hspace{0.1 cm} = \hspace{0.1 cm}
 Q_{q, 2} \hspace{0.1 cm} + \hspace{0.1 cm} | {\mathcal A} | .
\end{eqnarray*}

\vspace{0.2 cm}

\noindent
The kernel of $\hspace{0.1 cm} Q_{q, 2} \hspace{0.1 cm} + \hspace{0.1 cm} | {\mathcal A} | \hspace{0.1 cm}$ is described as follows.
For $f \in \Omega^{q}(M_{2}, E)|_{Y_{1}}$, choose $\psi \in \Omega^{q}(M_{2}, E)$ such that
$ \B^{2} \psi = 0$ and $\psi|_{Y_{1}} = f$.
Then,

\begin{eqnarray}  \label{E:2.55}
0 \hspace{0.1 cm} = \hspace{0.1 cm} \langle \B^{2} \psi, \hspace{0.1 cm} \psi \rangle & = &
\langle \B \psi, \hspace{0.1 cm} \B \psi \rangle \hspace{0.1 cm} + \hspace{0.1 cm}
\langle (\B \psi)|_{Y_{1}}, \hspace{0.1 cm} ({\mathcal \gamma} \psi)|_{Y_{1}} \rangle_{Y_{1}}  \nonumber \\
& = & \langle \B \psi, \hspace{0.1 cm} \B \psi \rangle \hspace{0.1 cm} + \hspace{0.1 cm}
\langle (\nabla_{\partial_{x}} \psi + {\mathcal A} \psi)|_{Y_{1}}, \hspace{0.1 cm} f \rangle_{Y_{1}}  \nonumber \\
& = & \parallel \B \psi \parallel^{2} \hspace{0.1 cm} - \hspace{0.1 cm}
\langle Q_{q, 2} f, \hspace{0.1 cm} f \rangle_{Y_{1}} \hspace{0.1 cm} + \hspace{0.1 cm}
\langle {\mathcal A} f, \hspace{0.1 cm} f \rangle_{Y_{1}} ,
\end{eqnarray}

\noindent
which leads to

\begin{eqnarray}  \label{E:2.56}
\langle (Q_{q, 2} + |{\mathcal A}|)f, \hspace{0.1 cm} f \rangle_{Y_{1}} & = &
\parallel \B \psi \parallel^{2} \hspace{0.1 cm} + \hspace{0.1 cm}
\langle (|{\mathcal A}| + {\mathcal A})f, \hspace{0.1 cm} f \rangle_{Y_{1}}.
\end{eqnarray}

\vspace{0.2 cm}

\noindent
Hence, $f \in \Ker (Q_{q, 2} + |{\mathcal A}|)$ if and only if $\B \psi = 0$ and $(|{\mathcal A}| + {\mathcal A})f = 0$,
which shows that $\psi$ is expressed, on a collar neighborhood of $Y_{1}$, by

\begin{equation}  \label{E:2.77}
\psi = \sum_{\lambda_{j} \in \Spec ({\mathcal A}) \atop \lambda_{j} \leq 0} a_{j} e^{- \lambda_{j} x} \phi_{j}, \qquad \text{where}
\quad {\mathcal A} \phi_{j} = \lambda_{j} \phi_{j}.
\end{equation}

\noindent
Let $M_{\infty} := \left( (- \infty, 0] \times Y \right) \cup_{Y} M_{2}$.
We can extend $E$ and $\B$ canonically to $M_{\infty}$, which we denote by
$E_{\infty}$ and $\B_{\infty}$. Then $\psi$ in (\ref{E:2.77}) can be extended to $M_{\infty}$ as an $L^{2}$ or extended $L^{2}$-solution of $\B_{\infty}$
(for definitions of $L^{2}$ and extended $L^{2}$-solutions we refer to [1] or [3]).
Hence,

$$
\Ker (Q_{q, 2} + |{\mathcal A}|) = \{ \psi|_{Y_{1}} \mid \psi \hspace{0.1 cm} \text{is} \hspace{0.1 cm} \text{an} \hspace{0.1 cm}
L^{2} \hspace{0.1 cm} \text{or} \hspace{0.1 cm} \text{extended}\hspace{0.1 cm} L^{2}\text{-solution} \hspace{0.1 cm}
\text{of} \hspace{0.1 cm} \B_{\infty} \hspace{0.1 cm} \text{in}
\hspace{0.1 cm} \Omega^{q}(M_{\infty}, E_{\infty}) \hspace{0.1 cm} \}.
$$

\vspace{0.2 cm}

\noindent
This fact together with Lemma \ref{Lemma:2.1}, Corollary \ref{Corollary:2.98} and (\ref{E:1.12}) leads to the following result.

\begin{corollary} \label{Corollary:2.99}
Let $f \in \Ker (Q_{q, 2} + |{\mathcal A}|)$ or $ f \in \Ker R_{q, {\frak D}}(r)$,
where ${\frak D}$ is one of ${\widetilde {\mathcal P}}_{0}$, ${\widetilde {\mathcal P}}_{1}$, the absolute or the relative boundary condition.
Then,
$\hspace{0.1 cm} {\mathcal K}_{q}(r) f \hspace{0.1 cm} = \hspace{0.1 cm} 0$.

\end{corollary}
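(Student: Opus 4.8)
The plan is to reduce both cases to the last assertion of Corollary~\ref{Corollary:2.98}. The reasoning proving that assertion is purely local near the boundary: it only invokes the normal form (\ref{E:1.12}) together with the identifications (\ref{E:1.13}) and (\ref{E:2.47}), which show that for $\nabla\phi=\Gamma\nabla\Gamma\phi=0$ the tangential and normal parts of $\phi|_{Y}$ lie in $\Imm\nabla^{Y}\oplus\mathcal{K}$ and $\Imm\Gamma^{Y}\nabla^{Y}\Gamma^{Y}\oplus\Gamma^{Y}\mathcal{K}$ respectively, precisely the subspace annihilated by the operator displayed in Corollary~\ref{Corollary:2.98}. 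Hence it applies unchanged with $Y$ replaced by $Y_{1}\subset M_{\infty}$ or by $Y_{r}\subset M_{r}$, the relevant collars being isometric to pieces of $[0,1)\times Y$. It therefore suffices, in each case, to exhibit $\phi$ with $\nabla\phi=\Gamma\nabla\Gamma\phi=0$ on the appropriate product collar and with $f=\phi|_{Y_{1}}$ (resp. $f=\phi|_{Y_{r}}$). Throughout I will use the elementary fact that $\B\psi=0$ for $\psi\in\Omega^{q}$ already forces $\nabla\psi=\Gamma\nabla\Gamma\psi=0$: indeed $\B\psi=\Gamma\nabla\psi+\nabla\Gamma\psi$ has its two summands in the distinct form-degrees $m-q-1$ and $m-q+1$, so each of them vanishes, and applying $\Gamma$ (recall $\Gamma^{2}=\Id$) gives the claim.

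For $f\in\Ker(Q_{q,2}+|\mathcal{A}|)$ most of the work is already done: the computation (\ref{E:2.55})--(\ref{E:2.77}) shows that $f=\psi|_{Y_{1}}$ for an $L^{2}$ or extended $L^{2}$-solution $\psi$ of $\B_{\infty}$ on $M_{\infty}$, so $\B_{\infty}\psi=0$, hence $\B\psi=0$ on the product collar of $Y_{1}$, hence $\nabla\psi=\Gamma\nabla\Gamma\psi=0$ there by the fact above. Then (\ref{E:1.12}) describes $\psi$ near $Y_{1}$, and by (the local form of) Corollary~\ref{Corollary:2.98} we get $\mathcal{K}_{q}(r)f=\mathcal{K}_{q}(r)(\psi|_{Y_{1}})=0$.

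For $f\in\Ker R_{q,{\frak D}}(r)$ with ${\frak D}$ one of $\widetilde{\mathcal{P}}_{0}$, $\widetilde{\mathcal{P}}_{1}$, the absolute or the relative boundary condition, Lemma~\ref{Lemma:2.1}(2) lets me write $f=\phi|_{Y_{r}}$ with $\phi\in\Ker\B^{2}_{q,{\frak D}}(r)$ on $M_{r}$. Since the only boundary of $M_{r}$ is $Y_{0}$ and $\B^{2}_{q,{\frak D}}(r)$ is a non-negative self-adjoint operator (the boundary condition ${\frak D}$ being chosen exactly so that the boundary terms in the Green formula vanish — see [11]), a double integration by parts via Lemma~\ref{Lemma:1.1} gives $0=\langle\B^{2}_{q}\phi,\phi\rangle_{M_{r}}=\parallel\nabla\phi\parallel^{2}+\parallel\Gamma\nabla\Gamma\phi\parallel^{2}$, so $\nabla\phi=\Gamma\nabla\Gamma\phi=0$. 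As $\phi$ is then closed and coclosed on the product region $M_{1,r}$, (\ref{E:1.12}) describes it near $Y_{r}$, and Corollary~\ref{Corollary:2.98} again yields $\mathcal{K}_{q}(r)f=0$.

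The one step that is not purely formal is the vanishing of the boundary contribution in the integration by parts for each of $\widetilde{\mathcal{P}}_{0}$, $\widetilde{\mathcal{P}}_{1}$, $\Abs$, $\rel$ — equivalently, the non-negative self-adjointness of $\B^{2}_{q,{\frak D}}(r)$, which forces its kernel to consist of closed-and-coclosed forms. This is precisely what is recorded in [11]; granting it, the remainder is bookkeeping with form-degrees and the normal form (\ref{E:1.12}).
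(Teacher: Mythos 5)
Your overall strategy is the paper's: via Lemma \ref{Lemma:2.1}(2) and the identification of $\Ker(Q_{q,2}+|{\mathcal A}|)$ with restrictions of ($L^{2}$ or extended $L^{2}$) solutions, both cases are turned into restrictions of closed and coclosed forms, and then the last assertion of Corollary \ref{Corollary:2.98} is invoked. Your degree argument that $\B\psi=0$ on $q$-forms forces $\nabla\psi=\Gamma\nabla\Gamma\psi=0$, and your Green-formula argument that elements of $\Ker\B^{2}_{q,{\frak D}}(r)$ are closed and coclosed (the content of Lemma \ref{Lemma:1.2}, i.e.\ of [11]), are both fine.

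The flaw is your reduction principle: the final assertion of Corollary \ref{Corollary:2.98} is not ``purely local near the boundary'', and it does \emph{not} suffice to exhibit $\phi$ with $\nabla\phi=\Gamma\nabla\Gamma\phi=0$ only on the product collar with $f=\phi|_{Y_{1}}$ (resp.\ $\phi|_{Y_{r}}$). The local computation behind (\ref{E:1.12}) only places the harmonic components $\phi_{\Tan,h},\phi_{\Nor,h}$ in ${\mathcal H}^{\bullet}(Y,E|_{Y})$; the memberships $\phi_{\Tan,h}\in{\mathcal K}$ and $\phi_{\Nor,h}\in\Gamma^{Y}{\mathcal K}$ are exactly (\ref{E:1.13}) and (\ref{E:2.47}), which require $\phi$ to be closed and coclosed on all of $M$, and it is precisely these memberships that force ${\mathcal K}_{q}(r)$ to vanish, since ${\mathcal K}_{q}(r)$ acts as $-\frac{1}{r}$ on $\Gamma^{Y}{\mathcal K}\cap\Omega^{q}$ and as $\frac{1}{r}$ on ${\mathcal K}\cap\Omega^{q-1}$. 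Section 3 does not assume acyclicity, so this matters: for $0\neq h\in\Gamma^{Y}{\mathcal K}\cap\Omega^{q}(Y,E|_{Y})$ the pullback of $h$ to the collar is closed and coclosed there, yet ${\mathcal K}_{q}(r)$ applied to its restriction equals $-\frac{1}{r}h\neq 0$; hence the ``local form of Corollary \ref{Corollary:2.98}'' you use is false. The repair uses only what you already have in hand: in both of your cases the form is closed and coclosed on all of $M_{\infty}$, resp.\ $M_{r}$, each of which contains an isometric copy of $M$ whose cylindrical collar contains the cross-section carrying $f$; restricting to that copy and using the $x$-independence of the harmonic components in (\ref{E:1.12}), one gets $\phi_{\Tan,h}\in{\mathcal K}$ and $\phi_{\Nor,h}\in\Gamma^{Y}{\mathcal K}$ (for the $M_{\infty}$ case this is also the Remark after (\ref{E:1.1555}), i.e.\ Lemma 2.4 of [11]: ${\mathcal K}$ and $\Gamma^{Y}{\mathcal K}$ are the tangential and normal parts of the limiting values of extended $L^{2}$-solutions of $\B_{\infty}$), and only then does Corollary \ref{Corollary:2.98} give ${\mathcal K}_{q}(r)f=0$.
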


\vspace{0.2 cm}

Since $\Ker R_{q, {\mathcal P}_{-, {\mathcal L}_{0}}}(r) = \Ker R_{q, \rel}(r)$ (Lemma \ref{Lemma:1.2}), we have

\begin{eqnarray}  \label{E:2.110}
& & \log \Det R_{q, {\mathcal P}_{-, {\mathcal L}_{0}}}(r) - \log \Det R_{q, \rel}(r) \nonumber \\
& = &
\log \Det \left( R_{q, {\mathcal P}_{-, {\mathcal L}_{0}}}(r) + \pr_{\Ker R_{q, {\mathcal P}_{-, {\mathcal L}_{0}}}(r)} \right)
- \log \Det \left( R_{q, \rel}(r) + \pr_{\Ker R_{q, \rel}(r)} \right)  \nonumber \\
& = & \int_{0}^{1} \frac{d}{ds} \log \Det \left( R_{q, \rel}(r) + \pr_{\Ker R_{q, \rel}(r)} + s \left( R_{q, {\mathcal P}_{-, {\mathcal L}_{0}}}(r) -
R_{q, \rel} (r) \right) \right) ds  \nonumber \\
& = & \int_{0}^{1} \Tr \left( \left( R_{q, \rel}(r) + \pr_{\Ker R_{q, \rel}(r)} + s \hspace{0.1 cm} {\mathcal K}_{q}(r) \right)^{-1}
{\mathcal K}_{q}(r) \right) ds ,
\end{eqnarray}

\vspace{0.2 cm}

\noindent
where $\hspace{0.1 cm} \pr_{\Ker R_{q, {\mathcal P}_{-, {\mathcal L}_{0}}}(r)} \hspace{0.1 cm}$ is the orthogonal projection onto
$\hspace{0.1 cm} \Ker R_{q, {\mathcal P}_{-, {\mathcal L}_{0}}}(r)$.
We denote

\begin{eqnarray*}
X(r) & := & R_{q, \rel}(r) + \pr_{\Ker R_{q, \rel}(r)} + s {\mathcal K}_{q}(r)  \hspace{0.1 cm} = \hspace{0.1 cm}
 Q_{q, 2} + |{\mathcal A}| + {\mathcal K}_{q, \rel}(r) + \pr_{\Ker R_{q, \rel}(r)} + s {\mathcal K}_{q}(r),   \\
{\mathcal M} & := & \{ \phi \mid {\mathcal K}_{q}(r) (\phi) \neq 0 \} \hspace{0.1 cm} = \hspace{0.1 cm}
\Omega^{q}_{+}(Y, E|_{Y}) \oplus \Omega^{q-1}_{-}(Y, E|_{Y}) \oplus \Gamma^{Y} {\mathcal K}  \cap \Omega^{q}(Y, E|_{Y})
\oplus {\mathcal K}  \cap \Omega^{q-1}(Y, E|_{Y}) , \\
{\mathcal W}(r) & := & X(r)^{-1} \left( {\mathcal M}  \right) .
\end{eqnarray*}

\vspace{0.2 cm}

\noindent
Then, we have

\begin{eqnarray}  \label{E:2.220}
\Tr \left( X(r)^{-1} {\mathcal K}_{q}(r) \right) \hspace{0.1 cm} = \hspace{0.1 cm}
\Tr \left( X(r)^{-1} {\mathcal K}_{q}(r) : {\mathcal M} \rightarrow {\mathcal W}(r) \right).
\end{eqnarray}

\vspace{0.2 cm}

\begin{lemma}  \label{Lemma:100}
$$
{\mathcal W}(r) \hspace{0.1 cm} \cap \hspace{0.1 cm} \Ker \left( Q_{q, 2} + |{\mathcal A}| \right)
\hspace{0.1 cm} = \hspace{0.1 cm} \{ 0 \}.
$$
\end{lemma}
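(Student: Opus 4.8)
The plan is to combine the non-negativity of $R_{q,\rel}(r)$ with the fact, already recorded in Corollary \ref{Corollary:2.99}, that $\mathcal{K}_q(r)$ annihilates $\Ker\bigl(Q_{q,2}+|\mathcal{A}|\bigr)$. So I would fix $w\in\mathcal{W}(r)\cap\Ker\bigl(Q_{q,2}+|\mathcal{A}|\bigr)$ and argue that $w=0$.

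The key step is to show $w\perp\mathcal{M}$. Since $w\in\Ker\bigl(Q_{q,2}+|\mathcal{A}|\bigr)$, Corollary \ref{Corollary:2.99} gives $\mathcal{K}_q(r)w=0$, so it is enough to observe that $\Ker\mathcal{K}_q(r)$ is exactly the orthogonal complement of
$$
\mathcal{M}=\Omega^q_+(Y,E|_Y)\oplus\Omega^{q-1}_-(Y,E|_Y)\oplus\bigl(\Gamma^{Y}\mathcal{K}\cap\Omega^q(Y,E|_Y)\bigr)\oplus\bigl(\mathcal{K}\cap\Omega^{q-1}(Y,E|_Y)\bigr).
$$
This is immediate from the explicit form of $\mathcal{K}_q(r)$ in Corollary \ref{Corollary:2.98}: $\mathcal{K}_q(r)$ acts invertibly on each of the four pairwise-orthogonal summands constituting $\mathcal{M}$ (a nonzero multiple of $\sqrt{\B_Y^2}$, which is bounded below on $\Omega^q_+(Y)$ and $\Omega^{q-1}_-(Y)$ since these are orthogonal to $\Ker\B_Y^2$, and $\pm\frac1r\neq0$ on the $\Gamma^{Y}\mathcal{K}$- and $\mathcal{K}$-pieces), while it vanishes on the orthogonally complementary summands. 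Hence $w\in\mathcal{M}^{\perp}$. (Alternatively, one can argue directly from the characterization of $\Ker\bigl(Q_{q,2}+|\mathcal{A}|\bigr)$ preceding Corollary \ref{Corollary:2.99}: there $w=\psi|_{Y_1}$ with $\B_{\infty}\psi=0$; comparing the degrees of the two summands in $\B\psi=\Gamma\nabla\psi+\nabla\Gamma\psi$ gives $\nabla\psi=\Gamma\nabla\Gamma\psi=0$ near $Y_1$, and then (\ref{E:1.12}), (\ref{E:1.13}), (\ref{E:2.47}) put $\psi_{\Tan}|_{Y_1}\in\Imm\nabla^{Y}\oplus\mathcal{K}$ and $\psi_{\Nor}|_{Y_1}\in\Imm\Gamma^{Y}\nabla^{Y}\Gamma^{Y}\oplus\Gamma^{Y}\mathcal{K}$, i.e.\ $w\in\mathcal{M}^{\perp}$.)

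The remainder is a short positivity argument. From $w\in\mathcal{W}(r)=X(r)^{-1}(\mathcal{M})$ we get $X(r)w\in\mathcal{M}$, hence $\langle X(r)w,w\rangle=0$ by the previous step. On the other hand $\mathcal{K}_q(r)w=0$ collapses $X(r)w$ to $R_{q,\rel}(r)w+\pr_{\Ker R_{q,\rel}(r)}w$, so
\begin{equation*}
0\ =\ \langle X(r)w,w\rangle\ =\ \langle R_{q,\rel}(r)w,w\rangle\ +\ \bigl\|\pr_{\Ker R_{q,\rel}(r)}w\bigr\|^{2}.
\end{equation*}
By Lemma \ref{Lemma:2.1}(1) the operator $R_{q,\rel}(r)$ is non-negative, so both summands on the right are $\geq0$ and therefore each is $0$. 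The second gives $\pr_{\Ker R_{q,\rel}(r)}w=0$; the first, together with non-negativity, gives $R_{q,\rel}(r)w=0$, i.e.\ $w\in\Ker R_{q,\rel}(r)$, so $\pr_{\Ker R_{q,\rel}(r)}w=w$. Combining the two, $w=0$.

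I expect the orthogonality $w\perp\mathcal{M}$ to be the only point requiring any work; everything after it is the one-line estimate above, and no analysis of the adiabatic limit $r\to\infty$ enters here (that being reserved for the estimates that will invoke this lemma).
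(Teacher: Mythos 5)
Your argument is correct, and it parts ways with the paper's proof in its second half. The first step is the same in both: you get $w\in{\mathcal M}^{\perp}$ from ${\mathcal K}_{q}(r)w=0$ (Corollary \ref{Corollary:2.99}) together with the block-diagonal description of ${\mathcal K}_{q}(r)$ in Corollary \ref{Corollary:2.98} — though note that only injectivity of ${\mathcal K}_{q}(r)$ on ${\mathcal M}$ is needed, and your phrase that it is ``bounded below'' (or acts ``invertibly'') on the infinite-dimensional pieces is not quite right, since the multiplier $4\mu/(e^{2\mu r}-e^{-2\mu r})$ tends to $0$ as $\mu\to\infty$; it is, however, strictly positive, which suffices. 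From that point the paper argues by orthogonality alone: using that $\Ker R_{q,\rel}(r)$ is also annihilated by ${\mathcal K}_{q}(r)$ and that ${\mathcal K}_{q,\rel}(r)$ preserves ${\mathcal M}^{\perp}$, it shows $X(r)w\in{\mathcal M}^{\perp}$, hence $X(r)w\in{\mathcal M}\cap{\mathcal M}^{\perp}=\{0\}$, and concludes $w=0$ from the invertibility of $X(r)$. You instead pair $X(r)w$ against $w$: since $X(r)w\in{\mathcal M}$ and $w\in{\mathcal M}^{\perp}$, $0=\langle X(r)w,w\rangle=\langle R_{q,\rel}(r)w,w\rangle+\parallel\pr_{\Ker R_{q,\rel}(r)}w\parallel^{2}$, and non-negativity forces $w\in\Ker R_{q,\rel}(r)$ together with $\pr_{\Ker R_{q,\rel}(r)}w=0$, hence $w=0$. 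Your route buys independence from the invertibility of $X(r)$ (which the paper asserts rather than proves) and from the observation that ${\mathcal K}_{q,\rel}(r)$ and the projection preserve ${\mathcal M}^{\perp}$; the price is that you must invoke the self-adjointness and non-negativity of $R_{q,\rel}(r)$ (Lemma \ref{Lemma:2.1}) so that $\langle R_{q,\rel}(r)w,w\rangle=0$ implies $R_{q,\rel}(r)w=0$. Both arguments are sound; the paper's is purely an orthogonality/invertibility argument, yours a positivity argument.
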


\begin{proof}
Let $\phi \in \Ker ( Q_{q, 2} + |{\mathcal A}|)$. Corollary \ref{Corollary:2.99} shows that $ X(r) \phi \hspace{0.1 cm} = \hspace{0.1 cm}
{\mathcal K}_{q, \rel}(r) \phi + \pr_{\Ker R_{q, \rel}(r)} \phi$.
Corollary \ref{Corollary:2.99} again shows $\phi$, $\pr_{\Ker R_{q, \rel}(r)} \phi \in {\mathcal M}^{\perp}$.
From $\phi \in {\mathcal M}^{\perp}$, we have ${\mathcal K}_{q, \rel}(r) \phi \in {\mathcal M}^{\perp}$, which shows that $X(r) \phi \in {\mathcal M}^{\perp}$.
Since $X(r)$ is invertible, this completes the proof of the lemma.
\end{proof}

\vspace{0.2 cm}

\noindent
We denote by ${\mathcal P}(r) : {\mathcal W}(r) \rightarrow \Ker \left( Q_{q, 2} + |{\mathcal A}| \right)$
the orthogonal projection from ${\mathcal W}(r)$ into $\Ker \left( Q_{q, 2} + |{\mathcal A}| \right)$.
We let
\begin{eqnarray*}
{\mathcal W}_{0}(r) := \Ker {\mathcal P}(r), \qquad {\mathcal W}_{1}(r) := {\mathcal W}(r) \ominus {\mathcal W}_{0}(r).
\end{eqnarray*}

\noindent
Since $\Ker \left( Q_{q, 2} + |{\mathcal A}| \right)$ is finite dimensional, so is ${\mathcal W}_{1}(r)$.
Let $\{ \phi_{1}, \cdots, \phi_{k} \}$ be an orthonormal basis for ${\mathcal W}_{1}(r)$.
For each $1 \leq i \leq k$, $\hspace{0.1 cm} \phi_{i}$ is expressed by

\begin{eqnarray*}
\phi_{i} \hspace{0.1 cm} = \hspace{0.1 cm} \psi_{i} \hspace{0.1 cm} + \hspace{0.1 cm} \varphi_{i},
\end{eqnarray*}

\noindent
where $0 \neq \psi_{i} \in \left( \Ker \left( Q_{q, 2} + |{\mathcal A}| \right) \right)^{\perp}$ and
$0 \neq \varphi_{i} \in \Ker \left( Q_{q, 2} + |{\mathcal A}| \right)$.
We put

\begin{eqnarray*}
c_{0} & := & \Min \{ \hspace{0.1 cm} \parallel \psi_{i} \parallel \hspace{0.1 cm} \mid \hspace{0.1 cm} 1 \leq i \leq k \} \hspace{0.1 cm} > \hspace{0.1 cm} 0,
\end{eqnarray*}

\vspace{0.2 cm}

\noindent
which leads to the following result.

\begin{lemma}  \label{Lemma:101}
For any $\phi \in {\mathcal W}(r)$, $\phi$ is expressed by
$\phi = \psi + \varphi$, where $\psi \in \left( \Ker \left( Q_{q, 2} + |{\mathcal A}| \right) \right)^{\perp}$ and
$\varphi \in \Ker \left( Q_{q, 2} + |{\mathcal A}| \right)$.
Then $\parallel \psi \parallel \geq c_{0} \parallel \phi \parallel$.
\end{lemma}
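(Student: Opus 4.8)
The plan is to split the statement according to the orthogonal decomposition $\mathcal{W}(r) = \mathcal{W}_0(r) \oplus \mathcal{W}_1(r)$ and to show that all the difficulty is concentrated in the finite-dimensional summand $\mathcal{W}_1(r)$. Write $K := \Ker\bigl(Q_{q,2}+|\mathcal{A}|\bigr)$, let $P$ be the orthogonal projection onto $K$ and $P^{\perp} := \Id - P$, so that in the lemma's notation $\psi = P^{\perp}\phi$, $\varphi = P\phi$, and $\|\phi\|^2 = \|\psi\|^2 + \|\varphi\|^2$. Since $\mathcal{P}(r)$ is the restriction of $P$ to $\mathcal{W}(r)$, one has $\mathcal{W}_0(r) = \Ker\mathcal{P}(r) = \mathcal{W}(r)\cap K^{\perp}$; in particular $P$ vanishes on $\mathcal{W}_0(r)$. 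Also $\|\psi_i\| \le \|\phi_i\| = 1$, so $0 < c_0 \le 1$.

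First I would reduce to the case $\phi \in \mathcal{W}_1(r)$. Writing a general $\phi \in \mathcal{W}(r)$ as $\phi = \eta + \phi_1$ with $\eta \in \mathcal{W}_0(r)$ and $\phi_1 \in \mathcal{W}_1(r)$ orthogonal, one has $\psi = P^{\perp}\phi = \eta + P^{\perp}\phi_1$ and, since $\eta \in K^{\perp}$, $\langle \eta, P^{\perp}\phi_1\rangle = \langle P^{\perp}\eta, \phi_1\rangle = \langle \eta, \phi_1\rangle = 0$; hence $\|\psi\|^2 = \|\eta\|^2 + \|P^{\perp}\phi_1\|^2$ while $\|\phi\|^2 = \|\eta\|^2 + \|\phi_1\|^2$. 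Because $c_0 \le 1$, the claim follows once one knows $\|P^{\perp}\phi_1\| \ge c_0\|\phi_1\|$ for all $\phi_1 \in \mathcal{W}_1(r)$.

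For the reduced inequality, consider $T := P^{\perp}|_{\mathcal{W}_1(r)} \colon \mathcal{W}_1(r) \to K^{\perp}$; it is injective, since $T\phi_1 = 0$ forces $\phi_1 \in K \cap \mathcal{W}(r) = \{0\}$ by Lemma \ref{Lemma:100}. As $\mathcal{W}_1(r)$ is finite dimensional, I would take the orthonormal basis $\{\phi_1,\dots,\phi_k\}$ of $\mathcal{W}_1(r)$ to consist of eigenvectors of $T^{*}T$, so that the vectors $\psi_i = P^{\perp}\phi_i$ become pairwise orthogonal and $\|\psi_1\|,\dots,\|\psi_k\|$ are precisely the singular values of $T$; with this choice $c_0 = \Min_i\|\psi_i\|$ is the least singular value of $T$. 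Then for $\phi_1 = \sum_i a_i\phi_i$,
\[
\|P^{\perp}\phi_1\|^2 \ = \ \sum_i |a_i|^2\|\psi_i\|^2 \ \ge \ c_0^2 \sum_i |a_i|^2 \ = \ c_0^2\|\phi_1\|^2 ,
\]
which together with the previous step gives $\|\psi\| \ge c_0\|\phi\|$ for every $\phi \in \mathcal{W}(r)$.

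The point that needs care is the choice of basis in the last step: $c_0$ must be read off from an orthonormal basis of $\mathcal{W}_1(r)$ for which the images $P^{\perp}\phi_i$ are mutually orthogonal (equivalently, an eigenbasis of $T^{*}T$), since for an arbitrary orthonormal basis the numbers $\|\psi_i\|$ need not bound the least singular value of $T$ from below and the inequality can then fail. Everything else is routine bookkeeping with the projections $P$ and $P^{\perp}$, resting on the two structural inputs $\mathcal{W}_0(r) \subseteq K^{\perp}$ and $\mathcal{W}(r)\cap K = \{0\}$ (Lemma \ref{Lemma:100}). Conceptually, the content is just that $\mathcal{W}(r)$, being closed and meeting the finite-dimensional space $K$ only in $0$, makes a positive minimal angle with $K$; the singular-value argument merely pins down the constant.
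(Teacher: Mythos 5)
Your proof is correct, and it is in fact more careful than what the paper itself supplies: the paper gives no proof of Lemma~\ref{Lemma:101} at all, presenting it as an immediate consequence of setting $c_{0} = \Min_{i}\parallel\psi_{i}\parallel$ for an \emph{arbitrary} orthonormal basis $\{\phi_{1},\dots,\phi_{k}\}$ of ${\mathcal W}_{1}(r)$. Your reduction of the general $\phi\in{\mathcal W}(r)$ to the finite-dimensional piece ${\mathcal W}_{1}(r)$, using ${\mathcal W}_{0}(r)\subseteq \left(\Ker(Q_{q,2}+|{\mathcal A}|)\right)^{\perp}$ and the injectivity of your $T=P^{\perp}|_{{\mathcal W}_{1}(r)}$ via Lemma~\ref{Lemma:100}, is exactly the bookkeeping the paper leaves implicit. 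The genuinely different, and valuable, point is your observation about the basis: the numbers $\parallel\psi_{i}\parallel^{2}$ are the diagonal entries of the matrix $\left[\langle P^{\perp}\phi_{i}, P^{\perp}\phi_{j}\rangle\right]$, each of which is bounded \emph{below} by its least eigenvalue, so for a generic orthonormal basis $\Min_{i}\parallel\psi_{i}\parallel$ can strictly exceed the least singular value of $T$ (e.g.\ a $2\times 2$ matrix with both diagonal entries $\tfrac{1}{2}$ and eigenvalues $\tfrac{9}{10}$, $\tfrac{1}{10}$), and then the stated inequality fails for a suitable unit vector of ${\mathcal W}_{1}(r)$. Choosing the basis to diagonalize $T^{\ast}T$, as you do, makes the $\psi_{i}$ mutually orthogonal, identifies $c_{0}$ with the least singular value of $T$ (the cosine of the largest principal angle between ${\mathcal W}_{1}(r)$ and $\left(\Ker(Q_{q,2}+|{\mathcal A}|)\right)^{\perp}$), and yields the lemma as stated; this repair costs nothing downstream, since Lemma~\ref{Lemma:102} only uses that $c_{0}>0$. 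The only step you share with the paper without further comment is the implicit decomposition ${\mathcal W}(r)={\mathcal W}_{0}(r)\oplus{\mathcal W}_{1}(r)$, which is how the paper defines ${\mathcal W}_{1}(r)$ in the first place, so no additional gap is introduced.
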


\vspace{0.2 cm}

\begin{lemma} \label{Lemma:102}
Let $\lambda_{1} > 0$ be the first nonzero eigenvalue of $Q_{q, 2} + |{\mathcal A}| $.
Then there exists $R_{0} > 0$ such that
for $r > R_{0}$ and $f \in {\mathcal M}$,
$$
\parallel X(r)^{-1} f \parallel \leq \frac{2}{c_{0} \lambda_{1}} \parallel f \parallel.
$$
Hence, for $r > R_{0}$ we have
$| \Tr \left(  X(r)^{-1} {\mathcal K}_{q}(r) \right) | \hspace{0.1 cm} \leq \hspace{0.1 cm} \frac{2}{c_{0} \lambda_{1}} | \Tr {\mathcal K}_{q}(r) | $ ,
which shows that
$$
\lim_{r \rightarrow \infty } | \Tr \left(  X(r)^{-1} {\mathcal K}_{q}(r) \right) |
\hspace{0.1 cm} = \hspace{0.1 cm} 0.
$$
\end{lemma}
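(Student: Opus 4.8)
The plan is to establish the norm estimate by testing the self-adjoint operator $X(r)$ against the component of $X(r)^{-1}f$ transverse to $\Ker(Q_{q,2}+|\mathcal A|)$, and then to feed this into a trace estimate, using that $\mathcal K_q(r)$ is trace class with trace norm tending to $0$. First I fix $f\in\mathcal M$ and set $\phi:=X(r)^{-1}f$, so $\phi\in\mathcal W(r)$; since $\{X(r)^{-1}f:f\in\mathcal M\}=\mathcal W(r)$, the claimed bound is equivalent to $\|X(r)\phi\|\ge\frac{c_0\lambda_1}{2}\|\phi\|$ for all $\phi\in\mathcal W(r)$ with $r$ large, uniformly in $s\in[0,1]$ (as is needed for the integral in (\ref{E:2.110})). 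Decompose $\phi=\psi+\varphi$ as in Lemma \ref{Lemma:101}, with $\psi\in(\Ker(Q_{q,2}+|\mathcal A|))^{\perp}$, $\varphi\in\Ker(Q_{q,2}+|\mathcal A|)$ and $\|\psi\|\ge c_0\|\phi\|$; we may assume $\psi\neq 0$, since otherwise $\phi\in\mathcal W(r)\cap\Ker(Q_{q,2}+|\mathcal A|)=\{0\}$ by Lemma \ref{Lemma:100}. The key preliminary remark is the identity $\langle X(r)\phi,\psi\rangle=\langle X(r)\phi,\phi\rangle$: by Corollary \ref{Corollary:2.99} we have $\mathcal K_q(r)\varphi=0$, and since $\mathcal K_q(r)$ vanishes on $\mathcal M^{\perp}$ and is injective on $\mathcal M$ (as one reads off from the eigenvalues in Corollary \ref{Corollary:2.98}), this forces $\varphi\perp\mathcal M$; as $X(r)\phi=f\in\mathcal M$, we get $\langle X(r)\phi,\varphi\rangle=0$.

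Next I bound $\langle X(r)\phi,\phi\rangle$ from below, using $X(r)=(Q_{q,2}+|\mathcal A|)+\mathcal K_{q,\rel}(r)+\pr_{\Ker R_{q,\rel}(r)}+s\,\mathcal K_q(r)$ (the definition of $X(r)$, cf. Lemma \ref{Lemma:2.8}). The operator $Q_{q,2}+|\mathcal A|$ is self-adjoint and non-negative by (\ref{E:2.56}), with discrete spectrum, kernel $\Ker(Q_{q,2}+|\mathcal A|)$ and first nonzero eigenvalue $\lambda_1$; since it preserves the kernel and its orthocomplement and annihilates $\varphi$, the cross terms drop and $\langle(Q_{q,2}+|\mathcal A|)\phi,\phi\rangle=\langle(Q_{q,2}+|\mathcal A|)\psi,\psi\rangle\ge\lambda_1\|\psi\|^2$. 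The projection contributes $\langle\pr_{\Ker R_{q,\rel}(r)}\phi,\phi\rangle=\|\pr_{\Ker R_{q,\rel}(r)}\phi\|^2\ge 0$. Finally, the explicit formulas in Lemma \ref{Lemma:2.8} and Corollary \ref{Corollary:2.98} give $\|\mathcal K_{q,\rel}(r)\|,\|\mathcal K_q(r)\|\le\varepsilon(r)$ with $\varepsilon(r)=O(1/r)\to 0$ (these operators are exponentially small in $r$ off $\Ker\B_Y^2$ and of size $1/r$ on it), so $|\langle(\mathcal K_{q,\rel}(r)+s\,\mathcal K_q(r))\phi,\phi\rangle|\le\varepsilon(r)\|\phi\|^2\le\varepsilon(r)c_0^{-2}\|\psi\|^2$. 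Hence $\langle X(r)\phi,\phi\rangle\ge(\lambda_1-\varepsilon(r)c_0^{-2})\|\psi\|^2$. Choosing $R_0$ so that $\varepsilon(r)c_0^{-2}<\lambda_1/2$ for $r>R_0$, the preliminary identity and Cauchy--Schwarz give $\|X(r)\phi\|\,\|\psi\|\ge\langle X(r)\phi,\psi\rangle=\langle X(r)\phi,\phi\rangle\ge\frac{\lambda_1}{2}\|\psi\|^2$, so $\|X(r)\phi\|\ge\frac{\lambda_1}{2}\|\psi\|\ge\frac{\lambda_1 c_0}{2}\|\phi\|$, i.e. $\|X(r)^{-1}f\|\le\frac{2}{c_0\lambda_1}\|f\|$; this is uniform in $s\in[0,1]$ since $s$ enters only through $s\,\mathcal K_q(r)$ with $|s|\le 1$.

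For the trace statement, recall that $\mathcal K_q(r)$ is self-adjoint, vanishes on $\mathcal M^{\perp}$, and on each of the four summands of $\mathcal M$ acts as a sign-definite function of $\B_Y^2$ (or as $\pm1/r$) with eigenvalues exponentially small in $r$ off $\Ker\B_Y^2$, hence is trace class with trace norm tending to $0$. Taking an orthonormal eigenbasis $\{e_i\}$ of $\mathcal K_q(r)$ contained in $\mathcal M$ (extended by an orthonormal basis of $\mathcal M^{\perp}$ on which $\mathcal K_q(r)=0$) and writing $\mathcal K_q(r)e_i=\kappa_ie_i$ with $\kappa_i\neq0$, we get $\Tr(X(r)^{-1}\mathcal K_q(r))=\sum_i\kappa_i\langle X(r)^{-1}e_i,e_i\rangle$, and $|\langle X(r)^{-1}e_i,e_i\rangle|\le\|X(r)^{-1}e_i\|\le\frac{2}{c_0\lambda_1}$ by the first part, whence $|\Tr(X(r)^{-1}\mathcal K_q(r))|\le\frac{2}{c_0\lambda_1}\sum_i|\kappa_i|$. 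Since $\mathcal K_q(r)$ is sign-definite on each of the four summands, $\sum_i|\kappa_i|$ coincides with $|\Tr\mathcal K_q(r)|$ interpreted summand by summand over those pieces, which is what the stated bound means and which tends to $0$ as $r\to\infty$; this gives $\lim_{r\to\infty}|\Tr(X(r)^{-1}\mathcal K_q(r))|=0$.

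The step I expect to be the main obstacle is the interplay with the constant $c_0$. In Lemma \ref{Lemma:101} the number $c_0=c_0(r)$ depends on $r$, so the argument above really needs $\varepsilon(r)c_0(r)^{-2}\to 0$, i.e. a quantitative, uniform-in-$r$ strengthening of the transversality $\mathcal W(r)\cap\Ker(Q_{q,2}+|\mathcal A|)=\{0\}$ of Lemma \ref{Lemma:100} --- equivalently, a uniform lower bound for the angle between $\mathcal W(r)$ and $\Ker(Q_{q,2}+|\mathcal A|)$ for large $r$ --- which should follow from the convergence of $X(r)$, hence of $\mathcal W(r)=X(r)^{-1}\mathcal M$, as $r\to\infty$; the dependence of $R_0$ on $s\in[0,1]$ must then be checked to be uniform so the estimate is usable in (\ref{E:2.110}).
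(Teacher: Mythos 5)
Your argument is correct in substance, but it reaches the estimate by a genuinely different mechanism than the paper. The paper works with the vector $X(r)\phi$ itself: it writes $X(r)\phi=\bigl((Q_{q,2}+|{\mathcal A}|)\psi+\pr_{\Ker R_{q,\rel}(r)}\varphi\bigr)+(\text{small terms})$, bounds the small terms in norm, and then expands $\|(Q_{q,2}+|{\mathcal A}|)\psi+\pr_{\Ker R_{q,\rel}(r)}\varphi\|^{2}$, showing that the cross term $\langle(Q_{q,2}+|{\mathcal A}|)\psi,\pr_{\Ker R_{q,\rel}(r)}\varphi\rangle$ and $\|\pr_{\Ker R_{q,\rel}(r)}\psi\|$ tend to zero via the identity $(Q_{q,2}+|{\mathcal A}|)\pr_{\Ker R_{q,\rel}(r)}=-{\mathcal K}_{q,\rel}(r)\pr_{\Ker R_{q,\rel}(r)}$; the spectral gap enters through $\|(Q_{q,2}+|{\mathcal A}|)\psi\|\ge\lambda_{1}\|\psi\|\ge\lambda_{1}c_{0}\|\phi\|$. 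You instead work with the quadratic form $\langle X(r)\phi,\phi\rangle$, after the extra observation (not used in the paper) that ${\mathcal K}_{q}(r)\varphi=0$ forces $\varphi\in{\mathcal M}^{\perp}$, so that $\langle X(r)\phi,\varphi\rangle=\langle f,\varphi\rangle=0$ and $\langle X(r)\phi,\psi\rangle=\langle X(r)\phi,\phi\rangle$; with this, the projection enters with a favorable sign as $\|\pr_{\Ker R_{q,\rel}(r)}\phi\|^{2}\ge 0$, so you never need the paper's two auxiliary ``tends to zero'' computations, only the $O(1/r)$ operator-norm bounds on ${\mathcal K}_{q,\rel}(r)$ and ${\mathcal K}_{q}(r)$ and Cauchy--Schwarz. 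Both routes rest on Lemma \ref{Lemma:101} and share its weak point, which you flag honestly and the paper passes over: $c_{0}$ (and ${\mathcal W}(r)$, hence $R_{0}$) depends on $r$ and on $s$, so one really needs $\varepsilon(r)c_{0}(r)^{-2}\to 0$, the same quantitative content hidden in the paper's ``$o(r)$'' terms and in its use of a fixed $c_{0}$. On the trace step you are in fact more careful than the paper: the displayed inequality should be read with the trace norm $\sum_{i}|\kappa_{i}|$ of ${\mathcal K}_{q}(r)$ rather than $|\Tr{\mathcal K}_{q}(r)|$ taken literally (the eigenvalues have both signs), and your summand-by-summand reading, together with the exponential decay off $\Ker\B_{Y}^{2}$ and the $1/r$ factors on the finite-dimensional pieces, yields the limit $0$ exactly as intended.
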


\begin{proof}
It's enough to prove that $\parallel X(r) \phi \parallel \geq \frac{c_{0} \lambda_{1}}{2} \parallel \phi \parallel$
for $\phi \in {\mathcal W}(r)$ and $r$ large enough.
As in Lemma \ref{Lemma:101}, we write $\phi = \psi + \varphi$, where
$\psi \in \left( \Ker \left( Q_{q, 2} + |{\mathcal A}| \right) \right)^{\perp}$,
$\varphi \in \Ker \left( Q_{q, 2} + |{\mathcal A}| \right)$, and $\parallel \psi \parallel \geq c_{0} \parallel \phi \parallel$.
Corollary \ref{Corollary:2.99} shows that

\begin{eqnarray*}
X(r) \phi & = & \left( \left( Q_{q, 2} + |{\mathcal A}| \right) \psi + \pr_{\Ker R_{q, \rel}(r)} \varphi \right) +
\left( {\mathcal K}_{q, \rel}(r) \psi + \pr_{\Ker R_{q, \rel}(r)} \psi + s {\mathcal K}_{q}(r) \psi + {\mathcal K}_{q, \rel}(r) \varphi \right).
\end{eqnarray*}

\noindent

\noindent
We note that

\begin{eqnarray*}
| \langle \left( Q_{q, 2} + |{\mathcal A}| \right) \psi , \hspace{0.2 cm} \pr_{\Ker R_{q, \rel}(r)} \varphi \rangle |
& = & | \langle \psi , \hspace{0.2 cm} \left( Q_{q, 2} + |{\mathcal A}| \right) \pr_{\Ker R_{q, \rel}(r)} \varphi \rangle | \\
& = & | \langle \psi , \hspace{0.2 cm} - {\mathcal K}_{q, \rel}(r) \pr_{\Ker R_{q, \rel}(r)} \varphi \rangle |,
\end{eqnarray*}

\noindent
which tends to $0$ as $r \rightarrow \infty$.
Similarly,

\begin{eqnarray*}
\parallel \pr_{\Ker R_{q, \rel}(r)} \psi \parallel^{2}
& = & \langle \pr_{\Ker R_{q, \rel}(r)} \psi, \hspace{0.2 cm} \pr_{\Ker R_{q, \rel}(r)} \psi \rangle  \hspace{0.1 cm} = \hspace{0.1 cm}
\langle \pr_{\Ker R_{q, \rel}(r)} \psi, \hspace{0.2 cm}  \psi \rangle  \\
& = & \langle \left( Q_{q, 2} + |{\mathcal A}| \right) \pr_{\Ker R_{q, \rel}(r)} \psi, \hspace{0.2 cm}
\left( Q_{q, 2} + |{\mathcal A}| + \pr_{\Ker \left( Q_{q, 2} + |{\mathcal A}| \right)} \right)^{-1} \psi \rangle  \\
& = &
\langle - {\mathcal K}_{q, \rel}(r) \pr_{\Ker R_{q, \rel}(r)} \psi, \hspace{0.2 cm}
\left( Q_{q, 2} + |{\mathcal A}| + \pr_{\Ker \left( Q_{q, 2} + |{\mathcal A}| \right)} \right)^{-1} \psi \rangle ,
\end{eqnarray*}

\noindent
which tends to $0$ as $r \rightarrow \infty$.
Hence, we have

\begin{eqnarray}  \label{E:2.221}
\parallel X(r) \phi \parallel & \geq &
\left( \parallel \left( Q_{q, 2} + |{\mathcal A}| \right) \psi + \pr_{\Ker R_{q, \rel}(r)} \varphi \parallel \right)  \nonumber \\
& & - \left( \parallel {\mathcal K}_{q, \rel}(r) \psi \parallel + \parallel \pr_{\Ker R_{q, \rel}(r)} \psi \parallel +
\parallel s {\mathcal K}_{q}(r) \psi \parallel + \parallel {\mathcal K}_{q, \rel}(r) \varphi \parallel \right) \nonumber \\
& \geq & \left( \parallel \left( Q_{q, 2} + |{\mathcal A}| \right) \psi \parallel^{2} + \parallel \pr_{\Ker R_{q, \rel}(r)} \varphi \parallel^{2}
-  2
| \langle \left( Q_{q, 2} + |{\mathcal A}| \right) \psi , \hspace{0.1 cm} \pr_{\Ker R_{q, \rel}(r)} \varphi \rangle |
\right)^{\frac{1}{2}} + o(r)  \nonumber\\
& \geq & \left( \parallel \left( Q_{q, 2} + |{\mathcal A}| \right) \psi \parallel^{2}
- \hspace{0.1 cm} 2 \hspace{0.1 cm}
| \langle \left( Q_{q, 2} + |{\mathcal A}| \right) \psi , \hspace{0.2 cm} \pr_{\Ker R_{q, \rel}(r)} \varphi \rangle |
\right)^{\frac{1}{2}}   + \hspace{0.1 cm} o(r) ,
\end{eqnarray}

\noindent
which completes the proof of the lemma.
\end{proof}

\vspace{0.2 cm}

\noindent
The above lemma with (\ref{E:2.110}) leads to the following result.

\vspace{0.2 cm}

\begin{eqnarray}  \label{E:2.222}
\lim_{r \rightarrow \infty} \left( \log \Det R_{q, {\mathcal P}_{-, {\mathcal L}_{0}}}(r) - \log \Det R_{q, \rel}(r) \right)
& = & 0.
\end{eqnarray}

\noindent
By the same method, we have

\begin{eqnarray}  \label{E:2.223}
\lim_{r \rightarrow \infty} \left( \log \Det R_{q, {\mathcal P}_{+, {\mathcal L}_{1}}}(r) - \log \Det R_{q, \Abs}(r) \right)
& = & 0.
\end{eqnarray}

\noindent
Corollary \ref{Corollary:2.8} with (\ref{E:2.222}) and (\ref{E:2.223}) yields

\begin{eqnarray}
\lim_{r \rightarrow \infty} \sum_{q=0}^{m} (-1)^{q+1} q \left( \log \Det \B^{2}_{q, {\widetilde {\mathcal P}}_{0}}(r) + \log \Det \B^{2}_{q, {\widetilde {\mathcal P}}_{1}}(r) - \log \Det \B^{2}_{q, \rel}(r) - \log \Det \B^{2}_{q, \Abs}(r)  \right)  =  0.
\end{eqnarray}

\vspace{0.2 cm}

The proof of the following lemma is a verbatim repetition of the proof of Theorem 7.6 in [17] (cf. Theorem 2.1 in [19]).

\begin{lemma}  \label{Lemma:2.11}
Let $M$ be a compact manifold with boundary $Y$ and $N$ be a collar neighborhood of $Y$.
We suppose that $\{ g_{t}^{M} \mid - \delta_{0} < t < \delta_{0} \}$ is a family of metrics such that each $g^{M}_{t}$ is a product metric and
does not vary on $N$.
Let ${\frak D}$ be one of
${\widetilde {\mathcal P}}_{0}$, ${\widetilde {\mathcal P}}_{1}$, the absolute or the relative boundary condition.
We denote by $\B^{2}_{q, {\frak D}}(t)$ the square of the odd signature operator acting on $q$-forms subject to ${\frak D}$ and with respect to the metric $g^{M}_{t}$. Then we have

$$
\frac{d}{dt} \left( \sum_{q=0}^{m} (-1)^{q+1} \cdot q \cdot \log \Det \B^{2}_{q, {\frak D}}(t) \right) \hspace{0.1 cm} = \hspace{0.1 cm}
\sum_{q=0}^{m} (-1)^{q+1} \cdot q \cdot \Tr \left( \pr_{{\mathcal H}^{q}_{{\frak D}}(t)} \ast_{t} (\frac{d}{dt} \ast_{t}) \right),
$$

\noindent
where $\pr_{{\mathcal H}^{q}_{{\frak D}}(t)}$ is the orthogonal projection onto the kernel of $\B^{2}_{q, {\frak D}}(t)$ and $\ast_{t}$ is the Hodge
star operator with respect to the metric $g^{M}_{t}$.
\end{lemma}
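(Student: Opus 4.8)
The plan is to differentiate $\log\Det_{2\theta}\B^2_{q,\frak D}(t)$ by the standard variation formula for zeta-determinants and then to run the Ray--Singer combinatorial argument on the weighted alternating sum; the point is that the ``local'' interior and boundary contributions cancel in the alternating sum, leaving only harmonic terms. I would first record the simplifications afforded by the hypotheses. Because $g^M_t$ is a product metric that does not vary on the collar $N$, every object built on $Y$ through $N$ --- the operators $\B_Y$, $\ast_Y$, $\Gamma^Y$, the space $\mathcal K$, and hence all of ${\mathcal P}_{-,\mathcal L_0}$, ${\mathcal P}_{+,\mathcal L_1}$, ${\widetilde {\mathcal P}}_{0}$, ${\widetilde {\mathcal P}}_{1}$ and the absolute and relative projections --- is independent of $t$; in particular $\Dom\big(\B^2_{q,\frak D}(t)\big)$ is a fixed subspace, so $\{\B^2_{q,\frak D}(t)\}_{t}$ is a smooth family of non-negative self-adjoint elliptic operators with fixed domain whose kernel $\mathcal H^q_{\frak D}(t)$ has constant dimension by Lemma~\ref{Lemma:1.2}, while only the $L^2$ inner product on forms not supported in $N$ varies, through $\ast_t$. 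By Lemma~\ref{Lemma:1.1}(2), $\Gamma\nabla\Gamma$ is the formal adjoint of $\nabla$, so $\B^2_q=\Delta_q$ is the Hodge Laplacian of $g^M_t$ and the whole $t$-dependence sits in $\Gamma=\Gamma(g^M_t)$, that is, in $\ast_t$. Since $m$ is odd, $\ast_t^2=\Id$, so $\alpha_t:=\ast_t\big(\tfrac{d}{dt}\ast_t\big)$ is a degree-preserving bundle endomorphism, $\tfrac{d}{dt}\Gamma=\Gamma\alpha_t$, and $\ast_t\alpha_t\ast_t=-\alpha_t$; moreover the $t$-derivative $\dot\Delta_q$ is an explicit second-order operator built from $\alpha_t$, $\nabla$ and $\Gamma_t$ which vanishes identically on $N$, since there $\Delta_q=-\nabla_{\partial_x}^{2}+\B_{Y,q}^{2}$ does not depend on $t$.

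Next, pass to the fixed Hilbert space $\big(\Omega^q(M,E),\langle\ ,\ \rangle_0\big)$ by conjugating with the positive square root of the density operator $\Theta_t:=\ast_0\ast_t$ (so $\langle u,v\rangle_t=\langle u,\Theta_t v\rangle_0$), which leaves the spectrum --- hence $\Det_{2\theta}$ --- unchanged. Differentiating with the Duhamel formula gives $\tfrac{d}{dt}\zeta_{\B^2_{q,\frak D}(t)}(s)=-s\,\Tr\big(\dot\Delta_{q,\frak D}(t)\,\B^2_{q,\frak D}(t)^{-s-1}\big)$, where $\B^2_{q,\frak D}(t)^{-s-1}$ is taken to vanish on the kernel: the commutator produced by the conjugation contributes nothing, by cyclicity of the trace, and the derivative of the kernel projection contributes nothing, since $\pr_{\mathcal H^q_{\frak D}(t)}\,\dot\Delta_{q,\frak D}(t)\,\pr_{\mathcal H^q_{\frak D}(t)}=0$. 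Taking the finite part at $s=0$ we obtain
$$
\frac{d}{dt}\log\Det_{2\theta}\B^2_{q,\frak D}(t)\;=\;\mathrm{FP}_{s=0}\,\Tr\big(\dot\Delta_{q,\frak D}(t)\,\B^2_{q,\frak D}(t)^{-s-1}\big).
$$

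The heart of the proof is then to insert this into $\sum_{q=0}^m(-1)^{q+1}q\cdot\tfrac{d}{dt}\log\Det_{2\theta}\B^2_{q,\frak D}(t)$ and to run the Ray--Singer argument at the level of the zeta functions, exactly as in the proof of Theorem~7.6 in [17]. One uses: the Hodge decomposition $\Omega^q(M,E)=\Imm\nabla\oplus\Imm\Gamma\nabla\Gamma\oplus\mathcal H^q_{\frak D}(t)$ compatible with $\frak D$; the fact that $\nabla$ intertwines the Laplacian on the coexact summand in degree $q-1$ with the Laplacian on the exact summand in degree $q$, so that these pieces contribute equal zeta functions; and the Poincar\'e--Lefschetz duality $\ast_t\colon\B^2_{q,\frak D}(t)\to\B^2_{m-q,\frak D'}(t)$ which sends $\alpha_t$ to $-\alpha_t$, where $\frak D'$ is $\frak D$ itself when $\frak D\in\{{\widetilde {\mathcal P}}_{0},{\widetilde {\mathcal P}}_{1}\}$, is the absolute condition when $\frak D$ is the relative one, and is the relative condition when $\frak D$ is the absolute one --- in accordance with $l_q^-=l_{m-1-q}^+$. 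Feeding these into the weighted alternating sum makes the exact, coexact and interior local contributions cancel identically in $s$, and there is no boundary contribution because $\dot\Delta_q$ vanishes near $Y$; letting $s\to0$ leaves precisely $\sum_{q=0}^m(-1)^{q+1}q\,\Tr\big(\pr_{\mathcal H^q_{\frak D}(t)}\alpha_t\big)$, which is the asserted identity. For $\frak D$ absolute or relative this is verbatim the classical computation of [17]; for ${\widetilde {\mathcal P}}_{0}$ and ${\widetilde {\mathcal P}}_{1}$ one first reduces to that case by means of the spectral identity quoted just before Corollary~\ref{Corollary:2.8} (Lemma~2.3 in [12]) together with the descriptions $(\ref{E:1.112})$ of the ranges of the projections and the decomposition $(\ref{E:1.144})$.

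The step I expect to be the main obstacle is exactly this reduction: one has to check that $\nabla$ and $\ast_t$ genuinely respect the pseudodifferential boundary conditions ${\widetilde {\mathcal P}}_{0}$ and ${\widetilde {\mathcal P}}_{1}$ --- so that the Hodge splitting and the intertwinings used above are valid on those domains --- and that the short-time heat-trace asymptotics underpinning the variation formula hold uniformly for $t$-families subject to these conditions. Once this is in place, the remaining manipulations are, as the authors state, a verbatim transcription of the closed / relative-absolute computation in [17] (cf.\ Theorem~2.1 in [19]).
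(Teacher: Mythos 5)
Your overall route is the same as the paper's: the paper proves this lemma simply by invoking the metric-variation argument of [17, Theorem 7.6] (cf. [19, Theorem 2.1]), and your write-up reconstructs exactly that argument, making explicit the two observations that matter here -- since $g^{M}_{t}$ does not vary on the collar, all boundary data and hence the domains of $\B^{2}_{q, {\frak D}}(t)$ are $t$-independent, and $\alpha_{t} = \ast_{t}\left(\frac{d}{dt}\ast_{t}\right)$ vanishes identically near $Y$, so the $t^{0}$-term of the relevant heat-trace expansion has no boundary contribution and, $m$ being odd, no interior contribution either. So in spirit the proposal is the paper's proof.

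Two concrete points, however. First, the conclusion you write down is not what the Ray--Singer telescoping produces. Setting $A_{q}=\Tr\left(\alpha_{t}\,\nabla\Gamma\nabla\Gamma\, e^{-t\B^{2}_{q,{\frak D}}}\right)$ and $B_{q}=\Tr\left(\alpha_{t}\,\Gamma\nabla\Gamma\nabla\, e^{-t\B^{2}_{q,{\frak D}}}\right)$, the cyclicity/intertwining manipulations give $\sum_{q}(-1)^{q} q \Tr\left(\dot{\B}^{2}_{q,{\frak D}} e^{-t\B^{2}_{q,{\frak D}}}\right)=\sum_{q}(-1)^{q}(A_{q}+B_{q})$: the weight $q$ collapses to weight one, and after the Mellin/finite-part step the right-hand side of the variation formula is the \emph{unweighted} alternating sum $\sum_{q}(-1)^{q}\Tr\left(\pr_{{\mathcal H}^{q}_{{\frak D}}(t)}\ast_{t}(\tfrac{d}{dt}\ast_{t})\right)$ (up to overall sign convention), exactly as in [17] and [19]; the $q$-weighted harmonic sum you assert ``is what remains'' does not follow from the computation you describe, and the factor $q$ in the displayed identity is evidently carried over from the left-hand weights. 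This discrepancy is harmless downstream, because in Corollary \ref{Corollary:2.12} only the fact that the right-hand side depends on $t$ through the harmonic subspaces is used, and these coincide pairwise by Lemma \ref{Lemma:1.2}; but as a proof of the displayed identity your final step is not justified. Second, your proposed treatment of ${\widetilde {\mathcal P}}_{0}$, ${\widetilde {\mathcal P}}_{1}$ by ``reducing to the rel/abs case via Lemma 2.3 of [12]'' is not a valid shortcut: that lemma concerns the operators on the finite cylinder $M_{1,r}$ with Dirichlet condition at the far end, not $\B^{2}_{q,{\frak D}}(t)$ on $M$, and it yields no relation between the corresponding determinants on $M$ (producing such a relation is precisely what the BFK/adiabatic argument of this section is for). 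What is actually needed, and what ``verbatim'' means here, is that for each fixed ${\frak D}$ the operators $\nabla$ and $\Gamma\nabla\Gamma$ are compatible with the boundary condition, so that the intertwining of the semigroups in adjacent degrees used in the telescoping holds; for ${\widetilde {\mathcal P}}_{0}$, ${\widetilde {\mathcal P}}_{1}$ this is exactly the compatibility built into the complex (\ref{E:3.116}) (cf.\ (\ref{E:2.17})), and the short-time heat-trace asymptotics for these well-posed pseudodifferential conditions are available, so the variation argument runs for each ${\frak D}$ separately, with no reduction.
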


\vspace{0.2 cm}

\noindent
Lemma \ref{Lemma:2.11} and Lemma \ref{Lemma:1.2} lead to the following corollary.

\begin{corollary}  \label{Corollary:2.12}
We assume the same assumptions as in Lemma \ref{Lemma:2.11}. Then,
$$
\frac{d}{dt} \sum_{q=0}^{m} (-1)^{q+1} \cdot q \cdot \left(
\log \Det \B^{2}_{q, {\widetilde {\mathcal P}}_{0}}(t) + \log \Det \B^{2}_{q, {\widetilde {\mathcal P}}_{1}}(t) -
\log \Det \B^{2}_{q, \rel}(t) - \log \Det \B^{2}_{q, \Abs}(t) \right) \hspace{0.1 cm} = \hspace{0.1 cm} 0.
$$
\end{corollary}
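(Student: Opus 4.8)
The plan is to apply Lemma \ref{Lemma:2.11} to each of the four boundary conditions ${\widetilde{\mathcal P}}_0$, ${\widetilde{\mathcal P}}_1$, the relative and the absolute one, and then combine the four identities by linearity of $\tfrac{d}{dt}$. Writing ${\mathcal H}^{q}_{{\frak D}}(t) = \Ker\B^{2}_{q,{\frak D}}(t)$ as in Lemma \ref{Lemma:2.11}, this rewrites the left-hand side of the corollary as
$$
\sum_{q=0}^{m}(-1)^{q+1}\cdot q\cdot\Tr\left(\Bigl(\pr_{{\mathcal H}^{q}_{{\widetilde{\mathcal P}}_0}(t)}+\pr_{{\mathcal H}^{q}_{{\widetilde{\mathcal P}}_1}(t)}-\pr_{{\mathcal H}^{q}_{\rel}(t)}-\pr_{{\mathcal H}^{q}_{\Abs}(t)}\Bigr)\ \ast_{t}\bigl(\tfrac{d}{dt}\ast_{t}\bigr)\right),
$$
so it is enough to show that, for each $q$, the operator inside the large parentheses vanishes, i.e.\ that
$$
\pr_{{\mathcal H}^{q}_{{\widetilde{\mathcal P}}_0}(t)}+\pr_{{\mathcal H}^{q}_{{\widetilde{\mathcal P}}_1}(t)}\ =\ \pr_{{\mathcal H}^{q}_{\rel}(t)}+\pr_{{\mathcal H}^{q}_{\Abs}(t)}\qquad\text{on }\Omega^{q}(M,E).
$$

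Next I would feed in Lemma \ref{Lemma:1.2}. Since each $g^{M}_{t}$ is a product metric near $Y$ and since the boundary conditions ${\mathcal P}_{-,{\mathcal L}_0}$, ${\mathcal P}_{+,{\mathcal L}_1}$, $\rel$, $\Abs$ are determined by the collar data, which is the same for every member of the family, the realizations $\B^{2}_{q,{\frak D}}(t)$ are exactly those occurring in Lemma \ref{Lemma:2.11}, and Lemma \ref{Lemma:1.2} applies verbatim with the metric $g^{M}_{t}$. It gives, as \emph{subspaces} of $\Omega^{q}(M,E)$,
$$
\Ker\B^{2}_{q,{\mathcal P}_{-,{\mathcal L}_0}}(t)=\Ker\B^{2}_{q,\rel}(t)=H^{q}(M, Y; E),\qquad \Ker\B^{2}_{q,{\mathcal P}_{+,{\mathcal L}_1}}(t)=\Ker\B^{2}_{q,\Abs}(t)=H^{q}(M; E),
$$
hence $\pr_{{\mathcal H}^{q}_{{\mathcal P}_{-,{\mathcal L}_0}}(t)}=\pr_{{\mathcal H}^{q}_{\rel}(t)}$ and $\pr_{{\mathcal H}^{q}_{{\mathcal P}_{+,{\mathcal L}_1}}(t)}=\pr_{{\mathcal H}^{q}_{\Abs}(t)}$ for every $q$.

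The final step is the parity bookkeeping built into the definition of ${\widetilde{\mathcal P}}_0$ and ${\widetilde{\mathcal P}}_1$: on $q$-forms, ${\widetilde{\mathcal P}}_0$ equals ${\mathcal P}_{-,{\mathcal L}_0}$ when $q$ is even and ${\mathcal P}_{+,{\mathcal L}_1}$ when $q$ is odd, while ${\widetilde{\mathcal P}}_1$ is the complementary choice. Consequently, for each $q$, the unordered pair of projections $\{\pr_{{\mathcal H}^{q}_{{\widetilde{\mathcal P}}_0}(t)},\pr_{{\mathcal H}^{q}_{{\widetilde{\mathcal P}}_1}(t)}\}$ is, by the previous step, literally the pair $\{\pr_{{\mathcal H}^{q}_{\rel}(t)},\pr_{{\mathcal H}^{q}_{\Abs}(t)}\}$; the two sums above therefore agree, the trace vanishes term by term, and the derivative is $0$, which is the corollary. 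I do not expect a serious obstacle; the only point that requires care is to read Lemma \ref{Lemma:1.2} as an equality of honest subspaces of $\Omega^{q}(M,E)$ — so that it descends to the corresponding orthogonal projections — and to note that, the collar metric being frozen, the four boundary conditions and the operators $\B^{2}_{q,{\frak D}}(t)$ are consistently defined along the whole family, so that Lemma \ref{Lemma:1.2} can be invoked at every $t$ even though $\B^{2}_{q}(t)$ itself varies over the interior of $M$.
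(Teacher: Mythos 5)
Your argument is correct and is exactly the paper's (one-line) proof: apply Lemma \ref{Lemma:2.11} to each of the four boundary conditions and use Lemma \ref{Lemma:1.2} (an equality of kernels as subspaces, hence of the orthogonal projections) together with the parity definition of ${\widetilde {\mathcal P}}_{0}$, ${\widetilde {\mathcal P}}_{1}$ to cancel the trace terms for each $q$. No gap.
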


\vspace{0.3 cm}

We fix $\delta_{0} > 0$ sufficiently small and
choose a smooth function $\hspace{0.1 cm} f(r, u) : [1, \infty) \times [0, 1] \rightarrow [0, \infty), \hspace{0.1 cm}$ ($r \geq 1$) such that
for each $r$
$$
\supp_{u}f(r, u) \subset [\delta_{0}, 1 - \delta_{0}], \quad \int_{0}^{1} f(r, u) du = r - 1 , \quad
\text{and} \quad f(1, u) \equiv 0.
$$
Setting $F(r, u) = u + \int_{0}^{u} f(r, t) dt$,
$\quad F_{r} := F(r, \cdot) : [0, 1] \rightarrow [0, r] \hspace{0.1 cm}$ is a diffeomorphism satisfying

\begin{eqnarray*}
F_{r}(u) = \begin{cases} u  \hspace{1.2 cm} \text{for} \hspace{0.2 cm}  0 \leq u \leq \delta_{0}  \\
 u + r -1  \hspace{0.6 cm} \text{for} \hspace{0.2 cm}  1 - \delta_{0} \leq u \leq 1.
\end{cases}
\end{eqnarray*}

\noindent
Let $g^{M}_{r}$ be a metric on $M_{r} : = \left( [0, r] \times Y \right) \cup_{\{ r \} \times Y} M_{2}$
which is the extension of $g^{M}$ by a product one on $[0, r] \times Y$.
Then $F_{r}^{\ast}g^{M}_{r}$ is a metric on $M$, which is
$\left( \begin{array}{clcr} F_{r}^{\prime}(u)^{2} & 0 \\ 0 & g_{Y} \end{array} \right)$ on $[0, 1] \times Y$.
Hence, $F_{r}^{\ast}g^{M}_{r}$ is a metric on $M$ which is a product one near $Y$. Furthermore,
$(M, F_{r}^{\ast}g^{M}_{r})$ and $(M_{r}, g^{M}_{r})$ are isometric.
Let ${\tilde \B}(r)$ and $\B(r)$ be the odd signature operators defined on $M$ and $M_{r}$ associated to the metrics $F_{r}^{\ast}g^{M}_{r}$
and $g^{M}_{r}$, respectively.
Then Corollary \ref{Corollary:2.12} leads to the following equalities.

\begin{eqnarray*}
& & \sum_{q=0}^{m} (-1)^{q+1} q \cdot
\left( \log \Det_{2 \theta} \B_{q, {\widetilde {\mathcal P}}_{0}}^{2} + \log \Det_{2 \theta} \B_{q, {\widetilde {\mathcal P}}_{1}}^{2}
- \log \Det_{2 \theta} \B_{q, \rel}^{2} - \log \Det_{2 \theta} \B_{q, \Abs}^{2} \right)  \\
& = & \sum_{q=0}^{m} (-1)^{q+1}  q \cdot
\left( \log \Det_{2 \theta} {\tilde \B(r)^{2}_{q, {\widetilde {\mathcal P}}_{0}}}
+ \log \Det_{2 \theta} {\tilde \B(r)^{2}_{q, {\widetilde {\mathcal P}}_{1}}} -
\log \Det_{2 \theta} {\tilde \B(r)^{2}_{q, \rel}} - \log \Det_{2 \theta} {\tilde \B(r)^{2}_{q, \Abs}} \right)  \\
& = & \sum_{q=0}^{m} (-1)^{q+1}  q \cdot
\left( \log \Det_{2 \theta} \B^{2}_{q, {\widetilde {\mathcal P}}_{0}}(r)
+ \log \Det_{2 \theta} \B^{2}_{q, {\widetilde {\mathcal P}}_{1}}(r)
- \log \Det_{2 \theta} \B^{2}_{q, \rel}(r) - \log \Det_{2 \theta} \B^{2}_{q, \Abs}(r)  \right)  \\
& = & \lim_{r \rightarrow \infty}
\sum_{q=0}^{m} (-1)^{q+1}  q \cdot
\left( \log \Det_{2 \theta} \B^{2}_{q, {\widetilde {\mathcal P}}_{0}}(r)
+ \log \Det_{2 \theta} \B^{2}_{q, {\widetilde {\mathcal P}}_{1}}(r)
- \log \Det_{2 \theta} \B^{2}_{q, \rel}(r) - \log \Det_{2 \theta} \B^{2}_{q, \Abs}(r)  \right)  \\
& = & 0 ,
\end{eqnarray*}

\noindent
which yields the following result. This is the main result of this section and is also interesting independently.
\vspace{0.2 cm}

\begin{theorem}  \label{Theorem:2.11}
Let $(M, g^{M})$ be a compact Riemannian manifold with boundary $Y$ and $g^{M}$ be a product metric near $Y$. Then :

\begin{eqnarray*}
\sum_{q=0}^{m} (-1)^{q+1} q \cdot \left( \log \Det \B^{2}_{q, {\widetilde {\mathcal P}}_{0}} + \log \Det \B^{2}_{q, {\widetilde {\mathcal P}}_{1}} \right)
\hspace{0.1 cm} = \hspace{0.1 cm}
\sum_{q=0}^{m} (-1)^{q+1} q \cdot \left( \log \Det \B^{2}_{q, \rel} + \log \Det \B^{2}_{q, \Abs}  \right) .
\end{eqnarray*}
\end{theorem}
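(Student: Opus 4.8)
The plan is to reduce the identity to the stretched manifolds $M_r=M_{1,r}\cup_{Y_r}M_2$, show that the weighted alternating sum of logarithms of zeta-determinants in question vanishes in the adiabatic limit $r\to\infty$, and then use the metric-variation (anomaly) formula to transport this vanishing back to $(M,g^M)$. For the first step, on $M_r$ with a product metric on $M_{1,r}$ I would apply the BFK-gluing formula in the form of Corollary~\ref{Corollary:2.8}, which rewrites
$$\sum_{q=0}^{m}(-1)^{q+1}q\Bigl(\log\Det\B^2_{q,\widetilde{\mathcal P}_0}(r)+\log\Det\B^2_{q,\widetilde{\mathcal P}_1}(r)-\log\Det\B^2_{q,\rel}(r)-\log\Det\B^2_{q,\Abs}(r)\Bigr)$$
as the corresponding alternating sum of differences $\log\Det R_{q,\mathcal P_{-,\mathcal L_0}}(r)-\log\Det R_{q,\rel}(r)$ and $\log\Det R_{q,\mathcal P_{+,\mathcal L_1}}(r)-\log\Det R_{q,\Abs}(r)$ of the Dirichlet-to-Neumann operators. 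By Corollary~\ref{Corollary:2.98} the perturbation $\mathcal K_q(r)=R_{q,\mathcal P_{-,\mathcal L_0}}(r)-R_{q,\rel}(r)$ decays exponentially off the harmonic part and like $1/r$ on the pieces of $\Ker\B_Y^2$ where it is nonzero, and by Corollary~\ref{Corollary:2.99} it annihilates $\Ker(Q_{q,2}+|\mathcal A|)$; feeding this into the deformation identity (\ref{E:2.110}) and invoking the uniform resolvent estimate of Lemma~\ref{Lemma:102} yields (\ref{E:2.222}) and (\ref{E:2.223}), so every Dirichlet-to-Neumann difference tends to $0$, and hence the whole weighted sum on $M_r$ tends to $0$ as $r\to\infty$.

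For the second step, Lemma~\ref{Lemma:2.11} expresses the $t$-variation of each of the four sums $\sum_q(-1)^{q+1}q\log\Det\B^2_{q,{\frak D}}(t)$, along a smooth family of product metrics that is constant near $Y$, as a trace over the finite-dimensional harmonic space of $\B^2_{q,{\frak D}}(t)$; by Lemma~\ref{Lemma:1.2} the harmonic spaces attached to the pairs $(\widetilde{\mathcal P}_0,\widetilde{\mathcal P}_1)$ and $(\rel,\Abs)$ are canonically identified, and a short computation shows the two trace contributions cancel in the combination of Corollary~\ref{Corollary:2.12}. Thus the weighted sum of differences in the statement is unchanged under such metric deformations. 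Now take the diffeomorphisms $F_r:[0,1]\to[0,r]$ assembled from $f(r,u)$: since $F_r$ is the identity near $u=0$, the metric $F_r^\ast g^M_r$ on $M$ is a product metric near $Y$ that agrees with $g^M$ on a fixed collar, so $\{F_s^\ast g^M_s\}_{1\le s\le r}$ is an admissible family, while $(M,F_r^\ast g^M_r)$ is isometric to $(M_r,g^M_r)$. Hence the weighted sum of differences on $(M,g^M)$ equals the one on $(M,F_r^\ast g^M_r)$ by Corollary~\ref{Corollary:2.12}, which equals the one on $(M_r,g^M_r)$; being independent of $r$, it equals its own adiabatic limit, which is $0$ by the first step. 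This is exactly the asserted identity.

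The main obstacle is the uniform estimate behind Lemma~\ref{Lemma:102}. The operator $X(r)=Q_{q,2}+|\mathcal A|+\mathcal K_{q,\rel}(r)+\pr_{\Ker R_{q,\rel}(r)}+s\mathcal K_q(r)$ degenerates as $r\to\infty$, because $Q_{q,2}+|\mathcal A|$ has a nontrivial kernel and the added finite-rank projection $\pr_{\Ker R_{q,\rel}(r)}$ only keeps the nullity constant, so $\|X(r)^{-1}\|$ cannot simply be controlled by the inverse of the spectral gap. The way around this is the splitting $\phi=\psi+\varphi$ with $\psi\in(\Ker(Q_{q,2}+|\mathcal A|))^{\perp}$ and $\varphi\in\Ker(Q_{q,2}+|\mathcal A|)$, together with the a priori bound $\|\psi\|\ge c_0\|\phi\|$ on the subspace $\mathcal W(r)$ furnished by Lemmas~\ref{Lemma:100} and \ref{Lemma:101}: this forces $X(r)\phi$ to have a component of size $\gtrsim\tfrac{c_0\lambda_1}{2}\|\phi\|$ transverse to $\Ker(Q_{q,2}+|\mathcal A|)$, while the remaining $r$-dependent terms are $o(1)$, giving $\|X(r)\phi\|\ge\tfrac{c_0\lambda_1}{2}\|\phi\|$ for $r$ large and hence $\lim_{r\to\infty}\Tr(X(r)^{-1}\mathcal K_q(r))=0$. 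Arranging all these constants to be uniform in $r$ (and independent of the deformation parameter $s\in[0,1]$) is the delicate point of the whole argument.
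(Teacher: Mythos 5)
Your proposal is correct and follows essentially the same route as the paper: the BFK-gluing reduction (Corollary \ref{Corollary:2.8}), the adiabatic vanishing of the Dirichlet-to-Neumann differences via Corollaries \ref{Corollary:2.98}, \ref{Corollary:2.99} and Lemmas \ref{Lemma:100}--\ref{Lemma:102}, and the transfer back to $(M,g^{M})$ through the metric-variation formula (Lemma \ref{Lemma:2.11}, Corollary \ref{Corollary:2.12}) together with the stretching diffeomorphisms $F_{r}$. The only point worth noting is that your identification of where the delicate uniformity lies (the lower bound $\parallel X(r)\phi\parallel\geq\frac{c_{0}\lambda_{1}}{2}\parallel\phi\parallel$ on ${\mathcal W}(r)$, uniform in $r$ and $s$) matches exactly the content of Lemma \ref{Lemma:102} in the paper.
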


\vspace{0.2 cm}

\noindent
{\it Remark} : This result improves Theorem 2.12 in [12], in which the same result was obtained under the additional assumption of
$H^{q}(M ; E) = H^{q}(M, Y ; E) = \{ 0 \}$ for each $0 \leq q \leq m$.

\vspace{0.2 cm}

For later use we include some result of [12] for eta invariants.
We denote by $\left(\Omega^{\even}(M, E)|_{Y} \right)^{\ast}$ the orthogonal complement of
$\left( \begin{array}{clcr} {\mathcal H}^{\even}(Y, E|_{Y}) \\ {\mathcal H}^{\odd}(Y, E|_{Y}) \end{array} \right)$ in
$\left( \Omega^{\even}(M, E)|_{Y} \right)$, {\it i.e.}

$$
\Omega^{\even}(M, E)|_{Y} \hspace{0.1 cm} = \hspace{0.1 cm}
\left( \Omega^{\even}(M, E)|_{Y} \right)^{\ast} \oplus
\left( \begin{array}{clcr} {\mathcal H}^{\even}(Y, E|_{Y}) \\ {\mathcal H}^{\odd}(Y, E|_{Y}) \end{array} \right),
$$
and denote by ${\mathcal P}_{\ast}$ the orthogonal projection onto $\left(\Omega^{\even}(M, E)|_{Y} \right)^{\ast}$.
We define one parameter families of orthogonal projections ${\widetilde {\frak P}}_{-}(\theta)$,
${\widetilde {\frak P}}_{+}(\theta) : \Omega^{\even}(M, E)|_{Y} \rightarrow \Omega^{\even}(M, E)|_{Y}$ by

\begin{eqnarray*}
{\widetilde {\frak P}}_{-}(\theta) & = & \Pi_{>} \cos \theta + {\mathcal P}_{-} \sin \theta + \frac{1}{2} ( 1 - \cos \theta - \sin \theta ) {\mathcal P}_{\ast} + {\mathcal P}_{{\mathcal L}_{0}} , \\
{\widetilde {\frak P}}_{+}(\theta) & = & \Pi_{>} \cos \theta + {\mathcal P}_{+} \sin \theta + \frac{1}{2} ( 1 - \cos \theta - \sin \theta ) {\mathcal P}_{\ast} + {\mathcal P}_{{\mathcal L}_{1}} ,
\qquad (0 \leq \theta \leq \frac{\pi}{2}),
\end{eqnarray*}

\vspace{0.2 cm}

\noindent
where $\Pi_{>}$ is the orthogonal projection onto the eigenspace
generated by positive eigenforms of ${\mathcal A}$ and ${\mathcal P}_{{\mathcal L}_{i}}$ is the orthogonal projection onto ${\mathcal L}_{i}$ ($i = 1, 2$).
${\widetilde {\frak P}}_{-}(\theta)$ (${\widetilde {\frak P}}_{+}(\theta)$)
is a smooth curve of orthogonal projections connecting ${\mathcal P}_{-, {\mathcal L}_{0}}$ (${\mathcal P}_{+, {\mathcal L}_{1}}$) and
$\Pi_{>, {\mathcal L}_{0}}$ ($\Pi_{>, {\mathcal L}_{1}}$).
We denote the Calder\'on projector for $\B$ by ${\mathcal C}_{M}$.
We also denote the spectral flow for $(\B_{{\widetilde {\frak P}}_{\pm}(\theta)})_{\theta \in [0, \frac{\pi}{2}]}$ and Maslov index for
$( {\widetilde {\frak P}}_{\pm}(\theta), \hspace{0.1 cm} {\mathcal C}_{M})_{\theta \in [0, \frac{\pi}{2}]} $
by $\SF (\B_{{\widetilde {\frak P}}_{\pm}(\theta)})_{\theta \in [0, \frac{\pi}{2}]}$ and
$ \Mas ( {\widetilde {\frak P}}_{\pm}(\theta), \hspace{0.1 cm} {\mathcal C}_{M})_{\theta \in [0, \frac{\pi}{2}]}$.
We refer to [3], [14] and [18] for the definitions of the Calder\'on projector, the spectral flow and Maslov index.
We refer to Theorem 3.12 in [12] for the proof of the following result.

\begin{theorem}   \label{Theorem:2.4}
Let $(M, g^{M})$ be a compact Riemannian manifold with boundary $Y$ and $g^{M}$ be a product metric near $Y$.
Then :

\vspace{0.2 cm}

\noindent
(1) $\hspace{0.2 cm} \eta( \B_{{\mathcal P}_{-, {\mathcal L}_{0}}} ) - \eta ( \B_{\Pi_{>, {\mathcal L}_{0}}} )
\hspace{0.1 cm} = \hspace{0.1 cm} \SF (\B_{{\widetilde {\frak P}}_{-}(\theta)})_{\theta \in [0, \frac{\pi}{2}]}
\hspace{0.1 cm} = \hspace{0.1 cm}  \Mas ( {\widetilde {\frak P}}_{-}(\theta), \hspace{0.1 cm} {\mathcal C}_{M})_{\theta \in [0, \frac{\pi}{2}]} $.  \newline
(2) $\hspace{0.2 cm} \eta( \B_{{\mathcal P}_{+, {\mathcal L}_{1}}} ) - \eta ( \B_{\Pi_{>, {\mathcal L}_{1}}} )
\hspace{0.1 cm} = \hspace{0.1 cm}  \SF (\B_{{\widetilde {\frak P}}_{+}(\theta)})_{\theta \in [0, \frac{\pi}{2}]}
\hspace{0.1 cm} = \hspace{0.1 cm} \Mas ( {\widetilde {\frak P}}_{+}(\theta), \hspace{0.1 cm} {\mathcal C}_{M})_{\theta \in [0, \frac{\pi}{2}]} $. \newline

\noindent
In particular, if for each $0 \leq q \leq m$, $H^{q}(M ; E) = H^{q}(M, Y ; E) = \{ 0 \}$, then
$\eta( \B_{{\mathcal P}_{-}} ) - \eta( \B_{{\mathcal P}_{+}} ) \in {\Bbb Z}$.
\end{theorem}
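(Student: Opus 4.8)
The plan is to realize the two boundary conditions compared in part~(1) (respectively part~(2)) as the endpoints $\theta=\tfrac{\pi}{2}$ and $\theta=0$ of the explicit path ${\widetilde {\frak P}}_{-}(\theta)$ (resp. ${\widetilde {\frak P}}_{+}(\theta)$), $\theta\in[0,\tfrac{\pi}{2}]$, and then to apply the general spectral flow formula for families of well-posed boundary value problems of a \emph{fixed} Dirac-type operator ([14]; see also [3] and [18]): for a smooth path $\{{\frak P}(\theta)\}$ of self-adjoint well-posed boundary conditions of $\B_{\even}$ one has
\[
\eta(\B_{\even,{\frak P}(1)})-\eta(\B_{\even,{\frak P}(0)})\;=\;\SF\bigl(\B_{{\frak P}(\theta)}\bigr)\;=\;\Mas\bigl({\frak P}(\theta),{\mathcal C}_{M}\bigr),
\]
with no continuous (local) correction term, because $\B$ and $g^{M}$ are held fixed along the path; here the second equality comes from the identification $\Ker\B_{\even,{\frak P}(\theta)}\cong\Imm{\mathcal C}_{M}\cap\Ker{\frak P}(\theta)$ together with the usual passage (with a fixed sign convention) from the spectral flow of a path of boundary value problems to the Maslov index of the corresponding path of boundary Lagrangians against the Calder\'on Lagrangian.

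The bulk of the work is to verify that $\{{\widetilde {\frak P}}_{\pm}(\theta)\}$ is such a path. I would first record the orthogonal splitting $\Omega^{\even}(M,E)|_{Y}=\Imm\Pi_{>}\oplus\Imm\Pi_{<}\oplus{\mathcal L}_{0}\oplus{\mathcal L}_{1}$ (note $\Pi_{>}+\Pi_{<}={\mathcal P}_{\ast}$, that ${\mathcal L}_{0}\oplus{\mathcal L}_{1}$ is the harmonic part, and ${\mathcal L}_{0}\perp{\mathcal L}_{1}$ by Lemma~\ref{Lemma:1.1}), with respect to which ${\widetilde {\frak P}}_{-}(\theta)$ is the fixed projection ${\mathcal P}_{{\mathcal L}_{0}}$ on the harmonic part and equals $\tfrac12\bigl({\mathcal P}_{\ast}+\cos\theta\cdot S+\sin\theta\cdot J\bigr)$ on ${\mathcal P}_{\ast}\,\Omega^{\even}(M,E)|_{Y}$, where $S:=2\Pi_{>}-{\mathcal P}_{\ast}$ and $J:=2{\mathcal P}_{-}-{\mathcal P}_{\ast}$. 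Since ${\mathcal A}$ interchanges $\Imm\nabla^{Y}$ with $\Imm\Gamma^{Y}\nabla^{Y}\Gamma^{Y}$ one gets ${\mathcal A}J=-J{\mathcal A}$, hence $SJ=-JS$ and $S^{2}=J^{2}={\mathcal P}_{\ast}$, so $(\cos\theta\,S+\sin\theta\,J)^{2}={\mathcal P}_{\ast}$ and each ${\widetilde {\frak P}}_{-}(\theta)$ is an orthogonal projection, with ${\widetilde {\frak P}}_{-}(0)=\Pi_{>,{\mathcal L}_{0}}$ and ${\widetilde {\frak P}}_{-}(\tfrac{\pi}{2})={\mathcal P}_{-,{\mathcal L}_{0}}$. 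Next I would note that ${\widetilde {\frak P}}_{-}(\theta)=U(\theta)\,\Pi_{>,{\mathcal L}_{0}}\,U(\theta)^{\ast}$ with $U(\theta):=\exp\bigl(\tfrac{\theta}{2}JS\bigr)$ a smooth path of order-zero pseudodifferential unitaries, $U(0)=\Id$; because $\gamma{\mathcal A}=-{\mathcal A}\gamma$ and $\gamma$ interchanges $\Imm\nabla^{Y}$ with $\Imm\Gamma^{Y}\nabla^{Y}\Gamma^{Y}$ we get $\gamma S\gamma^{-1}=-S$ and $\gamma J\gamma^{-1}=-J$, so $\gamma$ commutes with $U(\theta)$ and $\Imm{\widetilde {\frak P}}_{-}(\theta)$ stays Lagrangian for $\langle\,\cdot\,,\gamma\,\cdot\,\rangle_{Y}$, i.e. $\B_{\even,{\widetilde {\frak P}}_{-}(\theta)}$ is self-adjoint. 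Well-posedness along the whole path is a short computation with principal symbols: the Calder\'on symbol of $\B_{\even}$ is the positive eigenprojection of $\sigma({\mathcal A})$, and on the non-harmonic part the symbol of ${\widetilde {\frak P}}_{-}(\theta)$ remains transversal to it because $\cos(\theta/2)\neq0$ on $[0,\tfrac{\pi}{2}]$. The family ${\widetilde {\frak P}}_{+}(\theta)$ is treated identically with $({\mathcal P}_{-},{\mathcal L}_{0})$ replaced by $({\mathcal P}_{+},{\mathcal L}_{1})$. Feeding the two paths into the displayed formula yields parts~(1) and~(2).

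For the last assertion, assuming $H^{q}(M;E)=H^{q}(M,Y;E)=0$ for all $q$, the long exact cohomology sequence of the pair $(M,Y)$ forces $H^{\bullet}(Y;E|_{Y})=0$, hence ${\mathcal H}^{\bullet}(Y,E|_{Y})=\Ker\B_{Y}^{2}=0$, so ${\mathcal K}=0$ and ${\mathcal L}_{0}={\mathcal L}_{1}=0$; thus ${\mathcal P}_{-,{\mathcal L}_{0}}={\mathcal P}_{-}$, ${\mathcal P}_{+,{\mathcal L}_{1}}={\mathcal P}_{+}$, and $\Pi_{>,{\mathcal L}_{0}}=\Pi_{>,{\mathcal L}_{1}}=\Pi_{>}$. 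Subtracting part~(2) from part~(1) then gives $\eta(\B_{{\mathcal P}_{-}})-\eta(\B_{{\mathcal P}_{+}})=\SF\bigl(\B_{{\widetilde {\frak P}}_{-}(\theta)}\bigr)-\SF\bigl(\B_{{\widetilde {\frak P}}_{+}(\theta)}\bigr)\in{\Bbb Z}$, each spectral flow being an integer.

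The step I expect to be the main obstacle is the claim that ${\widetilde {\frak P}}_{\pm}(\theta)$ stays a \emph{well-posed} (elliptic) boundary condition for every $\theta$, so that $\B_{\even,{\widetilde {\frak P}}_{\pm}(\theta)}$ has discrete spectrum and a well-defined eta invariant throughout the deformation: the idempotency and the $\gamma$-Lagrangian property reduce to the algebra above once $\{{\mathcal A},J\}=0$ is established, but pinning down that this particular class of deformed boundary conditions is covered by the hypotheses of the spectral flow/Maslov machinery of [14] and [18] --- equivalently, controlling the realizations uniformly along the path, e.g. via the conjugation ${\widetilde {\frak P}}_{\pm}(\theta)=U(\theta)\,{\widetilde {\frak P}}_{\pm}(0)\,U(\theta)^{\ast}$ --- is where the care is needed.
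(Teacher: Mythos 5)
Your algebraic groundwork is sound and matches the paper's setup: the rewriting of ${\widetilde {\frak P}}_{-}(\theta)$ as $\tfrac12({\mathcal P}_{\ast}+\cos\theta\,S+\sin\theta\,J)+{\mathcal P}_{{\mathcal L}_0}$, the anticommutation $\{{\mathcal A},J\}=0$ giving idempotency, the conjugation by $U(\theta)=\exp(\tfrac{\theta}{2}JS)$, the Lagrangian/self-adjointness argument via $\gamma S\gamma^{-1}=-S$, $\gamma J\gamma^{-1}=-J$, the transversality estimate with $\cos(\theta/2)\neq 0$, and the deduction of the final integrality statement from (1) and (2) under acyclicity are all correct and in the spirit of the construction the paper uses. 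The identification of the endpoints ${\widetilde {\frak P}}_{-}(0)=\Pi_{>,{\mathcal L}_0}$, ${\widetilde {\frak P}}_{-}(\tfrac{\pi}{2})={\mathcal P}_{-,{\mathcal L}_0}$ is also right.

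The genuine gap is the step you treat as a citation: the claim that for a smooth path of self-adjoint well-posed boundary conditions of a \emph{fixed} operator one has $\eta(\B_{{\frak P}(1)})-\eta(\B_{{\frak P}(0)})=\SF$ ``with no continuous (local) correction term, because $\B$ and $g^{M}$ are held fixed.'' No such general theorem exists, and the statement is false in that generality: varying only the boundary condition can change the eta invariant continuously (already for $-i\,d/dx$ on an interval with the boundary condition $f(1)=e^{i\alpha}f(0)$ the eta invariant is an affine function of $\alpha$ between integer jumps, and for generalized APS conditions on the smooth self-adjoint Grassmannian the difference of eta invariants is in general a non-integer given by a determinant/phase term, cf. Lesch--Wojciechowski). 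So the content of Theorem \ref{Theorem:2.4} is precisely that along these \emph{particular} paths ${\widetilde {\frak P}}_{\pm}(\theta)$ the continuous part of the variation vanishes, and this must be proved, not quoted. The paper's proof (Theorem 3.12 of [12], to which it refers) does exactly this by the Br\"uning--Lesch deformation method: one conjugates by unitaries of the type you wrote down (extended into the collar, as with $\Psi_{\theta}$ in Section 4 here) to work with a fixed domain, and then shows by an explicit small-time heat-trace computation on the model cylinder --- exploiting the symmetries $\gamma S\gamma^{-1}=-S$, $\gamma J\gamma^{-1}=-J$, $\{{\mathcal A},J\}=0$ --- that the coefficient governing $\tfrac{d}{d\theta}\eta_{\B_{{\widetilde {\frak P}}_{\pm}(\theta)}}(0)$ vanishes, so the eta invariant can only jump by the spectral flow; the identification $\SF=\Mas$ against the Calder\'on projector via [14], [18] is then as you describe. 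Your proposal would become a proof only after supplying this vanishing computation (the analogue of Theorem \ref{Theorem:4.2} and Section 6 of the present paper); as written, the central analytic ingredient is assumed rather than established, and your closing worry about well-posedness, while legitimate, is not where the real difficulty lies.
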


\vspace{0.2 cm}

In the remaining part of this paper we assume that
$H^{q}(M ; E) = H^{q}(M, Y ; E) = \{ 0 \}$  for each $0 \leq q \leq m$ so that ${\mathcal L}_{0} = {\mathcal L}_{1} = \{ 0 \}$.

\vspace{0.3 cm}

\section{A family of odd signature operators on manifolds with boundary}\label{S:foso}

\vspace{0.2 cm}

In this section we construct a one parameter family of odd signature operators on manifolds with boundary connecting
$\left( \begin{array}{clcr} \B_{\mathcal{P}_{-}} & 0 \\ 0 & -\B_{\mathcal{P}_{+}} \end{array} \right)$
and the odd signature operator considered in [22], (cf. (\ref{E:3.011}) below) by using the ideas in [6] and Section 11 in [5].
For the motivation of this work we review briefly the Vertman's construction of the refined analytic torsion discussed in [22].
We first consider a direct sum of two de Rham complexes with the chirality operator ${\widetilde \Gamma}$, de Rham operator
${\widetilde \nabla}$ and odd signature operator ${\widetilde \B}$ defined as follows.

\vspace{0.2 cm}

\begin{eqnarray}   \label{E:3.a1}
L^{2}\Omega^{\bullet}(M, E) \oplus L^{2}\Omega^{\bullet}(M, E), \qquad
{\widetilde \Gamma} = \left( \begin{array}{clcr} 0 & \Gamma \\ \Gamma & 0 \end{array} \right), \quad
{\widetilde \nabla} = \left( \begin{array}{clcr} \nabla & 0 \\ 0 & \nabla \end{array} \right), \quad
{\widetilde \B}  =  {\widetilde \Gamma} {\widetilde \nabla} + {\widetilde \nabla} {\widetilde \Gamma}.
\end{eqnarray}

\vspace{0.2 cm}

\noindent
We denote by $\nabla_{\Min}$ and $\nabla_{\Max}$ the minimal and maximal closed extensions of $\nabla$ defined on smooth forms
having compact supports in the interior of $M$.
We refer to [22] for definitions of $\nabla_{\Min}$ and $\nabla_{\Max}$.
The following equalities are well known facts (cf. p.1996 in [22]).


\begin{eqnarray}   \label{E:3.a2}
\Dom ((\nabla_{\Min})^{\ast}) = \Dom ( \Gamma \nabla_{\Max} \Gamma), \qquad
\Dom ((\nabla_{\Max})^{\ast}) = \Dom ( \Gamma \nabla_{\Min} \Gamma).
\end{eqnarray}

\vspace{0.2 cm}

\noindent
We define

\begin{eqnarray}  \label{E:3.a3}
& \Omega^{q}_{\B, \Min}(M, E) & = \hspace{0.1 cm} \Dom (\nabla_{\Min}) \hspace{0.1 cm} \cap \hspace{0.1 cm} \Dom ((\nabla_{\Min})^{\ast})
\hspace{0.1 cm} \cap L^{2} \hspace{0.1 cm} \Omega^{q}(M, E),  \nonumber \\
& \Omega^{q}_{\B, \Max}(M, E) & = \hspace{0.1 cm} \Dom (\nabla_{\Max}) \hspace{0.1 cm} \cap \hspace{0.1 cm} \Dom ((\nabla_{\Max})^{\ast})
\hspace{0.1 cm} \cap L^{2} \hspace{0.1 cm} \Omega^{q}(M, E),  \nonumber \\
& \Omega^{q}_{\B^{2}, \Min}(M, E) & = \hspace{0.1 cm} \{ \omega \in \Omega^{q}_{\Min}(M, E) \mid \nabla_{\Min} \omega \in \Dom (\nabla_{\Min}^{\ast}), \quad
\nabla_{\Min}^{\ast} \omega \in \Dom (\nabla_{\Min}) \},  \nonumber \\
& \Omega^{q}_{\B^{2}, \Max}(M, E) & = \hspace{0.1 cm} \{ \omega \in \Omega^{q}_{\Max}(M, E) \mid \nabla_{\Max} \omega \in \Dom (\nabla_{\Max}^{\ast}), \quad
\nabla_{\Max}^{\ast} \omega \in \Dom (\nabla_{\Max}) \} ,
\end{eqnarray}

\noindent
and put

\begin{eqnarray}  \label{E:3.011}
{\widetilde \nabla}_{(\m)} & = & \left( \begin{array}{clcr} \nabla_{\Min} & \hspace{0.3 cm} 0 \\ 0 & \nabla_{\Max} \end{array} \right), \nonumber \\
{\widetilde \B}_{(\m)} & = & {\widetilde \Gamma} {\widetilde \nabla}_{(\m)} + {\widetilde \nabla}_{(\m)}  {\widetilde \Gamma}
\hspace{0.1 cm} = \hspace{0.1 cm}
\left( \begin{array}{clcr}  0 & \Gamma \nabla_{\Max} + \nabla_{\Min} \Gamma \\  \Gamma \nabla_{\Min} + \nabla_{\Max} \Gamma & \hspace{1.0 cm} 0 \end{array} \right),
\end{eqnarray}

\vspace{0.2 cm}

\noindent
where the subscript $(\m)$ in ${\widetilde \nabla}_{(\m)}$ and ${\widetilde \B}_{(\m)}$ stands for min/max.
We denote by ${\widetilde \B}_{(\m)}^{\even}$ and ${\widetilde \B}_{(\m)}^{2, q}$ the restriction of ${\widetilde \B}_{(\m)}$ and
$({\widetilde \B}_{(\m)})^{2}$ to even and $q$-forms, respectively.
Then the domains of ${\widetilde \B}_{(\m)}^{\even}$ and ${\widetilde \B}_{(\m)}^{2, q}$  are given as follows.

\begin{eqnarray}  \label{E:3.a4}
\Dom ({\widetilde \B}_{(\m)}^{\even})  =  \left( \begin{array}{clcr}
\Omega^{\even}_{\B, \Min}(M, E) \\ \Omega^{\even}_{\B, \Max}(M, E) \end{array} \right), \quad
\Dom ({\widetilde \B}^{2, q}_{(\m)})  =  \left( \begin{array}{clcr} \Omega^{q}_{\B^{2}, \Min}(M, E) \\
\Omega^{q}_{\B^{2}, \Max}(M, E) \end{array} \right) \hspace{0.1 cm} :=  \hspace{0.1 cm}
{\widetilde \Omega}^{q}_{{\widetilde \B}_{(\m)}^{2}}(M, E \oplus E).
\end{eqnarray}

\vspace{0.2 cm}

\noindent
In [22] Vertman considered the following complex

\begin{eqnarray}  \label{E:3.a5}
0 \longrightarrow
\cdots  \stackrel{\widetilde{\nabla}}{\longrightarrow}  {\widetilde \Omega}^{q-1}_{{\widetilde \B}_{(\m)}^{2}}(M, E \oplus E)
\stackrel{\widetilde{\nabla}}{\longrightarrow}
{\widetilde \Omega}^{q}_{{\widetilde \B}_{(\m)}^{2}}(M, E \oplus E)
\stackrel{\widetilde{\nabla}}{\longrightarrow} \cdots \longrightarrow  0.
\end{eqnarray}

\vspace{0.2 cm}

\noindent
We define ${\widetilde \B}^{\even, \trivial}_{(\m)}$ by the same way as ${\widetilde \B}^{\even}_{(\m)}$
when $\nabla$ is the trivial connection acting on the trivial line bundle $M \times {\Bbb C}$.
For simplicity we assume that $H^{\bullet}(M ; E) = H^{\bullet}(M, Y ; E) = 0$.
In this case the Vertman's construction of the refined analytic torsion $\rho_{\an, (\m)}(g^{M}, {\widetilde \nabla})$ is given as follows.

\begin{eqnarray}    \label{E:3.115}
 \log {\widetilde \rho}_{\an, (\m)}(g^{M}, {\widetilde \nabla})  & = &
\log \Det_{\gr, \theta} \widetilde{\B}^{\even}_{(\m)}  + i \pi \rk(E) \cdot
\eta_{{\widetilde{\B}^{\even, \trivial}_{(\m)}}}(0)  \nonumber \\
& = & \frac{1}{2} \sum_{q=0}^{m} (-1)^{q+1} q \cdot \log \Det_{2 \theta}{\widetilde{\B}}^{2, q}_{(\m)}
+ \hspace{0.1 cm}  \frac{i \pi}{2} \sum_{q=0}^{m} (-1)^{q+1} q \cdot \zeta_{\widetilde{\B}^{2, q}_{(\m)}}(0) \nonumber  \\
& - & i \pi \left( \eta \left({\widetilde{\B}^{\even}_{(\m)}} \right) -
\frac{1}{2} \rk(E) \cdot \eta_{{\widetilde{\B}^{\even, \trivial}_{(\m)}}}(0) \right).
\end{eqnarray}

\vspace{0.2 cm}

We denote by $\Omega^{\bullet}(M, E)$ the space of all smooth $E$-valued forms on $M$. We define

\begin{eqnarray}     \label{E:3.117}
\Omega^{\bullet}_{\rel}(M,E) & := & \{ \hspace{0.1 cm} \omega \in \Omega^{\bullet}(M, E) \mid \hspace{0.1 cm} \mathcal{P}_{\rel}(\omega|_Y)
\hspace{0.1 cm} := \hspace{0.1 cm}  dx \lrcorner \left( dx \wedge (\omega|_Y) \right) \hspace{0.1 cm} = \hspace{0.1 cm} 0 \hspace{0.1 cm},
\hspace{0.1 cm}
\mathcal{P}_{\rel}((\Gamma \nabla \Gamma \omega)|_Y) \hspace{0.1 cm} = \hspace{0.1 cm} 0 \hspace{0.1 cm} \}, \nonumber   \\
\Omega^{\bullet}_{\Abs}(M,E) & := & \{ \hspace{0.1 cm} \omega \in \Omega^{\bullet}(M, E) \mid \hspace{0.1 cm} \mathcal{P}_{\Abs}(\omega|_Y)
\hspace{0.1 cm} := \hspace{0.1 cm} dx \lrcorner (\omega|_Y) \hspace{0.1 cm} = \hspace{0.1 cm} 0 \hspace{0.1 cm},
\hspace{0.1 cm} \mathcal{P}_{\Abs}((\nabla \omega)|_Y) \hspace{0.1 cm} = \hspace{0.1 cm} 0 \hspace{0.1 cm} \},
\end{eqnarray}

\vspace{0.2 cm}

\noindent
and denote by $\B^{2}_{q, \rel}$ and $\B^{2}_{q, \Abs}$ the restriction of $\B^{2}$ to $\Omega^{q}_{\rel}(M,E)$
and $\Omega^{q}_{\Abs}(M,E)$, respectively.
It is a well known facts (cf. Theorem 3.2 in [22], Section 2.7 in [10]) that

\begin{eqnarray}     \label{E:3.118}
& & \Omega^{q}_{\rel}(M,E)  \hspace{0.2 cm}  \subset \hspace{0.2 cm}  \Omega^{q}_{\B^{2}, \Min}(M, E)
\hspace{0.2 cm}  \subset \hspace{0.2 cm}  \Omega^{q}_{\B, \Min}(M, E), \nonumber  \\
& & \Omega^{q}_{\Abs}(M,E) \hspace{0.2 cm}  \subset \hspace{0.2 cm}  \Omega^{q}_{\B^{2}, \Max}(M, E)
\hspace{0.2 cm}  \subset \hspace{0.2 cm}  \Omega^{q}_{\B, \Max}(M, E),   \nonumber  \\
& & \Spec \left( \B^{2}_{q, \rel} \right) \hspace{0.2 cm} = \hspace{0.2 cm} \Spec \left( \B^{2}|_{\Omega^{q}_{\B^{2}, \Min} (M, E)} \right), \quad
\Spec \left( \B^{2}_{q, \Abs} \right)  \hspace{0.1 cm} = \hspace{0.1 cm}
\Spec \left( \B^{2}|_{\Omega^{q}_{\B^{2}, \Max} (M, E)} \right) ,
\end{eqnarray}

\noindent
which leads to

\begin{eqnarray}    \label{E:3.119}
\log \Det_{2 \theta} \left( {\widetilde \B}^{2, q}_{(\m)} \right) & = & \log \Det_{2 \theta} \left( \B^{2, q}_{\rel} \right) +
\log \Det_{2 \theta} \left( \B^{2, q}_{\Abs} \right), \nonumber  \\
\zeta_{{\widetilde \B}^{2, q}_{(\m)}}(0) & = & \zeta_{\B^{2, q}_{\rel}}(0) + \zeta_{\B^{2, q}_{\Abs}}(0) .
\end{eqnarray}

\vspace{0.2 cm}

\noindent
The proof of the following lemma is similar to the proof of Lemma 3.4 in [11].

\begin{lemma}  \label{Lemma:3.0}
Let $(M, g^{M})$ be a compact oriented Riemannian manifold with boundary $Y$ and $g^{M}$ be a product metric near $Y$. We assume that
$H^{q}(M, Y ; E) = H^{q}(M ; E) = 0$ for each $0 \leq q \leq m$. Then,

$$
\zeta_{\B^{2, q}_{\rel}}(0) \hspace{0.1 cm} + \hspace{0.1 cm} \zeta_{\B^{2, q}_{\Abs}}(0)
\hspace{0.1 cm} = \hspace{0.1 cm} \zeta_{\B^{2, q}_{{\mathcal P}_{-, {\mathcal L}_{0}}}}(0) \hspace{0.1 cm} + \hspace{0.1 cm}
\zeta_{\B^{2, q}_{{\mathcal P}_{+, {\mathcal L}_{1}}}}(0) \hspace{0.1 cm} = \hspace{0.1 cm} 0 .
$$
\end{lemma}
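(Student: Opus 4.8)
The plan is to reduce the statement to the small-time behaviour of the heat traces. By (\ref{E:1.15}) together with the definition of $\zeta_{\B^{2, q}_{\frak D}}(s)$, for each boundary condition ${\frak D} \in \{\mathcal{P}_{-, \mathcal{L}_0}, \mathcal{P}_{+, \mathcal{L}_1}, \rel, \Abs\}$ one has $\zeta_{\B^{2, q}_{\frak D}}(0) = a_m(\B^{2, q}_{\frak D}) - \Dim \Ker \B^{2, q}_{\frak D}$, where $a_m(\B^{2, q}_{\frak D})$ is the coefficient of $t^0$ in the asymptotic expansion $\Tr(e^{-t \B^{2, q}_{\frak D}}) \sim \sum_{k \geq 0} a_k(\B^{2, q}_{\frak D})\, t^{(k-m)/2}$ as $t \to 0^+$. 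Under the standing assumption $H^q(M; E) = H^q(M, Y; E) = 0$ we have ${\mathcal L}_0 = {\mathcal L}_1 = \{0\}$, hence ${\mathcal K} = \{0\}$, ${\mathcal H}^\bullet(Y, E|_Y) = \{0\}$ and $\B_Y^2$ is invertible; and by Lemma \ref{Lemma:1.2} all four kernels $\Ker \B^{2, q}_{\frak D}$ vanish. So it suffices to prove $a_m(\B^{2, q}_{\rel}) + a_m(\B^{2, q}_{\Abs}) = 0$ and $a_m(\B^{2, q}_{\mathcal{P}_{-, \mathcal{L}_0}}) + a_m(\B^{2, q}_{\mathcal{P}_{+, \mathcal{L}_1}}) = 0$.

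Next I would localise these coefficients at the boundary. Since $\B^2_q$ is a Laplace type operator and $m = \Dim M$ is odd, the interior contribution to $a_m$ --- the integral over $M$ of a local heat invariant of odd weight --- vanishes, so $a_m(\B^{2, q}_{\frak D})$ is an integral over $Y$ of a density determined by the finitely many jets at $Y$ of the metric and the connection and by the symbol of the boundary projection ${\frak D}$. As $g^M$ is a product metric on the collar $[0, 1) \times Y$, all normal derivatives of the metric vanish along $Y$, so this boundary density coincides with the one for the model operator $-\nabla_{\partial_x}^2 + \B_Y^2$ on $[0, \infty) \times Y$ (see (\ref{E:1.11})) subject to ${\frak D}$ at $x = 0$. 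On this model the problem separates along the spectral decomposition of $\B_Y^2$ and reduces, on each eigenvalue $\mu^2 > 0$ and in each of the tangential and normal components of (\ref{E:1.3}), to the half-line operator $-\partial_x^2 + \mu^2$.

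The crux is then the combinatorics of which half-line boundary condition each ${\frak D}$ induces on each $\B_Y^2$-eigenmode. For the genuinely local conditions this is read off directly from (\ref{E:3.117}): along a product boundary the relative condition imposes Dirichlet on the tangential and Neumann on the normal component, and the absolute condition the opposite; equivalently, this is visible in the Dirichlet--to--Neumann corrections ${\mathcal K}_{q, \rel}(r)$, ${\mathcal K}_{q, \Abs}(r)$ of Lemma \ref{Lemma:2.8}, which on each $\B_Y^2$-eigenmode equal the Dirichlet correction $\tfrac{2\mu}{e^{2\mu r} - 1}$ for one member of the pair and the Neumann correction $-\tfrac{2\mu}{e^{2\mu r} + 1}$ for the other. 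The same dichotomy holds, by Lemma \ref{Lemma:2.8}, for the pseudodifferential pair $({\mathcal P}_{-, {\mathcal L}_0}, {\mathcal P}_{+, {\mathcal L}_1})$: on every eigenmode of $\B_Y^2$ exactly one of them induces Dirichlet and the other Neumann (with no remaining ``harmonic'' modes since ${\mathcal H}^\bullet(Y, E|_Y) = \{0\}$). Since the boundary term in the heat trace asymptotics of $-\partial_x^2 + \mu^2$ on the half-line is $-\tfrac14 e^{-t\mu^2}$ for Dirichlet and $+\tfrac14 e^{-t\mu^2}$ for Neumann, the two boundary densities in either pair cancel modewise; hence $a_m(\B^{2, q}_{\rel}) + a_m(\B^{2, q}_{\Abs}) = 0$ and $a_m(\B^{2, q}_{{\mathcal P}_{-, {\mathcal L}_0}}) + a_m(\B^{2, q}_{{\mathcal P}_{+, {\mathcal L}_1}}) = 0$, which finishes the argument. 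As independent checks, the first identity also follows by doubling $M$ along $Y$ (a smooth closed odd dimensional manifold, on whose $q$-form operator $a_m$ vanishes and whose spectrum is the union, with multiplicity, of $\Spec(\B^{2, q}_{\rel})$ and $\Spec(\B^{2, q}_{\Abs})$), and the second then follows from it via Lemma 2.3 of [12] on the stretched cylinder $M_{1, r}$ and the locality of the boundary heat coefficient. The step requiring the most care is the localisation and the modewise identification of the induced half-line boundary conditions for the \emph{pseudodifferential} projections ${\mathcal P}_{\pm, {\mathcal L}}$, where the near-boundary structure of $\B$ from Section 2 and the computations of [11], [12] (in particular Lemma \ref{Lemma:2.8}) are essential; for the local conditions $\rel$, $\Abs$ it is routine.
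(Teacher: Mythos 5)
Your argument is correct and, for the pair $(\rel,\Abs)$, it is essentially the argument the paper gives: the paper builds a parametrix by gluing the heat kernel of the closed double $M\cup_Y M$ (whose diagonal expansion has vanishing $t^{0}$-coefficient since $m$ is odd) with the explicit half-cylinder kernel, and reads off $\zeta_{\B^{2,q}_{\rel}}(0)=\tfrac14\left(-a_{0}(Y,q)+a_{0}(Y,q-1)\right)$ and its negative for $\Abs$; this is exactly your ``interior term vanishes in odd dimension, boundary term is the product-model Dirichlet/Neumann contribution $\mp\tfrac14$'' computation, just packaged with explicit cutoffs instead of an appeal to locality of $a_{m}$. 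Where you genuinely diverge is the second equality: the paper simply cites Lemma 3.4 of [11], whereas you give a uniform mode-wise argument for ${\mathcal P}_{-,{\mathcal L}_{0}}$, ${\mathcal P}_{+,{\mathcal L}_{1}}$ using the dichotomy visible in Lemma \ref{Lemma:2.8} (on each $\B_{Y}^{2}$-eigenmode one projection induces Dirichlet and the other Neumann, and acyclicity removes the harmonic modes), with the doubling argument and Lemma 2.3 of [12] as cross-checks. This buys a self-contained proof treating both pairs on the same footing; the price is that two points you treat as routine are really inherited from [11]: first, for the pseudodifferential conditions the existence of a short-time expansion without a $t^{0}\log t$ term (so that $\zeta(0)$ equals the constant coefficient minus the kernel dimension) is part of what [11] proves, not automatic from Grubb--Seeley theory; second, the identification of the induced scalar condition on the complementary modes as honest Neumann (coming from the second condition ${\mathcal P}\left((\B\psi)|_{Y}\right)=0$) should be justified, e.g.\ exactly by the computation underlying Lemma \ref{Lemma:2.8}, since a priori it could be of Robin type. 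With those attributions made explicit, your proof is complete and slightly more informative than the paper's, since it yields the individual values of all four $\zeta(0)$'s, not only the vanishing of the two sums.
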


\begin{proof} :
We denote by ${\mathcal E}^{q}_{\double}$, ${\mathcal E}^{q}_{\cyl, \rel}$ the heat kernels of $e^{-t \B^{2, q}_{\double}}$
and $e^{-t \B^{2, q}_{\cyl, \rel}}$, where $\B^{2,q}_{\double}$ and $\B^{2, q}_{\cyl, \rel}$
are Laplacians acting on $q$-forms on the closed double $M \cup_{Y} M$
and the half-infinite cylinder $Y \times [0, \infty)$ with the relative boundary condition at $Y \times \{ 0 \}$, respectively.
Let $\rho(a, b)$ be a smooth increasing function of real variable such that

$$
\rho(a, b) (u) = \left\{ \begin{array}{ll} 0 & \mbox{for $u \leq a$} \\
1 & \mbox{for $u \geq b$} \hspace{0.1 cm}.
\end{array} \right.
$$

\noindent
We put
\begin{eqnarray}   \label{E:5.55}
\phi_{1} := 1 - \rho(\frac{5}{7}, \frac{6}{7}), &\quad & \phi_{2} := \rho(\frac{1}{7}, \frac{2}{7})  \nonumber \\
\psi_{1} := 1 - \rho(\frac{3}{7}, \frac{4}{7}), &\quad & \psi_{2} := \rho(\frac{3}{7}, \frac{4}{7}) .
\end{eqnarray}
Then a parametrix $Q(t, (w, x), (w^{\prime}, y))$ of the kernel of $e^{- t \B^{2, q}_{\rel}}$ is given as follows.

\begin{eqnarray*}
Q(t,  (w, x), (w^{\prime}, y)) & = & \phi_{1}(x) {\mathcal E}^{q}_{\cyl, \rel} (t,  (w, x), (w^{\prime}, y)) \psi_{1}(y) +
\phi_{2}(x) {\mathcal E}^{q}_{\double} (t,  (w, x), (w^{\prime}, y)) \psi_{2}(y).
\end{eqnarray*}

\noindent
It is a well known fact that

\begin{eqnarray}
{\mathcal E}^{q}_{\double} (t,  (w, x), (w, x)) & \sim &  \sum_{j=0}^{\infty} a_{m-j}(w, x) t^{- \frac{m-j}{2}} \qquad \text{with} \quad
a_{0}(w, x) = 0 ,     \\
{\mathcal E}^{q}_{\cyl} (t, (w, x), (w, x)) & = & \frac{1}{\sqrt{4 \pi t}} \left( e^{- \frac{(x - y)^{2}}{4 t}} - e^{- \frac{(x + y)^{2}}{4 t}} \right)
e^{- t \B^{2, q}_{Y}}
+ \frac{1}{\sqrt{4 \pi t}} \left( e^{- \frac{(x - y)^{2}}{4 t}} + e^{- \frac{(x + y)^{2}}{4 t}} \right)
e^{- t \B^{2, q-1}_{Y}} . \nonumber
\end{eqnarray}

\noindent
Since $\Tr \left( e^{-t \B^{2, q}_{Y}}  \right) \sim \sum_{j=0}^{\infty} a_{\frac{m-1}{2} - j}(Y, q) \hspace{0.1 cm} t^{- \frac{m-1}{2} + j}$, we have

\begin{eqnarray*}
\zeta_{\B^{2, q}_{\rel}} (0) \hspace{0.1 cm} = \hspace{0.1 cm} \frac{1}{4} \left( - a_{0}(Y, q) + a_{0}(Y, q-1) \right).
\end{eqnarray*}

\noindent
A similar method shows that

\begin{eqnarray*}
\zeta_{\B^{2, q}_{\Abs}} (0) \hspace{0.1 cm} = \hspace{0.1 cm} \frac{1}{4} \left( a_{0}(Y, q) - a_{0}(Y, q-1) \right),
\end{eqnarray*}

\noindent
which completes the proof of the first equality. The second equality is proven in Lemma 3.4 in [11].
\end{proof}

\noindent
{\it Remark} : More generally, one can show that if $g^{M}$ is a product metric near boundary, then
$$
\zeta_{\B^{2, q}_{\rel}}(0) \hspace{0.1 cm} + \hspace{0.1 cm}
\zeta_{\B^{2, q}_{\Abs}}(0) \hspace{0.1 cm} = \hspace{0.1 cm} \zeta_{\B^{2, q}_{{\mathcal P}_{-, {\mathcal L}_{0}}}}(0) \hspace{0.1 cm} + \hspace{0.1 cm}
\zeta_{\B^{2, q}_{{\mathcal P}_{+, {\mathcal L}_{1}}}}(0)
\hspace{0.1 cm} = \hspace{0.1 cm} - \left( \Dim H^{q}(M, Y ; E) + \Dim H^{q}(M ; E) \right).
$$

\vspace{0.2 cm}

We recall that

\begin{eqnarray}  \label{E:4.1313}
{\widetilde \B}^{\even}_{(\m)} & : &  \Omega^{\even}_{\B, \Min}(M, E) \oplus \Omega^{\even}_{\B, \Max}(M, E)  \hspace{0.1 cm}  \rightarrow  \hspace{0.1 cm}
L^{2}\Omega^{\even}(M, E) \oplus L^{2}\Omega^{\even}(M, E)     \\
{\widetilde \B}^{\even}_{(\R/\A)} & := & \left( \begin{array}{clcr} 0 & \B^{\even}_{\Abs} \\ \B^{\even}_{\rel} & \hspace{0.2 cm} 0 \end{array} \right)  :
\Omega^{\even}_{\rel}(M, E) \oplus \Omega^{\even}_{\Abs}(M, E) \hspace{0.1 cm}  \rightarrow  \hspace{0.1 cm}
\Omega^{\even}(M, E) \oplus \Omega^{\even}(M, E),   \nonumber
\end{eqnarray}

\vspace{0.2 cm}

\noindent
where the subscript (r/a) in ${\widetilde \B}^{\even}_{(\R/\A)}$ stands for rel/abs.
By the same reason as in (\ref{E:3.118}), we have

\begin{eqnarray}   \label{E:3.120}
\Spec \left( {\widetilde \B}^{\even}_{(\m)} \right) \hspace{0.1 cm}  =  \hspace{0.1 cm}
\Spec \left( {\widetilde \B}^{\even}_{(\R/\A)}  \right).
\end{eqnarray}

\vspace{0.2 cm}

\noindent
By (\ref{E:3.119}), (\ref{E:3.120}) and Lemma \ref{Lemma:3.0} we can rewrite (\ref{E:3.115}) as follows.

\begin{eqnarray}    \label{E:3.121}
\log {\widetilde \rho}_{\an, (\m)}(g^{M}, {\widetilde \nabla})  & = &
\frac{1}{2} \sum_{q=0}^{m} (-1)^{q+1} q \cdot \left( \log \Det_{2 \theta} {\B}^{2, q}_{\rel} + \log \Det_{2 \theta} {\B}^{2, q}_{\Abs} \right)
\nonumber  \\
& & - \hspace{0.1 cm} i \pi \left( \eta \left( {\widetilde \B}^{\even}_{(\R/\A)} \right)  -
\frac{1}{2} \rk(E) \cdot \eta_{{\widetilde \B}^{\even, \trivial}_{(\R/\A)}}(0) \right) .
\end{eqnarray}

\vspace{0.3 cm}

We next consider another complex, which is similar to (\ref{E:3.a5}).
We put (cf. (\ref{E:2.17}))

\begin{eqnarray*}
{\widetilde \Omega}^{q}_{({\widetilde {\mathcal P}}_{0}/{\widetilde {\mathcal P}}_{1})}(M, E \oplus E) \hspace{0.1 cm} := \hspace{0.1 cm}
\Omega^{q}_{{\widetilde {\mathcal P}}_{0}}(M, E) \oplus \Omega^{q}_{{\widetilde {\mathcal P}}_{1}}(M, E),
\end{eqnarray*}

\noindent
and consider the following complex

\begin{eqnarray}    \label{E:3.116}
\cdots  \stackrel{\widetilde{\nabla}}{\longrightarrow}
{\widetilde \Omega}^{q-1}_{({\widetilde {\mathcal P}}_{0}/{\widetilde {\mathcal P}}_{1})} (M, E \oplus E)  & \stackrel{\widetilde{\nabla}}{\longrightarrow}
{\widetilde \Omega}^{q}_{({\widetilde {\mathcal P}}_{0}/{\widetilde {\mathcal P}}_{1})} (M, E \oplus E)
\stackrel{\widetilde{\nabla}}{\longrightarrow} \cdots
\end{eqnarray}

\vspace{0.2 cm}

\noindent
with the following operators

\begin{eqnarray}  \label{E:4.1616}
{\widetilde \Gamma}_{(\pm)} & = & \left( \begin{array}{clcr} \Gamma & 0 \\ 0 & -\Gamma \end{array} \right),  \quad
{\widetilde \nabla}_{({\widetilde {\mathcal P}}_{0}/{\widetilde {\mathcal P}}_{1})} \hspace{0.1 cm} = \hspace{0.1 cm}
\left( \begin{array}{clcr} \nabla_{{\widetilde {\mathcal P}}_{0}} & 0 \\ 0 & \nabla_{{\widetilde {\mathcal P}}_{1}} \end{array} \right),  \\
{\widetilde \B}^{\even}_{({\widetilde {\mathcal P}}_{0}/{\widetilde {\mathcal P}}_{1})} & := & {\widetilde \Gamma}_{(\pm)}
{\widetilde \nabla}_{({\widetilde {\mathcal P}}_{0}/{\widetilde {\mathcal P}}_{1})}
+ {\widetilde \nabla}_{({\widetilde {\mathcal P}}_{0}/{\widetilde {\mathcal P}}_{1})} {\widetilde \Gamma}_{(\pm)} \hspace{0.1 cm} = \hspace{0.1 cm}
\left( \begin{array}{clcr} \B^{\even}_{{\mathcal P}_{-}} & 0 \\ 0 & - \B^{\even}_{{\mathcal P}_{+}} \end{array} \right),  \quad
{\widetilde \B}^{2, q}_{({\widetilde {\mathcal P}}_{0}/{\widetilde {\mathcal P}}_{1})} :=
\left( \begin{array}{clcr} \B^{2, q}_{{\widetilde {\mathcal P}}_{0}} & 0 \\ 0 & \B^{2, q}_{{\widetilde {\mathcal P}}_{1}} \end{array} \right). \nonumber
\end{eqnarray}

\vspace{0.2 cm}

\noindent
When $H^{\bullet}(M ; E)= H^{\bullet}(M, Y ; E) = 0$,
we define the refined analytic torsion ${\widetilde \rho}_{\an, ({\mathcal P}_{-}/{\mathcal P}_{+})}(g^{M}, {\widetilde \nabla})$
with respect to this complex and the boundary conditions ${\mathcal P}_{-}$, ${\mathcal P}_{+}$ as follows (cf. (\ref{E:1.17})).

\begin{eqnarray}
& & \log {\widetilde \rho}_{\an, ({\mathcal P}_{-}/{\mathcal P}_{+})}(g^{M}, {\widetilde \nabla})  \nonumber \\
& := &  \log \Det_{\gr, \theta} \left( \B^{\even}_{{\mathcal P}_{-}} \right) + \log \Det_{\gr, \theta} \left( - \B^{\even}_{{\mathcal P}_{+}} \right)
+ \frac{\pi i}{2} \rk(E)
\left( \eta_{\B^{\even, \trivial}_{{\mathcal P}_{-}}}(0) - \eta_{\B^{\even, \trivial}_{{\mathcal P}_{+}}}(0) \right)
\nonumber   \\
& = & \frac{1}{2} \sum_{q=0}^{m} (-1)^{q+1} \cdot q \cdot \left( \log \Det_{2 \theta} \B^{2, q}_{{\widetilde {\mathcal P}}_{0}}
 +  \log \Det_{2 \theta} \B^{2, q}_{{\widetilde {\mathcal P}}_{1}} \right)  \nonumber   \\
& & - i \pi \left( \eta(\B^{\even}_{{\mathcal P}_{-}}) - \eta(\B^{\even}_{{\mathcal P}_{+}}) \right)
+ \frac{\pi i}{2} \rk(E) \left( \eta_{\B^{\even, \trivial}_{{\mathcal P}_{-}}}(0) - \eta_{\B^{\even, \trivial}_{{\mathcal P}_{+}}}(0) \right) \nonumber  \\
& = &  \log \rho_{\an, {\mathcal P}_{-}}(g^{M}, \nabla) + \log {\overline{\rho_{\an, {\mathcal P}_{+}}(g^{M}, \nabla)}}  ,
\end{eqnarray}

\vspace{0.2 cm}

\noindent
where ${\overline{\rho_{\an, {\mathcal P}_{+}}(g^{M}, \nabla)}}$ is the complex conjugation of $\rho_{\an, {\mathcal P}_{+}}(g^{M}, \nabla)$.
Theorem \ref{Theorem:2.11} and Lemma \ref{Lemma:3.0} lead to the following result.

\begin{corollary}  \label{Corollary:3.3}
Let $(M, g^{M})$ be a compact oriented Riemannian manifold with boundary $Y$. We assume that $g^{M}$ is a product metric near $Y$ and
$H^{\bullet}(M ; E)= H^{\bullet}(M, Y ; E) = 0$. Then,
\begin{eqnarray*}
& & \log {\widetilde \rho}_{\an, (\m)}(g^{M}, {\widetilde \nabla}) - \log {\widetilde \rho}_{\an, ({\mathcal P}_{-}/{\mathcal P}_{+})}(g^{M}, {\widetilde \nabla}) \hspace{0.1 cm} = \hspace{0.1 cm}
- \hspace{0.1 cm} i \pi \left( \eta \left( {\widetilde \B}^{\even}_{(\R/\A)} \right)  - \left( \eta(\B^{\even}_{{\mathcal P}_{-}})
- \eta(\B^{\even}_{{\mathcal P}_{+}}) \right) \right)  \\
& & \hspace{3.0 cm} + \hspace{0.1 cm} \frac{\pi i}{2} \rk(E) \left( \eta_{{\widetilde \B}^{\even, \trivial}_{(\R/\A)}}(0) -
\left( \eta_{\B^{\even, \trivial}_{{\mathcal P}_{-}}}(0) - \eta_{\B^{\even, \trivial}_{{\mathcal P}_{+}}}(0) \right) \right).
\end{eqnarray*}
\end{corollary}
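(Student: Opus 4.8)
The plan is to obtain the identity by subtracting, term by term, the two closed-form expressions for the refined analytic torsions already assembled above, and by observing that the difference of their Ray-Singer (zeta-determinant) parts vanishes by Theorem \ref{Theorem:2.11}.

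First I would fix the two expressions to be compared. For Vertman's torsion I would use (\ref{E:3.121}), which presents $\log {\widetilde \rho}_{\an, (\m)}(g^{M}, {\widetilde \nabla})$ as $\tfrac12\sum_{q=0}^{m}(-1)^{q+1}q\bigl(\log\Det_{2\theta}\B^{2,q}_{\rel}+\log\Det_{2\theta}\B^{2,q}_{\Abs}\bigr)$ minus the eta contribution $i\pi\bigl(\eta({\widetilde \B}^{\even}_{(\R/\A)})-\tfrac12\rk(E)\,\eta_{{\widetilde \B}^{\even,\trivial}_{(\R/\A)}}(0)\bigr)$; the passage from (\ref{E:3.115}) to (\ref{E:3.121}) is precisely where (\ref{E:3.119}), (\ref{E:3.120}) and Lemma \ref{Lemma:3.0} were invoked, the last of these in order to kill the scalar term $\tfrac{i\pi}{2}\sum_{q}(-1)^{q+1}q\,\zeta_{{\widetilde \B}^{2,q}_{(\m)}}(0)$. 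For the authors' torsion I would use the displayed chain of equalities above, which presents $\log {\widetilde \rho}_{\an, ({\mathcal P}_{-}/{\mathcal P}_{+})}(g^{M}, {\widetilde \nabla})$ as $\tfrac12\sum_{q=0}^{m}(-1)^{q+1}q\bigl(\log\Det_{2\theta}\B^{2,q}_{{\widetilde {\mathcal P}}_{0}}+\log\Det_{2\theta}\B^{2,q}_{{\widetilde {\mathcal P}}_{1}}\bigr)$ minus $i\pi\bigl(\eta(\B^{\even}_{{\mathcal P}_{-}})-\eta(\B^{\even}_{{\mathcal P}_{+}})\bigr)$ plus $\tfrac{\pi i}{2}\rk(E)\bigl(\eta_{\B^{\even,\trivial}_{{\mathcal P}_{-}}}(0)-\eta_{\B^{\even,\trivial}_{{\mathcal P}_{+}}}(0)\bigr)$; this formula comes from adding (\ref{E:1.17}) and the analogue of (\ref{E:1.18}) for the block $-\B^{\even}_{{\mathcal P}_{+}}$, and in it the scalar corrections $\tfrac{\pi i}{2}\bigl(\tfrac14\sum_{q}\zeta_{\B_{Y,q}^{2}}(0)+\sum_{q}(r-1-q)(l_{q}^{+}-l_{q}^{-})\bigr)$ of (\ref{E:1.17})--(\ref{E:1.18}) have already dropped out of the combination, $l_{q}^{\pm}=0$ in any case since the standing hypothesis $H^{\bullet}(M;E)=H^{\bullet}(M,Y;E)=0$ forces ${\mathcal L}_{0}={\mathcal L}_{1}=\{0\}$.

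Next I would subtract the second expression from the first. The difference of their zeta-determinant parts is
$$
\tfrac12\sum_{q=0}^{m}(-1)^{q+1}q\,\Bigl[\bigl(\log\Det_{2\theta}\B^{2,q}_{\rel}+\log\Det_{2\theta}\B^{2,q}_{\Abs}\bigr)-\bigl(\log\Det_{2\theta}\B^{2,q}_{{\widetilde {\mathcal P}}_{0}}+\log\Det_{2\theta}\B^{2,q}_{{\widetilde {\mathcal P}}_{1}}\bigr)\Bigr],
$$
which vanishes identically by Theorem \ref{Theorem:2.11}. Hence only the eta parts survive, and collecting them yields $-\,i\pi\bigl(\eta({\widetilde \B}^{\even}_{(\R/\A)})-(\eta(\B^{\even}_{{\mathcal P}_{-}})-\eta(\B^{\even}_{{\mathcal P}_{+}}))\bigr)$ together with $\tfrac{\pi i}{2}\rk(E)\bigl(\eta_{{\widetilde \B}^{\even,\trivial}_{(\R/\A)}}(0)-(\eta_{\B^{\even,\trivial}_{{\mathcal P}_{-}}}(0)-\eta_{\B^{\even,\trivial}_{{\mathcal P}_{+}}}(0))\bigr)$, which is exactly the right-hand side of the asserted identity.

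There is in fact no analytic difficulty: once Theorem \ref{Theorem:2.11} and Lemma \ref{Lemma:3.0} are available, the corollary is a purely formal consequence of the two torsion formulas, and the genuinely substantial input --- the comparison of the two Ray-Singer analytic torsions --- is already packaged inside Theorem \ref{Theorem:2.11}. The only points demanding attention are bookkeeping ones: one must check that every scalar zeta-function correction has been fully removed before subtracting (the term $\zeta_{{\widetilde \B}^{2,q}_{(\m)}}(0)$ on Vertman's side, eliminated via Lemma \ref{Lemma:3.0}; the terms $\sum_{q}\zeta_{\B_{Y,q}^{2}}(0)$ and the $l_{q}^{\pm}$ on the authors' side, eliminated by the internal cancellation in (\ref{E:1.17})--(\ref{E:1.18}) and by ${\mathcal L}_{0}={\mathcal L}_{1}=\{0\}$), and one must track the sign conventions induced by the block $-\B^{\even}_{{\mathcal P}_{+}}$ and by the even/odd swap defining ${\widetilde {\mathcal P}}_{0}$ and ${\widetilde {\mathcal P}}_{1}$. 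What remains open, and is taken up in the next section, is the comparison of the eta-invariant differences appearing on the right-hand side, which will be carried out by a deformation of the odd signature operator.
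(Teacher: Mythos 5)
Your proposal is correct and is essentially the paper's own argument: the paper derives the corollary by subtracting the expression for $\log {\widetilde \rho}_{\an, ({\mathcal P}_{-}/{\mathcal P}_{+})}$ from (\ref{E:3.121}) and cancelling the zeta-determinant parts via Theorem \ref{Theorem:2.11}, with Lemma \ref{Lemma:3.0} (and ${\mathcal L}_{0}={\mathcal L}_{1}=\{0\}$) having already removed the scalar zeta corrections. Your bookkeeping of those corrections matches what the paper does implicitly, so nothing further is needed.
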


\vspace{0.3 cm}

The purpose of this paper is to compare ${\widetilde \rho}_{\an, (\m)}(g^{M}, {\widetilde \nabla})$ with
${\widetilde \rho}_{\an, ({\mathcal P}_{-}/{\mathcal P}_{+})}(g^{M}, {\widetilde \nabla}) $.
By Corollary \ref{Corollary:3.3}
it's enough to compare $\eta \left( {\widetilde \B}^{\even}_{(\R/\A)} \right)$ with
$\eta(\B^{\even}_{{\mathcal P}_{-}}) - \eta(\B^{\even}_{{\mathcal P}_{+}})$.
For this purpose we are going to use a deformation of odd signature operators and boundary conditions simultaneously.
We here note that $\eta(\B^{\even}_{{\mathcal P}_{-}}) - \eta(\B^{\even}_{{\mathcal P}_{+}})$
and $\eta_{\B^{\even, \trivial}_{{\mathcal P}_{-}}}(0) - \eta_{\B^{\even, \trivial}_{{\mathcal P}_{+}}}(0)$  are integers by Theorem \ref{Theorem:2.4}.
We are next going to construct a one parameter family of operators connecting ${\widetilde \B}^{\even}_{(\R/\A)}$ and
${\widetilde \B}^{\even}_{({\mathcal P}_{-}/{\mathcal P}_{+})}$.
We begin with the following de Rham complex

$$
\Omega^\bullet(M,E) \oplus \Omega^\bullet(M,E) \hspace{0.1 cm} \equiv \hspace{0.1 cm} \Omega^\bullet(M,E \oplus E).
$$

\noindent
We define the de Rham operator ${\widetilde \nabla}$ and chirality operator $\widetilde{\Gamma}(\theta)$ (cf. 11.8, 11.9 in [5]) by

\begin{equation}    \label{E:3.1}
\widetilde{\nabla} \hspace{0.1 cm} =  \hspace{0.1 cm} \left( \begin{array}{clcr} \nabla & 0 \\ 0 & \nabla \end{array} \right), \quad
\widetilde{\Gamma}(\theta) \hspace{0.1 cm} =  \hspace{0.1 cm}
\left( \begin{array}{clcr} \Gamma \sin \theta & \Gamma \cos \theta \\ \Gamma \cos \theta & -\Gamma \sin \theta \end{array} \right)
\hspace{0.1 cm} = \hspace{0.1 cm}
\Gamma \circ   \left( \begin{array}{clcr} \sin \theta & \cos \theta \\ \cos \theta & -\sin \theta \end{array} \right) ,
\quad \theta \in [0,\frac{\pi}{2}].
\end{equation}

\noindent
For $0 \leq \theta \leq \frac{\pi}{2}$, we define a one parameter family of odd signature operators $\widetilde{\B}(\theta)$ by

\begin{eqnarray}    \label{E:3.2}
& & \widetilde{\B}(\theta) : \hspace{0.1 cm} \Omega^\bullet(M,E \oplus E)  \to \Omega^\bullet(M,E \oplus E),  \nonumber  \\
& & \widetilde{\B}(\theta) \hspace{0.1 cm} := \hspace{0.1 cm}  \widetilde{\Gamma}(\theta)\widetilde{\nabla} + \widetilde{\nabla} \widetilde{\Gamma}(\theta)
\hspace{0.1 cm} = \hspace{0.1 cm} \B \left( \begin{array}{clcr} \sin \theta & \cos \theta \\ \cos \theta & -\sin \theta \end{array} \right).
\end{eqnarray}

\noindent
Then we have

\begin{equation}    \label{E:3.3}
\widetilde{\B}(0) = \left( \begin{array}{clcr} 0 & \B \\ \B & 0 \end{array} \right), \quad
\widetilde{\B}(\frac{\pi}{2}) = \left( \begin{array}{clcr} \B & 0 \\ 0 & - \B \end{array} \right), \quad \text{and} \quad
\widetilde{\B}(\theta)^2 = \left( \begin{array}{clcr} \B^{2} & 0 \\ 0 & \B^{2} \end{array} \right).
\end{equation}

\noindent
On a collar neighborhood $N$ of the boundary, the odd signature operator $\widetilde{\B}(\theta)$ is expressed by

\begin{equation}   \label{E:3.4}
\widetilde{\B}(\theta) \hspace{0.1 cm} = \hspace{0.1 cm} {\widetilde {\mathcal \gamma}}(\theta)(\nabla_{\partial_{x}}+ \widetilde{\mathcal{A}}),
\end{equation}

\noindent
where

\begin{equation}   \label{E:3.5}
{\widetilde {\mathcal \gamma}}(\theta) \hspace{0.1 cm} = \hspace{0.1 cm}
\mathcal{\gamma} \circ \left( \begin{array}{clcr} \sin \theta & \cos \theta \\ \cos \theta & - \sin \theta \end{array} \right), \qquad
\widetilde{\mathcal{A}} \hspace{0.1 cm} = \hspace{0.1 cm}
\left( \begin{array}{clcr} \mathcal{A} & 0  \\ 0 & \mathcal{A} \end{array} \right).
\end{equation}

\noindent
Simple computation shows that

\begin{equation}  \label{E:3.6}
{\widetilde {\mathcal \gamma}}(\theta)^{2} \hspace{0.1 cm} = \hspace{0.1 cm} - \Id, \qquad
{\widetilde {\mathcal \gamma}}(\theta) \hspace{0.1 cm} \widetilde{\mathcal{A}} \hspace{0.1 cm} = \hspace{0.1 cm} - \hspace{0.1 cm} \widetilde{\mathcal{A}} \hspace{0.1 cm} {\widetilde {\mathcal \gamma}}(\theta).
\end{equation}

\vspace{0.2 cm}

\noindent
We denote
the $(\pm i)$-eigenspaces of ${\widetilde {\mathcal \gamma}}(\theta)$ in
$\left( \Omega^{\even}(M, E \oplus E)|_{Y} \right)$
by $\left( \Omega^{\even}(M, E \oplus E)|_{Y} \right)_{\theta, \pm i}$, {\it i.e.}

\begin{equation}  \label{E:3.0009}
\left( \Omega^{\even}(M, E \oplus E)|_{Y} \right)_{\theta, \pm i} \hspace{0.1 cm} := \hspace{0.1 cm} \frac{I \mp i {\widetilde {\mathcal \gamma}}(\theta)}{2}
\left( \Omega^{\even}(M, E \oplus E)|_{Y} \right).
\end{equation}

\noindent
Then we have

\begin{equation}  \label{E:3.009}
\Omega^{\even}(M, E \oplus E)|_{Y} \hspace{0.1 cm} = \hspace{0.1 cm} \left( \Omega^{\even}(M, E \oplus E)|_{Y} \right)_{\theta, i} \hspace{0.1 cm} \oplus
\hspace{0.1 cm} \left( \Omega^{\even}(M, E \oplus E)|_{Y} \right)_{\theta, -i}.
\end{equation}

\vspace{0.2 cm}

\noindent
For each $\theta$,
$\left( \Omega^{\even}(M, E \oplus E)|_{Y}, {\widetilde {\mathcal \gamma}}(\theta), \langle \hspace{0.1 cm}, \hspace{0.1 cm} \rangle \right)$
is a symplectic vector space and each Lagrangian subspace is expressed by the graph of a unitary operator from
$\left( \Omega^{\even}(M, E \oplus E)|_{Y} \right)_{\theta, +i}$ to $\left( \Omega^{\even}(M, E \oplus E)|_{Y} \right)_{\theta, -i}$.
$\Imm {\mathcal P}_{\rel} \oplus \Imm {\mathcal P}_{\Abs}$ (cf. (\ref{E:3.117}))
 and $\Imm {\mathcal P}_{-} \oplus \Imm {\mathcal P}_{+}$ are Lagrangian
subspaces of
$\left( \Omega^{\even}(M, E \oplus E)|_{Y}, {\widetilde {\mathcal \gamma}}(\theta), \langle \hspace{0.1 cm}, \hspace{0.1 cm} \rangle \right)$
for $\theta = 0$ and $\frac{\pi}{2}$, respectively.

Using the decomposition (\ref{E:1.3}),
we define two maps $U_{{\mathcal P}_{-} \oplus {\mathcal P}_{+}}$ and $U_{{\mathcal P}_{\rel} \oplus {\mathcal P}_{\Abs}}$
as follows (cf. \ref{E:1.9}).

$$
U_{{\mathcal P}_{-} \oplus {\mathcal P}_{+}}, \quad U_{{\mathcal P}_{\rel} \oplus {\mathcal P}_{\Abs}} : \hspace{0.1 cm}
\Omega^{\even}(M, E \oplus E)|_{Y}  \rightarrow  \Omega^{\even}(M, E \oplus E)|_{Y}
$$

\begin{eqnarray}  \label{E:3.9}
U_{{\mathcal P}_{-} \oplus {\mathcal P}_{+}} & = &
(\B_{Y}^{2})^{- 1} \left( (\B_{Y}^{2})^{-} - (\B_{Y}^{2})^{+} \right) \left( \begin{array}{clcr}  1 & 0 & 0 & 0 \\ 0 & 1 & 0 & 0 \\ 0 & 0 & -1 & 0 \\0 & 0 & 0 &-1 \end{array} \right) ,  \nonumber  \\
U_{{\mathcal P}_{\rel} \oplus {\mathcal P}_{\Abs}} & = & i \hspace{0.1 cm} (- i \beta \Gamma^{Y}) \hspace{0.1 cm}
\left( \begin{array}{clcr} 0 & 0 & -1 & 0 \\ 0 & 0 & 0 & 1 \\ 1 & 0 & 0 & 0 \\ 0 & -1 & 0 & 0 \end{array} \right) ,
\end{eqnarray}

\vspace{0.2 cm}

\noindent
where $\hspace{0.1 cm} \left( \B_{Y}^{2} \right)^{-} := \nabla^{Y} \Gamma^{Y} \nabla^{Y} \Gamma^{Y} :
\Omega^{\bullet}_{-} (Y, E|_{Y}) \rightarrow \Omega^{\bullet}_{-} (Y, E|_{Y}) \hspace{0.1 cm}$ and
$\hspace{0.1 cm} \left( \B_{Y}^{2} \right)^{+} :=  \Gamma^{Y} \nabla^{Y} \Gamma^{Y} \nabla^{Y} :
\Omega^{\bullet}_{+} (Y, E|_{Y}) \rightarrow \Omega^{\bullet}_{+} (Y, E|_{Y}) \hspace{0.1 cm}$.
The following lemma is straightforward.

\vspace{0.2 cm}

\begin{lemma}  \label{Lemma:3.1}
(1) $\hspace{0.1 cm} U_{{\mathcal P}_{-} \oplus {\mathcal P}_{+}} \hspace{0.1 cm} $ and
$\hspace{0.1 cm} U_{{\mathcal P}_{\rel} \oplus {\mathcal P}_{\Abs}} \hspace{0.1 cm}$ are unitary operators
with $\hspace{0.1 cm} \left( U_{{\mathcal P}_{-} \oplus {\mathcal P}_{+}} \right)^{\ast} = U_{{\mathcal P}_{-} \oplus {\mathcal P}_{+}} \hspace{0.1 cm}$
and $\hspace{0.1 cm} \left( U_{{\mathcal P}_{\rel} \oplus {\mathcal P}_{\Abs}} \right)^{\ast} =
- \hspace{0.1 cm} U_{{\mathcal P}_{\rel} \oplus {\mathcal P}_{\Abs}}$.  \newline
(2) $\hspace{0.1 cm} \left( U_{{\mathcal P}_{\rel} \oplus {\mathcal P}_{\Abs}} \right) {\widetilde {\mathcal \gamma}}(0)
\hspace{0.1 cm} = \hspace{0.1 cm} - \hspace{0.1 cm} {\widetilde {\mathcal \gamma}}(0)
\left( U_{{\mathcal P}_{\rel} \oplus {\mathcal P}_{\Abs}} \right) \hspace{0.1 cm}$ and
$\hspace{0.1 cm} \left( U_{{\mathcal P}_{-} \oplus {\mathcal P}_{+}} \right) {\widetilde {\mathcal \gamma}}(\frac{\pi}{2})
\hspace{0.1 cm} = \hspace{0.1 cm} - \hspace{0.1 cm} {\widetilde {\mathcal \gamma}}(\frac{\pi}{2})
\left( U_{{\mathcal P}_{-} \oplus {\mathcal P}_{+}} \right) \hspace{0.1 cm}$. \newline
(3)
$\Imm {\mathcal P}_{\rel} \oplus \Imm {\mathcal P}_{\Abs}$ is the graph of
$\hspace{0.1 cm} U_{{\mathcal P}_{\rel} \oplus {\mathcal P}_{\Abs}} : \left( \Omega^{\even}(M, E \oplus E)|_{Y} \right)_{0, +i} \rightarrow
\left( \Omega^{\even}(M, E \oplus E)|_{Y} \right)_{0, -i}$ and
$\hspace{0.1 cm} \Imm {\mathcal P}_{-} \oplus \Imm {\mathcal P}_{+}$ is the graph of
$\hspace{0.1 cm} U_{{\mathcal P}_{-} \oplus {\mathcal P}_{+}} : \left( \Omega^{\even}(M, E \oplus E)|_{Y} \right)_{\frac{\pi}{2}, +i}
\rightarrow \left( \Omega^{\even}(M, E \oplus E)|_{Y} \right)_{\frac{\pi}{2}, -i}$.
\end{lemma}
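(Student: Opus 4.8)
The plan is to prove all three assertions by direct computation from the explicit formulas (\ref{E:3.9}), using only elementary relations established above and the standing hypothesis $H^{q}(M;E) = H^{q}(M,Y;E) = 0$. A preliminary reduction: this hypothesis forces ${\mathcal L}_{0} = {\mathcal L}_{1} = \{0\}$, hence ${\mathcal K} = \Gamma^{Y}{\mathcal K} = \{0\}$ and ${\mathcal H}^{\bullet}(Y,E|_{Y}) = \{0\}$ by (\ref{E:1.144}); in particular $\B_{Y}^{2}$ is invertible. Consequently the operator $(\B_{Y}^{2})^{-1}\bigl((\B_{Y}^{2})^{-} - (\B_{Y}^{2})^{+}\bigr)$ occurring in $U_{{\mathcal P}_{-}\oplus{\mathcal P}_{+}}$ is well defined and, because of the orthogonal Hodge splitting $\Omega^{\bullet}(Y,E|_{Y}) = \Imm\nabla^{Y}\oplus\Imm\Gamma^{Y}\nabla^{Y}\Gamma^{Y}$, equals $+\Id$ on $\Omega^{\bullet}_{-}(Y,E|_{Y})$ and $-\Id$ on $\Omega^{\bullet}_{+}(Y,E|_{Y})$; that is, it is a self-adjoint involution.

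For part (1), I would combine this observation with the facts that $\beta$ and $\Gamma^{Y}$ are commuting self-adjoint involutions (so $-i\beta\Gamma^{Y}$ is a skew-adjoint unitary squaring to $-\Id$) and that the two constant matrices in (\ref{E:3.9}) are real orthogonal — the first diagonal, the second equal to minus its own transpose. Then $U_{{\mathcal P}_{-}\oplus{\mathcal P}_{+}}$ is a product of two commuting self-adjoint unitaries, hence self-adjoint and unitary, while $U_{{\mathcal P}_{\rel}\oplus{\mathcal P}_{\Abs}} = i\,(-i\beta\Gamma^{Y})\cdot(\text{real orthogonal skew matrix})$ is a product of commuting unitaries, hence unitary, and $\bigl(U_{{\mathcal P}_{\rel}\oplus{\mathcal P}_{\Abs}}\bigr)^{\ast} = -\,U_{{\mathcal P}_{\rel}\oplus{\mathcal P}_{\Abs}}$ follows by taking adjoints.

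For part (2), writing $\widetilde{\gamma}(0) = \left(\begin{smallmatrix}0 & \gamma \\ \gamma & 0\end{smallmatrix}\right)$ and $\widetilde{\gamma}(\tfrac{\pi}{2}) = \left(\begin{smallmatrix}\gamma & 0 \\ 0 & -\gamma\end{smallmatrix}\right)$ on the $E\oplus E$ decomposition, with $\gamma = -i\beta\Gamma^{Y}$ (cf. (\ref{E:1.9}), (\ref{E:3.5})), the two anticommutation identities reduce to a block-matrix computation whose only inputs are $\widetilde{\gamma}(\theta)^{2} = -\Id$ (cf. (\ref{E:3.6})), the commutation relations among $\beta$ and $\Gamma^{Y}$, and — for $U_{{\mathcal P}_{-}\oplus{\mathcal P}_{+}}$ — the fact that $\Gamma^{Y}$ interchanges $\Omega^{\bullet}_{-}(Y,E|_{Y})$ and $\Omega^{\bullet}_{+}(Y,E|_{Y})$, so that conjugation by $\gamma$ sends the self-adjoint involution of the first paragraph to its negative. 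For part (3), I would use that $\Imm{\mathcal P}_{\rel}\oplus\Imm{\mathcal P}_{\Abs}$ and $\Imm{\mathcal P}_{-}\oplus\Imm{\mathcal P}_{+}$ are already known to be Lagrangian subspaces at $\theta = 0$ and $\theta = \tfrac{\pi}{2}$ respectively, and that every such Lagrangian is the graph of a unique unitary from the $(+i)$-eigenspace to the $(-i)$-eigenspace of $\widetilde{\gamma}(\theta)$. It therefore suffices to check that $U_{{\mathcal P}_{\rel}\oplus{\mathcal P}_{\Abs}}$ maps $\bigl(\Omega^{\even}(M,E\oplus E)|_{Y}\bigr)_{0,+i}$ into $\bigl(\Omega^{\even}(M,E\oplus E)|_{Y}\bigr)_{0,-i}$ — immediate from part (2), since $\widetilde{\gamma}(0)v = iv$ forces $\widetilde{\gamma}(0)\,U_{{\mathcal P}_{\rel}\oplus{\mathcal P}_{\Abs}}v = -i\,U_{{\mathcal P}_{\rel}\oplus{\mathcal P}_{\Abs}}v$ — and that $v + U_{{\mathcal P}_{\rel}\oplus{\mathcal P}_{\Abs}}v \in \Imm{\mathcal P}_{\rel}\oplus\Imm{\mathcal P}_{\Abs}$ for every $v = \tfrac{I - i\widetilde{\gamma}(0)}{2}w$, a short computation with the defining projections of (\ref{E:3.117}); since the graph of a unitary is itself a Lagrangian, this inclusion of one Lagrangian in another is an equality. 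The case $\theta = \tfrac{\pi}{2}$ is handled identically, with ${\mathcal P}_{-}$, ${\mathcal P}_{+}$ in place of ${\mathcal P}_{\rel}$, ${\mathcal P}_{\Abs}$ (cf. (\ref{E:1.112})).

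The whole argument is routine linear algebra. The only points that need care are aligning the four-fold splitting — two copies of $E$, each carrying a tangential and a normal component — with the coordinate order of the $4\times 4$ matrices in (\ref{E:3.9}), and reading $(\B_{Y}^{2})^{-1}\bigl((\B_{Y}^{2})^{-} - (\B_{Y}^{2})^{+}\bigr)$ correctly as $\pm\Id$ on $\Omega^{\bullet}_{\pm}(Y,E|_{Y})$, which is exactly where the cohomology-vanishing hypothesis enters. I expect the graph identification in part (3) to be the most error-prone step.
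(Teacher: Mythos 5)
Your proposal is correct and is essentially the intended argument: the paper states the lemma without proof ("straightforward"), and your direct verification from the defining formulas — reading $(\B_{Y}^{2})^{-1}\bigl((\B_{Y}^{2})^{-}-(\B_{Y}^{2})^{+}\bigr)$ as the self-adjoint involution equal to $+\Id$ on $\Omega^{\bullet}_{-}(Y,E|_{Y})$ and $-\Id$ on $\Omega^{\bullet}_{+}(Y,E|_{Y})$ (which is where the acyclicity hypothesis enters), the block computations for unitarity, (skew-)self-adjointness and the anticommutation with ${\widetilde {\mathcal \gamma}}(0)$, ${\widetilde {\mathcal \gamma}}(\frac{\pi}{2})$, and the graph identification via "graph of a unitary contained in a Lagrangian, hence equal" — is exactly what is meant, and the key identities you outline do check out. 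No gaps.
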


\vspace{0.2 cm}

We next define $\hspace{0.1 cm} P(\theta) :
\hspace{0.1 cm} \Omega^{\even}(M, E \oplus E)|_{Y} \hspace{0.1 cm} \rightarrow \hspace{0.1 cm} \Omega^{\even}(M, E \oplus E)|_{Y}  \hspace{0.1 cm}$,
($0 \leq \theta \leq \frac{\pi}{2} \hspace{0.1 cm}$) by

\begin{eqnarray}  \label{E:3.9}
P(\theta) & = & (\B_{Y}^{2})^{- 1} \left( (\B_{Y}^{2})^{-} - (\B_{Y}^{2})^{+} \right) \left( \begin{array}{clcr}
\sin \theta \Id & \cos \theta \Id \\ \cos \theta \Id & - \sin \theta \Id \end{array} \right) \sin \theta +
U_{{\mathcal P}_{\rel} \oplus {\mathcal P}_{\Abs}} \cos \theta,  \nonumber  \\
& = & {\frak A}(\theta) \sin \theta + {\frak B} \cos \theta,
\end{eqnarray}

\noindent
where
\begin{equation}  \label{E:3.99}
{\frak A}(\theta) \hspace{0.1 cm} := \hspace{0.1 cm} (\B_{Y}^{2})^{- 1} \left( (\B_{Y}^{2})^{-} - (\B_{Y}^{2})^{+} \right) \left( \begin{array}{clcr}
\sin \theta \Id & \cos \theta \Id \\ \cos \theta \Id & - \sin \theta \Id \end{array} \right), \quad
{\frak B} \hspace{0.1 cm} := \hspace{0.1 cm}  U_{{\mathcal P}_{\rel} \oplus {\mathcal P}_{\Abs}} .
\end{equation}

\noindent
Then $P(\theta)$ is a smooth path connecting $U_{{\mathcal P}_{\rel} \oplus {\mathcal P}_{\Abs}}$ and $U_{{\mathcal P}_{-} \oplus {\mathcal P}_{+}}$.
The following lemma is straightforward.

\vspace{0.2 cm}

\begin{lemma}  \label{Lemma:3.2}
(1) ${\frak A}(\theta)$ and ${\frak B}$ are unitary operators satisfying
${\frak A}(\theta)^{2} = \Id$, ${\frak B}^{2} = - \Id$, ${\frak A}(\theta)^{\ast} = {\frak A}(\theta)$, and ${\frak B}^{\ast} = - {\frak B}$.
\newline
(2) ${\frak A}(\theta) \hspace{0.1 cm} {\widetilde {\mathcal \gamma}}(\theta) \hspace{0.1 cm} =
\hspace{0.1 cm} - \hspace{0.1 cm}  {\widetilde {\mathcal \gamma}}(\theta) \hspace{0.1 cm} {\frak A}(\theta)$,
${\frak B} \hspace{0.1 cm} {\widetilde {\mathcal \gamma}}(\theta) \hspace{0.1 cm} =
\hspace{0.1 cm} - \hspace{0.1 cm}  {\widetilde {\mathcal \gamma}}(\theta) \hspace{0.1 cm} {\frak B}$ and
${\frak A}^{\prime}(\theta) \hspace{0.1 cm} {\widetilde {\mathcal \gamma}}(\theta) \hspace{0.1 cm} =
\hspace{0.1 cm} {\widetilde {\mathcal \gamma}}(\theta) \hspace{0.1 cm} {\frak A}^{\prime}(\theta)$.  \newline
(3) ${\frak A}^{\prime}(\theta) {\frak A}(\theta) = - {\frak A}(\theta) {\frak A}^{\prime}(\theta) =
\left( \begin{array}{clcr} 0 & \Id \\ - \Id & 0 \end{array} \right)$.  \newline
(4) ${\frak A}(\theta) \hspace{0.1 cm} {\frak B} \hspace{0.1 cm} =
\hspace{0.1 cm} {\frak B} \hspace{0.1 cm} {\frak A}(\theta)$ and ${\frak A}^{\prime}(\theta) \hspace{0.1 cm} {\frak B} \hspace{0.1 cm} =
\hspace{0.1 cm} {\frak B} \hspace{0.1 cm} {\frak A}^{\prime}(\theta)$.  \newline
(5) $P(\theta)$ is a unitary operator with $P(\theta)^{\ast} = {\frak A}(\theta) \sin \theta - {\frak B} \cos \theta$ and
$P(\theta) \hspace{0.1 cm} {\widetilde {\mathcal \gamma}}(\theta) \hspace{0.1 cm} =
\hspace{0.1 cm} - \hspace{0.1 cm}  {\widetilde {\mathcal \gamma}}(\theta) \hspace{0.1 cm} P(\theta)$.  \newline
(6) ${\frak A}(\theta) \hspace{0.1 cm} {\widetilde {\mathcal A}} \hspace{0.1 cm} = \hspace{0.1 cm}
- \hspace{0.1 cm} {\widetilde {\mathcal A}} \hspace{0.1 cm} {\frak A}(\theta) \hspace{0.1 cm}$ and
$\hspace{0.1 cm} {\frak B} \hspace{0.1 cm} {\widetilde {\mathcal A}} \hspace{0.1 cm} =
\hspace{0.1 cm} {\widetilde {\mathcal A}} \hspace{0.1 cm} {\frak B} \hspace{0.1 cm}$ and hence
$\hspace{0.1 cm} P(\theta)^{\ast} \hspace{0.1 cm} {\widetilde {\mathcal A}} \hspace{0.1 cm} = \hspace{0.1 cm} -
\hspace{0.1 cm} {\widetilde {\mathcal A}} \hspace{0.1 cm} P(\theta)$.
\end{lemma}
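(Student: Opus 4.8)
The plan is to reduce each of (1)--(6) to a short list of (anti)commutation relations among a few elementary operators and then to read off every assertion mechanically, invoking Lemma~\ref{Lemma:3.1} at the endpoints $\theta=0,\frac{\pi}{2}$ where needed. Since we are in the case $\mathcal{L}_{0}=\mathcal{L}_{1}=\{0\}$ we have $\mathcal{K}=\Gamma^{Y}\mathcal{K}=\{0\}$, so $\mathcal{H}^{\bullet}(Y,E|_{Y})=\Ker\B_{Y}^{2}=\{0\}$ by (\ref{E:1.144}) and $\Omega^{\bullet}(Y,E|_{Y})=\Imm\nabla^{Y}\oplus\Imm\Gamma^{Y}\nabla^{Y}\Gamma^{Y}$; in particular $\B_{Y}^{2}$ is invertible and $\sigma:=(\B_{Y}^{2})^{-1}\bigl((\B_{Y}^{2})^{-}-(\B_{Y}^{2})^{+}\bigr)$ is the self-adjoint involution which is $+\Id$ on $\Imm\nabla^{Y}$ and $-\Id$ on $\Imm\Gamma^{Y}\nabla^{Y}\Gamma^{Y}$. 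With $R(\theta)=\left(\begin{smallmatrix}\sin\theta&\cos\theta\\\cos\theta&-\sin\theta\end{smallmatrix}\right)$ the constant matrix on the $E\oplus E$ factor appearing in $\widetilde{\Gamma}(\theta)$ and $\widetilde{\gamma}(\theta)$, and $R'(\theta)$ its $\theta$-derivative, one has $\mathfrak{A}(\theta)=\hat{\sigma}R(\theta)$ (so $\mathfrak{A}'(\theta)=\hat{\sigma}R'(\theta)$, as $\hat{\sigma}$ is constant), $\widetilde{\gamma}(\theta)=\hat{\gamma}R(\theta)=R(\theta)\hat{\gamma}$ and $\mathfrak{B}=\hat{\Theta}J$, where $\hat{\sigma},\hat{\gamma},\hat{\Theta}$ are the operators $\sigma,\gamma,\beta\Gamma^{Y}$ acting diagonally on $E\oplus E$ and $J$ is the constant $4\times 4$ matrix in $U_{\mathcal{P}_{\rel}\oplus\mathcal{P}_{\Abs}}$, which in (tangential $\oplus$ normal)--block form equals $\left(\begin{smallmatrix}0&D\\-D&0\end{smallmatrix}\right)$ with $D=\left(\begin{smallmatrix}-1&0\\0&1\end{smallmatrix}\right)$. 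The point of this notation is that $R(\theta)$, $R'(\theta)$, $J$ mix the two copies of $E$ with scalar coefficients, while $\hat{\sigma},\hat{\gamma},\hat{\Theta},\widetilde{\mathcal{A}}$ are block-diagonal on $E\oplus E$.

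Next I would record the elementary relations. From (\ref{E:1.5}) and $m$ odd, $\beta$, $\Gamma^{Y}$, $\beta\Gamma^{Y}=\Gamma^{Y}\beta$ are self-adjoint involutions. The one geometric input is that $\Gamma^{Y}$ exchanges $\Imm\nabla^{Y}$ and $\Imm\Gamma^{Y}\nabla^{Y}\Gamma^{Y}$ (immediate from $\Gamma^{Y}\nabla^{Y}\alpha=\Gamma^{Y}\nabla^{Y}\Gamma^{Y}(\Gamma^{Y}\alpha)$ and $\Gamma^{Y}(\Gamma^{Y}\nabla^{Y}\Gamma^{Y}\nu)=\nabla^{Y}(\Gamma^{Y}\nu)$), while $\beta$ preserves each of these subspaces and $\nabla^{Y}+\Gamma^{Y}\nabla^{Y}\Gamma^{Y}$ is strictly off-diagonal for this splitting; hence $\sigma$ commutes with $\beta$ and anticommutes with $\Gamma^{Y}$ and with $\mathcal{A}$, so $\sigma\gamma=-\gamma\sigma$, $\sigma\mathcal{A}=-\mathcal{A}\sigma$, $\sigma(\beta\Gamma^{Y})=-(\beta\Gamma^{Y})\sigma$. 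For the constant matrices one checks directly $R(\theta)^{2}=\Id$, $R(\theta)^{\ast}=R(\theta)$, $R'(\theta)R(\theta)=-R(\theta)R'(\theta)=\left(\begin{smallmatrix}0&\Id\\-\Id&0\end{smallmatrix}\right)$, $J^{2}=-\Id$, $J^{\ast}=-J$, $JR(\theta)=-R(\theta)J$, $JR'(\theta)=-R'(\theta)J$; and $D$ commutes with $\sigma,\gamma,\beta\Gamma^{Y}$ but anticommutes with $\left(\begin{smallmatrix}0&-1\\-1&0\end{smallmatrix}\right)$, hence $D\mathcal{A}=-\mathcal{A}D$, so $[J,\hat{\sigma}]=[J,\hat{\gamma}]=[J,\hat{\Theta}]=0$ and $J\widetilde{\mathcal{A}}=-\widetilde{\mathcal{A}}J$. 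Finally $\hat{\gamma},\hat{\Theta},\widetilde{\mathcal{A}}$ commute with $R(\theta)$ and $R'(\theta)$; $\hat{\sigma}$ anticommutes with $\hat{\gamma}$ and with $\hat{\Theta}$; $\hat{\gamma}$ commutes with $\hat{\Theta}$; and $\widetilde{\mathcal{A}}$ anticommutes with $\hat{\sigma}$ and, by $\gamma\mathcal{A}=-\mathcal{A}\gamma$ in (\ref{E:1.10}), with $\hat{\Theta}$.

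Each assertion is then a one-line computation. (1): $\mathfrak{A}(\theta)=\hat{\sigma}R(\theta)$ is a product of two commuting self-adjoint involutions, hence a self-adjoint unitary with square $\Id$, and $\mathfrak{B}^{\ast}=-\mathfrak{B}$, $\mathfrak{B}^{2}=-\mathfrak{B}^{\ast}\mathfrak{B}=-\Id$ come from Lemma~\ref{Lemma:3.1}(1). (2): $\mathfrak{A}(\theta)\widetilde{\gamma}(\theta)=\hat{\sigma}\hat{\gamma}=-\hat{\gamma}\hat{\sigma}=-\widetilde{\gamma}(\theta)\mathfrak{A}(\theta)$; using $JR(\theta)=-R(\theta)J$, $[\hat{\Theta},R(\theta)]=[\hat{\gamma},\hat{\Theta}]=0$ the relation $\mathfrak{B}\widetilde{\gamma}(\theta)=-\widetilde{\gamma}(\theta)\mathfrak{B}$ reduces to Lemma~\ref{Lemma:3.1}(2); and $\mathfrak{A}'(\theta)\widetilde{\gamma}(\theta)=\hat{\sigma}R'(\theta)\hat{\gamma}R(\theta)=-\hat{\gamma}\hat{\sigma}R'(\theta)R(\theta)=\hat{\gamma}\hat{\sigma}R(\theta)R'(\theta)=\widetilde{\gamma}(\theta)\mathfrak{A}'(\theta)$. (3): $\mathfrak{A}'(\theta)\mathfrak{A}(\theta)=\hat{\sigma}^{2}R'(\theta)R(\theta)=\left(\begin{smallmatrix}0&\Id\\-\Id&0\end{smallmatrix}\right)=-\hat{\sigma}^{2}R(\theta)R'(\theta)=-\mathfrak{A}(\theta)\mathfrak{A}'(\theta)$. (4): $\mathfrak{A}(\theta)\mathfrak{B}=\hat{\sigma}R(\theta)\hat{\Theta}J=\hat{\sigma}\hat{\Theta}R(\theta)J=-\hat{\Theta}\hat{\sigma}R(\theta)J=\hat{\Theta}\hat{\sigma}JR(\theta)=\hat{\Theta}J\hat{\sigma}R(\theta)=\mathfrak{B}\mathfrak{A}(\theta)$, and the same chain with $R'(\theta)$ in place of $R(\theta)$ gives $\mathfrak{A}'(\theta)\mathfrak{B}=\mathfrak{B}\mathfrak{A}'(\theta)$. (5): by (1), $P(\theta)^{\ast}=\mathfrak{A}(\theta)\sin\theta-\mathfrak{B}\cos\theta$, so $P(\theta)^{\ast}P(\theta)=\sin^{2}\theta\,\mathfrak{A}(\theta)^{2}-\cos^{2}\theta\,\mathfrak{B}^{2}+\sin\theta\cos\theta\,\bigl(\mathfrak{A}(\theta)\mathfrak{B}-\mathfrak{B}\mathfrak{A}(\theta)\bigr)=\Id$ by (1) and (4), and $P(\theta)\widetilde{\gamma}(\theta)=\sin\theta\,\mathfrak{A}(\theta)\widetilde{\gamma}(\theta)+\cos\theta\,\mathfrak{B}\widetilde{\gamma}(\theta)=-\widetilde{\gamma}(\theta)P(\theta)$ by (2). (6): $\mathfrak{A}(\theta)\widetilde{\mathcal{A}}=\hat{\sigma}R(\theta)\widetilde{\mathcal{A}}=\hat{\sigma}\widetilde{\mathcal{A}}R(\theta)=-\widetilde{\mathcal{A}}\hat{\sigma}R(\theta)=-\widetilde{\mathcal{A}}\mathfrak{A}(\theta)$, $\mathfrak{B}\widetilde{\mathcal{A}}=\hat{\Theta}J\widetilde{\mathcal{A}}=-\hat{\Theta}\widetilde{\mathcal{A}}J=\widetilde{\mathcal{A}}\hat{\Theta}J=\widetilde{\mathcal{A}}\mathfrak{B}$, and hence $P(\theta)^{\ast}\widetilde{\mathcal{A}}=\sin\theta\,\mathfrak{A}(\theta)\widetilde{\mathcal{A}}-\cos\theta\,\mathfrak{B}\widetilde{\mathcal{A}}=-\widetilde{\mathcal{A}}P(\theta)$.

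The only substantive point is the remark that $\Gamma^{Y}$ interchanges $\Imm\nabla^{Y}$ and $\Imm\Gamma^{Y}\nabla^{Y}\Gamma^{Y}$ (which is exactly where the invertibility of $\B_{Y}^{2}$, i.e. the hypothesis $H^{\bullet}(M;E)=H^{\bullet}(M,Y;E)=0$, enters); the only real care needed is bookkeeping the three nested block structures --- $E\oplus E$, tangential $\oplus$ normal, and $\Imm\nabla^{Y}\oplus\Imm\Gamma^{Y}\nabla^{Y}\Gamma^{Y}$ --- so that each (anti)commutation is charged to the correct factor.
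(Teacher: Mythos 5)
Your proof is correct and is exactly the direct verification the paper intends (the authors give no argument, stating only that the lemma is straightforward): reducing everything to (anti)commutation relations among $\beta\Gamma^{Y}$, $\gamma$, ${\mathcal A}$, the involution $(\B_{Y}^{2})^{-1}\bigl((\B_{Y}^{2})^{-}-(\B_{Y}^{2})^{+}\bigr)$ and the constant matrices $R(\theta)$, $R'(\theta)$, $J$, with the unitarity of ${\frak B}$ taken from Lemma \ref{Lemma:3.1}. You also correctly isolate the one genuinely geometric input, namely that $\Gamma^{Y}$ interchanges $\Imm \nabla^{Y}$ and $\Imm \Gamma^{Y}\nabla^{Y}\Gamma^{Y}$ (so that, in the acyclic case where $\Ker \B_{Y}^{2}=0$, the operator $(\B_{Y}^{2})^{-1}\bigl((\B_{Y}^{2})^{-}-(\B_{Y}^{2})^{+}\bigr)$ is a self-adjoint involution anticommuting with $\Gamma^{Y}$ and with ${\mathcal A}$), and all the sign bookkeeping across the three block decompositions checks out.
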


\vspace{0.2 cm}

\noindent
We note that the orthogonal projections ${\mathcal P}_{\rel} \oplus {\mathcal P}_{\Abs}$ and ${\mathcal P}_{-} \oplus {\mathcal P}_{+}$
are described as follows.

\begin{eqnarray*}
{\mathcal P}_{\rel} \oplus {\mathcal P}_{\Abs}, \hspace{0.2 cm} {\mathcal P}_{-} \oplus {\mathcal P}_{+} :
\oplus_{k=0}^{1} \left( \Omega^{\even}(M, E \oplus E)|_{Y} \right)_{\theta, (-1)^{k}i}  \hspace{0.1 cm} \rightarrow
\hspace{0.1 cm}   \oplus_{k=0}^{1} \left( \Omega^{\even}(M, E \oplus E)|_{Y} \right)_{\theta, (-1)^{k}i}
\end{eqnarray*}

\begin{eqnarray}  \label{E:3.1415}
{\mathcal P}_{\rel} \oplus {\mathcal P}_{\Abs} \hspace{0.1 cm} =
\hspace{0.1 cm} \frac{1}{2} \left[ \begin{array}{clcr} \Id & {\frak B}^{\ast} \\ {\frak B} & \Id \end{array} \right] , \qquad
{\mathcal P}_{-} \oplus {\mathcal P}_{+} \hspace{0.1 cm} =
\hspace{0.1 cm} \frac{1}{2} \left[ \begin{array}{clcr} \Id & {\frak A}(\frac{\pi}{2})^{\ast} \\ {\frak A}(\frac{\pi}{2}) & \Id \end{array} \right] .
\end{eqnarray}

\noindent
We define a smooth path $\hspace{0.1 cm} {\widetilde P}(\theta) \hspace{0.1 cm}$
of orthogonal projections connecting $\hspace{0.1 cm} {\mathcal P}_{\rel} \oplus {\mathcal P}_{\Abs} \hspace{0.1 cm} $ and $\hspace{0.1 cm} {\mathcal P}_{-} \oplus {\mathcal P}_{+} \hspace{0.1 cm} $ by

\begin{eqnarray}  \label{E:3.10}
{\widetilde P}(\theta) \hspace{0.1 cm} =
\hspace{0.1 cm} \frac{1}{2} \left[ \begin{array}{clcr} \Id & P(\theta)^{\ast} \\ P(\theta) & \Id \end{array} \right] , \qquad (0 \leq \theta \leq \frac{\pi}{2}).
\end{eqnarray}

\noindent
Under the decomposition (\ref{E:3.009}) we have

\begin{eqnarray} \label{E:4.3232}
\hspace{0.1 cm} \widetilde{\mathcal \gamma}(\theta)= \left[ \begin{array}{clcr} i & 0 \\ 0 & -i  \end{array} \right],   \qquad
\hspace{0.1 cm} \widetilde{\B}_{Y}^{2} := \widetilde{\mathcal A}^{2}  = \B_{Y}^{2} \left[\begin{array}{clcr} \Id & 0 \\ 0 & \Id  \end{array} \right] ,   \qquad
\hspace{0.1 cm} \widetilde{\mathcal A} =
\left[ \begin{array}{clcr} 0 & \widetilde{\mathcal A}_{-} \\ \widetilde{\mathcal A}_{+} & 0 \end{array} \right],
\end{eqnarray}

\noindent
where $\hspace{0.1 cm} \widetilde{\mathcal A}_{\pm} = \widetilde{\mathcal A}|_{\left(\Omega^{\even}(M, E \oplus E)|_{Y} \right)_{\theta, \pm i}}$.
Then ${\widetilde P}(\theta)$ satisfies the following properties, whose proofs are straightforward.

\vspace{0.2 cm}

\begin{lemma} \label{Lemma:3.3}
(1) $\widetilde{\mathcal \gamma}(\theta) \hspace{0.1 cm} {\widetilde P}(\theta) \hspace{0.1 cm} = \hspace{0.1 cm} (I - {\widetilde P}(\theta))
\hspace{0.1 cm} \widetilde{\mathcal \gamma}(\theta), \hspace{0.2 cm}$ and
 $\hspace{0.2 cm} {\widetilde P}(\theta) \hspace{0.1 cm} \widetilde{\B}_{Y}^{2} \hspace{0.1 cm} = \hspace{0.1 cm} \widetilde{\B}_{Y}^{2} {\widetilde P}(\theta)$.       \newline
(2) ${\widetilde P}(\theta) \hspace{0.1 cm} \widetilde{\mathcal A} \hspace{0.1 cm} {\widetilde P}(\theta)
\hspace{0.1 cm} = 0, \hspace{0.2 cm}$ \text{and} $\hspace{0.2 cm} (I -{\widetilde P}(\theta)) \hspace{0.1 cm} \widetilde{\mathcal A} \hspace{0.1 cm}
(I - {\widetilde P}(\theta))  \hspace{0.1 cm} = 0.$   \newline
\end{lemma}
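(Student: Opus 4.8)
The plan is to verify both parts by a direct computation in the $2\times 2$ block form afforded by the decomposition \eqref{E:3.009} of $\Omega^{\even}(M,E\oplus E)|_Y$ into the $(\pm i)$-eigenspaces of $\widetilde{\mathcal \gamma}(\theta)$, using the matrix descriptions \eqref{E:4.3232} of $\widetilde{\mathcal \gamma}(\theta)$, $\widetilde{\B}_Y^2$, $\widetilde{\mathcal A}$ and \eqref{E:3.10} of $\widetilde P(\theta)$. The only non-formal inputs are: $P(\theta)$ is unitary and anticommutes with $\widetilde{\mathcal \gamma}(\theta)$ (Lemma \ref{Lemma:3.2}(5)), hence is off-diagonal for \eqref{E:3.009} and restricts to a unitary $P$ from $\left(\Omega^{\even}(M,E\oplus E)|_Y\right)_{\theta,+i}$ onto $\left(\Omega^{\even}(M,E\oplus E)|_Y\right)_{\theta,-i}$ with $P^{\ast}P=\Id$ and $PP^{\ast}=\Id$; and the intertwining relations $P(\theta)^{\ast}\widetilde{\mathcal A}=-\widetilde{\mathcal A}P(\theta)$ (this is Lemma \ref{Lemma:3.2}(6)) and $P(\theta)\widetilde{\mathcal A}=-\widetilde{\mathcal A}P(\theta)^{\ast}$ (which follows in the same way from $P(\theta)^{\ast}={\frak A}(\theta)\sin\theta-{\frak B}\cos\theta$ and the relations ${\frak A}(\theta)\widetilde{\mathcal A}=-\widetilde{\mathcal A}{\frak A}(\theta)$, ${\frak B}\widetilde{\mathcal A}=\widetilde{\mathcal A}{\frak B}$ of Lemma \ref{Lemma:3.2}(6)). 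In block form, writing $\widetilde{\mathcal A}_{\pm}=\widetilde{\mathcal A}|_{\left(\Omega^{\even}(M,E\oplus E)|_Y\right)_{\theta,\pm i}}$ (so that $\widetilde{\mathcal A}=\left(\begin{smallmatrix}0 & \widetilde{\mathcal A}_{-}\\ \widetilde{\mathcal A}_{+} & 0\end{smallmatrix}\right)$ since $\widetilde{\mathcal A}$ anticommutes with $\widetilde{\mathcal \gamma}(\theta)$ by \eqref{E:3.6}), these read $P^{\ast}\widetilde{\mathcal A}_{+}=-\widetilde{\mathcal A}_{-}P$ and $P\widetilde{\mathcal A}_{-}=-\widetilde{\mathcal A}_{+}P^{\ast}$.

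For part (1), since $\widetilde{\mathcal \gamma}(\theta)$ is the diagonal operator with entries $i$ and $-i$, multiplying out $\widetilde{\mathcal \gamma}(\theta)\,\widetilde P(\theta)$ and $(I-\widetilde P(\theta))\,\widetilde{\mathcal \gamma}(\theta)$ with $\widetilde P(\theta)=\tfrac12\left(\begin{smallmatrix}\Id & P(\theta)^{\ast}\\ P(\theta) & \Id\end{smallmatrix}\right)$ produces the common value $\tfrac12\left(\begin{smallmatrix}i & iP(\theta)^{\ast}\\ -iP(\theta) & -i\end{smallmatrix}\right)$; conceptually this is just the standard fact that the graph of the unitary $P(\theta)$ is a Lagrangian subspace of $\left(\Omega^{\even}(M,E\oplus E)|_Y,\widetilde{\mathcal \gamma}(\theta)\right)$. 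For the second identity of (1), use $\widetilde{\B}_Y^{2}=\widetilde{\mathcal A}^{2}$: applying the two intertwining relations twice gives $P(\theta)^{\ast}\widetilde{\mathcal A}^{2}=\widetilde{\mathcal A}^{2}P(\theta)^{\ast}$ and, likewise, $P(\theta)\widetilde{\mathcal A}^{2}=\widetilde{\mathcal A}^{2}P(\theta)$, so $\widetilde P(\theta)$ commutes with the block-diagonal operator $\widetilde{\B}_Y^{2}$ entrywise; alternatively this is immediate from the explicit shapes \eqref{E:3.9}, \eqref{E:3.99} of ${\frak A}(\theta)$ and ${\frak B}$, both of which commute with $\widetilde{\B}_Y^{2}$.

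For part (2), I would expand $\widetilde P(\theta)\,\widetilde{\mathcal A}\,\widetilde P(\theta)$ in blocks; its four entries (up to the overall factor $\tfrac14$) are $P^{\ast}\widetilde{\mathcal A}_{+}+\widetilde{\mathcal A}_{-}P$, $\ P^{\ast}\widetilde{\mathcal A}_{+}P^{\ast}+\widetilde{\mathcal A}_{-}$, $\ \widetilde{\mathcal A}_{+}+P\widetilde{\mathcal A}_{-}P$ and $\widetilde{\mathcal A}_{+}P^{\ast}+P\widetilde{\mathcal A}_{-}$, each of which vanishes once one substitutes $P^{\ast}\widetilde{\mathcal A}_{+}=-\widetilde{\mathcal A}_{-}P$ and $P\widetilde{\mathcal A}_{-}=-\widetilde{\mathcal A}_{+}P^{\ast}$ and uses $P^{\ast}P=PP^{\ast}=\Id$; the computation for $(I-\widetilde P(\theta))\,\widetilde{\mathcal A}\,(I-\widetilde P(\theta))$ is identical after replacing the off-diagonal blocks $P(\theta),P(\theta)^{\ast}$ by their negatives, which does not affect the vanishing. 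Conceptually, the two assertions of (2) together just say that $\widetilde{\mathcal A}$ interchanges $\Imm\widetilde P(\theta)$ and $\Imm(I-\widetilde P(\theta))$: for $v=v_{+}+P(\theta)v_{+}\in\Imm\widetilde P(\theta)$ one has $\widetilde{\mathcal A}v=\widetilde{\mathcal A}v_{+}-P(\theta)^{\ast}\widetilde{\mathcal A}v_{+}=(I-P(\theta)^{\ast})\widetilde{\mathcal A}v_{+}$, which lies in $\Imm(I-\widetilde P(\theta))$ because $P(\theta)$ is unitary. I do not expect a real obstacle here — as the paper notes, the proofs are straightforward — the one point requiring care is the bookkeeping under \eqref{E:3.009}, namely that $P(\theta)$ is off-diagonal and that the operator denoted "$P(\theta)$" in \eqref{E:3.10} is its restriction $\left(\Omega^{\even}(M,E\oplus E)|_Y\right)_{\theta,+i}\to\left(\Omega^{\even}(M,E\oplus E)|_Y\right)_{\theta,-i}$, which is unitary.
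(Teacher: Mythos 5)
Your computation is correct and is exactly the argument the paper has in mind: the paper omits the proof as ``straightforward,'' and it is precisely this block computation in the decomposition (\ref{E:3.009}) using (\ref{E:4.3232}), (\ref{E:3.10}) and the (anti)commutation relations of Lemma \ref{Lemma:3.2} (in particular $P(\theta)^{\ast}\widetilde{\mathcal A}=-\widetilde{\mathcal A}P(\theta)$ and its companion $P(\theta)\widetilde{\mathcal A}=-\widetilde{\mathcal A}P(\theta)^{\ast}$, together with unitarity of the block restriction of $P(\theta)$). Your care in distinguishing the full operator $P(\theta)$ from its restriction appearing in (\ref{E:3.10}) is exactly the right bookkeeping, and all four block entries vanish as you claim.
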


\noindent
{\it Remark} : In this paper we are using two types of decompositions. One comes from the decomposition (\ref{E:1.3}) and
the other one comes from the decomposition (\ref{E:3.009}). When we write a matrix form of an operator, we are going to use the notation
$\left( \hspace{0.2 cm} \right)$ for the decomposition (\ref{E:1.3}) like (\ref{E:3.3}) and use $\left[ \hspace{0.2 cm} \right]$ for the decomposition (\ref{E:3.009}) like (\ref{E:3.1415}).

\vspace{0.3 cm}

\noindent
Let $\widetilde{\B}(\theta)_{{\widetilde P}(\theta)}$ $(0 \leq \theta \leq \frac{\pi}{2}) \hspace{0.1 cm}$ be the realization of $\widetilde{\B}(\theta)$ with respect to ${\widetilde P}(\theta)$, {\it i.e.}
$$
\Dom \left( \widetilde{\B}(\theta)_{{\widetilde P}(\theta)} \right) \hspace{0.1 cm} = \{ \phi \in H^{1} \left( \Omega^{\even}(M, E \oplus E) \right) \mid
{\widetilde P}(\theta) (\phi|_{Y}) = 0 \}.
$$
Then $\widetilde{\B}(\theta)^{\even}_{{\widetilde P}(\theta)} \hspace{0.1 cm}$  is a smooth path of operators connecting
$\hspace{0.1 cm} {\widetilde \B}^{\even}_{(\R/\A)} \hspace{0.1 cm}$ and
$ \hspace{0.1 cm}{\widetilde \B}^{\even}_{({\widetilde {\mathcal P}_{-}}/{\widetilde {\mathcal P}_{+}})} \hspace{0.1 cm}$
(cf. (\ref{E:4.1313}), (\ref{E:4.1616})).

\vspace{0.2 cm}

\begin{lemma} \label{Lemma:3.4}
$\widetilde{\B}(\theta)_{{\widetilde P}(\theta)}$ is essentially self-adjoint.
\end{lemma}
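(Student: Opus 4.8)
The plan is to establish self-adjointness of $\widetilde{\B}(\theta)_{{\widetilde P}(\theta)}$ in three moves — symmetry, identification of the adjoint boundary condition, and well-posedness of $({\widetilde P}(\theta), \widetilde{\B}(\theta))$ — and then to read off essential self-adjointness on the dense core of smooth forms satisfying the boundary condition. Throughout I will use that the standing hypothesis forces ${\mathcal H}^{\bullet}(Y, E|_{Y}) = 0$, so ${\mathcal A}$ and hence $\widetilde{\mathcal A}$ are invertible, and that by (\ref{E:3.2})--(\ref{E:3.6}) $\widetilde{\B}(\theta)$ is a formally self-adjoint first-order elliptic operator which near $Y$ has the Dirac form $\widetilde{\B}(\theta) = \widetilde{\mathcal \gamma}(\theta)(\nabla_{\partial_{x}} + \widetilde{\mathcal A})$ with $\widetilde{\mathcal \gamma}(\theta)$ unitary and skew-adjoint and $\widetilde{\mathcal A}$ self-adjoint.

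First I would record the Green formula for $\widetilde{\B}(\theta)$, the analogue of Lemma \ref{Lemma:1.1}(3): for $\phi, \psi \in H^{1}(\Omega^{\even}(M, E \oplus E))$ one has $\langle \widetilde{\B}(\theta)\phi, \psi\rangle_{M} - \langle \phi, \widetilde{\B}(\theta)\psi\rangle_{M} = \langle \phi|_{Y}, \widetilde{\mathcal \gamma}(\theta)(\psi|_{Y})\rangle_{Y}$. If $\phi, \psi \in \Dom(\widetilde{\B}(\theta)_{{\widetilde P}(\theta)})$ then $\phi|_{Y}, \psi|_{Y} \in \Ker {\widetilde P}(\theta)$, and Lemma \ref{Lemma:3.3}(1) gives $\widetilde{\mathcal \gamma}(\theta)(\psi|_{Y}) = \widetilde{\mathcal \gamma}(\theta)(I - {\widetilde P}(\theta))(\psi|_{Y}) = {\widetilde P}(\theta)\widetilde{\mathcal \gamma}(\theta)(\psi|_{Y}) \in \Imm {\widetilde P}(\theta) = (\Ker {\widetilde P}(\theta))^{\perp}$, so the boundary term vanishes and $\widetilde{\B}(\theta)_{{\widetilde P}(\theta)}$ is symmetric. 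Running the same identity against an arbitrary $\psi$ with $\widetilde{\B}(\theta)\psi \in L^{2}$ shows that $\psi$ lies in the domain of the adjoint iff $(I - {\widetilde P}(\theta))\widetilde{\mathcal \gamma}(\theta)(\psi|_{Y}) = 0$, which by Lemma \ref{Lemma:3.3}(1) reads $\widetilde{\mathcal \gamma}(\theta){\widetilde P}(\theta)(\psi|_{Y}) = 0$, hence ${\widetilde P}(\theta)(\psi|_{Y}) = 0$ since $\widetilde{\mathcal \gamma}(\theta)$ is invertible. Thus the adjoint is the realization of $\widetilde{\B}(\theta)$ on the maximal domain subject to the same condition ${\widetilde P}(\theta)$, and it remains to show this maximal domain equals $\Dom(\widetilde{\B}(\theta)_{{\widetilde P}(\theta)})$, i.e.\ that $({\widetilde P}(\theta), \widetilde{\B}(\theta))$ is well-posed.

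This last step is the main obstacle. Since $P(\theta)$ in (\ref{E:3.9}) is an order-zero pseudodifferential projection, ${\widetilde P}(\theta)$ is an order-zero pseudodifferential orthogonal projection, and I would verify the Shapiro--Lopatinski condition: on each fibre of the cosphere bundle of $Y$, the subspace $L^{+}$ of Cauchy data of solutions of $\widetilde{\B}(\theta) \cdot = 0$ decaying into $M$ — the image of the principal symbol of the Calder\'on projector, equal to the sum of the positive eigenspaces of $\sigma(\widetilde{\mathcal A})$ — is a complement of $\Ker \sigma({\widetilde P}(\theta))$. Lemma \ref{Lemma:3.3}(2) is exactly what makes this work: it forces $\sigma(\widetilde{\mathcal A})$ to be off-diagonal with respect to $\Imm \sigma({\widetilde P}(\theta)) \oplus \Ker \sigma({\widetilde P}(\theta))$, so $\sigma(\widetilde{\mathcal A})$ maps $\Ker \sigma({\widetilde P}(\theta))$ into $\Imm \sigma({\widetilde P}(\theta))$; hence for $v \in L^{+} \cap \Ker \sigma({\widetilde P}(\theta))$ one gets $\langle \sigma(\widetilde{\mathcal A})v, v\rangle = 0$ while $\sigma(\widetilde{\mathcal A})|_{L^{+}} > 0$, forcing $v = 0$. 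The off-diagonal, invertible structure also makes $\Spec \sigma(\widetilde{\mathcal A})$ symmetric, so $\Dim L^{+}$ is half the fibre dimension, and Lemma \ref{Lemma:3.3}(1) (since $\widetilde{\mathcal \gamma}(\theta)$ carries $\Ker \sigma({\widetilde P}(\theta))$ isomorphically onto $\Imm \sigma({\widetilde P}(\theta))$) makes $\Dim \Ker \sigma({\widetilde P}(\theta))$ also half; hence $L^{+} \oplus \Ker \sigma({\widetilde P}(\theta))$ is the whole fibre, uniformly in $\theta$. At $\theta = 0, \tfrac{\pi}{2}$ this recovers the well-posedness of ${\mathcal P}_{\rel} \oplus {\mathcal P}_{\Abs}$ for $\widetilde{\B}(0)$ and of ${\mathcal P}_{-} \oplus {\mathcal P}_{+}$ for $\widetilde{\B}(\tfrac{\pi}{2})$ from [11], [12].

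With well-posedness in hand, elliptic regularity gives $\Dom(\widetilde{\B}(\theta)_{{\widetilde P}(\theta)}) = \{\psi \mid \widetilde{\B}(\theta)\psi \in L^{2},\ {\widetilde P}(\theta)(\psi|_{Y}) = 0\}$, so by the second paragraph $\widetilde{\B}(\theta)_{{\widetilde P}(\theta)}$ coincides with its adjoint and is self-adjoint; since smooth forms in this domain are dense there in the graph norm, the restriction of $\widetilde{\B}(\theta)$ to them is essentially self-adjoint with closure $\widetilde{\B}(\theta)_{{\widetilde P}(\theta)}$. Alternatively, once symmetry and well-posedness are established one may simply invoke the general theory of self-adjoint elliptic (well-posed) boundary conditions for Dirac-type operators to conclude.
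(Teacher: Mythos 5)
Your proposal is correct and follows essentially the same route as the paper: it opens with the very same Green formula $\langle \widetilde{\B}(\theta)\widetilde{\phi},\widetilde{\psi}\rangle_{M}-\langle\widetilde{\phi},\widetilde{\B}(\theta)\widetilde{\psi}\rangle_{M}=\langle\widetilde{\phi}|_{Y},\widetilde{\gamma}(\theta)(\widetilde{\psi}|_{Y})\rangle_{Y}$, and then uses the intertwining and off-diagonal properties of ${\widetilde P}(\theta)$ from Lemma \ref{Lemma:3.3} to get symmetry, the adjoint boundary condition, and well-posedness. The only difference is presentational: the paper delegates everything after the Green formula to the proof of Lemma 3.3 in [12] as a verbatim repetition, whereas you spell out that imported argument (Lagrangian-type condition via $\widetilde{\gamma}(\theta){\widetilde P}(\theta)=(I-{\widetilde P}(\theta))\widetilde{\gamma}(\theta)$, plus the symbol-level Shapiro--Lopatinski check against the positive spectral subspace of $\sigma(\widetilde{\mathcal A})$) explicitly, which is a faithful and adequate substitute.
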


\begin{proof}
The Green formula for $\widetilde{\B}(\theta)$ can be written as follows (cf. (3) in Lemma \ref{Lemma:1.1}).
For ${\widetilde \phi} = \left( \begin{array}{clcr} \phi_{1} \\ \phi_{2}  \end{array} \right)$,
${\widetilde \psi} = \left( \begin{array}{clcr} \psi_{1} \\ \psi_{2}  \end{array} \right) \in \Omega^{\even}(M, E \oplus E)$,
$$
\langle \widetilde{\B}(\theta) {\widetilde \phi}, \hspace{0.1 cm} {\widetilde \psi} \rangle_{M} \hspace{0.1 cm} = \hspace{0.1 cm}
\langle {\widetilde \phi}, \hspace{0.1 cm} \widetilde{\B}(\theta) {\widetilde \psi} \rangle_{M} \hspace{0.1 cm} + \hspace{0.1 cm}
\langle {\widetilde \phi}|_{Y}, \hspace{0.1 cm} {\widetilde {\mathcal \gamma}}(\theta) ({\widetilde \psi}|_{Y}) \rangle_{Y}.
$$
The remaining part is a verbatim repetition of the proof of Lemma 3.3 in [12].

\end{proof}

\vspace{0.2 cm}

We next define a unitary operator $U(\theta)$ ($0 \leq \theta \leq \frac{\pi}{2}$) by

$$
U(\theta) : \hspace{0.1 cm} \oplus_{k=0}^{1} \left( \Omega^{\even}(M, E \oplus E)|_{Y} \right)_{\theta, (-1)^{k}i} \hspace{0.1 cm}
\rightarrow   \hspace{0.1 cm}  \oplus_{k=0}^{1} \left( \Omega^{\even}(M, E \oplus E)|_{Y} \right)_{\theta, (-1)^{k}i}
$$

\begin{eqnarray} \label{E:3.11}
U(\theta) \hspace{0.1 cm} = \hspace{0.1 cm}
\left[ \begin{array}{clcr}  \cos \theta - {\frak B}^{\ast} \hspace{0.1 cm} {\frak A}(\theta) \sin \theta & 0 \\ 0 & \Id  \end{array} \right] .
\end{eqnarray}

\noindent
Then $U(\theta)$ satisfies the following equality.

\begin{eqnarray} \label{E:3.12}
U(\theta) \hspace{0.1 cm} {\widetilde P}(0) \hspace{0.1 cm} U(\theta)^{\ast} \hspace{0.1 cm} = \hspace{0.1 cm} {\widetilde P}(\theta).
\end{eqnarray}

\noindent
Setting

\begin{eqnarray} \label{E:3.13}
T(\theta) \hspace{0.1 cm} = \hspace{0.1 cm} - i \theta
\left[ \begin{array}{clcr} - {\frak B}^{\ast} \hspace{0.1 cm} {\frak A}(\theta) & 0 \\ 0 & 0  \end{array} \right]
\hspace{0.1 cm} = \hspace{0.1 cm}  i \theta \hspace{0.1 cm}
 {\frak B}^{\ast} \hspace{0.1 cm} {\frak A}(\theta) \hspace{0.1 cm} \frac{ I - i {\widetilde {\mathcal \gamma}}(\theta)}{2} ,
\end{eqnarray}

\noindent
Lemma \ref{Lemma:3.2} shows that

\begin{eqnarray} \label{E:3.133}
e^{i T(\theta)} \hspace{0.1 cm} = \hspace{0.1 cm} U(\theta).
\end{eqnarray}

\noindent
$T(\theta)$ satisfies the following properties.

\begin{lemma} \label{Lemma:3.5}
 $T(\theta) \hspace{0.1 cm} {\widetilde {\mathcal \gamma}}(\theta) \hspace{0.1 cm} = \hspace{0.1 cm}
\widetilde{\mathcal \gamma}(\theta) \hspace{0.1 cm} T(\theta) \hspace{0.1 cm}$ and
$\hspace{0.1 cm} T(\theta) \hspace{0.1 cm} {\widetilde \B_{Y}^{2}} \hspace{0.1 cm} = \hspace{0.1 cm}
{\widetilde \B_{Y}^{2}} \hspace{0.1 cm} T(\theta)$.
\end{lemma}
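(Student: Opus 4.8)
The plan is to read off both identities directly from the closed form of $T(\theta)$ given in (\ref{E:3.13}),
$$
T(\theta) \;=\; i \theta \; {\frak B}^{\ast} \, {\frak A}(\theta) \; \frac{I - i \widetilde{\mathcal \gamma}(\theta)}{2},
$$
together with the algebraic relations of Lemma \ref{Lemma:3.2}, the identity (\ref{E:3.6}) relating $\widetilde{\mathcal \gamma}(\theta)$ and $\widetilde{\mathcal A}$, and the block descriptions in (\ref{E:4.3232}). No analysis enters; the whole argument reduces to bookkeeping of (anti)commutation signs.

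First I would prove $T(\theta)\,\widetilde{\mathcal \gamma}(\theta) = \widetilde{\mathcal \gamma}(\theta)\,T(\theta)$. By Lemma \ref{Lemma:3.2}(1)--(2), ${\frak A}(\theta)$ anticommutes with $\widetilde{\mathcal \gamma}(\theta)$, and since ${\frak B}^{\ast} = -{\frak B}$ the operator ${\frak B}^{\ast}$ anticommutes with $\widetilde{\mathcal \gamma}(\theta)$ as well; hence the product ${\frak B}^{\ast}{\frak A}(\theta)$ commutes with $\widetilde{\mathcal \gamma}(\theta)$, and therefore also with the degree-one polynomial $\tfrac12\bigl(I - i\widetilde{\mathcal \gamma}(\theta)\bigr)$ in $\widetilde{\mathcal \gamma}(\theta)$. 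That polynomial of course commutes with $\widetilde{\mathcal \gamma}(\theta)$ itself, so $T(\theta)$ is a scalar multiple of a product of two operators each commuting with $\widetilde{\mathcal \gamma}(\theta)$, and the first identity follows. Equivalently, in the decomposition (\ref{E:3.009}) both $T(\theta)$ and $\widetilde{\mathcal \gamma}(\theta)$ are block diagonal and $\widetilde{\mathcal \gamma}(\theta)$ acts as the scalar $\pm i$ on the two summands, whence the commutation is immediate.

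Next I would prove $T(\theta)\,{\widetilde \B_{Y}^{2}} = {\widetilde \B_{Y}^{2}}\,T(\theta)$. By (\ref{E:4.3232}) we have ${\widetilde \B_{Y}^{2}} = \widetilde{\mathcal A}^{2}$, so it suffices to check that each of the three factors ${\frak A}(\theta)$, ${\frak B}$, and $\widetilde{\mathcal \gamma}(\theta)$ commutes with $\widetilde{\mathcal A}^{2}$. Lemma \ref{Lemma:3.2}(6) gives that ${\frak A}(\theta)$ anticommutes with $\widetilde{\mathcal A}$ and ${\frak B}$ commutes with $\widetilde{\mathcal A}$, and (\ref{E:3.6}) gives that $\widetilde{\mathcal \gamma}(\theta)$ anticommutes with $\widetilde{\mathcal A}$; in each case, an operator that commutes or anticommutes with $\widetilde{\mathcal A}$ commutes with $\widetilde{\mathcal A}^{2}$. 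Consequently each factor of $T(\theta)$ commutes with ${\widetilde \B_{Y}^{2}}$, and so does $T(\theta)$.

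I do not expect any genuine obstacle: the statement is essentially a formal consequence of Lemma \ref{Lemma:3.2} and (\ref{E:3.6}). The only point deserving a moment's care is consistency between the two matrix conventions used in the paper --- the one attached to (\ref{E:1.3}) and the one attached to (\ref{E:3.009}) --- so that, for instance, the identification $\widetilde{\mathcal A}^{2} = \B_{Y}^{2}\cdot\Id$ of (\ref{E:4.3232}) is invoked in the correct basis when one unwinds the definitions of ${\frak A}(\theta)$ and ${\frak B}$.
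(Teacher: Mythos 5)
Your proof is correct and is essentially the argument the paper leaves implicit (the lemma is stated as a straightforward consequence): the first identity follows because ${\frak A}(\theta)$ and ${\frak B}^{\ast}=-{\frak B}$ each anticommute with $\widetilde{\mathcal \gamma}(\theta)$ by Lemma \ref{Lemma:3.2}, so ${\frak B}^{\ast}{\frak A}(\theta)$ and hence $T(\theta)$ commute with it, and the second follows since each factor of $T(\theta)$ either commutes or anticommutes with $\widetilde{\mathcal A}$ (Lemma \ref{Lemma:3.2}(6) and (\ref{E:3.6})) and therefore commutes with $\widetilde{\B}_{Y}^{2}=\widetilde{\mathcal A}^{2}$.
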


\vspace{0.2 cm}

\noindent
{\it Remark} : Contrary to the case of [6], $T(\theta)$ does not anticommute with ${\widetilde {\mathcal A}}$.

\vspace{0.2 cm}

Let $\phi : [0, 1] \rightarrow [0, 1]$ be a decreasing smooth function such that $\phi = 1$ on a small neighborhood of $0$ and
$\phi = 0$ on a small neighborhood of $1$. We use this cut-off function to extend $T(\theta)$ defined on $\Omega^{\bullet}(M, E \oplus E)|_{Y}$
to a unitary operator defined on $\Omega^{\bullet}(M, E \oplus E)$.
We define
$\Psi_{\theta} : \Omega^{\bullet}(M, E \oplus E) \rightarrow \Omega^{\bullet}(M, E \oplus E)$ by

\begin{equation} \label{E:3.14}
\Psi_{\theta} (\omega) (x) = e^{i \phi(x) T(\theta)} \omega (x),
\end{equation}

\noindent
where the support of $\phi(x) T(\theta)$ is contained in $N := [0, 1) \times Y$, a collar neighborhood of $Y$.

\vspace{0.2 cm}

\begin{lemma} \label{Lemma:3.6}
$\Psi_{\theta}$ is a unitary operator mapping from $\Dom \left(\widetilde{\B}(\theta)_{{\widetilde P}(0)} \right)$ onto $\Dom \left(\widetilde{\B}(\theta)_{{\widetilde P}(\theta)} \right)$.
\end{lemma}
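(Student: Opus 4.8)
The statement is that $\Psi_\theta$ maps $\Dom(\widetilde{\B}(\theta)_{{\widetilde P}(0)})$ onto $\Dom(\widetilde{\B}(\theta)_{{\widetilde P}(\theta)})$, unitarily. First I would note that unitarity is immediate: $\Psi_\theta$ multiplies a form $\omega(x)$ by the operator $e^{i\phi(x)T(\theta)}$, which by Lemma \ref{Lemma:3.5} (via $e^{iT(\theta)} = U(\theta)$ in \eqref{E:3.133}, and the fact that $U(\theta)$ is unitary since it is built from the unitary pieces ${\frak A}(\theta)$, ${\frak B}$ in Lemma \ref{Lemma:3.2}) is pointwise unitary on the fibres for every $x$; integrating over $M$ preserves the $L^2$ inner product, and since $\phi(x)T(\theta)$ is a bounded smooth operator family supported in the collar $N$, $\Psi_\theta$ preserves $H^1$ as well. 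So the whole content is the identification of domains, which is really a boundary computation.

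**Key steps.** The domains in question differ only through the boundary condition at $Y$: $\phi \in \Dom(\widetilde{\B}(\theta)_{{\widetilde P}(0)})$ means $\phi \in H^1$ and ${\widetilde P}(0)(\phi|_Y) = 0$, while $\psi \in \Dom(\widetilde{\B}(\theta)_{{\widetilde P}(\theta)})$ means $\psi \in H^1$ and ${\widetilde P}(\theta)(\psi|_Y) = 0$. I would trace what $\Psi_\theta$ does to the restriction to $Y$. Since $\phi(x) = 1$ near $x = 0$, we have $(\Psi_\theta\omega)|_Y = e^{iT(\theta)}(\omega|_Y) = U(\theta)(\omega|_Y)$. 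Therefore ${\widetilde P}(\theta)\big((\Psi_\theta\omega)|_Y\big) = {\widetilde P}(\theta)\,U(\theta)(\omega|_Y)$, and by the intertwining relation \eqref{E:3.12}, ${\widetilde P}(\theta) = U(\theta)\,{\widetilde P}(0)\,U(\theta)^\ast$, so ${\widetilde P}(\theta)U(\theta) = U(\theta){\widetilde P}(0)$. Hence ${\widetilde P}(\theta)\big((\Psi_\theta\omega)|_Y\big) = U(\theta)\,{\widetilde P}(0)(\omega|_Y)$, which vanishes precisely when ${\widetilde P}(0)(\omega|_Y) = 0$, i.e. precisely when $\omega \in \Dom(\widetilde{\B}(\theta)_{{\widetilde P}(0)})$. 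Running this argument with $\omega$ ranging over $H^1$ shows $\Psi_\theta$ carries $\Dom(\widetilde{\B}(\theta)_{{\widetilde P}(0)})$ bijectively onto $\Dom(\widetilde{\B}(\theta)_{{\widetilde P}(\theta)})$; bijectivity on the $H^1$ level comes from the fact that $\Psi_{-\theta}$ (replacing $T(\theta)$ by $-T(\theta)$ in \eqref{E:3.14}) provides a two-sided inverse.

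**The main obstacle.** The only subtle point is to confirm that $\Psi_\theta$ actually preserves $H^1\big(\Omega^{\even}(M, E\oplus E)\big)$ — it is not enough that it preserves $L^2$. Here I would use that $\phi$ is smooth with $\phi' $ compactly supported inside the collar, that $T(\theta)$ is a fixed bounded operator on the boundary Sobolev spaces extended constantly in $x$, and that $e^{i\phi(x)T(\theta)}$ is then a smooth bundle automorphism with all $x$-derivatives bounded; conjugating $\nabla$ by such an automorphism produces a first-order operator with bounded coefficients, so $\Psi_\theta$ and its inverse are bounded on $H^1$. A secondary point worth a sentence is that $U(\theta)$ genuinely respects the $\pm i$-eigenspace decomposition \eqref{E:3.009} of $\widetilde{\mathcal\gamma}(\theta)$ used in writing ${\widetilde P}(\theta)$ in block form \eqref{E:3.10}, which follows from the block structure of $U(\theta)$ in \eqref{E:3.11} together with Lemma \ref{Lemma:3.5}. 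Once these are in place, the proof is essentially the two displayed identities above, and I would present it as such.
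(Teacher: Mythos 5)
Your proposal is correct and follows essentially the same route as the paper: the paper's proof is exactly the boundary computation you give, using ${\widetilde P}(\theta)=U(\theta){\widetilde P}(0)U(\theta)^{\ast}$ from (\ref{E:3.12}), $e^{iT(\theta)}=U(\theta)$, and $\phi\equiv 1$ near $x=0$, with unitarity and invertibility of $\Psi_{\theta}$ treated as immediate. You merely make explicit the bijectivity and $H^{1}$-preservation points that the paper leaves implicit.
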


\begin{proof} Clearly $\Psi_{\theta}$ is a unitary operator.
Let ${\widetilde P}(0) \omega(0) = 0$. Then
\begin{eqnarray*}
{\widetilde P}(\theta) (\Psi_{\theta} \omega) (0) & = & U(\theta) {\widetilde P}(0) U(\theta)^{\ast} \left( e^{i \phi(x) T(\theta)} \omega \right)|_{x=0}  \\
& = & U(\theta) {\widetilde P}(0) e^{- i T(\theta)} \left( e^{i \phi(x) T(\theta)} \omega \right)|_{x=0} =
U(\theta) {\widetilde P}(0) \omega(0) = 0,
\end{eqnarray*}
which completes the proof of the lemma.
\end{proof}

\vspace{0.2 cm}

\noindent
Note that
$\Dom \left(\widetilde{\B}(0)_{{\widetilde P}(0)} \right)\equiv \Dom \left(\widetilde{\B}(\theta)_{{\widetilde P}(0)} \right)
\subset \Omega^{\even}(M, E \oplus E) \hspace{0.1 cm}$ and consider the following diagram.

\vspace{0.3 cm}

$$\begin{CD}
  \Dom \left( \widetilde{\B}(0)_{{\widetilde P}(0)} \right)     & @> \widehat{\B}(\theta) >> &  \Omega^{\even}(M, E \oplus E)       \\
  @V \Psi_{\theta} VV      &        & @V{\Psi_{\theta}}VV    \\
  \Dom \left( \widetilde{\B}(\theta)_{{\widetilde P}(\theta)} \right)     & @> \widetilde{\B}(\theta)  >> & \Omega^{\even}(M, E \oplus E)
\end{CD}
$$

\vspace{0.3 cm}

\noindent
Setting $\widehat{\B}(\theta) := \Psi_{\theta}^{\ast} \hspace{0.1 cm} \widetilde{\B}(\theta) \hspace{0.1 cm}
\Psi_{\theta}|_{\Dom \left(\widetilde{\B}(0)_{{\widetilde P}(0)} \right)}$,
$$
\widehat{\B}(\theta) : \Dom \left(\widetilde{\B}(0)_{{\widetilde P}(0)} \right) \rightarrow \Omega^{\even}(M, E \oplus E)
$$
is an elliptic $\Psi$DO of order $1$
with a fixed domain $\Dom \left(\widetilde{\B}(0)_{{\widetilde P}(0)} \right)$ and has the same spectrum as
$\widetilde{\B}(\theta)_{{\widetilde P}(\theta)}$,
which is a smooth path of operators
connecting $\hspace{0.1 cm} {\widetilde \B}^{\even}_{(\R/\A)} \hspace{0.1 cm}$ at $\theta = 0$ and
$ \hspace{0.1 cm}{\widetilde \B}^{\even}_{({\widetilde {\mathcal P}_{-}}/{\widetilde {\mathcal P}_{+}})} \hspace{0.1 cm}$
at $\theta = \frac{\pi}{2}$.

\vspace{0.3 cm}

\section{Comparison of the eta invariants}

\vspace{0.2 cm}

In this section we discuss the variation of eta functions for $\widehat{\B}(\theta)$ to compare $\eta \left( {\widetilde \B}^{\even}_{(\R/\A)} \right)$
with $\eta \left( {\widetilde \B}^{\even}_{({\widetilde {\mathcal P}_{-}}/{\widetilde {\mathcal P}_{+}})} \right)$.
For this purpose we are going to use the deformation method in [6].
In [12] we used similar method to compare the eta invariants subject to the boundary conditions ${\mathcal P}_{-}$ and ${\mathcal P}_{+}$ with the eta invariants subject to the APS boundary condition.
We now begin with the one parameter family of the eta functions $\eta_{\widehat{\B}(\theta)}(s)$ defined by

\vspace{0.2 cm}

\begin{equation} \label{E:4.1}
\eta_{\widehat{\B}(\theta)}(s) = \frac{1}{\Gamma(\frac{s+1}{2})} \int_{0}^{\infty} t^{\frac{s-1}{2}} \Tr \left( \widehat{\B}(\theta) e^{-t \widehat{\B}(\theta)^{2}} \right) dt.
\end{equation}

\vspace{0.2 cm}

\noindent
If $\eta_{\widehat{\B}(\theta)}(s)$ has a regular value at $s=0$, we define the eta invariant $\eta (\widehat{\B}(\theta))$ by

\begin{equation}  \label{E:4.2}
\eta (\widehat{\B}(\theta)) = \frac{1}{2} \left( \eta_{\widehat{\B}(\theta)}(0) + \Dim \Ker \widehat{\B}(\theta) \right).
\end{equation}

\vspace{0.2 cm}

\noindent
For a fixed $\theta_{0}$ in $[0, \frac{\pi}{2}]$, there exist $c(\theta_{0}) > 0$ and $\delta > 0$ such that
$c(\theta_{0}) \notin \Spec \left( \widehat{\B}(\theta) \right)$ for $\theta_{0} - \delta < \theta < \theta_{0} + \delta$.
We denote by $Q(\theta)$ the orthogonal projection onto the space spanned by eigensections of $\widehat{\B}(\theta)$ whose eigenvalues belong to
$\hspace{0.1 cm} (-c(\theta_{0}), \hspace{0.1 cm} c(\theta_{0})) \hspace{0.1 cm}$ for $\theta_{0} - \delta < \theta < \theta_{0} + \delta$.
We define

\begin{eqnarray*}
\eta_{\widehat{\B}(\theta)} \left( s \hspace{0.1 cm} ; \hspace{0.1 cm} c(\theta_{0}) \right) \hspace{0.1 cm} =
\sum_{| \lambda | > c(\theta_{0})} sign (\lambda) \vert \lambda \vert^{-s}
\hspace{0.1 cm} = \hspace{0.1 cm} \frac{1}{\Gamma(\frac{s+1}{2})} \int^{\infty}_{0} t^{\frac{s-1}{2}}
\Tr \left\{ \left( I - Q(\theta) \right) \widehat{\B}(\theta) e^{- t \widehat{\B}(\theta)^{2}} \right\} dt.
\end{eqnarray*}

\noindent
Then $\eta_{\widehat{\B}(\theta)} (s) - \eta_{\widehat{\B}(\theta)} \left( s \hspace{0.1 cm} ; c(\theta_{0}) \right)$ is an entire function and

\vspace{0.2 cm}

\begin{equation}   \label{E:4.3}
\left\{ \frac{1}{2} \left( \eta_{\widehat{\B}(\theta)} (s) + \Dim \Ker \widehat{\B}(\theta) \right) - \frac{1}{2} \eta_{\widehat{\B}(\theta)} \left( s \hspace{0.1 cm} ; c(\theta_{0}) \right) \right\}_{s=0}
\end{equation}

\vspace{0.2 cm}

\noindent
does not depend on the $\theta$ for $\theta_{0} - \delta < \theta < \theta_{0} + \delta$ up to $\Mod {\Bbb Z}$.
Simple computation shows that

\begin{eqnarray}  \label{E:4.4}
& & \frac{d}{d \theta} \eta_{\widehat{\B}(\theta)} \left( s \hspace{0.1 cm} ; c(\theta_{0}) \right)  \\
& = & \frac{1}{\Gamma(\frac{s+1}{2})} \int^{\infty}_{0} t^{\frac{s-1}{2}} \Tr \left( - {\dot Q}(\theta) \widehat{\B}(\theta) e^{-t \widehat{\B}(\theta)^{2}}  +
 \left( I - Q(\theta) \right) \frac{d}{d \theta} \left( \widehat{\B}(\theta) e^{- t \widehat{\B}(\theta)^{2}} \right) \right) dt  \nonumber \\
& = & \frac{1}{\Gamma(\frac{s+1}{2})} \int^{\infty}_{0} t^{\frac{s-1}{2}} \Tr \left( - {\dot Q}(\theta) \widehat{\B}(\theta) e^{-t \widehat{\B}(\theta)^{2}} \right) dt -
\frac{s}{\Gamma(\frac{s+1}{2})} \int^{\infty}_{0} t^{\frac{s-1}{2}}
\Tr \left\{ \left( I - Q(\theta) \right) \left( \dot{\widehat{\B}}(\theta) e^{- t \widehat{\B}(\theta)^{2}} \right) \right\} dt \nonumber,
\end{eqnarray}

\vspace{0.2 cm}

\noindent
where ${\dot Q(\theta)}$ and $\dot{\widehat{\B}(\theta)}$ mean the derivative of $Q(\theta)$ and $\widehat{\B}(\theta)$ with respect to $\theta$.
Furthermore, we have (cf. Section 4.2 in [11])
$$
\Tr \left( - {\dot Q}(\theta) \widehat{\B}(\theta) e^{-t \widehat{\B}(\theta)^{2}} \right) = 0, \qquad
\left\{ \frac{s}{\Gamma(\frac{s+1}{2})} \int^{\infty}_{0} t^{\frac{s-1}{2}}
\Tr \left( Q(\theta)  \dot{\widehat{\B}}(\theta) e^{- t \widehat{\B}(\theta)^{2}} \right) dt \right\}_{s=0} = 0.
$$

\noindent
These equalities imply that

\vspace{0.2 cm}

\begin{eqnarray}  \label{E:4.5}
 \frac{d}{d \theta} \eta_{\widehat{\B}(\theta)} \left( s \hspace{0.1 cm} ; c(\theta_{0}) \right) & = &
-  \hspace{0.1 cm} \frac{s}{\Gamma(\frac{s+1}{2})} \int^{\infty}_{0} t^{\frac{s-1}{2}}
\Tr \left( \dot{\widehat{\B}}(\theta) e^{- t \widehat{\B}(\theta)^{2}} \right) dt + F(s)
\end{eqnarray}

\vspace{0.2 cm}

\noindent
where $F(s)$ is an analytic function at least for $\operatorname{Re} s > -1$ with $F(0) = 0$.
The equality (\ref{E:4.5}) leads to the following lemma.

\vspace{0.2 cm}

\begin{lemma}  \label{Lemma:4.1}
(1) The derivative of the residue of $\eta_{\widehat{\B}(\theta)} (s)$ at $s=0$ is given by
$$
\frac{d}{d \theta} \Res_{s=0} \eta_{\widehat{\B}(\theta)} (s) \hspace{0.1 cm} = \hspace{0.1 cm}
\Res_{s=0} \left( \frac{d}{d \theta} \eta_{\widehat{\B}(\theta)} (s) \right) \hspace{0.1 cm} = \hspace{0.1 cm}
\frac{4}{\sqrt{\pi}} \hspace{0.1 cm} a_{- \frac{1}{2}, 1} \hspace{0.1 cm} (\widehat{\B}(\theta), \dot{\widehat{\B}}(\theta)).
$$
(2) If $\eta_{\widehat{\B}(\theta)} (s)$ is regular at $s=0$ for each $\theta$, the derivative of $\eta_{\widehat{\B}(\theta)} (0)$, up to $\Mod {\Bbb Z}$, is given by
$$
\left( \frac{d}{d \theta}\eta_{\widehat{\B}(\theta)} \right) (0)  \hspace{0.1 cm} = \hspace{0.1 cm}
- \frac{2}{\sqrt{\pi}} a_{- \frac{1}{2}, 0} \hspace{0.1 cm} (\widehat{\B}(\theta), \dot{\widehat{\B}}(\theta)) \qquad (\Mod {\Bbb Z}).
$$

\noindent
Here $a_{- \frac{1}{2}, 1} \hspace{0.1 cm} (\widehat{\B}(\theta), \dot{\widehat{\B}}(\theta))$ and
$a_{- \frac{1}{2}, 0} \hspace{0.1 cm} (\widehat{\B}(\theta), \dot{\widehat{\B}}(\theta))$
are the coefficients of $t^{- \frac{1}{2}} \log t$ and $t^{- \frac{1}{2}}$
in the asymptotic expansion of $\Tr \left( \dot{\widehat{\B}}(\theta) e^{- t \widehat{\B}(\theta)^{2}} \right)$ for $t \rightarrow 0^{+}$,
respectively.
\end{lemma}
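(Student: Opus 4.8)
The key idea is to reduce the computation of $\frac{d}{d\theta}\eta_{\widehat{\B}(\theta)}(s)$ to the analysis of the short-time asymptotic expansion of $\Tr(\dot{\widehat{\B}}(\theta)\,e^{-t\widehat{\B}(\theta)^{2}})$, which is what equation (\ref{E:4.5}) already accomplishes for the truncated eta function $\eta_{\widehat{\B}(\theta)}(s;c(\theta_{0}))$. First, I would observe that since $\eta_{\widehat{\B}(\theta)}(s)-\eta_{\widehat{\B}(\theta)}(s;c(\theta_{0}))$ is entire in $s$ and depends on $\theta$ only through finitely many small eigenvalues in the window $(-c(\theta_{0}),c(\theta_{0}))$, the $\theta$-derivatives of $\Res_{s=0}\eta_{\widehat{\B}(\theta)}(s)$ and of $\eta_{\widehat{\B}(\theta)}(0)$ (the latter only $\Mod{\Bbb Z}$, because eigenvalues may cross zero) agree with those of the truncated function. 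So it suffices to differentiate the right-hand side of (\ref{E:4.5}).

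Next, I would insert the standard short-time asymptotic expansion
$$
\Tr\!\left(\dot{\widehat{\B}}(\theta)\,e^{-t\widehat{\B}(\theta)^{2}}\right)\ \sim\ \sum_{j\geq 0}\Big(a_{\frac{-m+j}{2},0}\,t^{\frac{-m+j}{2}}\ +\ a_{\frac{-m+j}{2},1}\,t^{\frac{-m+j}{2}}\log t\Big),
$$
where the logarithmic terms appear because $\dot{\widehat{\B}}(\theta)$ is a pseudodifferential (not differential) operator — recall $\widehat{\B}(\theta)=\Psi_\theta^{\ast}\widetilde{\B}(\theta)\Psi_\theta$ and $\Psi_\theta$ involves $\phi(x)T(\theta)$ with $T(\theta)$ a $\Psi$DO. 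Then I would compute the Mellin transform termwise: $\frac{s}{\Gamma(\frac{s+1}{2})}\int_0^\infty t^{\frac{s-1}{2}}\cdot(\text{expansion})\,dt$ picks up, near $s=0$, a pole exactly from the $t^{-1/2}$ and $t^{-1/2}\log t$ terms. A short bookkeeping with $\Gamma(\frac{s+1}{2})^{-1}=\frac{\sqrt{\pi}}{2}\,s+O(s^2)$ near $s=0$ gives that the residue at $s=0$ of $\frac{d}{d\theta}\eta_{\widehat{\B}(\theta)}(s)$ equals $\frac{4}{\sqrt{\pi}}\,a_{-\frac12,1}(\widehat{\B}(\theta),\dot{\widehat{\B}}(\theta))$, proving (1); and the finite part at $s=0$ contributes $-\frac{2}{\sqrt{\pi}}\,a_{-\frac12,0}(\widehat{\B}(\theta),\dot{\widehat{\B}}(\theta))$ (when the residue vanishes, i.e. $\eta_{\widehat{\B}(\theta)}(s)$ is regular at $s=0$), proving (2); the analytic error term $F(s)$ with $F(0)=0$ does not interfere with either computation.

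The main obstacle — already anticipated in the excerpt by the remarks "$\Tr(-\dot Q(\theta)\widehat{\B}(\theta)e^{-t\widehat{\B}(\theta)^2})=0$" and "$\{\frac{s}{\Gamma(\frac{s+1}{2})}\int_0^\infty t^{\frac{s-1}{2}}\Tr(Q(\theta)\dot{\widehat{\B}}(\theta)e^{-t\widehat{\B}(\theta)^2})\,dt\}_{s=0}=0$" — is justifying that the projection terms contribute nothing. The first vanishing is the classical fact that $\dot Q(\theta)$ is off-diagonal with respect to the spectral splitting induced by $Q(\theta)$ (since $Q(\theta)$ is a spectral projection, $\dot Q = Q\dot Q(I-Q)+(I-Q)\dot Q\,Q$), so $\dot Q(\theta)\widehat{\B}(\theta)$ has zero trace against $e^{-t\widehat{\B}(\theta)^2}$ by symmetry; the second is that $Q(\theta)$ has finite rank, so $\Tr(Q(\theta)\dot{\widehat{\B}}(\theta)e^{-t\widehat{\B}(\theta)^2})$ is bounded as $t\to 0^+$, hence the Mellin integral has at worst a simple pole with residue proportional to $s$, killed by the explicit factor $s$ and the evaluation at $s=0$. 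Both are standard in the Bismut–Freed / Braverman–Kappeler framework and can be cited to Section 4.2 of [11]; the only care needed is to track that the pseudodifferential nature of $\dot{\widehat{\B}}(\theta)$ forces us to work $\Mod{\Bbb Z}$ and to retain the $\log t$ coefficient in part (1), rather than the cleaner formulas one would get in the differential-operator case.
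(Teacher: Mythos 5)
Your proposal follows essentially the same route as the paper: Lemma \ref{Lemma:4.1} is obtained there exactly by combining (\ref{E:4.3})--(\ref{E:4.5}) with the short-time expansion of $\Tr\bigl( \dot{\widehat{\B}}(\theta) e^{-t\widehat{\B}(\theta)^{2}}\bigr)$ and reading off the $t^{-1/2}\log t$ and $t^{-1/2}$ coefficients through the Mellin transform, the projection terms being disposed of precisely as you indicate (and as the paper cites to Section 4.2 of [11]). One slip in your bookkeeping: $\Gamma(\tfrac{s+1}{2})^{-1}$ is \emph{not} $\tfrac{\sqrt{\pi}}{2}s+O(s^{2})$; since $\Gamma(\tfrac12)=\sqrt{\pi}$, one has $\Gamma(\tfrac{s+1}{2})^{-1}=\tfrac{1}{\sqrt{\pi}}+O(s)$, and the vanishing factor that kills the regular terms is the explicit $s$ already present in (\ref{E:4.5}); with this correction, the term $a_{-\frac12,1}t^{-1/2}\log t$ contributes $-\tfrac{s}{\Gamma(\frac{s+1}{2})}\cdot\bigl(-\tfrac{4}{s^{2}}\bigr)a_{-\frac12,1}$, giving residue $\tfrac{4}{\sqrt{\pi}}a_{-\frac12,1}$, and the term $a_{-\frac12,0}t^{-1/2}$ contributes $-\tfrac{s}{\Gamma(\frac{s+1}{2})}\cdot\tfrac{2}{s}\,a_{-\frac12,0}\to-\tfrac{2}{\sqrt{\pi}}a_{-\frac12,0}$, which are exactly the constants you (correctly) assert, so the error is only in the stated expansion, not in the conclusion.
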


\vspace{0.2 cm}

\begin{theorem}  \label{Theorem:4.2}
We recall that $\hspace{0.1 cm} \widehat{\B}(\theta) = \Psi_{\theta}^{\ast} \hspace{0.1 cm} \widetilde{\B}(\theta) \hspace{0.1 cm}
\Psi_{\theta} \hspace{0.1 cm} : \hspace{0.1 cm} {\Dom \left(\widetilde{\B}(0)_{{\widetilde P}(0)} \right)}
 \rightarrow \Omega^{\even}(M, E \oplus E)$. Then :

$$
\Tr \left( \dot{\widehat{\B}}(\theta) e^{- t \widehat{\B}(\theta)^{2}} \right) \hspace{0.1 cm} \sim \hspace{0.1 cm} 0 \quad
(\text{up to} \quad e^{- \frac{c}{t}}), \quad
\text{for} \quad t \rightarrow 0^{+}.
$$

\end{theorem}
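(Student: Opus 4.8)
The plan is to push the $\theta$-dependence of the domain onto a unitary conjugation, peel off an interior piece that vanishes for algebraic reasons, and treat what is left as a boundary trace on $Y$ governed by the product structure. First I would record the elementary identity $\dot{\widetilde{\B}}(\theta) = \tfrac12[\widetilde{\B}(\theta), \widetilde{J}]$, where $\widetilde{J} = \left( \begin{array}{cc} 0 & -\Id \\ \Id & 0 \end{array} \right)$ is the constant skew endomorphism of $E\oplus E$; this follows at once from $\widetilde{\B}(\theta) = \B\left( \begin{array}{cc} \sin\theta & \cos\theta \\ \cos\theta & -\sin\theta \end{array} \right)$. Using that $\Psi_\theta$ is unitary with $\Psi_\theta\Psi_\theta^\ast = \Id$ (so $\Psi_\theta\dot\Psi_\theta^\ast = -\dot\Psi_\theta\Psi_\theta^\ast$) and that $\Psi_\theta e^{-t\widehat{\B}(\theta)^2}\Psi_\theta^\ast = e^{-t\widetilde{\B}(\theta)^{2}_{{\widetilde P}(\theta)}}$, differentiating $\widehat{\B}(\theta) = \Psi_\theta^\ast\widetilde{\B}(\theta)\Psi_\theta$ gives
$$
\Psi_\theta\,\dot{\widehat{\B}}(\theta)\,\Psi_\theta^\ast \;=\; \dot{\widetilde{\B}}(\theta) \,+\, [\widetilde{\B}(\theta),\, \dot\Psi_\theta\Psi_\theta^\ast] \;=\; [\widetilde{\B}(\theta),\, W_\theta], \qquad W_\theta \,:=\, \tfrac12\widetilde{J} + \dot\Psi_\theta\Psi_\theta^\ast ,
$$
where $\dot\Psi_\theta\Psi_\theta^\ast$ is a $\Psi$DO of order $0$ supported in the collar $N$. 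By invariance of the trace under $\Psi_\theta$ this reduces the theorem to showing that $\Tr\!\left( [\widetilde{\B}(\theta),\, W_\theta]\, e^{-t\widetilde{\B}(\theta)^{2}_{{\widetilde P}(\theta)}} \right)$ is $O(e^{-c/t})$ as $t\to 0^+$.

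Next I would separate the interior contribution. Write $[\widetilde{\B}(\theta), W_\theta] = \dot{\widetilde{\B}}(\theta) + [\widetilde{\B}(\theta),\, \dot\Psi_\theta\Psi_\theta^\ast]$; the second summand is supported in $N$, and for the first one $\widetilde{\B}(\theta)^2 = \left( \begin{array}{cc} \B^2 & 0 \\ 0 & \B^2 \end{array} \right)$ is block diagonal while $\dot{\widetilde{\B}}(\theta) = \B\left( \begin{array}{cc} \cos\theta & -\sin\theta \\ -\sin\theta & -\cos\theta \end{array} \right)$ carries a trace-free matrix on the $E\oplus E$ factor, so the fibrewise trace of the interior heat parametrix of $\dot{\widetilde{\B}}(\theta)\, e^{-t\widetilde{\B}(\theta)^2}$ vanishes pointwise. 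Hence the small-$t$ asymptotics of the trace are localized near $Y$; and since the metric is a product there and $\widetilde{\B}_{Y}^{2}$ is invertible — equivalently $\mathcal H^\bullet(Y, E|_{Y}) = 0$, which holds by the standing assumption $\mathcal L_0 = \mathcal L_1 = \{0\}$ — one may replace $e^{-t\widetilde{\B}(\theta)^{2}_{{\widetilde P}(\theta)}}$ near $Y$ by the corresponding operator on the product half-cylinder $[0,\infty)\times Y$ at the cost of $O(e^{-c/t})$, by finite propagation speed.

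The decisive, and hardest, step is the collar term itself. On $N$ one has $\widetilde{\B}(\theta) = {\widetilde {\mathcal \gamma}}(\theta)(\nabla_{\partial_x} + \widetilde{\mathcal A})$ and $\widetilde{\B}(\theta)^2 = -\nabla_{\partial_x}^2 + \widetilde{\B}_{Y}^{2}$. Applying the Green formula for $\widetilde{\B}(\theta)$ from Lemma \ref{Lemma:3.4} together with a Duhamel expansion of $\widetilde{\B}(\theta)\, W_\theta\, e^{-t\widetilde{\B}(\theta)^2}$, the collar contribution reduces to a trace over $Y$ of an operator built from $W_\theta|_{Y}$, ${\widetilde {\mathcal \gamma}}(\theta)$, ${\widetilde P}(\theta)$ and the restriction of the heat kernel of $\widetilde{\B}(\theta)^{2}_{{\widetilde P}(\theta)}$ to $Y\times Y$; the latter has range in $\Ker{\widetilde P}(\theta)$, and since ${\widetilde {\mathcal \gamma}}(\theta)$ interchanges $\Ker{\widetilde P}(\theta)$ and $\Imm{\widetilde P}(\theta)$ by Lemma \ref{Lemma:3.3}(1), this boundary trace collapses onto the off-diagonal block ${\widetilde P}(\theta)\, W_\theta|_{Y}\, (I - {\widetilde P}(\theta))$. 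The remaining task — and this is where I expect the real difficulty to lie — is to show this block vanishes, using $W_\theta|_{Y} = \dot U(\theta)U(\theta)^\ast + \tfrac12\widetilde{J}$ with $U(\theta) = e^{iT(\theta)}$, the intertwining relation (\ref{E:3.12}), and the commutation relations of Lemmas \ref{Lemma:3.2} and \ref{Lemma:3.5}. The extra subtlety compared to [6] is that $T(\theta)$ commutes with ${\widetilde {\mathcal \gamma}}(\theta)$ and $\widetilde{\B}_{Y}^{2}$ but does not anticommute with $\widetilde{\mathcal A}$, so the model operator on the cylinder is not explicit and the computation has to be run through the abstract relations among $\mathfrak A(\theta)$, $\mathfrak B$, ${\widetilde {\mathcal \gamma}}(\theta)$ and ${\widetilde P}(\theta)$; checking that the resulting polynomial-in-$t^{-1/2}$ terms cancel, leaving only the $O(e^{-c/t})$ remainder, is the technical core of the argument.
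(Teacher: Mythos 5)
Your opening reductions do match the paper: conjugating by $\Psi_{\theta}$ and splitting $\Psi_{\theta}\dot{\widehat{\B}}(\theta)\Psi_{\theta}^{\ast}$ into $\dot{\widetilde{\B}}(\theta)$ plus the commutator of $\widetilde{\B}(\theta)$ with $\dot{\Psi}_{\theta}\Psi_{\theta}^{\ast}={\mathcal Q}(x)$ is exactly Lemma \ref{Lemma:5.1} (your identity $\dot{\widetilde{\B}}(\theta)=\tfrac12[\widetilde{\B}(\theta),\widetilde{J}]$ is a correct but inessential repackaging), and your trace-free-matrix argument for the interior part of $\Tr\bigl(\dot{\widetilde{\B}}(\theta)e^{-t\widetilde{\B}(\theta)^{2}_{{\widetilde P}(\theta)}}\bigr)$ is the same as \eqref{E:5.9}. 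The gap is everything after that. The content of the theorem is the vanishing (up to $O(e^{-c/t})$) of the collar contributions, and there you stop at a heuristic reduction: you assert that the boundary trace ``collapses onto the block ${\widetilde P}(\theta)\,W_{\theta}|_{Y}\,(I-{\widetilde P}(\theta))$'' and then explicitly defer showing that it vanishes, calling it the technical core. That core is the proof; without it nothing is established. Moreover the mechanism you aim at is misidentified: the paper never proves (and does not need) an operator-level vanishing of such a block. The cancellations are trace identities over $Y$, obtained term by term after inserting the explicit cylinder heat kernel \eqref{E:5.5}, e.g. $\Tr\bigl({\frak B}^{\ast}{\frak A}(\theta)e^{-t{\widetilde{\mathcal A}}^{2}}\bigr)=\Tr\bigl({\widetilde{\mathcal A}}\,{\frak B}^{\ast}e^{-t{\widetilde{\mathcal A}}^{2}}\bigr)=0$ and $\Tr\bigl(\Gamma^{Y}e^{-t\B_{Y}^{2}}|_{\Omega^{\even}(Y,E|_{Y})}\bigr)+\Tr\bigl(\Gamma^{Y}e^{-t\B_{Y}^{2}}|_{\Omega^{\odd}(Y,E|_{Y})}\bigr)=0$, the last of which uses $H^{\bullet}(Y,E|_{Y})=0$. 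You invoke acyclicity of $Y$ only to justify localization to the cylinder, where it is not needed (the parametrix patching of \eqref{E:5.6}--\eqref{E:5.7} suffices), and never where it is indispensable, namely in these boundary-trace cancellations.

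Two further points. First, your stated obstruction that ``the model operator on the cylinder is not explicit'' because $T(\theta)$ fails to anticommute with ${\widetilde{\mathcal A}}$ is wrong in the relevant sense: Lemma \ref{Lemma:3.3} (${\widetilde P}(\theta){\widetilde{\mathcal A}}{\widetilde P}(\theta)=0$ and $(I-{\widetilde P}(\theta)){\widetilde{\mathcal A}}(I-{\widetilde P}(\theta))=0$) kills the extra integral term in the Br\"uning--Lesch formula, so the heat kernel of $\widetilde{\B}(\theta)^{\cyl}_{{\widetilde P}(\theta)}$ is completely explicit, and this explicitness is precisely what makes the computation of the three terms in \eqref{E:5.10} and \eqref{E:5.17} feasible; the non-anticommutation only forces the careful formula for $\dot{\Psi}_{\theta}$ via ${\mathcal Q}(x)$ and Lemma \ref{Lemma:5.2}. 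Second, not all collar terms vanish identically: the term \eqref{E:5.27} is only $O(e^{-\frac{c}{t}})$, because $\phi^{\prime}$ vanishes near $x=0$, a distinction your ``show the block is zero'' formulation cannot produce. So the proposal reproduces the paper's first, easy reductions but omits, and partly mislocates, the argument that actually proves the statement.
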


\vspace{0.2 cm}

\noindent
Lemma \ref{Lemma:4.1} and Theorem \ref{Theorem:4.2} imply that for each $\theta$, $\eta_{\widehat{\B}(\theta)} (s)$ has a regular value at $s=0$.
Moreover, $\eta_{\widehat{\B}(\theta)} (0)$ and $\eta (\widehat{\B}(\theta))$ do not depend on $\theta$ up to $\Mod {\Bbb Z}$,
which yields the following result.

\vspace{0.2 cm}

\begin{corollary}  \label{Corollary:4.3}
\begin{eqnarray*}
\eta \left( {\widetilde \B}^{\even}_{(\R/\A)} \right) \hspace{0.1 cm} = \hspace{0.1 cm} \eta \left( \widehat{\B}(0) \right)
 \hspace{0.1 cm} = \hspace{0.1 cm} \eta \left( \widehat{\B}(\frac{\pi}{2}) \right) \hspace{0.1 cm} = \hspace{0.1 cm}
\eta \left( {\widetilde \B}^{\even}_{({\widetilde {\mathcal P}_{-}}/{\widetilde {\mathcal P}_{+}})} \right) \hspace{0.1 cm} = \hspace{0.1 cm}
\eta \left( \B_{{\mathcal P}_{-}} \right) - \eta \left( \B_{{\mathcal P}_{+}} \right)  \qquad (\Mod {\Bbb Z}).
\end{eqnarray*}
\end{corollary}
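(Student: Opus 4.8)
The plan is to obtain the chain of equalities from three inputs: the isospectrality of $\widehat{\B}(\theta)$ with $\widetilde{\B}(\theta)_{{\widetilde P}(\theta)}$ for every $\theta$, the deformation invariance (modulo ${\Bbb Z}$) of $\eta(\widehat{\B}(\theta))$ coming out of Lemma~\ref{Lemma:4.1} together with Theorem~\ref{Theorem:4.2}, and the explicit block-diagonal form of ${\widetilde \B}^{\even}_{({\widetilde {\mathcal P}_{-}}/{\widetilde {\mathcal P}_{+}})}$ recorded in (\ref{E:4.1616}).

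First I would dispose of the outer equalities and the last equality. By the construction at the end of Section~\ref{S:foso}, the operator $\widehat{\B}(\theta)=\Psi_{\theta}^{\ast}\widetilde{\B}(\theta)\Psi_{\theta}$ has, for each $\theta\in[0,\frac{\pi}{2}]$, the same spectrum as $\widetilde{\B}(\theta)_{{\widetilde P}(\theta)}$; since $\eta_{\widehat{\B}(\theta)}(s)$ and $\Dim\Ker\widehat{\B}(\theta)$ are spectral invariants, $\eta(\widehat{\B}(\theta))=\eta(\widetilde{\B}(\theta)_{{\widetilde P}(\theta)})$. Evaluating at the endpoints, where $\widetilde{\B}(0)_{{\widetilde P}(0)}={\widetilde \B}^{\even}_{(\R/\A)}$ and $\widetilde{\B}(\frac{\pi}{2})_{{\widetilde P}(\frac{\pi}{2})}={\widetilde \B}^{\even}_{({\widetilde {\mathcal P}_{-}}/{\widetilde {\mathcal P}_{+}})}$, gives $\eta({\widetilde \B}^{\even}_{(\R/\A)})=\eta(\widehat{\B}(0))$ and $\eta(\widehat{\B}(\frac{\pi}{2}))=\eta({\widetilde \B}^{\even}_{({\widetilde {\mathcal P}_{-}}/{\widetilde {\mathcal P}_{+}})})$. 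For the last equality, since $H^\bullet(M;E)=H^\bullet(M,Y;E)=0$ we have ${\mathcal L}_0={\mathcal L}_1=\{0\}$, so by (\ref{E:4.1616}) the operator ${\widetilde \B}^{\even}_{({\widetilde {\mathcal P}_{-}}/{\widetilde {\mathcal P}_{+}})}$ is block-diagonal with diagonal blocks $\B^{\even}_{{\mathcal P}_-}$ and $-\B^{\even}_{{\mathcal P}_+}$; hence its eta function and kernel dimension split as sums, and since $\eta(-A)+\eta(A)=\Dim\Ker A\in{\Bbb Z}$ for any $A$ with discrete real spectrum, $\eta(-\B^{\even}_{{\mathcal P}_+})\equiv-\eta(\B^{\even}_{{\mathcal P}_+})\ (\Mod {\Bbb Z})$, so $\eta({\widetilde \B}^{\even}_{({\widetilde {\mathcal P}_{-}}/{\widetilde {\mathcal P}_{+}})})\equiv\eta(\B_{{\mathcal P}_-})-\eta(\B_{{\mathcal P}_+})\ (\Mod {\Bbb Z})$.

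The heart of the matter is the middle equality $\eta(\widehat{\B}(0))\equiv\eta(\widehat{\B}(\frac{\pi}{2}))\ (\Mod {\Bbb Z})$. Here I would use that Theorem~\ref{Theorem:4.2} forces the asymptotic expansion of $\Tr(\dot{\widehat{\B}}(\theta)e^{-t\widehat{\B}(\theta)^{2}})$ as $t\to0^{+}$ to be trivial, so in particular the coefficients $a_{-\frac{1}{2},1}(\widehat{\B}(\theta),\dot{\widehat{\B}}(\theta))$ and $a_{-\frac{1}{2},0}(\widehat{\B}(\theta),\dot{\widehat{\B}}(\theta))$ both vanish. By Lemma~\ref{Lemma:4.1}(1) this makes $\Res_{s=0}\eta_{\widehat{\B}(\theta)}(s)$ independent of $\theta$; evaluating at $\theta=\frac{\pi}{2}$, where $\widehat{\B}(\frac{\pi}{2})$ is isospectral to $\B^{\even}_{{\mathcal P}_-}\oplus(-\B^{\even}_{{\mathcal P}_+})$ and the eta function is regular at $s=0$ by [11], shows the residue vanishes for every $\theta$, so $\eta_{\widehat{\B}(\theta)}(s)$ is regular at $s=0$ throughout. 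Then Lemma~\ref{Lemma:4.1}(2) gives $\frac{d}{d\theta}\eta_{\widehat{\B}(\theta)}(0)=0\ (\Mod {\Bbb Z})$, and since the reduced eta invariant $\frac{1}{2}(\eta_{\widehat{\B}(\theta)}(0)+\Dim\Ker\widehat{\B}(\theta))$ changes only by integer jumps (spectral flow) across the finitely many $\theta$ at which eigenvalues of the smooth family $\widehat{\B}(\theta)$ cross zero, the map $\theta\mapsto\eta(\widehat{\B}(\theta))\ (\Mod {\Bbb Z})$ is locally constant, hence constant on $[0,\frac{\pi}{2}]$; comparing the two endpoints finishes the equality.

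The genuine obstacle lies upstream, in Theorem~\ref{Theorem:4.2}: once the super-exponential vanishing of $\Tr(\dot{\widehat{\B}}(\theta)e^{-t\widehat{\B}(\theta)^{2}})$ is in hand, the corollary is mere bookkeeping with heat-trace coefficients, spectral flow, and additivity of eta data under direct sums. The only point inside the corollary that requires a little care is confirming regularity of $\eta_{\widehat{\B}(\theta)}(s)$ at $s=0$ for every $\theta$ before applying Lemma~\ref{Lemma:4.1}(2); this is why one propagates regularity from the endpoint $\theta=\frac{\pi}{2}$ using the vanishing of $a_{-\frac{1}{2},1}$, rather than assuming it outright.
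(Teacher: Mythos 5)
Your proposal is correct and follows essentially the same route as the paper: isospectrality of $\widehat{\B}(\theta)$ with $\widetilde{\B}(\theta)_{{\widetilde P}(\theta)}$ identifies the endpoints, Theorem \ref{Theorem:4.2} kills the coefficients $a_{-\frac{1}{2},1}$ and $a_{-\frac{1}{2},0}$ so that Lemma \ref{Lemma:4.1} gives regularity at $s=0$ and $\theta$-independence of $\eta(\widehat{\B}(\theta))$ modulo ${\Bbb Z}$ (the paper's device (\ref{E:4.3}) plays the role of your spectral-flow/locally-constant step), and the block-diagonal form (\ref{E:4.1616}) together with $\eta(-A)\equiv-\eta(A)\ (\Mod{\Bbb Z})$ yields the final equality.
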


\vspace{0.2 cm}

\noindent
Corollary \ref{Corollary:3.3}, Corollary \ref{Corollary:4.3}, Theorem \ref{Theorem:2.11} and Theorem \ref{Theorem:2.4} lead to the following result,
which is the main result of this paper.

\vspace{0.2 cm}

\begin{theorem}   \label{Theorem:4.4}
Let $(M, g^{M})$ be an odd dimensional compact Riemannian manifold with boundary $Y$ and $g^{M}$ be a product metric near $Y$.
We assume that the connection $\nabla$ is a Hermitian connection and for each $0 \leq q \leq m$, $H^{q}(M ; E) = H^{q}(M, Y ; E) = \{ 0 \}$. Then : \newline

\vspace{0.05 cm}

\noindent
(1) $\hspace{0.2 cm} {\widetilde \rho}_{\an, (\m)}(g^{M}, {\widetilde \nabla}) \hspace{0.1 cm} = \hspace{0.1 cm}  \pm \hspace{0.1 cm}
 {\widetilde \rho}_{\an, ({\mathcal P}_{-}/{\mathcal P}_{+})}(g^{M}, {\widetilde \nabla})  = \hspace{0.1 cm}  \pm \hspace{0.1 cm}
\rho_{\an, {\mathcal P}_{-}}(g^{M}, \nabla) \cdot \overline{\rho_{\an, {\mathcal P}_{+}}(g^{M}, \nabla)}  \hspace{0.1 cm}
\in \hspace{0.1 cm} {\Bbb R}. $    \newline

\vspace{0.05 cm}

\noindent
(2) $\hspace{0.2 cm} \eta \left( {\widetilde \B}^{\even}_{(\m)} \right) \hspace{0.1 cm}  =  \hspace{0.1 cm}
\eta \left( {\widetilde \B}^{\even}_{(\R/\A)} \right) \hspace{0.1 cm} \equiv \hspace{0.1 cm}   0  \qquad (\Mod {\Bbb Z}) . $
\end{theorem}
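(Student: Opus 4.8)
The plan is to derive Theorem~\ref{Theorem:4.4} from three results already established: the comparison formula of Corollary~\ref{Corollary:3.3} (into which Theorem~\ref{Theorem:2.11} and Lemma~\ref{Lemma:3.0} have already been incorporated), the deformation invariance of Corollary~\ref{Corollary:4.3}, and the integrality of Theorem~\ref{Theorem:2.4}. I would begin with part (2). By (\ref{E:3.120}) the operators $\widetilde{\B}^{\even}_{(\m)}$ and $\widetilde{\B}^{\even}_{(\R/\A)}$ have the same spectrum, hence the same eta invariant; Corollary~\ref{Corollary:4.3} then pushes $\eta(\widetilde{\B}^{\even}_{(\R/\A)})$ to $\eta(\B_{{\mathcal P}_{-}}) - \eta(\B_{{\mathcal P}_{+}})$ modulo ${\Bbb Z}$, and under the standing hypothesis $H^{q}(M;E) = H^{q}(M,Y;E) = 0$ Theorem~\ref{Theorem:2.4} makes this last quantity an integer, which proves part (2). (In fact it equals $0$: since $\widetilde{\B}^{\even}_{(\R/\A)}$ is off-diagonal in the $E\oplus E$ decomposition, conjugation by $\operatorname{diag}(1,-1)$ preserves its domain and reverses its sign, forcing $\eta_{\widetilde{\B}^{\even}_{(\R/\A)}}(0)=0$, while $\Ker\B^{\even}_{\rel}=H^{\even}(M,Y;E)=0$ and $\Ker\B^{\even}_{\Abs}=H^{\even}(M;E)=0$ by Lemma~\ref{Lemma:1.2}.)

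For part (1), the identity ${\widetilde \rho}_{\an,({\mathcal P}_{-}/{\mathcal P}_{+})}(g^{M},{\widetilde \nabla}) = \rho_{\an,{\mathcal P}_{-}}(g^{M},\nabla)\cdot\overline{\rho_{\an,{\mathcal P}_{+}}(g^{M},\nabla)}$ is already on record in the display preceding Corollary~\ref{Corollary:3.3}, so only the first equality and the reality claim remain. For reality I would start from (\ref{E:3.121}): the term $\tfrac12\sum_{q}(-1)^{q+1}q\,(\log\Det_{2\theta}\B^{2,q}_{\rel}+\log\Det_{2\theta}\B^{2,q}_{\Abs})$ is real because $\B^{2}_{q,\rel}$ and $\B^{2}_{q,\Abs}$ are non-negative self-adjoint operators, while the summand $-i\pi\big(\eta(\widetilde{\B}^{\even}_{(\R/\A)}) - \tfrac12\rk(E)\,\eta_{\widetilde{\B}^{\even,\trivial}_{(\R/\A)}}(0)\big)$ lies in $i\pi{\Bbb Z}$: the same off-diagonal symmetry (now for the trivial connection) gives $\eta_{\widetilde{\B}^{\even,\trivial}_{(\R/\A)}}(0)=0$, and $\eta(\widetilde{\B}^{\even}_{(\R/\A)})\in{\Bbb Z}$ by part (2). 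Hence $\log\widetilde{\rho}_{\an,(\m)}(g^{M},{\widetilde \nabla})$ lies in ${\Bbb R} + i\pi{\Bbb Z}$, so $\widetilde{\rho}_{\an,(\m)}(g^{M},{\widetilde \nabla})\in{\Bbb R}$.

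It remains to show $\widetilde{\rho}_{\an,(\m)} = \pm\,\widetilde{\rho}_{\an,({\mathcal P}_{-}/{\mathcal P}_{+})}$, that is, that the difference of logarithms in Corollary~\ref{Corollary:3.3} lies in $i\pi{\Bbb Z}$. That difference is $-i\pi n_{1} + \tfrac{\pi i}{2}\rk(E)\,n_{2}$, with $n_{1} := \eta(\widetilde{\B}^{\even}_{(\R/\A)}) - \big(\eta(\B_{{\mathcal P}_{-}}) - \eta(\B_{{\mathcal P}_{+}})\big)$ and $n_{2} := \eta_{\widetilde{\B}^{\even,\trivial}_{(\R/\A)}}(0) - \big(\eta_{\B^{\even,\trivial}_{{\mathcal P}_{-}}}(0) - \eta_{\B^{\even,\trivial}_{{\mathcal P}_{+}}}(0)\big)$. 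Corollary~\ref{Corollary:4.3} and Theorem~\ref{Theorem:2.4} give $n_{1}\in{\Bbb Z}$ at once; the substantive point is to prove $n_{2}\in 2{\Bbb Z}$, so that $\tfrac{\rk(E)}{2}n_{2}\in\rk(E){\Bbb Z}\subset{\Bbb Z}$ regardless of the parity of $\rk(E)$. My approach would be to write each eta-function value as $\eta_{\widehat{\B}}(0) = 2\,\eta(\widehat{\B}) - \Dim\Ker\widehat{\B}$, apply Corollary~\ref{Corollary:4.3} for the trivial connection to cancel the eta-invariant parts modulo $2{\Bbb Z}$, and then use Lemma~\ref{Lemma:1.2} together with Poincar\'e--Lefschetz duality $H^{q}(M,Y;{\Bbb C})\cong H^{m-q}(M;{\Bbb C})$ (valid because $m$ is odd) to check that the surviving combination of kernel dimensions equals $2\,\Dim H^{\even}(M;{\Bbb C})$, which is even. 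With $n_{1}\in{\Bbb Z}$ and $n_{2}\in 2{\Bbb Z}$ the difference of logarithms lies in $i\pi{\Bbb Z}$, so $\widetilde{\rho}_{\an,(\m)} = \pm\,\widetilde{\rho}_{\an,({\mathcal P}_{-}/{\mathcal P}_{+})}$; combined with the reality just proved and the factorization recalled in the second paragraph, this gives the full chain of equalities of part (1) and places all three quantities in ${\Bbb R}$. The step I expect to be the main obstacle is precisely this parity bookkeeping for the trivial connection: the de Rham cohomology of the trivial bundle need not vanish, so the half-integer contributions $\tfrac{\pi i}{2}\rk(E)\,\eta(\cdot)$ cannot simply be dropped and must be reorganized, through duality and the off-diagonal structure, into an integer multiple of $i\pi$.
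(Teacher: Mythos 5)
Your first two paragraphs essentially reproduce the paper's own route: part (2) is exactly the intended argument (the spectral identity \eqref{E:3.120}, Corollary \ref{Corollary:4.3}, and the integrality statement at the end of Theorem \ref{Theorem:2.4}), the factorization through $\rho_{\an, {\mathcal P}_{-}}(g^{M},\nabla)\cdot\overline{\rho_{\an, {\mathcal P}_{+}}(g^{M},\nabla)}$ is the display preceding Corollary \ref{Corollary:3.3}, and your extra observation that $\eta_{{\widetilde \B}^{\even,\trivial}_{(\R/\A)}}(s)\equiv 0$ because conjugation by $\operatorname{diag}(1,-1)$ preserves $\Omega^{\even}_{\rel}(M,{\Bbb C})\oplus\Omega^{\even}_{\Abs}(M,{\Bbb C})$ and reverses the sign of the off-diagonal operator is correct and genuinely useful for the reality claim.

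The gap is precisely at the step you flag as the main obstacle: you cannot ``apply Corollary \ref{Corollary:4.3} for the trivial connection.'' The entire deformation apparatus of Sections 3--5 is developed only under the standing hypothesis $H^{q}(M;E)=H^{q}(M,Y;E)=0$, which forces $H^{\bullet}(Y,E|_{Y})=0$ and ${\mathcal L}_{0}={\mathcal L}_{1}=\{0\}$: the operators ${\frak A}(\theta)$ and $P(\theta)$ are built from $(\B_{Y}^{2})^{-1}$, which does not exist for the trivial line bundle since $H^{0}(Y;{\Bbb C})\neq 0$; the boundary conditions ${\mathcal P}_{-}$, ${\mathcal P}_{+}$ without Lagrangian pieces are not even the relevant ones there (for the trivial connection ${\mathcal L}_{0}\neq{\mathcal L}_{1}\neq\{0\}$); and the cancellations \eqref{E:5.15} and \eqref{E:5.24} in the proof of Theorem \ref{Theorem:4.2} use $H^{\bullet}(Y,E|_{Y})=0$ explicitly. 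Hence Corollary \ref{Corollary:4.3} tells you nothing about $\eta\bigl({\widetilde \B}^{\even,\trivial}_{(\R/\A)}\bigr)-\bigl(\eta(\B^{\even,\trivial}_{{\mathcal P}_{-}})-\eta(\B^{\even,\trivial}_{{\mathcal P}_{+}})\bigr)$, and your parity claim $n_{2}\in 2{\Bbb Z}$ is unsupported (the kernel bookkeeping giving $2\Dim H^{\even}(M;{\Bbb C})$ is fine; it is the eta-invariant half that is missing, and no Poincar\'e--Lefschetz argument is needed or supplied for it). The paper disposes of the trivial-connection term by a different device: the remark preceding Corollary \ref{Corollary:3.3} asserts that $\eta_{\B^{\even,\trivial}_{{\mathcal P}_{-}}}(0)-\eta_{\B^{\even,\trivial}_{{\mathcal P}_{+}}}(0)$ is an integer as a consequence of the spectral-flow/Maslov identities (1), (2) of Theorem \ref{Theorem:2.4}, which hold without any acyclicity, and then combines this with Corollary \ref{Corollary:3.3} and Corollary \ref{Corollary:4.3}. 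Any correct completion of your argument has to go through that route (or an independent parity computation comparing the APS-type conditions $\Pi_{>,{\mathcal L}_{0}}$ and $\Pi_{>,{\mathcal L}_{1}}$ for the trivial connection), not through the Section 4--5 deformation; note also that mere integrality of $n_{2}$ only pins the ratio of the two torsions down to a fourth root of unity, so the evenness issue you raise is indeed the crux of both the sign statement and the reality of ${\widetilde \rho}_{\an, ({\mathcal P}_{-}/{\mathcal P}_{+})}(g^{M},{\widetilde \nabla})$.
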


\vspace{0.3 cm}

\section{Proof of Theorem \ref{Theorem:4.2}}

\vspace{0.2 cm}

Recall that
$$
\widehat{\B}(\theta) \hspace{0.1 cm} = \hspace{0.1 cm} \Psi_{\theta}^{\ast} \hspace{0.1 cm} \widetilde{\B}(\theta) \hspace{0.1 cm} \Psi_{\theta}
\hspace{0.1 cm} = \hspace{0.1 cm} e^{- i \phi(x) T(\theta)} \hspace{0.1 cm} \widetilde{\B}(\theta) \hspace{0.1 cm} e^{i \phi(x) T(\theta)}.
$$

\noindent
Since $T^{\prime}(\theta)$ does not commute with $T(\theta)$, we should be careful in computing $\dot{\widehat{\B}}(\theta)$.
We note that

\begin{eqnarray}  \label{E:5.1}
\dot{\widehat{\B}}(\theta) & = &
\left( \frac{d}{d \theta} e^{- i \phi(x) T(\theta)} \right) \widetilde{\B}(\theta) e^{i \phi(x) T(\theta)}
\hspace{0.1 cm} + \hspace{0.1 cm}
e^{- i \phi(x) T(\theta)} \left( \frac{d}{d \theta} \widetilde{\B}(\theta) \right) e^{i \phi(x) T(\theta)}  \nonumber  \\
& & \hspace{0.1 cm} + \hspace{0.1 cm}
e^{- i \phi(x) T(\theta)} \widetilde{\B}(\theta) \left( \frac{d}{d \theta} e^{i \phi(x) T(\theta)} \right),
\end{eqnarray}

\vspace{0.2 cm}

\noindent
which leads to

\begin{eqnarray}  \label{E:5.2}
& & \Tr \left( \dot{\widehat{\B}}(\theta) e^{- t {\widehat{\B}}(\theta)^{2}} \right)  \hspace{0.1 cm} = \hspace{0.1 cm}
\Tr \left(  \left( \frac{d}{d \theta} e^{- i \phi(x) T(\theta)} \right) \widetilde{\B}(\theta)
e^{- t \widetilde{\B}(\theta)_{{\widetilde P}(\theta)}^{2}} e^{i \phi(x) T(\theta)} \right)   \nonumber  \\
& + &
\Tr \left(  \left( \frac{d}{d \theta} \widetilde{\B}(\theta) \right)
e^{- t \widetilde{\B}(\theta)_{{\widetilde P}(\theta)}^{2}} \right)   \hspace{0.1 cm} + \hspace{0.1 cm}
\Tr \left( \widetilde{\B}(\theta) \left( \frac{d}{d \theta} e^{i \phi(x) T(\theta)} \right) e^{- i \phi(x) T(\theta)}
e^{- t \widetilde{\B}(\theta)_{{\widetilde P}(\theta)}^{2}} \right).
\end{eqnarray}

\vspace{0.2 cm}

\noindent
Simple computation leads to the following result.

\begin{eqnarray}   \label{E:5.3}
\frac{d}{d \theta} e^{- i \phi(x) T(\theta)} & = & - e^{- i \phi(x) T(\theta)} \int_{1}^{x} e^{i \phi(u) T(\theta)}
\left( i \phi^{\prime}(u) T^{\prime}(\theta) \right) e^{- i \phi(u) T(\theta)} du
\hspace{0.1 cm} = \hspace{0.1 cm} - e^{- i \phi(x) T(\theta)} {\mathcal Q}(x) \nonumber  \\
\frac{d}{d \theta} e^{i \phi(x) T(\theta)} & = & \int_{1}^{x} e^{i \phi(u) T(\theta)}
\left( i \phi^{\prime}(u) T^{\prime}(\theta) \right) e^{- i \phi(u) T(\theta)} du \hspace{0.1 cm} e^{i \phi(x) T(\theta)}
\hspace{0.1 cm} = \hspace{0.1 cm} {\mathcal Q}(x)   e^{i \phi(x) T(\theta)} ,
\end{eqnarray}

\noindent
where

\begin{equation}   \label{E:5.4}
{\mathcal Q}(x) :=  \int_{1}^{x} e^{i \phi(u) T(\theta)} \left( i \phi^{\prime}(u) T^{\prime}(\theta) \right) e^{- i \phi(u) T(\theta)} du.
\end{equation}

\vspace{0.2 cm}

\noindent
Since $\phi(u)$ is supported in $[0, 1]$, the support of ${\mathcal Q}(x)$ belongs to $[0, 1] \times Y$.
Equations (\ref{E:5.2}) and (\ref{E:5.3}) yield the following result.

\vspace{0.2 cm}

\begin{lemma}  \label{Lemma:5.1}
\begin{eqnarray*}
\Tr \left( \dot{\widehat{\B}}(\theta) e^{- t {\widehat{\B}}(\theta)^{2}} \right)
& = & \Tr \left(  \left( \frac{d}{d \theta} \widetilde{\B}(\theta) \right) e^{- t \widetilde{\B}_{{\widetilde P}(\theta)}^{2}} \right)
\hspace{0.1 cm} + \hspace{0.1 cm}
\Tr \left\{ \left( \widetilde{\B}(\theta) {\mathcal Q}(x) - {\mathcal Q}(x) \widetilde{\B}(\theta) \right)
e^{- t \widetilde{\B}(\theta)_{{\widetilde P}(\theta)}^{2}} \right\}.
\end{eqnarray*}
\end{lemma}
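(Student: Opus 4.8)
The plan is to reduce the claim to the three–term expansion (\ref{E:5.2}) and then to collapse its first and third terms into a single commutator term by invoking the explicit Duhamel formulas (\ref{E:5.3}) for the $\theta$–derivatives of $e^{\pm i\phi(x)T(\theta)}$ together with the cyclicity of the trace. No new analytic input beyond what is already in the excerpt is needed; the work is entirely in organizing the conjugations.

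First I would record the conjugation identity for the heat semigroup. By Lemma~\ref{Lemma:3.6}, $\Psi_{\theta}$ is unitary and maps $\Dom(\widetilde{\B}(0)_{{\widetilde P}(0)})$ onto $\Dom(\widetilde{\B}(\theta)_{{\widetilde P}(\theta)})$, intertwining $\widehat{\B}(\theta)$ with the self-adjoint realization $\widetilde{\B}(\theta)_{{\widetilde P}(\theta)}$; hence $\widehat{\B}(\theta)^{2} = \Psi_{\theta}^{\ast}\,\widetilde{\B}(\theta)_{{\widetilde P}(\theta)}^{2}\,\Psi_{\theta}$ and therefore $e^{-t\widehat{\B}(\theta)^{2}} = e^{-i\phi(x)T(\theta)}\, e^{-t\widetilde{\B}(\theta)_{{\widetilde P}(\theta)}^{2}}\, e^{i\phi(x)T(\theta)}$. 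Substituting this and (\ref{E:5.1}) into $\Tr\!\big(\dot{\widehat{\B}}(\theta)\,e^{-t\widehat{\B}(\theta)^{2}}\big)$ and cancelling the adjacent factors $e^{i\phi(x)T(\theta)}e^{-i\phi(x)T(\theta)}=\Id$ yields (\ref{E:5.2}). This is exactly the point that must be handled with care: since $T'(\theta)$ does not commute with $T(\theta)$, the derivatives of $e^{\pm i\phi(x)T(\theta)}$ are not $\mp i\phi(x)T'(\theta)e^{\pm i\phi(x)T(\theta)}$ but the integral expressions in (\ref{E:5.3}), with $\mathcal{Q}(x)$ as in (\ref{E:5.4}); this is the main obstacle, and it is the reason (\ref{E:5.3}) is singled out before the lemma.

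It then remains to simplify the first and third terms of (\ref{E:5.2}). Each operator occurring under a trace is trace class: $e^{-t\widetilde{\B}(\theta)_{{\widetilde P}(\theta)}^{2}}$ is smoothing on the compact manifold $M$ under the well-posed boundary condition ${\widetilde P}(\theta)$, and composing it with the remaining factors $\widetilde{\B}(\theta)$, $\mathcal{Q}(x)$, $e^{\pm i\phi(x)T(\theta)}$ (and their products) produces bounded operators, so cyclicity of the trace is legitimate for all the rearrangements below. Moving $e^{i\phi(x)T(\theta)}$ to the front of the first term and using $\frac{d}{d\theta}e^{-i\phi(x)T(\theta)} = -e^{-i\phi(x)T(\theta)}\mathcal{Q}(x)$ from (\ref{E:5.3}) turns it into $-\Tr\!\big(\mathcal{Q}(x)\,\widetilde{\B}(\theta)\,e^{-t\widetilde{\B}(\theta)_{{\widetilde P}(\theta)}^{2}}\big)$; likewise, using $\frac{d}{d\theta}e^{i\phi(x)T(\theta)} = \mathcal{Q}(x)\,e^{i\phi(x)T(\theta)}$ the third term becomes $\Tr\!\big(\widetilde{\B}(\theta)\,\mathcal{Q}(x)\,e^{-t\widetilde{\B}(\theta)_{{\widetilde P}(\theta)}^{2}}\big)$. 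Adding these two to the unchanged middle term $\Tr\!\big((\tfrac{d}{d\theta}\widetilde{\B}(\theta))\,e^{-t\widetilde{\B}_{{\widetilde P}(\theta)}^{2}}\big)$ gives precisely the asserted identity, the two $\mathcal{Q}(x)$ contributions combining into $\Tr\!\big((\widetilde{\B}(\theta)\mathcal{Q}(x)-\mathcal{Q}(x)\widetilde{\B}(\theta))\,e^{-t\widetilde{\B}(\theta)_{{\widetilde P}(\theta)}^{2}}\big)$. Beyond the non-commutativity already flagged, the only thing to be careful about is justifying that the bounded unitary factors may be cycled past the unbounded $\widetilde{\B}(\theta)$, which is legitimate because $\widetilde{\B}(\theta)\,e^{-t\widetilde{\B}(\theta)_{{\widetilde P}(\theta)}^{2}}$ is itself trace class and every rearrangement therefore takes place among trace-class operators.
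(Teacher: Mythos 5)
Your proposal is correct and follows essentially the same route as the paper: differentiate the conjugated operator as in (\ref{E:5.1}), pass to the three-term trace identity (\ref{E:5.2}), and then use the Duhamel-type formulas (\ref{E:5.3}) for $\frac{d}{d\theta}e^{\pm i\phi(x)T(\theta)}$ together with cyclicity of the trace to collapse the first and third terms into the commutator term. The only difference is that you spell out explicitly the conjugation $e^{-t\widehat{\B}(\theta)^{2}}=\Psi_{\theta}^{\ast}e^{-t\widetilde{\B}(\theta)_{{\widetilde P}(\theta)}^{2}}\Psi_{\theta}$ and the trace-class justification for the rearrangements, which the paper leaves implicit.
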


\vspace{0.3 cm}

Let $\B^{\cyl}$ be the odd signature operator defined as in (\ref{E:1.8}) on $[0, \infty) \times Y$ and

$$\widetilde{\B}(\theta)^{\cyl} := \B^{\cyl} \cdot \left( \begin{array}{clcr} \sin \theta \cdot \Id & \cos \theta \cdot \Id  \\
\cos \theta \cdot \Id & -\sin \theta \cdot \Id   \end{array} \right).
$$

\noindent
The heat kernel of $\left( \widetilde{\B}(\theta)^{\cyl}_{{\widetilde P}(\theta)} \right)^{2}$ was computed in [6] as follows.

\vspace{0.2 cm}

\begin{eqnarray*}
e^{-t \left( \widetilde{\B}(\theta)^{\cyl}_{{\widetilde P}(\theta)} \right)^{2}} (x, y)  & = & (4 \pi t)^{- \frac{1}{2}} \left( e^{- \frac{(x-y)^{2}}{4t}} +
(I - 2 {\widetilde P} (\theta)) e^{- \frac{(x+y)^{2}}{4t}} \right) e^{-t {\widetilde {\mathcal A}}^{2}}  \nonumber \\
& & + (\pi t)^{- \frac{1}{2}} \left( I - {\widetilde P} (\theta) \right) \int_{0}^{\infty} e^{- \frac{(x+y+z)^{2}}{4t}} {\widetilde {\mathcal A}}(\theta)
\hspace{0.1 cm} e^{{\widetilde {\mathcal A}}(\theta)z - t {\widetilde {\mathcal A}}^{2}} dz,
\end{eqnarray*}

\vspace{0.2 cm}

\noindent
where
${\widetilde {\mathcal A}}(\theta) := (I - {\widetilde P}(\theta)) \widetilde{\mathcal A} (I - {\widetilde P } (\theta))$.
Moreover, Lemma \ref{Lemma:3.3} shows that

\begin{eqnarray} \label{E:5.5}
e^{-t \left( \widetilde{\B}(\theta)^{\cyl}_{{\widetilde P}(\theta)} \right)^{2}} (x, y)  & = & (4 \pi t)^{- \frac{1}{2}} \left( e^{- \frac{(x-y)^{2}}{4t}} +
(I - 2 {\widetilde P} (\theta)) e^{- \frac{(x+y)^{2}}{4t}} \right) e^{-t {\widetilde {\mathcal A}}^{2}}.
\end{eqnarray}

\vspace{0.3 cm}

\subsection{Asymptotic expansion of $\Tr \left(  \left( \frac{d}{d \theta} \widetilde{\B}(\theta) \right)
e^{- t \widetilde{\B}(\theta)_{{\widetilde P}(\theta)}^{2}} \right)$}

Recall that $N = [0, 1) \times Y$ is a collar neighborhood of $Y$.
We define cut off functions $\phi_{1}$, $\phi_{2}$, $\psi_{1}$ and $\psi_{2}$ as in (\ref{E:5.55}).
We put

\begin{eqnarray} \label{E:5.6}
& & \hspace{0.5 cm} {\mathcal R}_{\even}(t, (w, x), (w^{\prime}, y)) (\theta)  \\
& = &   \phi_{1}(x) \hspace{0.1 cm}  \dot{{\widetilde \B}}(\theta)^{\cyl}_{\even} \hspace{0.1 cm}  e^{-t \left( \widetilde{\B}(\theta)^{\cyl}_{{\widetilde P}(\theta)} \right)^{2}} (x, y)
 \hspace{0.1 cm}  \psi_{1}(y) \hspace{0.1 cm}  + \hspace{0.1 cm}
\phi_{2}(x) \hspace{0.1 cm}  \dot{{\widetilde \B}}(\theta)_{\even} \hspace{0.1 cm}  {\widetilde \E}^{\doub}_{\even}(t, (w, x), (w^{\prime}, y)) \hspace{0.1 cm}  \psi_{2}(y),  \nonumber
\end{eqnarray}

\vspace{0.2 cm}

\vspace{0.2 cm}

\noindent
where $\dot{{\widetilde \B}}(\theta) := \frac{d}{d \theta} \widetilde{\B}(\theta)$ and
${\widetilde \E}^{\doub}_{\even}(t, (w, x), (w^{\prime}, y)) $ is the heat kernel of $e^{-t {\widetilde \B}(\theta)^{2}}$
on $M^{\doub} := M \cup_{Y} M$, the closed double of $M$.
Then ${\mathcal R}_{\even}(t, (w, x), (w^{\prime}, y)) (\theta)$ is a parametrix for
$\dot{\widetilde \B}(\theta)_{\even} {\widetilde \E}_{\even}(t, (w, x), (w^{\prime}, y)) (\theta)$, the kernel of
${\widetilde \B}(\theta)_{\even} e^{-t {\widetilde \B}(\theta)^{2}_{\even, {\widetilde P}(\theta)}}$ on $M$ and
the standard computation shows that for $0 < t \leq 1$,

\begin{equation} \label{E:5.7}
| {\widetilde \B}(\theta)_{\even} {\widetilde \E}_{\even}(t, (w, x), (w, x)) (\theta) - {\mathcal R}_{\even}(t, (w, x), (w, x)) (\theta) |
\leq c_{3} e^{-\frac{c_{4}}{t}}
\end{equation}

\noindent
for some positive constants $c_{3}$ and $c_{4}$,
which implies that

\begin{eqnarray}  \label{E:5.8}
\Tr \left(  \dot{\widetilde{\B}}(\theta) e^{- t \widetilde{\B}(\theta)_{{\widetilde P}(\theta)}^{2}} \right)
& \sim &
\Tr \left( {\mathcal R}_{\even}(t, (w, x), (w, x)) (\theta) \right)  \nonumber \\
& = &
\Tr \left(  \dot{\widetilde{\B}}(\theta)^{\cyl}
e^{- t (\widetilde{\B}(\theta)_{{\widetilde P}(\theta)}^{\cyl})^{2}} \psi_{1}(x) \right) \hspace{0.1 cm} + \hspace{0.1 cm}
\Tr \left(  \dot{\widetilde{\B}}(\theta) e^{- t (\widetilde{\B}(\theta))^{2}} \psi_{2}(x) \right) .
\end{eqnarray}

\vspace{0.2 cm}

\noindent
We note that

\begin{eqnarray*}
e^{- t (\widetilde{\B}(\theta))^{2}} \hspace{0.1 cm} = \hspace{0.1 cm} \left( \begin{array}{clcr} e^{-t \B^{2}} & 0 \\ 0 & e^{-t \B^{2}}
\end{array} \right) ,  \qquad
\dot{\widetilde{\B}}(\theta) \hspace{0.1 cm} = \hspace{0.1 cm}  \B \left(
\begin{array}{clcr} \cos \theta \Id & - \sin \theta \Id \\ - \sin \theta \Id & - \cos \theta \Id \end{array} \right) ,
\end{eqnarray*}

\noindent
which shows that

\begin{equation}  \label{E:5.9}
\Tr \left(  \dot{\widetilde{\B}}(\theta) e^{- t (\widetilde{\B}(\theta))^{2}} \psi_{2}(u) \right) \hspace{0.1 cm} = \hspace{0.1 cm} 0.
\end{equation}

Using (\ref{E:5.5}) and the decomposition (\ref{E:1.3}), we have

\begin{eqnarray}  \label{E:5.10}
& & \Tr \left(  \dot{\widetilde{\B}}(\theta)^{\cyl}
e^{- t (\widetilde{\B}(\theta)_{{\widetilde P}(\theta)}^{\cyl})^{2}} \psi_{1}(x) \right)  \nonumber \\
& = & \Tr \left\{ \left( \begin{array}{clcr} \cos \theta  & - \sin \theta   \\ - \sin \theta  & - \cos \theta  \end{array} \right)
{\mathcal \gamma} \left( \nabla_{\partial_{x}} + {\mathcal A} \right) \left\{ \frac{1}{\sqrt{4 \pi t}} \left( e^{- \frac{(x - y)^{2}}{4 t}} +
\left( I - 2 {\widetilde P}(\theta) \right) e^{- \frac{(x + y)^{2}}{4t}} \right) e^{- t {\widetilde {\mathcal A}}^{2}} \right\} \psi_{1}(x) \right\}
\nonumber  \\
& = & \left( \frac{1}{\sqrt{4 \pi t}} \int_{0}^{\infty} - \frac{x}{t} e^{- \frac{x^{2}}{t}} \psi_{1}(x) dx \right) \hspace{0.1 cm}
\Tr \left\{ \left( \begin{array}{clcr} \cos \theta  & - \sin \theta   \\ - \sin \theta  & - \cos \theta  \end{array} \right)
{\mathcal \gamma}
\left( I - 2 {\widetilde P}(\theta) \right) e^{- t {\widetilde {\mathcal A}}^{2}} \right\}   \nonumber \\
&  & + \hspace{0.1 cm} \left( \frac{1}{\sqrt{4 \pi t}} \int_{0}^{\infty} \psi_{1}(x) dx \right) \hspace{0.1 cm}
\Tr \left\{ \left( \begin{array}{clcr} \cos \theta  & - \sin \theta   \\ - \sin \theta  & - \cos \theta  \end{array} \right)
{\mathcal \gamma}
{\mathcal A} e^{- t {\widetilde {\mathcal A}}^{2}} \right\}   \nonumber \\
&  &  + \hspace{0.1 cm}  \left( \frac{1}{\sqrt{4 \pi t}} \int_{0}^{\infty} \psi_{1}(x) e^{- \frac{x^{2}}{t}} dx \right) \hspace{0.1 cm}
\Tr \left\{ \left( \begin{array}{clcr} \cos \theta  & - \sin \theta   \\ - \sin \theta  & - \cos \theta  \end{array} \right)
{\mathcal \gamma}
{\mathcal A} \left( I - 2 {\widetilde P}(\theta) \right)  e^{- t {\widetilde {\mathcal A}}^{2}} \right\} \nonumber \\
& = : & (\I) + (\II) + (\III).
\end{eqnarray}

\vspace{0.2 cm}

\noindent
Since ${\mathcal \gamma} {\mathcal A} = - {\mathcal A} {\mathcal \gamma}$, we have

\begin{equation}  \label{E:5.11}
(\II) \hspace{0.1 cm} = \hspace{0.1 cm} 0.
\end{equation}

\vspace{0.2 cm}

\noindent
We note that

\begin{eqnarray}  \label{E:5.12}
& & \Tr \left\{ \left( \begin{array}{clcr} \cos \theta  & - \sin \theta   \\ - \sin \theta  & - \cos \theta  \end{array} \right)
{\mathcal \gamma}
{\mathcal A} \left( I - 2 {\widetilde P}(\theta) \right)  e^{- t {\widetilde {\mathcal A}}^{2}} \right\}  \nonumber \\
& = & \Tr \left\{ \left( \begin{array}{clcr} \cos \theta  & - \sin \theta   \\ - \sin \theta  & - \cos \theta  \end{array} \right)
\left( \begin{array}{clcr} \sin \theta  & \cos \theta   \\ \cos \theta  & - \sin \theta  \end{array} \right)
{\widetilde {\mathcal \gamma}}(\theta)
{\mathcal A} \left( I - 2 {\widetilde P}(\theta) \right)  e^{- t {\widetilde {\mathcal A}}^{2}} \right\}.
\end{eqnarray}

\noindent
Since
$\hspace{0.1 cm} \left( \begin{array}{clcr} \cos \theta  & - \sin \theta   \\ - \sin \theta & - \cos \theta  \end{array} \right)$ and
$\left( \begin{array}{clcr} \sin \theta  & \cos \theta   \\ \cos \theta  & - \sin \theta  \end{array} \right), \hspace{0.1 cm}$
$\hspace{0.1 cm} {\widetilde {\mathcal \gamma}}(\theta)$ and $\hspace{0.1 cm} {\mathcal A} \hspace{0.1 cm}$,
$\hspace{0.1 cm} {\widetilde {\mathcal \gamma}}(\theta) $ and
$\left( I - 2 {\widetilde P}(\theta) \right) \hspace{0.1 cm}$ anticommute with each other (Lemma \ref{Lemma:3.3}), we have

\begin{eqnarray}  \label{E:5.13}
(\ref{E:5.12})
& = & - \Tr \left\{ {\widetilde {\mathcal \gamma}}(\theta) \left(
\begin{array}{clcr} \sin \theta  & \cos \theta   \\ \cos \theta  & - \sin \theta  \end{array} \right)
\left( \begin{array}{clcr} \cos \theta  & - \sin \theta   \\ - \sin \theta  & - \cos \theta  \end{array} \right)
{\mathcal A} \left( I - 2 {\widetilde P}(\theta) \right)  e^{- t {\widetilde {\mathcal A}}^{2}} \right\}  \nonumber  \\
& = & - \Tr \left\{ {\mathcal \gamma}
\left( \begin{array}{clcr} \cos \theta  & - \sin \theta   \\ - \sin \theta  & - \cos \theta  \end{array} \right)
{\mathcal A} \left( I - 2 {\widetilde P}(\theta) \right)  e^{- t {\widetilde {\mathcal A}}^{2}} \right\} \nonumber  \\
& = & - \Tr \left\{
\left( \begin{array}{clcr} \cos \theta  & - \sin \theta   \\ - \sin \theta  & - \cos \theta  \end{array} \right)
{\mathcal \gamma} {\mathcal A} \left( I - 2 {\widetilde P}(\theta) \right)  e^{- t {\widetilde {\mathcal A}}^{2}} \right\}
\hspace{0.1 cm} = \hspace{0.1 cm}  0 ,
\end{eqnarray}

\vspace{0.2 cm}

\noindent
which shows that

\begin{equation}   \label{E:5.133}
(\III) = 0.
\end{equation}

\noindent
For $(\I)$ we note that

\begin{eqnarray*}
I - 2 {\widetilde P}(\theta) & = & \left[ \begin{array}{clcr} 0 & - P(\theta)^{\ast} \\ - P(\theta) & 0 \end{array} \right]
\hspace{0.1 cm} = \hspace{0.1 cm}
- \hspace{0.1 cm} P(\theta)^{\ast} \hspace{0.1 cm} \frac{I + i {\widetilde {\mathcal \gamma}}(\theta)}{2} \hspace{0.1 cm}
- \hspace{0.1 cm} P(\theta) \hspace{0.1 cm} \frac{I - i {\widetilde {\mathcal \gamma}}(\theta)}{2}  \\
& = & - \hspace{0.1 cm} {\frak A}(\theta) \sin \theta \hspace{0.1 cm} + \hspace{0.1 cm} i \hspace{0.1 cm} {\frak B} \hspace{0.1 cm}
{\widetilde {\mathcal \gamma}}(\theta) \hspace{0.1 cm} \cos \theta,
\end{eqnarray*}

\noindent
which leads to

\begin{eqnarray*}
& & \Tr \left\{ \left( \begin{array}{clcr} \cos \theta  & - \sin \theta   \\ - \sin \theta  & - \cos \theta  \end{array} \right)
{\mathcal \gamma}
\left( I - 2 {\widetilde P}(\theta) \right) e^{- t {\widetilde {\mathcal A}}^{2}} \right\}  \\
& = & \Tr \left\{ \left( \begin{array}{clcr} \cos \theta  & - \sin \theta   \\ - \sin \theta  & - \cos \theta \end{array} \right) {\mathcal \gamma}
\left( - \hspace{0.1 cm} {\frak A}(\theta) \sin \theta \hspace{0.1 cm} + \hspace{0.1 cm} i \hspace{0.1 cm} {\frak B} \hspace{0.1 cm}
{\widetilde {\mathcal \gamma}}(\theta) \hspace{0.1 cm} \cos \theta \right) e^{- t {\widetilde {\mathcal A}}^{2}} \right\}  \\
& = & - \sin \theta \Tr \left\{ \left( \begin{array}{clcr} \cos \theta & - \sin \theta  \\ - \sin \theta & - \cos \theta \end{array} \right)
{\mathcal \gamma} \hspace{0.1 cm} {\frak A}(\theta) e^{- t {\widetilde {\mathcal A}}^{2}} \right\} + i \cos \theta
\Tr \left\{ \left( \begin{array}{clcr} \cos \theta & - \sin \theta  \\ - \sin \theta & - \cos \theta \end{array} \right)
{\mathcal \gamma}
{\frak B} \hspace{0.1 cm} {\widetilde {\mathcal \gamma}}(\theta) e^{- t {\widetilde {\mathcal A}}^{2}} \right\}.
\end{eqnarray*}

\vspace{0.2 cm}
\noindent
Let $\K = \left( \begin{array}{clcr} -1 & 0 \\ 0 & 1 \end{array} \right)$. Simple computation shows that

\begin{eqnarray}  \label{E:5.14}
& & \Tr \left\{ \left( \begin{array}{clcr} \cos \theta & - \sin \theta  \\ - \sin \theta & - \cos \theta \end{array} \right)
 {\mathcal \gamma} \hspace{0.1 cm} {\frak A}(\theta) e^{- t {\widetilde {\mathcal A}}^{2}} \right\}
  =
\Tr \left\{  {\mathcal \gamma} (\B_{Y}^{2})^{-1} \left( (\B_{Y}^{2})^{-} - (\B_{Y}^{2})^{+} \right) e^{- t \B_{Y}^{2}}
\left( \begin{array}{clcr} 0 & \Id  \\ - \Id & 0 \end{array} \right) \right\}  \nonumber \\
& & \hspace{1.0 cm} = \hspace{0.1 cm} 0,  \\
\label{E:5.15}
& & \Tr \left\{ \left( \begin{array}{clcr} \cos \theta & - \sin \theta  \\ - \sin \theta & - \cos \theta \end{array} \right)
{\mathcal \gamma}
{\frak B} \hspace{0.1 cm} {\widetilde {\mathcal \gamma}}(\theta) e^{- t {\widetilde {\mathcal A}}^{2}} \right\}
\hspace{0.1 cm} = \hspace{0.1 cm}
\Tr \left( \left( \begin{array}{clcr} 0 & \Id \\ - \Id & 0 \end{array} \right) {\frak B} e^{- t {\widetilde{\mathcal A}}^{2}} \right)
\nonumber \\
& & \hspace{1.0 cm}  =  \hspace{0.1 cm}
 \Tr \left( \beta \Gamma^{Y} \left( \begin{array}{clcr} - \K & 0 \\ 0 & - \K \end{array} \right) e^{- t \B_{Y}^{2}} \right)
\hspace{0.1 cm} = \hspace{0.1 cm}
- 2 \Tr \left( \beta \Gamma^{Y} \left( \begin{array}{clcr} - 1 & 0 \\ 0 & 1 \end{array} \right) e^{- t \B_{Y}^{2}} \right)
 \nonumber \\
& & \hspace{1.0 cm} =  \hspace{0.1 cm}
 2 \Tr \left( \Gamma^{Y} e^{- t \B_{Y}^{2}}|_{\Omega^{\even}(Y, E|_{Y})} + \Gamma^{Y} e^{- t \B_{Y}^{2}}|_{\Omega^{\odd}(Y, E|_{Y})} \right)
\hspace{0.1 cm} = \hspace{0.1 cm} 0 .
\end{eqnarray}

\vspace{0.2 cm}

\noindent
In the last equality we used the fact that
$\hspace{0.1 cm} \Tr \left(\ast_{Y} e^{- t \B_{Y}^{2}}|_{\Omega^{\even}(Y, E|_{Y})} \right)
\hspace{0.1 cm} = \hspace{0.1 cm}
\Tr \left(\ast_{Y} e^{- t \B_{Y}^{2}}|_{\Omega^{\odd}(Y, E|_{Y})} \right)
\hspace{0.1 cm} = 0 \hspace{0.1 cm}$
since $H^{\bullet}(Y, E|_{Y}) = 0$.
Hence $(\I) = 0$.
Equations from (\ref{E:5.9}) to (\ref{E:5.15}) show that

\begin{equation}  \label{E:5.16}
\Tr \left(  \left( \frac{d}{d \theta} \widetilde{\B}(\theta) \right)
e^{- t \widetilde{\B}(\theta)_{{\widetilde P}(\theta)}^{2}} \right) \hspace{0.1 cm} = \hspace{0.1 cm} 0  .
\end{equation}

\vspace{0.3 cm}

\subsection{Asymptotic expansion of $ \Tr \left\{ \left( \widetilde{\B}(\theta) {\mathcal Q}(x) -
{\mathcal Q}(x) \widetilde{\B}(\theta) \right) e^{- t \widetilde{\B}(\theta)_{{\widetilde P}(\theta)}^{2}} \right\} $}

Since ${\mathcal Q}(x)$ is supported in $[0, 1] \times Y$, the standard theory for heat kernel ([1], [3]) implies
that the asymptotic expansions of $\Tr \left( \left( \widetilde{\B}(\theta) {\mathcal Q}(x) -
{\mathcal Q}(x) \widetilde{\B}(\theta) \right) e^{-t \widetilde{\B}(\theta)_{{\widetilde P}(\theta)}^{2}} \right)$ and
$\Tr \left( \left( \widetilde{\B}(\theta)^{\cyl} {\mathcal Q}(x) -
{\mathcal Q}(x) \widetilde{\B}(\theta)^{\cyl} \right)
e^{-t \left( \widetilde{\B}(\theta)^{\cyl}_{{\widetilde P}(\theta)} \right)^{2}} \right)$
are equal up to $\left( e^{- \frac{c}{t}} \right)$ for some $c > 0$.
With a little abuse of notation we write $\widetilde{\B}(\theta)^{\cyl}$ by $\widetilde{\B}(\theta)$ again.
Simple computation using (\ref{E:3.4}) shows that

\begin{eqnarray}   \label{E:5.17}
& & \Tr \left\{ \left( \widetilde{\B}(\theta) {\mathcal Q}(x) -
{\mathcal Q}(x) \widetilde{\B}(\theta) \right) e^{- t \widetilde{\B}(\theta)_{{\widetilde P}(\theta)}^{2}} \right\}
\hspace{0.1 cm} = \hspace{0.1 cm}
 \Tr \left\{ {\widetilde {\mathcal \gamma}} (\theta) e^{i \phi(x) T(\theta)} \left( i \phi^{\prime}(x) T^{\prime}(\theta) \right) e^{- i \phi(x) T(\theta)}
e^{- t \widetilde{\B}(\theta)_{{\widetilde P}(\theta)}^{2}} \right\}  \nonumber \\
& + &
\Tr \left\{ [ {\widetilde {\mathcal \gamma}} (\theta)\hspace{0.1 cm},  {\mathcal Q}(x) ]
\nabla_{\partial_{x}} e^{- t \widetilde{\B}(\theta)_{{\widetilde P}(\theta)}^{2}} \right\} \hspace{0.1 cm} + \hspace{0.1 cm}
\Tr \left\{ [ {\widetilde {\mathcal \gamma}} (\theta) {\widetilde {\mathcal A}}\hspace{0.1 cm},  {\mathcal Q}(x) ]
e^{- t \widetilde{\B}(\theta)_{{\widetilde P}(\theta)}^{2}} \right\} \hspace{0.1 cm} =: \hspace{0.1 cm} (\I) + (\II) + (\III).
\end{eqnarray}

\vspace{0.2 cm}

\noindent
The following lemma is straightforward by using (\ref{E:3.13}), Lemma \ref{Lemma:3.2}, Lemma \ref{Lemma:3.5} and (\ref{E:5.4}).

\vspace{0.3 cm}

\begin{lemma} \label{Lemma:5.2}
\begin{eqnarray*}
& (1) & T^{\prime}(\theta) = i {\frak B}^{\ast} {\frak A}(\theta) \frac{I - i {\widetilde {\mathcal \gamma}}(\theta)}{2} +
\frac{i \theta}{2} {\frak B}^{\ast} {\frak A}^{\prime}(\theta) .\\
& (2) &  {\mathcal Q}(x) = - \phi(x) \left( {\frak B}^{\ast} {\frak A}(\theta) \frac{I - i {\widetilde {\mathcal \gamma}}(\theta)}{2} \right)
- \frac{\theta}{2} {\widetilde {\mathcal Q}}(x), \qquad
{\widetilde {\mathcal Q}}(x) := \int^{x}_{1} \phi^{\prime}(u) e^{i \phi(u) T(\theta) } {\frak B}^{\ast}
{\frak A}^{\prime}(\theta) e^{- i \phi(u) T(\theta) } du . \\
& (3) & {\widetilde {\mathcal \gamma}}(\theta) \hspace{0.1 cm} {\widetilde {\mathcal Q}}(x) \hspace{0.1 cm} = \hspace{0.1 cm}
- \hspace{0.1 cm} {\widetilde {\mathcal Q}}(x) \hspace{0.1 cm} {\widetilde {\mathcal \gamma}}(\theta),
\qquad
{\widetilde {\mathcal \gamma}}(\theta) \left( {\frak B}^{\ast} {\frak A}(\theta) \frac{I - i {\widetilde {\mathcal \gamma}}(\theta)}{2} \right)
\hspace{0.1 cm} = \hspace{0.1 cm}
\left( {\frak B}^{\ast} {\frak A}(\theta) \frac{I - i {\widetilde {\mathcal \gamma}}(\theta)}{2} \right)  {\widetilde {\mathcal \gamma}}(\theta) .
\end{eqnarray*}
\end{lemma}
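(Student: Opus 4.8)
The plan is a direct computation from the explicit formula (\ref{E:3.13}) for $T(\theta)$, organized so that both the $\theta$-differentiation in (1) and the conjugations $e^{i\phi(u)T(\theta)}(\,\cdot\,)e^{-i\phi(u)T(\theta)}$ in (\ref{E:5.4}) collapse cleanly. First I would record the fact underlying everything: writing ${\frak A}(\theta) = C\,M(\theta)$ with $C := (\B_{Y}^{2})^{-1}\bigl((\B_{Y}^{2})^{-} - (\B_{Y}^{2})^{+}\bigr)$ and $M(\theta) := \left( \begin{smallmatrix} \sin\theta & \cos\theta \\ \cos\theta & -\sin\theta \end{smallmatrix} \right)$ the block appearing in $\widetilde{\B}(\theta) = \B\,M(\theta)$ (see (\ref{E:3.2}) and (\ref{E:3.99})), and likewise ${\widetilde {\mathcal \gamma}}(\theta) = {\mathcal \gamma}\,M(\theta)$ (cf.\ (\ref{E:3.5})), one has $M(\theta)^{2} = \Id$ and $M(\theta)$ commutes with both $C$ and ${\mathcal \gamma}$, since the latter act within the two copies of $E$ while $M(\theta)$ mixes them by scalars. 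Hence ${\frak A}(\theta)\,{\widetilde {\mathcal \gamma}}(\theta) = C\,{\mathcal \gamma}$ is \emph{independent of $\theta$}, and I will use this repeatedly.

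For (1) I would rewrite $T(\theta) = i\theta\,{\frak B}^{\ast}{\frak A}(\theta)\,\tfrac{I - i{\widetilde {\mathcal \gamma}}(\theta)}{2}$ using the elementary identity $i\,\tfrac{I - i{\widetilde {\mathcal \gamma}}(\theta)}{2} = \tfrac{iI + {\widetilde {\mathcal \gamma}}(\theta)}{2}$, which gives $T(\theta) = \tfrac{i\theta}{2}{\frak B}^{\ast}{\frak A}(\theta) + \tfrac{\theta}{2}{\frak B}^{\ast}{\frak A}(\theta){\widetilde {\mathcal \gamma}}(\theta) = \tfrac{i\theta}{2}{\frak B}^{\ast}{\frak A}(\theta) + \tfrac{\theta}{2}{\frak B}^{\ast}C{\mathcal \gamma}$. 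Because the last summand is constant in $\theta$ up to the scalar factor, differentiation is immediate, $T^{\prime}(\theta) = \tfrac{i}{2}{\frak B}^{\ast}{\frak A}(\theta) + \tfrac{i\theta}{2}{\frak B}^{\ast}{\frak A}^{\prime}(\theta) + \tfrac{1}{2}{\frak B}^{\ast}C{\mathcal \gamma}$, and recombining the first and third terms by running the same identity backwards recovers exactly $i\,{\frak B}^{\ast}{\frak A}(\theta)\,\tfrac{I - i{\widetilde {\mathcal \gamma}}(\theta)}{2} + \tfrac{i\theta}{2}{\frak B}^{\ast}{\frak A}^{\prime}(\theta)$, i.e.\ assertion (1).

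For (2) I would substitute this expression for $T^{\prime}(\theta)$ into the definition (\ref{E:5.4}) of ${\mathcal Q}(x)$. The crucial observation is that $T(\theta)$ is itself a scalar multiple of ${\frak B}^{\ast}{\frak A}(\theta)\,\tfrac{I - i{\widetilde {\mathcal \gamma}}(\theta)}{2}$, so $e^{\pm i\phi(u)T(\theta)}$ commutes with the first summand of $T^{\prime}(\theta)$; the conjugation leaves it untouched, and $\int_{1}^{x}\phi^{\prime}(u)\,du = \phi(x)$ (since $\phi$ vanishes near $u=1$) produces the term $-\phi(x)\bigl({\frak B}^{\ast}{\frak A}(\theta)\,\tfrac{I - i{\widetilde {\mathcal \gamma}}(\theta)}{2}\bigr)$, while the second summand $\tfrac{i\theta}{2}{\frak B}^{\ast}{\frak A}^{\prime}(\theta)$ does not commute through $e^{i\phi(u)T(\theta)}$ and stays inside the integral as $-\tfrac{\theta}{2}{\widetilde {\mathcal Q}}(x)$, with ${\widetilde {\mathcal Q}}(x)$ exactly as stated. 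For (3) I would invoke Lemma~\ref{Lemma:3.5} (so that ${\widetilde {\mathcal \gamma}}(\theta)$ commutes with $T(\theta)$, hence with $e^{\pm i\phi(u)T(\theta)}$) together with the (anti)commutation relations in Lemma~\ref{Lemma:3.2}(2): commuting ${\widetilde {\mathcal \gamma}}(\theta)$ through the integrand of ${\widetilde {\mathcal Q}}(x)$ picks up one sign, from ${\frak B}^{\ast}$, and none from ${\frak A}^{\prime}(\theta)$, giving anticommutation; commuting it through ${\frak B}^{\ast}{\frak A}(\theta)\,\tfrac{I - i{\widetilde {\mathcal \gamma}}(\theta)}{2}$ picks up two signs, from ${\frak B}^{\ast}$ and from ${\frak A}(\theta)$, which cancel (${\widetilde {\mathcal \gamma}}(\theta)$ commutes with the polynomial $\tfrac{I - i{\widetilde {\mathcal \gamma}}(\theta)}{2}$ in itself), giving commutation.

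The main obstacle, such as it is, will be the bookkeeping in the first paragraph rather than any conceptual difficulty---consistent with the author's remark that the lemma is straightforward. One has to keep careful track of which matrix factors act via the decomposition (\ref{E:1.3}) and which via (\ref{E:3.009}), and it is precisely the two facts isolated above---that ${\frak A}(\theta){\widetilde {\mathcal \gamma}}(\theta)$ is $\theta$-independent, and that $T(\theta)$ is a scalar multiple of ${\frak B}^{\ast}{\frak A}(\theta)\,\tfrac{I - i{\widetilde {\mathcal \gamma}}(\theta)}{2}$---that turn the differentiation in (1) and the conjugation in (2) into one-line computations; everything else is a mechanical application of Lemmas~\ref{Lemma:3.2} and~\ref{Lemma:3.5}.
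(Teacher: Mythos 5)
Your computation is correct, and it follows exactly the route the paper intends: the paper offers no written proof beyond declaring the lemma straightforward from (\ref{E:3.13}), Lemma \ref{Lemma:3.2}, Lemma \ref{Lemma:3.5} and (\ref{E:5.4}), and your argument fills in precisely that computation (your observation that ${\frak A}(\theta)\,{\widetilde {\mathcal \gamma}}(\theta)$ is $\theta$-independent is just a convenient repackaging of the anticommutation relations of Lemma \ref{Lemma:3.2}, and your use of $\phi(1)=0$ and of Lemma \ref{Lemma:3.5} for the conjugations is exactly what is needed). No gaps.
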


\vspace{0.2 cm}

\noindent
Using (\ref{E:5.5}) and Lemma \ref{Lemma:5.2} we have

\begin{eqnarray}   \label{E:5.18}
(\II) & = & \Tr \left\{ [ {\widetilde {\mathcal \gamma}} (\theta)\hspace{0.1 cm},  {\mathcal Q}(x) ]
\nabla_{\partial_{x}} e^{- t \widetilde{\B}(\theta)_{{\widetilde P}(\theta)}^{2}} \right\}
\hspace{0.1 cm} = \hspace{0.1 cm} - \theta
\Tr \left\{  {\widetilde {\mathcal \gamma}} (\theta) \hspace{0.1 cm}  {\widetilde {\mathcal Q}}(x) \hspace{0.1 cm}
\nabla_{\partial_{x}} \hspace{0.1 cm} e^{- t \widetilde{\B}(\theta)_{{\widetilde P}(\theta)}^{2}} \right\} \nonumber \\
& = & \frac{- \theta}{\sqrt{4 \pi t}} \int_{0}^{\infty}
\Tr \left\{ {\widetilde {\mathcal \gamma}} (\theta)\hspace{0.1 cm}  {\widetilde {\mathcal Q}}(x)
\left( - \frac{x - y}{2 t} e^{- \frac{(x-y)^{2}}{4t}} + \left( I - 2 {\widetilde P}(\theta) \right) \left( - \frac{x+y}{2t} \right)
e^{- \frac{(x+y)^{2}}{4t}} \right)_{x=y} e^{- t {\widetilde {\mathcal A}}^{2}} \right\} dx  \nonumber \\
& = & \frac{\theta}{\sqrt{4 \pi t}} \left\{ \int_{0}^{\infty} \frac{x}{t} e^{- \frac{x^{2}}{t}}  \Tr \left(
{\widetilde {\mathcal \gamma}} (\theta)\hspace{0.1 cm} {\widetilde {\mathcal Q}}(x) \left( I - 2 {\widetilde P}(\theta) \right)
e^{- t {\widetilde {\mathcal A}}^{2}} \right) dx \right\} \nonumber \\
& = & \frac{-2 \theta}{\sqrt{4 \pi t}} \left\{ \int_{0}^{\infty} \frac{x}{t} e^{- \frac{x^{2}}{t}}  \Tr \left(
{\widetilde {\mathcal \gamma}} (\theta)\hspace{0.1 cm} {\widetilde {\mathcal Q}}(x) {\widetilde P}(\theta)
e^{- t {\widetilde {\mathcal A}}^{2}} \right) dx \right\} .
\end{eqnarray}

\vspace{0.2 cm}

\noindent
Using the decomposition (\ref{E:3.009}), (\ref{E:3.13}) and Lemma \ref{Lemma:3.2}, we have

\begin{eqnarray}   \label{E:5.20}
{\widetilde {\mathcal \gamma}} (\theta)\hspace{0.1 cm} {\widetilde {\mathcal Q}}(x) {\widetilde P}(\theta) & = &
\int^{x}_{1} \phi^{\prime}(u) \hspace{0.1 cm} e^{i \phi(u) T(\theta) } \hspace{0.1 cm} {\widetilde {\mathcal \gamma}} (\theta)
 \hspace{0.1 cm} {\frak B}^{\ast} \hspace{0.1 cm} {\frak A}^{\prime}(\theta) \hspace{0.1 cm}
e^{- i \phi(u) T(\theta) } \hspace{0.1 cm} {\widetilde P}(\theta) \hspace{0.1 cm} du ,  \nonumber   \\
e^{\pm i \phi(u) T(\theta) } & = &
\left[ \begin{array}{clcr} \cos ( \phi(u) \theta ) \mp \sin ( \phi(u) \theta )
\hspace{0.1 cm} {\frak B}^{\ast} \hspace{0.1 cm} {\frak A}(\theta) & 0 \\ 0 & \Id \end{array} \right] .
\end{eqnarray}

\noindent
Since $\hspace{0.1 cm} {\widetilde {\mathcal \gamma}} (\theta) = \left[ \begin{array}{clcr} i & 0 \\ 0 & -i \end{array} \right] \hspace{0.1 cm}$ and
$\hspace{0.1 cm} {\frak B}^{\ast} {\frak A}^{\prime}(\theta) = \left[ \begin{array}{clcr} 0 & {\frak B}^{\ast} {\frak A}^{\prime}(\theta) \\
{\frak B}^{\ast} {\frak A}^{\prime}(\theta) & 0 \end{array} \right] \hspace{0.1 cm}$
with respect to the decomposition (\ref{E:3.009}), simple computation shows that

\begin{eqnarray}  \label{E:5.21}
& & e^{i \phi(u) T(\theta) } \hspace{0.1 cm} {\widetilde {\mathcal \gamma}} (\theta)
 \hspace{0.1 cm} {\frak B}^{\ast} \hspace{0.1 cm} {\frak A}^{\prime}(\theta) \hspace{0.1 cm}
e^{- i \phi(u) T(\theta) } \hspace{0.1 cm} {\widetilde P}(\theta) \nonumber  \\
& = &  \frac{i}{2} \left\{ \cos ( \phi(u) \theta ) {\frak B}^{\ast} {\frak A}^{\prime}(\theta) +
\sin ( \phi(u) \theta ) {\frak A}(\theta) {\frak A}^{\prime}(\theta) \right\}
\left[ \begin{array}{clcr} P(\theta) & \Id  \\
- \Id & - P(\theta)^{\ast} \end{array} \right]  \nonumber  \\
& =  & \frac{i}{2} \left\{ \cos ( \phi(u) \theta ) {\frak B}^{\ast} {\frak A}^{\prime}(\theta) +
\sin ( \phi(u) \theta ) {\frak A}(\theta) {\frak A}^{\prime}(\theta) \right\} \nonumber  \\
& & \times \left\{
P(\theta) \frac{I - i {\widetilde {\mathcal \gamma}}(\theta)}{2} - P(\theta)^{\ast} \frac{I + i {\widetilde {\mathcal \gamma}}(\theta)}{2}
+ \frac{I + i {\widetilde {\mathcal \gamma}}(\theta)}{2} - \frac{I - i {\widetilde {\mathcal \gamma}}(\theta)}{2} \right\} \nonumber  \\
& =  & \frac{i}{2} \left\{ \cos ( \phi(u) \theta ) {\frak B}^{\ast} {\frak A}^{\prime}(\theta) +
\sin ( \phi(u) \theta ) {\frak A}(\theta) {\frak A}^{\prime}(\theta) \right\} \left\{
{\frak B} \cos \theta - i {\frak A}(\theta) {\widetilde {\mathcal \gamma}} (\theta) \sin \theta + i {\widetilde {\mathcal \gamma}} (\theta) \right\}.
\end{eqnarray}

\vspace{0.2 cm}

\noindent
Since ${\widetilde {\mathcal \gamma}} (\theta)$ anticommutes with ${\frak B}^{\ast} {\frak A}^{\prime}(\theta)$ and
${\frak A}(\theta) {\frak A}^{\prime}(\theta)$ (cf. Lemma \ref{Lemma:3.2}), we have

\begin{eqnarray}  \label{E:5.22}
\Tr \left\{ \left( \cos ( \phi(u) \theta ) {\frak B}^{\ast} {\frak A}^{\prime}(\theta) +
\sin ( \phi(u) \theta ) {\frak A}(\theta) {\frak A}^{\prime}(\theta) \right) \left( i {\widetilde {\mathcal \gamma}} (\theta)
e^{-t {\widetilde {\mathcal A}}^{2}} \right)
\right\}
= 0.
\end{eqnarray}

\noindent
Using Lemma \ref{Lemma:3.2} again, we have

\begin{eqnarray}  \label{E:5.23}
& & \Tr \left\{ \left( \cos ( \phi(u) \theta ) {\frak B}^{\ast} {\frak A}^{\prime}(\theta) +
\sin ( \phi(u) \theta ) {\frak A}(\theta) {\frak A}^{\prime}(\theta) \right) \left(
{\frak B} \cos \theta - i {\frak A}(\theta) {\widetilde {\mathcal \gamma}} (\theta) \sin \theta \right)
e^{-t {\widetilde {\mathcal A}}^{2}} \right\}    \\
& = & \cos (\phi(u) \theta) \cos \theta \hspace{0.1 cm} \Tr \left( {\frak A}^{\prime} (\theta)
e^{-t {\widetilde {\mathcal A}}^{2}} \right)  \hspace{0.1 cm} - \hspace{0.1 cm}
\sin (\phi(u) \theta) \cos \theta \hspace{0.1 cm} \Tr \left( \left( \begin{array}{clcr} 0 & \Id \\ - \Id & 0 \end{array} \right)
{\frak B} e^{-t {\widetilde {\mathcal A}}^{2}} \right)  \nonumber  \\
& & - \hspace{0.1 cm} i \hspace{0.1 cm} \cos (\phi(u) \theta) \sin \theta \hspace{0.1 cm}
\Tr \left( {\frak B}^{\ast} \left( \begin{array}{clcr} 0 & \Id \\ - \Id & 0 \end{array} \right)
{\widetilde {\mathcal \gamma}} (\theta) e^{-t {\widetilde {\mathcal A}}^{2}} \right) \hspace{0.1 cm} + \hspace{0.1 cm}
i \hspace{0.1 cm} \sin (\phi(u) \theta) \sin \theta \hspace{0.1 cm} \Tr \left( {\frak A}^{\prime} (\theta)
{\widetilde {\mathcal \gamma}} (\theta) e^{-t {\widetilde {\mathcal A}}^{2}} \right) .  \nonumber
\end{eqnarray}

\vspace{0.2 cm}

\noindent
Simple computation shows that for $\K = \left( \begin{array}{clcr} -1 & 0 \\ 0 & 1 \end{array} \right)$

\begin{eqnarray}   \label{E:5.24}
& & \Tr \left( {\frak A}^{\prime} (\theta) e^{-t {\widetilde {\mathcal A}}^{2}} \right) \hspace{0.1 cm} = \hspace{0.1 cm}
\Tr \left( {\frak B}^{\ast} \left( \begin{array}{clcr} 0 & \Id \\ - \Id & 0 \end{array} \right)
{\widetilde {\mathcal \gamma}} (\theta) e^{-t {\widetilde {\mathcal A}}^{2}} \right)  \hspace{0.1 cm} = \hspace{0.1 cm}
\Tr \left( {\frak A}^{\prime} (\theta) {\widetilde {\mathcal \gamma}} (\theta) e^{-t {\widetilde {\mathcal A}}^{2}} \right)
\hspace{0.1 cm} = \hspace{0.1 cm} 0 ,   \\
& & \Tr \left( \left( \begin{array}{clcr} 0 & \Id \\ - \Id & 0 \end{array} \right) {\frak B} e^{-t {\widetilde {\mathcal A}}^{2}} \right)
\hspace{0.1 cm} = \hspace{0.1 cm}
\Tr \left( \beta \Gamma^{Y} e^{-t {\widetilde {\mathcal A}}^{2}} \left( \begin{array}{clcr} - \K & 0 \\ 0 & - \K \end{array} \right) \right)
\hspace{0.1 cm} = \hspace{0.1 cm} - 2 \Tr \left( \beta \Gamma^{Y} e^{-t \B_{Y}^{2}} \K \right)  \nonumber  \\
&  & \hspace{0.5 cm} = \hspace{0.1 cm} 2 \Tr \left( \Gamma^{Y} e^{-t \B_{Y}^{2}}|_{\Omega^{\even}(Y, E|_{Y})}
+ \Gamma^{Y} e^{-t \B_{Y}^{2}}|_{\Omega^{\odd}(Y, E|_{Y})} \right) \hspace{0.1 cm} = \hspace{0.1 cm} 0.
\end{eqnarray}

\noindent
Hence, equations from (\ref{E:5.18}) to (\ref{E:5.24}) show that

\begin{eqnarray}  \label{E:5.26}
(\II) \hspace{0.1 cm} = \hspace{0.1 cm} \Tr \left\{ [ {\widetilde {\mathcal \gamma}} (\theta)\hspace{0.1 cm},  {\mathcal Q}(x) ]
\nabla_{\partial_{x}} e^{- t \widetilde{\B}(\theta)_{{\widetilde P}(\theta)}^{2}} \right\} \hspace{0.1 cm} = \hspace{0.1 cm} 0 .
\end{eqnarray}

\vspace{0.3 cm}

Using (\ref{E:5.5}), we have

\begin{eqnarray}  \label{E:5.27}
(\I) & = & \Tr \left\{ {\widetilde {\mathcal \gamma}} (\theta) e^{i \phi(x) T(\theta)} \left( i \phi^{\prime}(x) T^{\prime}(\theta) \right) e^{- i \phi(x) T(\theta)} e^{- t \widetilde{\B}(\theta)_{{\widetilde P}(\theta)}^{2}} \right\}  \nonumber   \\
& = & \int_{0}^{\infty} \Tr \left\{ {\widetilde {\mathcal \gamma}} (\theta) e^{i \phi(x) T(\theta)} \left( i \phi^{\prime}(x) T^{\prime}(\theta) \right) e^{- i \phi(x) T(\theta)}
\left\{ \frac{1}{\sqrt{4 \pi t}} \left( I + \left( I - 2 {\widetilde P}(\theta) \right) e^{- \frac{x^{2}}{t}} \right)
e^{- t {\widetilde {\mathcal A}}^{2}} \right\}
\right\} dx    \nonumber \\
& = & \frac{i}{\sqrt{4 \pi t}} \left( \int_{0}^{\infty} \phi^{\prime}(x) dx \right)
\Tr \left( {\widetilde {\mathcal \gamma}} (\theta) T^{\prime}(\theta) e^{- t {\widetilde {\mathcal A}}^{2}} \right)   \nonumber \\
& & + \hspace{0.1 cm} \frac{i}{\sqrt{4 \pi t}} \left\{ \int_{0}^{\infty} \phi^{\prime}(x) e^{- \frac{x^{2}}{t}}
\Tr \left( {\widetilde {\mathcal \gamma}} (\theta) e^{i \phi(x) T(\theta)} T^{\prime}(\theta) e^{- i \phi(x) T(\theta)}
\left( I - 2 {\widetilde P}(\theta) \right) e^{- t {\widetilde {\mathcal A}}^{2}} \right) dx \right\}  \nonumber \\
& =: & (\I_{1}) \hspace{0.1 cm} + \hspace{0.1 cm} (\I_{2}).
\end{eqnarray}

\noindent
Since $\phi^{\prime}(x) =0$ near $x=0$ and has a compact support, we have

\begin{eqnarray}  \label{E:5.28}
(\I_{2}) \hspace{0.1 cm} = \hspace{0.1 cm} O(e^{- \frac{c}{t}}).
\end{eqnarray}

\noindent
Using the assertion (1) in Lemma \ref{Lemma:5.2} and Lemma \ref{Lemma:3.2}, we have

\begin{eqnarray}  \label{E:5.30}
\Tr \left( {\widetilde {\mathcal \gamma}} (\theta) T^{\prime}(\theta) e^{- t {\widetilde {\mathcal A}}^{2}} \right) & = &
\hspace{0.1 cm} i \hspace{0.1 cm} \Tr \left( {\widetilde {\mathcal \gamma}} (\theta) {\frak B}^{\ast} \hspace{0.1 cm} {\frak A}(\theta) \hspace{0.1 cm}
\frac{I - i {\widetilde {\mathcal \gamma}}(\theta)}{2} \hspace{0.1 cm} e^{- t {\widetilde {\mathcal A}}^{2}} \right)
\hspace{0.1 cm} + \hspace{0.1 cm} \frac{i \theta}{2} \hspace{0.1 cm}
\Tr \left( {\widetilde {\mathcal \gamma}}(\theta) {\frak B}^{\ast} \hspace{0.1 cm} {\frak A}^{\prime}(\theta) \hspace{0.1 cm}
e^{- t {\widetilde {\mathcal A}}^{2}} \right)
\nonumber \\
& = & \hspace{0.1 cm} i \hspace{0.1 cm} \Tr \left( {\widetilde {\mathcal \gamma}} (\theta) {\frak B}^{\ast} \hspace{0.1 cm} {\frak A}(\theta) \hspace{0.1 cm} \frac{I - i {\widetilde {\mathcal \gamma}}(\theta)}{2} \hspace{0.1 cm} e^{- t {\widetilde {\mathcal A}}^{2}} \right)  \nonumber \\
& = &  \hspace{0.1 cm} \frac{i}{2} \hspace{0.1 cm} \Tr \left( {\widetilde {\mathcal \gamma}} (\theta) {\frak B}^{\ast} \hspace{0.1 cm} {\frak A}(\theta) \hspace{0.1 cm} e^{- t {\widetilde {\mathcal A}}^{2}} \right)
- \hspace{0.1 cm} \frac{1}{2} \hspace{0.1 cm} \Tr \left( {\frak B}^{\ast} \hspace{0.1 cm} {\frak A}(\theta) \hspace{0.1 cm}
e^{- t {\widetilde {\mathcal A}}^{2}} \right) .
\end{eqnarray}

\noindent
For $\K = \left( \begin{array}{clcr} -1 & 0 \\ 0 & 1 \end{array} \right)$ and the decomposition (\ref{E:1.3}), we have

\begin{eqnarray}  \label{E:5.31}
{\frak B}^{\ast} \hspace{0.1 cm} {\frak A}(\theta) & = & \hspace{0.1 cm} - \hspace{0.1 cm} \beta \hspace{0.1 cm} \Gamma^{Y} \hspace{0.1 cm}
\hspace{0.1 cm} \left( \B_{Y}^{2} \right)^{-1} \left( \left( \B_{Y}^{2} \right)^{-} -
\left( \B_{Y}^{2} \right)^{+} \right)
\left( \begin{array}{clcr} \cos \theta \hspace{0.1 cm} \K & - \sin \theta \hspace{0.1 cm} \K \\
- \sin \theta \hspace{0.1 cm} \K & - \cos \theta \hspace{0.1 cm} \K \end{array} \right),  \nonumber  \\
{\widetilde {\mathcal \gamma}}(\theta) \hspace{0.1 cm} {\frak B}^{\ast} \hspace{0.1 cm} {\frak A}(\theta) & = &
{\frak B}^{\ast} \hspace{0.1 cm} {\frak A}(\theta) \hspace{0.1 cm} {\widetilde {\mathcal \gamma}}(\theta) \hspace{0.1 cm} = \hspace{0.1 cm}  - \hspace{0.1 cm} i \hspace{0.1 cm}
\hspace{0.1 cm} \left( \B_{Y}^{2} \right)^{-1} \left( \left( \B_{Y}^{2} \right)^{-} - \left( \B_{Y}^{2} \right)^{+} \right)
\left( \begin{array}{clcr} 0 &  \K \\ - \K & 0 \end{array} \right),
\end{eqnarray}

\noindent
which imply that

\begin{eqnarray}  \label{E:5.32}
\Tr \left( {\frak B}^{\ast} \hspace{0.1 cm} {\frak A}(\theta) \hspace{0.1 cm} e^{- t {\widetilde {\mathcal A}}^{2}} \right)
\hspace{0.1 cm} = \hspace{0.1 cm}
\Tr \left( {\widetilde {\mathcal \gamma}} (\theta) {\frak B}^{\ast} \hspace{0.1 cm} {\frak A}(\theta) \hspace{0.1 cm}
e^{- t {\widetilde {\mathcal A}}^{2}} \right)
\hspace{0.1 cm} = \hspace{0.1 cm} 0
\end{eqnarray}

\noindent
and hence

\begin{eqnarray}  \label{E:5.33}
(\I) \hspace{0.1 cm} = \hspace{0.1 cm} \Tr \left\{ {\widetilde {\mathcal \gamma}} (\theta) e^{i \phi(x) T(\theta)}
\left( i \phi^{\prime}(x) T^{\prime}(\theta) \right) e^{- i \phi(x) T(\theta)}
e^{- t \widetilde{\B}(\theta)_{{\widetilde P}(\theta)}^{2}} \right\} \hspace{0.1 cm} = \hspace{0.1 cm} O(e^{- \frac{c}{t}}) \quad \text {for some} \quad
c > 0 .
\end{eqnarray}

\vspace{0.3 cm}

Finally, we are going to analyze $(\III) := \Tr \left\{ [ {\widetilde {\mathcal \gamma}} (\theta) {\widetilde {\mathcal A}}\hspace{0.1 cm},  {\mathcal Q}(x) ]
e^{- t \widetilde{\B}(\theta)_{{\widetilde P}(\theta)}^{2}} \right\}$ in (\ref{E:5.17}).
Using (\ref{E:5.5}) and Lemma \ref{Lemma:5.2}, we have

\begin{eqnarray}   \label{E:5.36}
(\III) & = & \Tr \left\{ [ \hspace{0.1 cm} {\widetilde {\mathcal \gamma}} (\theta) \hspace{0.1 cm} {\widetilde {\mathcal A}}\hspace{0.1 cm}, \hspace{0.1 cm}
{\mathcal Q}(x) \hspace{0.1 cm} ] \hspace{0.1 cm}
e^{- t \widetilde{\B}(\theta)_{{\widetilde P}(\theta)}^{2}} \right\}   \nonumber  \\
& = &
\Tr \left\{ [ \hspace{0.1 cm} {\widetilde {\mathcal \gamma}} (\theta) \hspace{0.1 cm} {\widetilde {\mathcal A}}\hspace{0.1 cm}, \hspace{0.1 cm}
{\mathcal Q}(x) \hspace{0.1 cm} ] \hspace{0.1 cm}
\left( \frac{1}{\sqrt{4 \pi t}} \hspace{0.1 cm} \left( I + \left( I - 2 {\widetilde P}(\theta) \right) e^{- \frac{x^{2}}{t}} \right) \right)
 \hspace{0.1 cm} e^{- t {\widetilde {\mathcal A}}^{2}} \right\}   \nonumber \\
& = &
\frac{1}{\sqrt{4 \pi t}} \hspace{0.1 cm} \int_{0}^{\infty} \left( ( 1 + e^{- \frac{x^{2}}{t}} ) \hspace{0.1 cm}
\Tr \left( [ \hspace{0.1 cm} {\widetilde {\mathcal \gamma}} (\theta) \hspace{0.1 cm} {\widetilde {\mathcal A}}\hspace{0.1 cm}, \hspace{0.1 cm}
{\mathcal Q}(x) \hspace{0.1 cm} ] \hspace{0.1 cm} e^{- t {\widetilde {\mathcal A}}^{2}} \right) \right) dx
\nonumber  \\
& &  - \hspace{0.1 cm}
\frac{2}{\sqrt{4 \pi t}} \hspace{0.1 cm} \int_{0}^{\infty} \left( e^{- \frac{x^{2}}{t}} \hspace{0.1 cm}
\Tr \left( [ \hspace{0.1 cm} {\widetilde {\mathcal \gamma}} (\theta) \hspace{0.1 cm} {\widetilde {\mathcal A}}\hspace{0.1 cm}, \hspace{0.1 cm}
{\mathcal Q}(x) \hspace{0.1 cm} ] \hspace{0.1 cm}
{\widetilde P}(\theta) \hspace{0.1 cm} e^{- t {\widetilde {\mathcal A}}^{2}} \right) \right) dx  \nonumber \\
& = &  - \hspace{0.1 cm}
\frac{2}{\sqrt{4 \pi t}} \hspace{0.1 cm} \int_{0}^{\infty} \left( e^{- \frac{x^{2}}{t}} \hspace{0.1 cm}
\Tr \left( [ \hspace{0.1 cm} {\widetilde {\mathcal \gamma}} (\theta) \hspace{0.1 cm} {\widetilde {\mathcal A}}\hspace{0.1 cm}, \hspace{0.1 cm}
{\mathcal Q}(x) \hspace{0.1 cm} ] \hspace{0.1 cm}
{\widetilde P}(\theta) \hspace{0.1 cm} e^{- t {\widetilde {\mathcal A}}^{2}} \right) \right) dx  \nonumber \\
& = &
\frac{\theta}{\sqrt{4 \pi t}} \hspace{0.1 cm} \int_{0}^{\infty} \left( e^{- \frac{x^{2}}{t}}  \hspace{0.1 cm}
\Tr \left( [ \hspace{0.1 cm} {\widetilde {\mathcal \gamma}} (\theta) \hspace{0.1 cm} {\widetilde {\mathcal A}}\hspace{0.1 cm}, \hspace{0.1 cm}
{\widetilde {\mathcal Q}}(x) \hspace{0.1 cm} ] \hspace{0.1 cm}
{\widetilde P}(\theta) \hspace{0.1 cm} e^{- t {\widetilde {\mathcal A}}^{2}} \right) \right) dx \nonumber \\
& &  + \hspace{0.1 cm}
\frac{2}{\sqrt{4 \pi t}} \hspace{0.1 cm} \int_{0}^{\infty} \left( \phi(x) \hspace{0.1 cm} e^{- \frac{x^{2}}{t}} \hspace{0.1 cm}
\Tr \left( [ \hspace{0.1 cm} {\widetilde {\mathcal \gamma}} (\theta) \hspace{0.1 cm} {\widetilde {\mathcal A}}\hspace{0.1 cm}, \hspace{0.1 cm}
\left[ \begin{array}{clcr}
{\frak B}^{\ast} \hspace{0.1 cm} {\frak A}(\theta) & 0 \\ 0 & 0 \end{array} \right] \hspace{0.1 cm} ] \hspace{0.1 cm}
{\widetilde P}(\theta) \hspace{0.1 cm} e^{- t {\widetilde {\mathcal A}}^{2}} \right) \right) dx ,
\end{eqnarray}

\vspace{0.2 cm}

\noindent
where in the last expression we used the decomposition (\ref{E:3.009}).
Using Lemma \ref{Lemma:5.2}, we have

\begin{eqnarray}  \label{E:5.38}
& & \Tr \left\{ [ \hspace{0.1 cm} {\widetilde {\mathcal \gamma}} (\theta) \hspace{0.1 cm} {\widetilde {\mathcal A}}\hspace{0.1 cm}, \hspace{0.1 cm}
{\widetilde {\mathcal Q}}(x) \hspace{0.1 cm} ] \hspace{0.1 cm}
{\widetilde P}(\theta) \hspace{0.1 cm} e^{- t {\widetilde {\mathcal A}}^{2}} \right\}
\hspace{0.1 cm} = \hspace{0.1 cm}
\Tr \left\{ \left( \hspace{0.1 cm} {\widetilde {\mathcal \gamma}} (\theta) \hspace{0.1 cm} {\widetilde {\mathcal A}} \hspace{0.1 cm}
{\widetilde {\mathcal Q}}(x)
\hspace{0.1 cm} - \hspace{0.1 cm} {\widetilde {\mathcal Q}}(x) \hspace{0.1 cm} {\widetilde {\mathcal \gamma}} (\theta) \hspace{0.1 cm}
{\widetilde {\mathcal A}} \hspace{0.1 cm} \right) \hspace{0.1 cm}
{\widetilde P}(\theta) \hspace{0.1 cm} e^{- t {\widetilde {\mathcal A}}^{2}} \right\}   \nonumber  \\
& = & \Tr \left\{ \hspace{0.1 cm} {\widetilde {\mathcal \gamma}} (\theta) \left( \hspace{0.1 cm} {\widetilde {\mathcal A}} \hspace{0.1 cm}
{\widetilde {\mathcal Q}}(x)
\hspace{0.1 cm} + \hspace{0.1 cm} {\widetilde {\mathcal Q}}(x) \hspace{0.1 cm}
{\widetilde {\mathcal A}} \hspace{0.1 cm} \right) \hspace{0.1 cm}
{\widetilde P}(\theta) \hspace{0.1 cm} e^{- t {\widetilde {\mathcal A}}^{2}} \right\}  \nonumber  \\
& = & \Tr \left\{ \hspace{0.1 cm} {\widetilde {\mathcal \gamma}} (\theta) \left( \hspace{0.1 cm} {\widetilde {\mathcal A}} \hspace{0.1 cm}
{\widetilde {\mathcal Q}}(x)
\hspace{0.1 cm} + \hspace{0.1 cm} {\widetilde {\mathcal Q}}(x) \hspace{0.1 cm}
{\widetilde {\mathcal A}} \hspace{0.1 cm} \right) \hspace{0.1 cm}
\left( I - {\widetilde P}(\theta) \right) \hspace{0.1 cm} e^{- t {\widetilde {\mathcal A}}^{2}} \right\} \nonumber  \\
& = & \frac{1}{2} \hspace{0.1 cm}
\Tr \left\{ \hspace{0.1 cm} {\widetilde {\mathcal \gamma}} (\theta) \left( \hspace{0.1 cm} {\widetilde {\mathcal A}} \hspace{0.1 cm}
{\widetilde {\mathcal Q}}(x)
\hspace{0.1 cm} + \hspace{0.1 cm} {\widetilde {\mathcal Q}}(x) \hspace{0.1 cm}
{\widetilde {\mathcal A}} \hspace{0.1 cm} \right) \hspace{0.1 cm} e^{- t {\widetilde {\mathcal A}}^{2}} \right\}
\hspace{0.1 cm} = \hspace{0.1 cm} 0,
\end{eqnarray}

\vspace{0.2 cm}

\noindent
since $ \hspace{0.1 cm}{\widetilde {\mathcal A}} \hspace{0.1 cm} e^{- t {\widetilde {\mathcal A}}^{2}} \hspace{0.1 cm}  =
\hspace{0.1 cm} e^{- t {\widetilde {\mathcal A}}^{2}} \hspace{0.1 cm} {\widetilde {\mathcal A}} \hspace{0.1 cm}$ and
$ \hspace{0.1 cm}{\widetilde {\mathcal A}} \hspace{0.1 cm} {\widetilde {\mathcal \gamma}} (\theta) \hspace{0.1 cm}  = - \hspace{0.1 cm}
\hspace{0.1 cm} {\widetilde {\mathcal \gamma}} (\theta) \hspace{0.1 cm} {\widetilde {\mathcal A}} \hspace{0.1 cm}$.
Using Lemma \ref{Lemma:3.2} and (\ref{E:4.3232}),
by simple computation we have

\begin{eqnarray}  \label{E:5.40}
& & \Tr \left( [ \hspace{0.1 cm} {\widetilde {\mathcal \gamma}} (\theta) \hspace{0.1 cm} {\widetilde {\mathcal A}}\hspace{0.1 cm}, \hspace{0.1 cm}
\left[ \begin{array}{clcr}
{\frak B}^{\ast} \hspace{0.1 cm} {\frak A}(\theta) & 0 \\ 0 & 0 \end{array} \right] \hspace{0.1 cm} ] \hspace{0.1 cm}
{\widetilde P}(\theta) \hspace{0.1 cm} e^{- t {\widetilde {\mathcal A}}^{2}} \right) \nonumber  \\
& = & - \frac{i}{2} \Tr \left\{ \left[ \begin{array}{clcr}
{\frak B}^{\ast} \hspace{0.1 cm} {\frak A}(\theta) \hspace{0.1 cm} {\widetilde {\mathcal A}} \hspace{0.1 cm} \left(\hspace{0.1 cm}
{\frak A}(\theta) \hspace{0.1 cm} \sin \theta + {\frak B} \hspace{0.1 cm} \cos \theta \hspace{0.1 cm}\right) &
{\frak B}^{\ast} \hspace{0.1 cm} {\frak A}(\theta) \hspace{0.1 cm} {\widetilde {\mathcal A}} \\
{\widetilde {\mathcal A}}\hspace{0.1 cm} {\frak B}^{\ast} \hspace{0.1 cm} {\frak A}(\theta)  &
{\widetilde {\mathcal A}}\hspace{0.1 cm}  {\frak B}^{\ast} \hspace{0.1 cm} \sin \theta -
{\widetilde {\mathcal A}}\hspace{0.1 cm} {\frak A} (\theta) \hspace{0.1 cm} \cos \theta \end{array} \right]
e^{- t {\widetilde {\mathcal A}}^{2}} \right\}  \nonumber  \\
& = & - \frac{i}{2} \hspace{0.1 cm} \Tr \left\{
\left(  {\frak B}^{\ast} \hspace{0.1 cm} {\frak A}(\theta) \hspace{0.1 cm} {\widetilde {\mathcal A}} \hspace{0.1 cm} \left( \hspace{0.1 cm}
{\frak A}(\theta) \hspace{0.1 cm} \sin \theta + {\frak B} \hspace{0.1 cm} \cos \theta \hspace{0.1 cm}\right)   \right) \hspace{0.1 cm}
\frac{I - i {\widetilde {\mathcal \gamma}}(\theta)}{2} \hspace{0.1 cm} e^{- t {\widetilde {\mathcal A}}^{2}} \right\}  \nonumber \\
& & \hspace{0.1 cm}  - \hspace{0.1 cm} \frac{i}{2} \hspace{0.1 cm}   \Tr \left\{
\left( {\widetilde {\mathcal A}}\hspace{0.1 cm}  {\frak B}^{\ast} \hspace{0.1 cm} \sin \theta -
{\widetilde {\mathcal A}}\hspace{0.1 cm} {\frak A} (\theta) \hspace{0.1 cm} \cos \theta \right) \hspace{0.1 cm}
\frac{I + i {\widetilde {\mathcal \gamma}}(\theta)}{2} \hspace{0.1 cm} e^{- t {\widetilde {\mathcal A}}^{2}} \right\}  \nonumber \\
& = & - \frac{i}{2} \hspace{0.1 cm} \sin \theta \Tr \left( {\widetilde {\mathcal A}}\hspace{0.1 cm}  {\frak B}^{\ast} \hspace{0.1 cm} e^{- t {\widetilde {\mathcal A}}^{2}} \right)
\hspace{0.1 cm} - \hspace{0.1 cm} \frac{1}{2} \hspace{0.1 cm} \cos \theta \hspace{0.1 cm}
\Tr \left(  {\widetilde {\mathcal A}}\hspace{0.1 cm}  {\frak A} (\theta) \hspace{0.1 cm} {\widetilde {\mathcal \gamma}} (\theta)
 \hspace{0.1 cm}  e^{- t {\widetilde {\mathcal A}}^{2}} \right) .
\end{eqnarray}

\vspace{0.2 cm}

\noindent
Setting $\hspace{0.1 cm} \K = \left( \begin{array}{clcr} -1 & 0 \\ 0 & 1 \end{array} \right)$,
$\hspace{0.1 cm} \LL = \left( \begin{array}{clcr} 0 & - 1 \\ -1 & 0 \end{array} \right)$ and
 $\hspace{0.1 cm} \J = \left( \begin{array}{clcr} 0 & 1 \\ -1 & 0 \end{array} \right)$, we have

\begin{eqnarray}  \label{E:5.41}
{\widetilde {\mathcal A}} \hspace{0.1 cm} {\frak B}^{\ast} & = &
\beta \left( \Gamma^{Y} \nabla^{Y} + \nabla^{Y} \Gamma^{Y} \right) \left( \begin{array}{clcr} 0 & - \J \\ \J & 0 \end{array} \right)
\nonumber  \\
{\widetilde {\mathcal A}}\hspace{0.1 cm}  {\frak A} (\theta) \hspace{0.1 cm} {\widetilde {\mathcal \gamma}} (\theta)  & = &
- i \beta  \left( \Gamma^{Y} \nabla^{Y} + \nabla^{Y} \Gamma^{Y} \right) \left( \left( \B_{Y}^{2} \right)^{-1} \left(
\left( \B_{Y}^{2} \right)^{-} - \left( \B_{Y}^{2} \right)^{+} \right) \right) \left( \begin{array}{clcr} \LL & 0 \\ 0 & \LL \end{array} \right),
\end{eqnarray}

\noindent
which shows that

\begin{eqnarray}  \label{E:5.42}
\Tr \left( {\widetilde {\mathcal A}}\hspace{0.1 cm}  {\frak B}^{\ast} \hspace{0.1 cm} e^{- t {\widetilde {\mathcal A}}^{2}} \right)
\hspace{0.1 cm}  = \hspace{0.1 cm}
\Tr \left(  {\widetilde {\mathcal A}}\hspace{0.1 cm}  {\frak A} (\theta) \hspace{0.1 cm} {\widetilde {\mathcal \gamma}} (\theta)
 \hspace{0.1 cm}  e^{- t {\widetilde {\mathcal A}}^{2}} \right)
\hspace{0.1 cm}  = \hspace{0.1 cm} 0.
\end{eqnarray}

\vspace{0.2 cm}

\noindent
By (\ref{E:5.38}) and (\ref{E:5.42}), we have

\begin{eqnarray}   \label{E:5.46}
\Tr \left( [ \hspace{0.1 cm} {\widetilde {\mathcal \gamma}} (\theta) \hspace{0.1 cm} {\widetilde {\mathcal A}}\hspace{0.1 cm}, \hspace{0.1 cm}
\left[ \begin{array}{clcr}
{\frak B}^{\ast} \hspace{0.1 cm} {\frak A}(\theta) & 0 \\ 0 & 0 \end{array} \right] \hspace{0.1 cm} ] \hspace{0.1 cm}
{\widetilde P}(\theta) \hspace{0.1 cm} e^{- t {\widetilde {\mathcal A}}^{2}} \right) \hspace{0.1 cm} = \hspace{0.1 cm} 0 \quad \text{and hence} \quad
(\III) = 0.
\end{eqnarray}

\noindent
Adding up the above arguments, we have

\begin{eqnarray}   \label{E:5.47}
\Tr \left\{ \left( \widetilde{\B}(\theta) {\mathcal Q}(x) -
{\mathcal Q}(x) \widetilde{\B}(\theta) \right) e^{- t \widetilde{\B}(\theta)_{{\widetilde P}(\theta)}^{2}} \right\}
\hspace{0.1 cm} \sim \hspace{0.1 cm} O(e^{- \frac{c}{t}}) \qquad \text{for} \quad t \rightarrow 0^{+},
\end{eqnarray}

\noindent
and this completes the proof of Theorem \ref{Theorem:4.2}.

\vspace{0.5 cm}

\end{document}